\newtheorem{theorem}{Theorem}[section]
\newtheorem{proposition}[theorem]{Proposition}
\newtheorem{conjecture}[theorem]{Conjecture}
\newtheorem{definition}[theorem]{Definition}
\newtheorem{lemma}[theorem]{Lemma}
\newtheorem{conclusion}[theorem]{Conclusion}
\newtheorem{corollary}[theorem]{Corollary}
\newtheorem*{claim*}{Claim}
\newtheorem*{subclaim*}{Subclaim}
\newcommand{\Ce}{{\mathcal{C}}}
\newcommand{\PPP}{{\mathbb{P}}}
\newcommand{\QQQ}{{\mathbb{Q}}}
\newcommand{\RRR}{{\mathbb{R}}}
\newcommand{\ESR}{\mathrm{ESR}}
\newcommand{\seq}[2]{\langle{#1}~\vert~{#2}\rangle}
\newcommand{\map}[3]{{#1}:{#2}\longrightarrow{#3}}
\newcommand{\pmap}[4]{{#1}:{#2}\xrightarrow{#4}{#3}}
\newcommand{\dom}[1]{{{\rm{dom}}(#1)}}
\newcommand{\ran}[1]{{{\rm{ran}}(#1)}}
\newcommand{\id}{{\rm{id}}}
\newcommand{\tc}[1]{{\rm{tc}}({#1})}
\newcommand{\crit}[1]{{{\rm{crit}}\left({#1}\right)}}
\newcommand{\calL}{\mathcal{L}}
\newcommand{\calU}{\mathcal{U}}
\newcommand{\rank}[1]{{\rm{rnk}}({#1})}
\newcommand{\cof}[1]{{{\rm{cof}}(#1)}}
\newcommand{\Set}[2]{\{{#1}~ \vert~{#2}\}}
\newcommand{\Add}[2]{\mathrm{Add}({#1},{#2})}
\newcommand{\Ult}[2]{\mathrm{Ult}({#1},{#2})}
\newcommand{\HOD}{{\rm{HOD}}}
\newcommand{\OD}{{\rm{OD}}}
\newcommand{\On}{{\rm{Ord}}}
\newcommand{\betrag}[1]{\vert{#1}\vert}
\renewcommand{\emptyset}{\varnothing}
\newtheorem{theoremletter}{Theorem}
\title[Large cardinals, structural reflection and the HOD Conjecture]{Large cardinals, structural reflection, and\\ the HOD Conjecture}
\author[Aguilera]{Juan P. Aguilera}
\address{Institut f\"ur diskrete Mathematik und Geometrie, TU Wien. Wiedner Hauptstrasse 8-10, 1040 Vienna, Austria.}
\email{aguilera@logic.at}
\author[Bagaria]{Joan Bagaria}
\address{ICREA (Instituci\'o Catalana de Recerca i Estudis Avan\c{c}ats) and
\newline \indent Departament de Matem\`atiques i Inform\`atica, Universitat de Barcelona. 
Gran Via de les Corts Catalanes, 585,
08007 Barcelona, Catalonia.}
\email{joan.bagaria@icrea.cat}
\author[L\"ucke]{Philipp L\"{u}cke}
\address{Fachbereich Mathematik, Universit\"at Hamburg, Bundesstra{\ss}e 55, Hamburg, 20146, Germany}
\email{philipp.luecke@uni-hamburg.de}
\thanks{
The work of the first author was partially supported by FWF grants ESP-3N and STA-139. 
The work of the second author was supported by the Generalitat de Catalunya (Catalan Government) under
grant 2021 SGR 00348, and by the Spanish Government under grants MTM-PID2020-116773GBI00, PID2023-147428NB-I00, and Europa Excelencia grant EUR2022-134032.
The  third author gratefully acknowledges support from the Deutsche Forschungsgemeinschaft (Project number 522490605).}
\date{\today }
\subjclass[2020]{(Primary) 03E55;  (Secondary) 18A15, 03C55, 03E45, 03E65}
\keywords{Exacting cardinal, Ultraexacting cardinal, Icarus set, I0 embedding,  rank-to-rank embedding, Structural Reflection, HOD Conjecture}
\begin{document}

\begin{abstract}
We introduce \textit{exacting cardinals} and a strengthening of these, \textit{ultraexacting cardinals}. These are natural large cardinals defined equivalently as weak forms of rank-Berkeley cardinals, strong forms of J\'onsson cardinals, or in terms of principles of structural reflection. However, they challenge commonly held intuition on strong axioms of infinity.

We prove that ultraexacting cardinals are consistent with Zermelo-Fraenkel Set Theory with the Axiom of Choice ($\mathsf{ZFC}$) relative to the existence of an I0 embedding. However, the existence of an ultraexacting cardinal below a measurable cardinal implies the consistency of $\mathsf{ZFC}$ with a proper class of I0 embeddings, thus challenging the linear--incremental picture of the large cardinal hierarchy.

We show that the existence of an exacting cardinal implies that $V$ is not equal to $\HOD$ (G\"odel's universe of Hereditarily Ordinal Definable sets), showing that these cardinals surpass the current  hierarchy of large cardinals consistent with $\mathsf{ZFC}$. 
Moreover, we prove that the existence of an exacting cardinal above an extendible cardinal implies the "\emph{$V$ is  far from $\HOD$}" alternative of Woodin's $\HOD$ Dichotomy. 
In particular, it follows that the consistency of $\mathsf{ZFC}$ with an exacting cardinal above an extendible cardinal would refute Woodin's $\HOD$ Conjecture and Ultimate-L Conjecture. 
Finally, we show that the consistency of $\mathsf{ZF}$ with certain large cardinals beyond choice implies the consistency of $\mathsf{ZFC}$ with the existence of an exacting cardinal above an extendible cardinal. 
\end{abstract}
\maketitle
\setcounter{tocdepth}{1}
\tableofcontents

%%%%%%%%%%%%%%%%%%%%
%%%%%%%%%%%%%%%%%%%%
\section{Introduction}
\subsection{Background}
\textit{Scott's inconsistency theorem} \cite{Scott:1961} asserts that the theory $\mathsf{ZFC} + V=L$ (i.e., Zermelo-Fraenkel set theory plus the axiom of Choice, together with the Axiom of Constructibility, which asserts that the set-theoretic universe $V$ equals G\"odel's constructible universe, $L$) is inconsistent with the existence of a non-trivial elementary\footnote{I.e., truth-preserving.} embedding from $V$ to a transitive class\footnote{Recall that a class or a set is said to be \emph{transitive} if it contains all elements of its elements.} $M$, a kind of embedding that can be obtained from the existence of a \emph{measurable cardinal}. It was then shown by Keisler that, conversely,  the \emph{critical point} of such an  embedding, i.e., the least ordinal that is moved by it, is a measurable cardinal (see \cite[5.5 \& 5.6]{Kan:THI}). These \emph{large cardinals} were introduced by Ulam \cite{Ul30} and are defined as cardinals $\kappa$ carrying a total $\kappa$-complete, uniform, two-valued measure.  The Scott-Kiesler proofs  show in fact  that $\mathsf{ZFC} + V=L$ is inconsistent with the existence  of a non-trivial elementary embedding from a rank-initial segment $V_{\kappa+1}$ of the universe $V$ into a transitive set $M$. Here, recall the definition of the von Neumann hierarchy of sets given inductively by $V_0=\emptyset$, $V_{\alpha+1} = \mathcal{P}(V_\alpha)$, and taking unions at limit stages.

There is a rich world of large cardinals defined in terms of the existence of elementary embeddings 
$\map{j}{V_\alpha}{N}$, where $V_\alpha$ is a rank-initial segment of $V$ and $N$ is a transitive set, with various closure properties. While such embeddings are incompatible with the Axiom of Constructibility, $L$ can glimpse a modest outline of these in the form of so-called \textit{weak} large cardinals, in which one substitutes the domain $V_\alpha$  of the elementary embedding by an elementary substructure of the same size as the critical point of the embedding. Effectively one avoids Scott's inconsistency by restricting \textit{partial} measures on the critical point $\kappa$. There are many examples of these, including weakly compact cardinals (a weak form of measurable cardinals introduced by Erd\H{o}s and Tarski \cite{ErTa61}), strongly unfoldable cardinals (introduced by Villaveces \cite{Vi98} as a weak form of strong  cardinals), subtle cardinals (a weak form of Vop\u{e}nka cardinals), as well as weakly extendible cardinals (recently introduced by Fuchino and Sakai \cite{FuSa22}), and  $C^{(n)}$-strongly unfoldable cardinals, introduced by the second and third authors in \cite{BL2} as a weak version of $C^{(n)}$-extendible cardinals \cite{Ba:CC}  which generalizes both Villaveces' notion of strongly unfoldable cardinals and Rathjen's notion of shrewd cardinals \cite{Ra:Shrewd}.

Reaching upon the highest  end of the large-cardinal spectrum, we have \textit{Kunen's inconsistency theorem} \cite{ku:EE}, which asserts that the existence of a non-trivial elementary embedding $\map{j}{V}{V}$ is inconsistent with $\mathsf{ZFC}$. The existence of such an embedding had been proposed by Reinhardt in his 1968 Berkeley PhD thesis as the ultimate large-cardinal axiom. With the Axiom of Choice playing an essential part in the proof, Kunen's inconsistency theorem  shows, in fact, that the existence of a non-trivial elementary embedding $\map{j}{V_{\lambda+2}}{V_{\lambda+2}}$ is inconsistent with $\mathsf{ZFC}$. Thus, we have two dividing lines in the hierarchy of large cardinals: one separating those consistent with the Axiom of Constructibility from those which are not, and one separating those consistent with the Axiom of Choice with those which are not. 

Just like the anti-constructibility cardinals project their shadows onto $L$, one finds vestiges of Reinhardt's dream in the world of $\mathsf{ZFC}$. Examples of these are the well known large-cardinal principles ${\rm I}3$, ${\rm I}2$, ${\rm I}1$, and ${\rm I}0$ (see \cite{Kan:THI}), the last of which was introduced by Woodin  in 1984 in order to prove the consistency of the Axiom of Determinacy.\footnote{Woodin later reduced the large-cardinal hypothesis for this theorem, which appeared in print as \cite{Wo88}.}
Laver \cite{Lav92} uncovered a close connection between cardinals of this sort and finite left-distributive algebras, and this fact later led to deep connections between large cardinals, finite left-distributive algebras, arithmetic (see, e.g.,\cite{DoJe97}), knot theory (see, e.g., \cite{BCM20} and \cite{BrMi20}), and braid theory (see, e.g., \cite{Deh94}).

In this work, we introduce new large cardinals, which we call \emph{exacting} and \emph{ultraexacting}, and show that they are consistent with $\mathsf{ZFC}$ relative to I0.\footnote{See \S\ref{remark1} for commentary and historical remarks, including connections with earlier work of Woodin.} 
These new cardinals are obtained as direct analogues of the \textit{weak} large cardinals compatible with $V = L$. 
Starting from cardinals incompatible with $\mathsf{ZFC}$ (specifically, with \textit{rank-Berkeley} cardinals in the sense of \cite{GoSch24}), we weaken the definition by shrinking the domains of the elementary embeddings, from some $V_\alpha$ to an elementary substructure of $V_\alpha$ containing the supremum $\lambda$ of the critical sequence\footnote{Recall that the \emph{critical sequence} of a non-trivial  elementary embedding $j$ between set-theoretic structures is the sequence $\seq{\lambda_m}{m<\omega}$, where $\lambda_0$ is the critical point of $j$ and $\lambda_{m+1}=j(\lambda_m)$.} as well as $V_\lambda$ as a subset (see Definition \ref{DefinitionWRB}). Thus, we obtain the following analogy:
\[\frac{\text{Exacting}}{\text{Rank-Berkeley}} ~ = ~  \frac{\text{Weakly compact}}{\text{Measurable}} ~ = ~  \frac{\text{Strongly unfoldable}}{\text{Supercompact}}.\]
In the case of \textit{ultraexacting} cardinals, we also demand that the restriction of the embedding to $V_\lambda$ belong to its domain.

We prove that, surprisingly, exacting cardinals imply  $V$ is not equal to $\HOD$ (G\"odel's universe of Hereditarily Ordinal Definable sets), and so they establish a third dividing line in the hierarchy of large cardinals, namely between those that are compatible with $V=\HOD$, which include all traditional large cardinals,  and those that are not. Thus they challenge the commonly held intuition about strong axioms of infinity that, as with all such axioms considered so far, they should be compatible with $V=\HOD$. 
Moreover, we show that these large cardinal axioms challenge the linear--incremental picture of the large cardinal landscape in the sense that they interact in rather extreme ways with other large cardinal axioms, in terms of consistency strength. For instance, the existence of an ultraexacting cardinal below a measurable cardinal implies the consistency of $\mathsf{ZFC}$ with a proper class of I0 cardinals.

\subsection{The HOD Conjecture}\label{SectIntroHODC}
The celebrated HOD Dichotomy theorem of Woodin asserts that, assuming the existence of an extendible cardinal, the set-theoretic universe  is either ``very close'' to HOD, or ``very far'' from it. Namely,

\begin{theorem}[The HOD Dichotomy. Woodin \cite{WOEM1}]\label{HODdichotomy} 
Suppose there exists an extendible cardinal $\kappa$. Then exactly one of the following holds:
 \begin{enumerate}
  \item Every singular cardinal $\lambda >\kappa$ is singular in HOD, and HOD computes  $\lambda^+$ correctly (i.e., $V$ is {\emph{close}} to HOD).

  \item Every regular cardinal greater than $\kappa$ is $\omega$-strongly measurable  in $\HOD$\footnote{See Definition \ref{defstronglymeasurable} below.} (i.e., $V$ is  {\emph{far}} from HOD).
 \end{enumerate}
\end{theorem}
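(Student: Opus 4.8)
The plan is to establish the two halves of ``exactly one'': that alternatives (1) and (2) are mutually exclusive, and that the failure of (2) forces (1). Recall from Definition~\ref{defstronglymeasurable} that, writing $S^\gamma_\omega$ for $\Set{\beta<\gamma}{\cf(\beta)=\omega}$, a regular cardinal $\gamma$ is $\omega$-strongly measurable in $\HOD$ when there is $\eta<\gamma$ with $(2^\eta)^{\HOD}<\gamma$ such that $\HOD$ contains no partition of $S^\gamma_\omega$ into $\eta$ many stationary sets. The first ingredient is the combinatorial fact, proved inside $\HOD$ by an Ulam-matrix argument exploiting the constraint $(2^\eta)^{\HOD}<\gamma$, that an $\omega$-strongly measurable cardinal of $\HOD$ is measurable in $\HOD$, hence inaccessible, hence a limit cardinal there. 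Mutual exclusivity is then immediate: if (1) held, choose a singular cardinal $\lambda>\kappa$ of cofinality $\omega$; cardinals of $V$ are cardinals of $\HOD$, so $\lambda$ is a cardinal of $\HOD$, and by (1) the ordinal $(\lambda^+)^V$ equals $(\lambda^+)^{\HOD}$ and is therefore a successor cardinal of $\HOD$, not a limit cardinal there; by the first ingredient it is not $\omega$-strongly measurable in $\HOD$, contradicting (2) applied to it, since it is a regular cardinal above $\kappa$.

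The substantive content is a \emph{local covering lemma}: \emph{if $\kappa$ is extendible and $\gamma>\kappa$ is a regular cardinal that is not $\omega$-strongly measurable in $\HOD$, then every singular cardinal $\lambda$ with $\kappa<\lambda<\gamma$ is singular in $\HOD$ and satisfies $(\lambda^+)^{\HOD}=(\lambda^+)^V$.} Granting this, the failure of (2) yields (1): assume some regular $\gamma_0>\kappa$ is not $\omega$-strongly measurable in $\HOD$, and let $\lambda>\kappa$ be an arbitrary singular cardinal. Fix a large $\delta$ with $V_\delta\prec_{\Sigma_2}V$ and an extendibility embedding $\map{j}{V_\delta}{V_{\delta'}}$ with $\crit{j}=\kappa$ and $\lambda<j(\kappa)$, so $\lambda<j(\kappa)\le j(\gamma_0)$. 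Since ``being $\omega$-strongly measurable in $\HOD$'' mentions only sets of low rank and, by the $\Sigma_2$-definability of $\HOD$, is absolute between $V$ and the relevant $V_\delta$, elementarity of $j$ gives that $\gamma_1:=j(\gamma_0)$ is regular, lies above $\lambda$, and is not $\omega$-strongly measurable in $\HOD$. Applying the local covering lemma to $\gamma_1$, the cardinal $\lambda$ is singular in $\HOD$ with $(\lambda^+)^{\HOD}=(\lambda^+)^V$. As $\lambda$ was arbitrary, (1) holds; together with mutual exclusivity, exactly one of (1), (2) holds.

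For the local covering lemma itself, the essential input is Woodin's theorem that an extendible cardinal $\kappa$ is supercompact in $\HOD$: for each $\theta$, the extendibility embedding can be chosen so that the object it induces on $\HOD$ is, inside $\HOD$, a $\theta$-supercompactness embedding $\map{i}{\HOD}{M}$ with $\crit{i}=\kappa$ and ${}^\theta M\subseteq M$ as computed in $\HOD$. Fix $\lambda$ singular with $\kappa<\lambda<\gamma$, and take $\theta=\gamma$. Since $\gamma$ is not $\omega$-strongly measurable in $\HOD$, for every $\eta<\gamma$ with $(2^\eta)^{\HOD}<\gamma$ there is in $\HOD$ a partition of $S^\gamma_\omega$ into $\eta$ many stationary sets; fix one that is sufficiently fine. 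Pushing it forward under $i$ yields a partition of $S^{i(\gamma)}_\omega$ into stationary sets in $M$; using its stationarity \emph{in $M$} together with the closure ${}^\gamma M\subseteq M$, one rules out $(\lambda^+)^{\HOD}<(\lambda^+)^V$ and concludes $(\lambda^+)^{\HOD}=(\lambda^+)^V$. A companion argument, again invoking the $\HOD$-supercompactness of $\kappa$, shows that $\lambda$ cannot be regular in $\HOD$ — otherwise $\HOD$ would ``cover'' at $\lambda$, forcing $\cf^V(\lambda)=\lambda$ — so $\lambda$ is singular in $\HOD$.

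I expect the local covering lemma to be the main obstacle. The delicate points are that the $\HOD$-partitions of $S^\gamma_\omega$ produced must remain stationary from the standpoint of $\HOD$ (not merely of $V$), and that they must be controlled tightly enough under the internal supercompactness embedding to pin $(\lambda^+)^{\HOD}$ to $(\lambda^+)^V$ exactly rather than merely to some $\HOD$-cardinal $\le(\lambda^+)^V$; both rest on the genuinely nontrivial fact underlying Woodin's theorem, that the extendibility embedding descends to an $\HOD$-definable supercompactness measure, and on making that measure interact correctly with stationary reflection at $\gamma$. By comparison, mutual exclusivity is elementary, and the transfer step of the second paragraph is routine once one has checked the low L\'evy complexity of $\omega$-strong measurability in $\HOD$ relative to the canonical $\Sigma_2$ definition of $\HOD$.
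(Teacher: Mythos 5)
A preliminary remark: the paper does not prove this statement. It is quoted verbatim from Woodin \cite{WOEM1} (see also {\cite[Theorems 3.34 \& 3.39]{MR3632568}}) as background for \S\ref{SectIntroHODC}, so there is no in-paper argument to compare yours against; what follows assesses your proposal against the standard proof. Your easy half is correct and standard: mutual exclusivity via ``$\omega$-strongly measurable in $\HOD$ $\Rightarrow$ measurable in $\HOD$ $\Rightarrow$ inaccessible, hence a limit cardinal, in $\HOD$'' (the Ulam/Solovay splitting argument), applied to $(\lambda^+)^{\HOD}=\lambda^+$ for a singular $\lambda>\kappa$; and the upward transfer of a single regular non-$\omega$-strongly-measurable cardinal through an extendibility embedding is indeed how one obtains a proper class of such cardinals, though the bookkeeping about the correctness of the target $V_{\delta'}$ and the agreement of $\HOD^{V_{\delta'}}$ with $\HOD$ is more delicate than ``routine.''

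The genuine gap is the ``local covering lemma,'' which carries the entire mathematical content of the theorem and which you do not prove but rather reduce to ``Woodin's theorem that an extendible cardinal $\kappa$ is supercompact in $\HOD$,'' with the extendibility embedding inducing an internal $\theta$-supercompactness embedding of $\HOD$ with the measure in $\HOD$. That statement is not an unconditional theorem of $\mathsf{ZFC}$: it is exactly the weak extender model property of $\HOD$, which Woodin derives \emph{from} the hypothesis that some regular $\gamma\geq\kappa$ is not $\omega$-strongly measurable in $\HOD$, and deriving it is precisely the hard part of the dichotomy. (If it were unconditional, the ``far from $\HOD$'' alternative would be impossible and the present paper's Theorem \ref{TheoremIntroHODC} would be an inconsistency proof; compare also the Revised HOD Conjecture quoted in \S\ref{section6}, which explicitly leaves open whether an extendible cardinal is supercompact in $\HOD$ when the HOD Hypothesis fails.) Moreover, the derivation runs in the opposite direction from your sketch: one fixes a partition $\vec{S}\in\HOD$ of $S^\gamma_\omega$ into $\kappa$ many stationary sets and a supercompactness embedding $j$ of $V$, shows that $j[\gamma]$ is computable inside $\HOD$ from $j(\vec{S})$ and $\sup j[\gamma]$ via stationary reflection, and thereby certifies that the induced measures on $P_\kappa(\gamma)\cap\HOD$ belong to and concentrate on $\HOD$; the facts that $\lambda$ is singular in $\HOD$ and $(\lambda^+)^{\HOD}=\lambda^+$ are then consequences of this weak extender model property, not inputs to it. Your sketch (``pushing the partition forward under $i$ yields a stationary partition in $M$; using its stationarity in $M$ together with ${}^{\gamma}M\subseteq M$ one rules out $(\lambda^+)^{\HOD}<\lambda^+$'') inverts this flow and leaves unaddressed the key step of extracting a $\HOD$-internal supercompactness measure from the mere existence of the stationary partition. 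As written, the proposal assumes the theorem's main lemma.
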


The \emph{$\HOD$ Hypothesis} (see \cite{WOEM1} and also \cite[Section 7.1]{MR4022642}) is the statement  that there exists a proper class of regular cardinals that are not $\omega$-strongly measurable in $\HOD$. 
Theorem \ref{HODdichotomy} shows that the existence of an extendible cardinal  implies that the $\HOD$ Hypothesis is equivalent 
to the statement that every singular cardinal above an extendible cardinal is singular in $\HOD$ and $\HOD$ computes its successor correctly, whereas 
a failure of the $\HOD$ Hypothesis is equivalent to the statement that every regular cardinal greater than  an extendible cardinal is $\omega$-strongly measurable in $\HOD$. 
Results of Goldberg in \cite{MR4693981} show that a  similar dichotomy already  holds above a strongly compact cardinal.

The $\HOD$ Dichotomy  placed set theory at a crossroads, for  there are only two alternatives, each yielding a radically different  universe of sets.     However, in contrast with Jensen's  $L$-Dichotomy, which asserts that either $V$ is very close to $L$ or very far from it, and is easily decided granted large cardinals, the $\HOD$ Dichotomy has a completely different character, for the existence of all large cardinals (compatible with ZFC) that have been studied until now is not known to be inconsistent with either alternative.

Woodin's strategy for resolving the HOD Dichotomy has been to prove  the existence of a  \emph{weak extender model for a supercompact cardinal} (see \cite{WOEM1} and \cite{MR3632568}), which yields the first alternative of the dichotomy:

\begin{theorem}[Woodin \cite{WOEM1}]
Suppose that $\kappa$ is an extendible cardinal. Then the following are equivalent:
 \begin{enumerate}
  \item The $\HOD$ Hypothesis.

  \item There is a weak extender model $N$ for the supercompactness of $\kappa$ which is contained in $\HOD$.
 \end{enumerate}
\end{theorem}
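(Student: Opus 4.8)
The plan is to prove the two implications separately. The direction $(2)\Rightarrow(1)$ rests on the \emph{Universality Theorem} for weak extender models, while $(1)\Rightarrow(2)$ is the substantial direction and proceeds by showing that $\HOD$ itself is a weak extender model for the supercompactness of $\kappa$.

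\emph{$(2)\Rightarrow(1)$.} Let $N\subseteq\HOD$ be a weak extender model for the supercompactness of $\kappa$. Since $\kappa$ is extendible, the Universality Theorem applies: $N$ is ``close to $V$'' above $\kappa$ --- it computes successors of singular cardinals $>\kappa$ correctly, singular cardinals $>\kappa$ stay singular in $N$, and, decisively here, $N$ correctly perceives stationarity of subsets of $\{\xi<\gamma:\cf(\xi)=\omega\}$ for every regular $\gamma>\kappa$ (this is forced by the partial presence of the supercompactness measures in $N$). Working inside $N$, Solovay's splitting theorem then yields a partition of $\{\xi<\gamma:\cf(\xi)=\omega\}$ into $\gamma$ pairwise disjoint stationary sets. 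Such a partition lies in $N\subseteq\HOD$, and coalescing its pieces gives, for \emph{every} $\kappa_0<\gamma$, a partition $\seq{S_\alpha}{\alpha<\kappa_0}\in\HOD$ of $\{\xi<\gamma:\cf(\xi)=\omega\}$ into stationary sets; hence no regular $\gamma>\kappa$ is $\omega$-strongly measurable in $\HOD$ (regardless of the value of $(2^{\kappa_0})^{\HOD}$). So the $\HOD$ Hypothesis holds, witnessed by the proper class of regular cardinals above $\kappa$.

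\emph{$(1)\Rightarrow(2)$.} Assume the $\HOD$ Hypothesis; I would show that $N:=\HOD$ is a weak extender model for the supercompactness of $\kappa$ (then $(2)$ holds, trivially with $N\subseteq\HOD$). Fix $\lambda>\kappa$; we must find a $\kappa$-complete, normal, fine ultrafilter $U$ on $\mathcal{P}_\kappa(\lambda)$ with $\mathcal{P}_\kappa(\lambda)\cap\HOD\in U$ and $U\cap\HOD\in\HOD$. Using extendibility, fix an elementary embedding $\map{j}{V_\alpha}{V_\beta}$ with $\crit{j}=\kappa$, $\alpha$ far above $\lambda$ and $j(\kappa)>\alpha$; one first checks, using extendibility of $\kappa$ (and not just its supercompactness), that $j$ restricts to an elementary embedding of $\HOD\cap V_\alpha$ into $\HOD\cap V_\beta$. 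Put $U=\Set{X\subseteq\mathcal{P}_\kappa(\lambda)}{j[\lambda]\in j(X)}$. Normality, fineness, $\kappa$-completeness, and $\mathcal{P}_\kappa(\lambda)\cap\HOD\in U$ follow by the usual computations (the last because $j[\lambda]$ is ordinal-definable inside $V_\beta$). The real issue is \emph{amenability}: $U\cap\HOD\in\HOD$.

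\emph{Amenability via the $\HOD$ Hypothesis; the main obstacle.} Suppose toward a contradiction that at cofinally many $\lambda$ no derived supercompactness measure is amenable to $\HOD$. The plan is to show that such a persistent failure propagates downward to the $\omega$-club filter: from the non-amenability one extracts, for each such $\lambda$, an $\HOD$-ultrafilter on $\{\xi<\gamma:\cf(\xi)=\omega\}$ (for an associated regular cardinal $\gamma$ near $\lambda$) that $\HOD$ cannot split into $\kappa_0$ many stationary pieces, i.e.\ a witness that $\gamma$ is $\omega$-strongly measurable in $\HOD$; carrying this out for a proper class of $\lambda$ contradicts the $\HOD$ Hypothesis. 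Consequently amenable derived measures exist at every $\lambda>\kappa$, so $\HOD$ is a weak extender model for the supercompactness of $\kappa$. This last step --- converting non-amenability of $U\cap\HOD$ into genuine $\omega$-strong measurability in $\HOD$ --- is the main obstacle: it requires the full interaction of weak extender models, the $\omega$-club filter, and the $\HOD$ Dichotomy (Theorem~\ref{HODdichotomy}), and it is where extendibility of $\kappa$ is indispensable (both for the Dichotomy itself and for the elementarity of $j\restriction\HOD$). The remaining ingredients --- normality and fineness of the derived measure, the coalescing argument in $(2)\Rightarrow(1)$, and invoking the Universality Theorem as a black box --- are routine.
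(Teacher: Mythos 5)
This statement is quoted in the paper as a theorem of Woodin, cited to \cite{WOEM1}, and the paper gives no proof of it; so there is no in-paper argument to compare against, and your proposal has to be judged on its own. Your overall architecture does match Woodin's actual proof: $(2)\Rightarrow(1)$ via the Universality Theorem plus Solovay splitting inside $N$, and $(1)\Rightarrow(2)$ by showing that $\HOD$ itself is a weak extender model for the supercompactness of $\kappa$, with the whole difficulty concentrated in the amenability clause $U\cap\HOD\in\HOD$. The $(2)\Rightarrow(1)$ half is essentially complete modulo the black-boxed Universality Theorem (the coalescing remark correctly disposes of the $(2^\delta)^{\HOD}<\gamma$ clause in Definition \ref{defstronglymeasurable}).

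The genuine gap is in $(1)\Rightarrow(2)$, and it is exactly the step you flag as ``the main obstacle'': you do not give an argument, you only restate what would need to be true. Saying that ``persistent non-amenability propagates downward to a witness of $\omega$-strong measurability'' is the conclusion one wants, not a mechanism for obtaining it; there is no a priori reason a failure of $U\cap\HOD\in\HOD$ should produce an $\HOD$-ultrafilter on $S^\gamma_\omega$ that $\HOD$ cannot split. Woodin's proof runs in the opposite direction: one uses the $\HOD$ Hypothesis \emph{positively} to fix, for suitable regular $\gamma$ near $\lambda$, a partition $\seq{S_\alpha}{\alpha<\eta}\in\HOD$ of $S^\gamma_\omega$ into stationary sets, and then shows that the derived measure restricted to $\HOD$ is \emph{definable in $\HOD$ from this partition} --- roughly, membership of an $\HOD$-set $X$ in $U$ is recovered from which pieces $S_\alpha$ reflect to (equivalently, which $j(S_\alpha)$ contain) the relevant suprema of $j[\lambda]$, and since stationarity of ordinal-definable sets is an ordinal-definable property, this computation can be carried out inside $\HOD$. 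That coding argument is the entire content of the hard direction and is absent from your sketch. Two smaller soft spots: the claim that $j$ restricts to an elementary embedding of $\HOD\cap V_\alpha$ into $\HOD\cap V_\beta$ needs $\alpha,\beta$ chosen so that $\HOD\cap V_\alpha$ and $\HOD\cap V_\beta$ are correctly computed by $V_\alpha$ and $V_\beta$ (this is where extendibility, rather than supercompactness, is used and should be said explicitly); and the ``cofinally many $\lambda$'' contradiction framing is misaligned with the quantifier structure of the definition of weak extender model, which demands an amenable normal fine measure concentrating on $\mathcal{P}_\kappa(\lambda)\cap\HOD$ for \emph{every} $\lambda>\kappa$, so the construction should be direct rather than by contradiction.
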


Since the expectation was that a canonical $L$-like inner model for a supercompact cardinal should exist, and it should be contained in $\HOD$,  the theorem  prompted the  following conjecture implying that the first alternative of the HOD Dichotomy is the true one:

\begin{definition} [Woodin \cite{WOEM1}]
The \emph{$\HOD$ Conjecture} asserts that the theory
$$\mbox{ZFC + ``There exists an extendible cardinal''}$$
proves the $\HOD$ Hypothesis.\footnote{This is weaker than the assertion that all sufficiently large regular cardinals are not measurable in HOD.}    
\end{definition}

The \emph{Weak $\HOD$ Conjecture} is defined similarly, but with the stronger theory $$\mbox{ZFC + ``There exists an extendible cardinal and a huge cardinal above it''}.$$ Many contemporary questions in set theory center on topics that are linked to the $\HOD$ Conjecture. For instance, it is shown in \cite{MR4022642} that the ``weak'' form of Woodin's well known \textit{Ultimate-L Conjecture} has the Weak HOD Conjecture as consequence.

Part of the appeal of the HOD Conjecture is that it is equivalent to a number-theoretic question of very low complexity, namely $\Sigma^0_1$, and it is commonly known that all true $\Sigma^0_1$ sentences are provable. Thus the HOD Conjecture must be provable if true. This is a very appealing aspect of the conjecture if the expectation is that it be true. The downside of this situation is that the negation of the HOD Conjecture is  literally a consistency assertion. Thus if it is false, one faces the problem that it can only be refuted from hypotheses stronger than the HOD Conjecture itself in terms of consistency strength, by G\"odel's incompleteness theorems. This is problematic as one must first convince onself that the hypotheses employed are consistent. Plausibly, the HOD Conjecture fails, with a proof of this only possible from incredibly strong assumptions. 
Candidates for such arguments have been found, but, in all cases, there is no consensus on the consistency of the assumptions used. Results of Woodin show that the consistency of $\mathsf{ZF}$ with large cardinals beyond the Axiom of Choice imply a failure of the HOD Conjecture (see \cite{MR4022642} and \cite{WOEM1}). Moreover, in \cite{MR4693981}, Goldberg proved that a failure of the HOD Conjecture follows from the consistency of $\mathsf{ZFC}$ together with a strongly compact cardinal and a non-trivial embedding from HOD into itself. 
Important examples of strong partial failures of the $\HOD$ Hypothesis obtained from assumptions that are generally considered to be consistent can be found in \cite{doi:10.1142/S0219061324500181}, \cite{Ben-Neria_Hayut_2023}, \cite{blue2024ad}, \cite{AlejandroGabeCompactnessHOD} and \cite{FarmerLambda2}.

Let us consider a  scenario in which large cardinal axioms cause the  $\HOD$ Conjecture to fail. In this scenario, we (i) exhibit instances of large-cardinal axioms $A_1, A_2, \hdots, A_n$, (ii) argue that each axiom $A_i$ is consistent relative to a commonly accepted system such as $\mathsf{ZFC} +{{\rm I}0}$, and (iii) prove that the joint consistency of the axioms $A_1, A_2, \hdots, A_n$ with $\mathsf{ZFC}$ refutes the HOD Conjecture. Why multiple axioms $A_i$? The current picture of the large cardinal landscape is that these are generated in a linearly ordered and incremental manner, e.g., weakly compact cardinals are ``much stronger'' than any axiom expressible in terms of inaccessibility; measurable cardinals are ``much stronger'' than any axiom expressible in terms of weak compactness, etc. If this picture were to continue, one would expect that the axioms $A_1, A_2, \hdots, A_n$ in the above scenario can be substituted by a single one, but it is not clear that it should. 
Nevertheless, we shall exhibit instances of large cardinal axioms $A_1$ and $A_2$, each of which has consistency strength strictly weaker than $\mathsf{ZFC} +{{\rm I}0}$, and whose joint consistency  disproves the HOD Conjecture. 
Moreover, we will use techniques developed by Woodin in his disproof of the HOD Conjecture from the 
consistency of $\mathsf{ZF}$ with large cardinals beyond the Axiom of Choice to show that this consistency assumption also implies the joint consistency of $A_1$ and $A_2$ with $\mathsf{ZFC}$.

%The difference between exacting and ultraexacting cardinals is whether one demands that the embeddings satisfy a version of the Hauser embedding property: that $j\upharpoonright V_\lambda$ belong to the domain of the embedding. While weakly compact and strongly unfoldable embeddings usually can be asked to satisfy the (usual) Hauser embedding property, this makes a difference in the context of exact and ultraexact cardinals.

%{corollary:LeastI0notUltraexacting}
\subsection{Summary of results}

We now state and summarize the main results of this article. Exacting  cardinals are the subject of \S\ref{SectExact}. We define exacting cardinals and argue that they are a natural large cardinal notion. First, we give an alternative characterization of these cardinals as a weak form of rank-Berkeley cardinals and therefore also of Reinhardt cardinals. Second, we show that this large cardinal property is also equivalent to a strong form of J\'onssonness that also reflects strong external properties to proper submodels of the same cardinality. We then continue by proving that exacting cardinals  are compatible with the Axiom of Choice:

\begin{theoremletter}\label{TheoremIntroI0Exact}
Suppose that I0 holds. Then, there is a set model $M$ of $\mathsf{ZFC}$ with an exacting cardinal. 
%then there is a transitive model $M$ of ZFC such that $\kappa\in M$ and $\kappa$ is $n$-exact in $M$ for every natural number $n$. 
 %The theory $\mathsf{ZFC}$ + I0 proves the consistency of $\mathsf{ZFC}$ + $\{``$there exists an $n$-exact cardinal'': $n \in\mathbb{N}\}$.
\end{theoremletter}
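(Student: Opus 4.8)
The plan is to situate the exacting cardinal at the critical point $\kappa_{0}$ of an I0 embedding and to verify the clauses of Definition \ref{DefinitionWRB} inside a set model of $\mathsf{ZFC}$ extracted from $L(V_{\lambda+1})$. Fix a nontrivial elementary embedding $\map{j}{L(V_{\lambda+1})}{L(V_{\lambda+1})}$ with $\crit{j}=\kappa_{0}<\lambda$, put $\kappa_{n}=j^{n}(\kappa_{0})$ and $\lambda=\sup_{n}\kappa_{n}$, and recall the standard facts that $j(\lambda)=\lambda$, that $j$ is determined by its restriction to $V_{\lambda+1}$ and is elementary there, that $j\upharpoonright V_{\lambda}\in V_{\lambda+1}$, and that $j$, being fully elementary from $L(V_{\lambda+1})$ to itself, preserves truth in any rank segment $(V_{\gamma})^{L(V_{\lambda+1})}$. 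The configuration to be produced is a model $M\models\mathsf{ZFC}$ together with, for the ordinals $\alpha$ of $M$ demanded by the definition, a \emph{proper} elementary substructure $X\prec(V_{\alpha})^{M}$ of size $\betrag{V_{\lambda}}$ with $V_{\lambda}\cup\{\lambda\}\subseteq X$ and a nontrivial elementary $\map{j'}{X}{(V_{\alpha})^{M}}$ with $\crit{j'}=\kappa_{0}$ and $\sup_{n}(j')^{n}(\kappa_{0})=\lambda$.

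The first task is to repair the possible failure of choice in $L(V_{\lambda+1})$. Using that $V\models\mathsf{ZFC}$, fix a set $A$ of ordinals coding both a well-ordering of $V_{\lambda+1}$ and the restriction $j\upharpoonright V_{\lambda+1}$, and pass to $W=L[A]$. Then $W\models\mathsf{ZFC}$, $W$ has $V_{\lambda+1}$ and $j\upharpoonright V_{\lambda+1}$ as elements, and therefore $W$ can reconstruct $j$ as a class of $W$ elementary from $L(V_{\lambda+1})$ to itself, so that ``I0 holds at $\lambda$'' remains true in $W$; moreover $W$ and $L(V_{\lambda+1})$ agree on the rank segments around $\lambda$ that will be used. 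One then runs the remaining argument inside $W$, and finally reflects --- or takes a Skolem hull of the relevant parameters together with its transitive collapse, using reflection in $W$ and the abundance of large cardinals provided by I0 --- to pass to a genuinely set-sized $M\models\mathsf{ZFC}$ satisfying ``$\kappa_{0}$ is exacting''.

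The embeddings $j'$ are obtained by transporting $j$ along Skolem parameters. Fixing an admissible $\alpha$, let $X$ be the Skolem hull of $V_{\lambda}\cup\{\lambda\}$ in $(V_{\alpha})^{W}$; this is a proper elementary substructure of size $\betrag{V_{\lambda}}$ with $V_{\lambda}\cup\{\lambda\}\subseteq X$. Define $j'\bigl(t^{(V_{\alpha})^{W}}(\vec a,\lambda)\bigr)=t^{(V_{\alpha})^{W}}(j(\vec a),\lambda)$ for Skolem terms $t$ and parameters $\vec a\in V_{\lambda}$, using $j\upharpoonright V_{\lambda}$. Well-definedness and elementarity of $j'$ reduce to the claim that $j$ preserves the theory of $(V_{\alpha})^{W}$ with parameters from $V_{\lambda}\cup\{\lambda\}$; this holds because, by the choice of $A$ and of $\alpha$, $(V_{\alpha})^{W}$ is a set recognized by $L(V_{\lambda+1})$, and $j(\lambda)=\lambda$. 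Then $\crit{j'}=\kappa_{0}$ since $j$ fixes $V_{\kappa_{0}}$ and moves $\kappa_{0}$, and the critical sequence of $j'$ is $\seq{\kappa_{n}}{n<\omega}$ with supremum $\lambda$, because each $\kappa_{n}$ and $\lambda$ lie in $X$ and $j'(\kappa_{n})=j(\kappa_{n})=\kappa_{n+1}$; the remaining conditions of Definition \ref{DefinitionWRB} are then routine to check.

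The main obstacle is making the two halves of the argument compatible: one needs a single $M\models\mathsf{ZFC}$ in which the ambient rank segments $(V_{\alpha})^{M}$, over the whole range of $\alpha$ required by the definition, are still ``thin enough'' that $j$ acts on them elementarily (so that $j'$ is available), while $M$ genuinely satisfies the Axiom of Choice. The tension is real: by Kunen's inconsistency theorem no model of $\mathsf{ZFC}$ can carry a full elementary self-map of a rank segment around $\lambda$, so only the proper-substructure remnant of $j$ can persist, and one must verify that this remnant still satisfies \emph{every} requirement on an exacting cardinal --- in particular the size constraint on $X$ and the demand that $\lambda$ really be the supremum of the critical sequence of $j'$ as computed inside $M$. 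Choosing the coding set $A$ and delimiting the admissible $\alpha$ so that both demands hold at once is where the work lies; once the ambient model is in place, recognizing the configuration as an exacting witness is essentially bookkeeping.
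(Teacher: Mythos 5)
There is a genuine gap, and it sits exactly at the point you flag as ``where the work lies'' but do not carry out. Your inner model $W=L[A]$, where $A$ codes a wellordering of $V_{\lambda+1}$ together with $j\restriction V_{\lambda+1}$, is not a model on which $j$ acts: a set of ordinals coding a wellordering of $V_{\lambda+1}$ cannot belong to $L(V_{\lambda+1})$ (under I0 this model satisfies that $V_{\lambda+1}$ is not wellorderable), so $A\notin\dom{j}$ and $W\not\subseteq L(V_{\lambda+1})$. Consequently the claim that ``$(V_\alpha)^W$ is a set recognized by $L(V_{\lambda+1})$'' is false in general, and the key step of your construction --- that $j$ preserves the theory of $(V_\alpha)^W$ with parameters from $V_\lambda\cup\{\lambda\}$, which is what makes $j'\bigl(t(\vec a,\lambda)\bigr)=t(j(\vec a),\lambda)$ well-defined and elementary --- has no justification. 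The paper resolves this tension differently: it builds a wellordering $\lhd$ of $V_\lambda$ only (not of $V_{\lambda+1}$), of order type $\lambda$ and satisfying $j(\lhd)=\lhd$, sets $\Gamma=V_\lambda\cup\{\vec\lambda,\lhd\}$ and $M=L_{\lambda^+}(\Gamma)$, and shows $M\models\mathsf{ZFC}$ (using Woodin's theorem that $\lambda^+$ is measurable in $L(V_{\lambda+1})$) with $j\restriction M\colon M\to M$ elementary. Getting $\mathsf{AC}$ from a $j$-fixed wellordering of $V_\lambda$ alone, rather than of $V_{\lambda+1}$, is precisely what keeps the model small enough for $j$ to act on it.

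The second, equally serious gap is that even over a model $M$ on which $j$ acts elementarily, your embedding $j'$ is defined from the parameter $j\restriction V_\lambda$ and therefore lives in $V$; Definition \ref{defexact} requires the witnessing embedding to be an \emph{element} of the set model, and $j\restriction V_\lambda$ need not belong to $M$ (if one could always arrange that it does, one would be proving ultraexactness, which the paper shows requires an additional forcing argument). The paper's proof supplies the missing mechanism: it forms the tree $T$ of finite partial approximations $t\colon\lambda_m\to\lambda_{m+1}$ inducing partial elementary embeddings between two prescribed hulls $X_0$ and $X_1=j(X_0)$, observes that $\langle j\restriction\lambda_m : m<\omega\rangle$ is a branch through $T$ in $V$, and concludes by absoluteness of the existence of infinite branches that $T$ has a branch \emph{in $M$}, whose union yields an elementary embedding $i\in M$ (generally different from any restriction of $j$). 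Without this absoluteness step, or some substitute for it, your argument does not produce an exacting cardinal inside any set model of $\mathsf{ZFC}$. (A minor point: the exacting cardinal is $\lambda$, the supremum of the critical sequence, not the critical point $\kappa_0$; your displayed configuration is the right one, but the framing sentence and the final clause ``$\kappa_0$ is exacting'' misidentify it.)
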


In particular, this answers \cite[Question 10.3]{BL}.
In  \S\ref{section6}, we  explore the consequences of exacting cardinals in the context of ordinal definability, and we prove the following:

\begin{theoremletter}\label{TheoremIntroHODC}
Suppose that the theory $\mathsf{ZFC}$ is consistent with the assumption that there is an extendible cardinal below an exacting cardinal.  
Then, the Weak HOD Conjecture and the Weak Ultimate-L Conjecture are false.
\end{theoremletter}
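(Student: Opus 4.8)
The plan is to convert the consistency hypothesis into a single model of $\mathsf{ZFC}$ carrying simultaneously an extendible cardinal, a huge cardinal strictly above it, and a failure of the $\HOD$ Hypothesis. By the very definition of the Weak $\HOD$ Conjecture this refutes it, and the Weak Ultimate-L Conjecture then fails by the implication recorded in \cite{MR4022642} (there it is shown that the Weak Ultimate-L Conjecture implies the Weak $\HOD$ Conjecture).

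Assuming the hypothesis, fix a model $M \models \mathsf{ZFC} + \psi$, where $\psi$ abbreviates the sentence ``there is an extendible cardinal $\kappa$ below an exacting cardinal $\lambda$'' (alternatively the argument runs purely syntactically, all implications below being provable over a weak base theory). Working inside $M$: since $\kappa$ is extendible and $\lambda > \kappa$ is exacting, the results of \S\ref{section6} apply, namely that an exacting cardinal above an extendible cardinal forces the \emph{far} side of Woodin's $\HOD$ Dichotomy (Theorem~\ref{HODdichotomy}); hence every regular cardinal greater than $\kappa$ is $\omega$-strongly measurable in $\HOD$. In particular the regular cardinals that are not $\omega$-strongly measurable in $\HOD$ are bounded by $\kappa$, so $M$ satisfies the negation of the $\HOD$ Hypothesis.

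It remains to produce, in $M$, a huge cardinal strictly above $\kappa$, and here I would appeal to the structure theory of exacting cardinals developed in \S\ref{SectExact}. If $\map{j}{X}{X}$ witnesses that $\lambda$ is exacting --- so $X \prec V_\alpha$ for a suitable $\alpha$, the critical sequence $\seq{\lambda_m}{m<\omega}$ of $j$ has supremum $\lambda$, and $V_\lambda \subseteq X$ --- then $\crit{j} = \lambda_0$ is a huge cardinal with target $\lambda_1 := j(\lambda_0)$. Indeed, both $\mathcal{P}(\lambda_1)$ and the set $j[\lambda_1] = \Set{j(\xi)}{\xi < \lambda_1} \subseteq \lambda_2$ have rank below $\lambda_2 < \lambda$, hence lie in $V_\lambda \subseteq X$, so $\mathcal{U} = \Set{A \subseteq \mathcal{P}(\lambda_1)}{j[\lambda_1] \in j(A)}$ is a well-defined ultrafilter on $\mathcal{P}(\lambda_1)$, and elementarity of $j$ together with the inclusion $V_\lambda \subseteq X$ readily give that $\mathcal{U}$ is fine, normal, $\lambda_0$-complete and concentrates on $\Set{s}{\otp{s} = \lambda_0}$; that is, $\mathcal{U}$ witnesses the hugeness of $\lambda_0$. (Only $V_\lambda \subseteq X$ is used here, not $j\restriction V_\lambda \in X$, so plain exactingness --- not ultraexactingness --- suffices, and the $\lambda_0$-completeness survives the fact that $\cf(\lambda) = \omega$ because all the sets involved have rank bounded below $\lambda$.) Finally, by the structure theory of \S\ref{SectExact} --- in particular, using the strong-J\'onsson characterization, the witnessing embedding for $\lambda$ may be chosen with critical point above any prescribed $\mu < \lambda$ --- the cardinal $\lambda$ is a limit of huge cardinals; since $\kappa < \lambda$ we may fix a huge cardinal $\mu$ with $\kappa < \mu < \lambda$.

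Putting this together, $M$ is a model of $\mathsf{ZFC}$ together with ``there is an extendible cardinal, there is a huge cardinal above it, and the $\HOD$ Hypothesis fails''. Consequently $\mathsf{ZFC}$ together with ``there is an extendible cardinal and a huge cardinal above it'' does not prove the $\HOD$ Hypothesis; i.e., the Weak $\HOD$ Conjecture is false, and with it the Weak Ultimate-L Conjecture by \cite{MR4022642}. The main obstacle is the step producing the huge cardinal above $\kappa$: checking that $\mathcal{U}$ is a genuine hugeness witness using only the weak closure $V_\lambda \subseteq X$ that separates exacting from ultraexacting cardinals, and arranging --- via the strong-J\'onsson characterization --- that the critical point of the witnessing embedding lands above the given extendible cardinal $\kappa$; the remainder is bookkeeping around the $\HOD$ Dichotomy and the cited implication.
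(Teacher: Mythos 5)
Your proof is correct and follows essentially the same route as the paper's: refute the ``close'' side of the HOD Dichotomy, extract a huge cardinal between the extendible and the exacting cardinal from the induced rank-to-rank embedding, and conclude via the definition of the Weak HOD Conjecture together with the implication from the Weak Ultimate-L Conjecture recorded in \cite{MR4022642}. Two small points of attribution: the reason the ``far'' alternative of Theorem~\ref{HODdichotomy} holds is Theorem~\ref{VnotHOD} --- the exacting cardinal $\lambda$ has cofinality $\omega$ in $V$ yet is regular in $\HOD$, so the first alternative fails --- and this should be said explicitly rather than deferring to \S\ref{section6}, which is the very argument being reconstructed; and the device for placing the critical point of the witnessing embedding above a prescribed $\mu<\lambda$ is Lemma~\ref{lemma:Equivalentexact}~\ref{item:Equivalentexact3}, not the J\'onsson characterization (alternatively, a single exact embedding already suffices, since by elementarity of $j\restriction V_\lambda$ every term of its critical sequence is huge and these are cofinal in $\lambda$).
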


Thus, we have two natural notions of large cardinals, namely exacting and extendible, each consistent with $\mathsf{ZFC}$, and which together can be used to refute the HOD Conjecture. 
Theorem \ref{TheoremIntroHODC} and the fact that exacting cardinals imply $V \neq \HOD$ unveils a third -- previously unobserved -- dividing line in the large-cardinal hierarchy and illustrates the following phenomenon: as one considers stronger and stronger large-cardinal axioms, one negates weaker principles of ``regularity'' for the universe of sets -- first $V = L$, then $V = \HOD$, and finally the Axiom of Choice. However, unlike the first, the two latter dividing lines are not entirely determined by the consistency strength of the axioms, as seen from Theorem \ref{TheoremIntroI0Exact}, and by earlier work of Schlutzenberg \cite{FarmerLambda2}.

Motivated by our consistency proof for exacting cardinals, we introduce the notion of \textit{ultraexacting} cardinals. These are obtained from exacting cardinals by enlarging the domain of the elementary embedding in the simplest non-trivial way, namely by demanding that it contains the restriction of the embedding to $V_{\lambda}$, where $\lambda$ is the supremum of the critical sequence.
We show in \S\ref{Ultraexact} that ultraexacting cardinals are not stronger, consistency-wise, than the axiom I0:

\begin{theoremletter}\label{TheoremIntroConUE}
Suppose that $\mathsf{ZFC}$ + I0 is consistent. Then, $\mathsf{ZFC}$ + ``there is an ultraexacting cardinal'' is consistent. 
\end{theoremletter}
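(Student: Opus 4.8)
The plan is to start from a model of $\mathsf{ZFC} + {\rm I}0$, say witnessed by an elementary embedding $\map{j}{L(V_{\lambda+1})}{L(V_{\lambda+1})}$ with critical point $\kappa$ and critical sequence with supremum $\lambda$, and to force with a carefully chosen poset so that in the extension there is an ultraexacting cardinal. Recall that an ultraexacting cardinal requires a nontrivial elementary embedding from an elementary substructure $X\prec V_\alpha$ with $V_\lambda\subseteq X$ and $\lambda\in X$, into $V_\alpha$, such that moreover $j\restriction V_\lambda\in X$. The key observation is that, under ${\rm I}0$, the embedding $j$ itself is definable over $L(V_{\lambda+1})$ in a way that places enough fragments of $j$ inside reasonably small structures; in particular $j\restriction V_\lambda$ lies in $V_{\lambda+1}\subseteq L(V_{\lambda+1})$, and iterating $j$ produces a rich supply of elementary embeddings between rank-initial segments. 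So the first step is to extract, from the ${\rm I}0$ embedding, an elementary embedding of the required shape but possibly with domain all of $V_\alpha$ rather than a small elementary substructure.

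Second, I would collapse appropriately. The natural move is to force with a Lévy collapse (or a reverse Easton iteration) that makes $\lambda$ — or some $V_\alpha$ above it — have size equal to the critical point of the witnessing embedding, while preserving enough of the embedding structure. The technical heart is a \emph{reflection/absorption} argument: one shows that in the collapse extension, the relevant $V_\alpha^{V[G]}$ has an elementary substructure $X$ of size $\crit{}$ that is closed under the restricted embedding and contains $j\restriction V_\lambda$ as an element. This is where the structure of $L(V_{\lambda+1})$ is essential: the proof of Theorem~\ref{TheoremIntroI0Exact} (exacting cardinals are consistent from ${\rm I}0$) already produces, inside a set model, an elementary substructure $X\prec V_\alpha$ with $V_\lambda\subseteq X\ni\lambda$ admitting a nontrivial elementary embedding into $V_\alpha$; the additional content here is to arrange that the witnessing $X$ also contains $j\restriction V_\lambda$. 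I expect this to follow by taking $X$ to be the Skolem hull of $V_\lambda\cup\{\lambda, j\restriction V_\lambda\}$ inside $V_\alpha$ and checking that the exacting embedding produced in that argument descends to this slightly larger hull — the extra element $j\restriction V_\lambda$ has size $\lambda$, so the hull still has size $\lambda=|V_\lambda|$, keeping us in the Jónsson-style regime.

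Third, I would verify that the resulting configuration really witnesses ultraexactingness in the final model, checking the closure and elementarity requirements in Definition~\ref{DefinitionWRB} and its ultraexacting strengthening, and confirming that no cardinals needed for the definition were destroyed by the forcing. The main obstacle, I expect, is the simultaneous demand that $X$ be small (size equal to the critical point, for the Jónsson-type reflection) \emph{and} contain the object $j\restriction V_\lambda$, which a priori encodes a great deal of the embedding: one must ensure that adding this single object to the hull does not force $X$ to swallow too much of $V_\alpha$, and that the target-side embedding defined on the larger hull remains well-defined and nontrivial with the correct critical behaviour. Resolving this should hinge on the fact, specific to the ${\rm I}0$ setting, that $j\restriction V_\lambda$ is itself an element of $V_{\lambda+1}$ and hence is "absorbed for free" once $V_\lambda\subseteq X$ and $X$ is closed under the $L(V_{\lambda+1})$-definable operations — so the extra closure costs nothing in cardinality. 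The remaining bookkeeping (preservation of the relevant $V_\alpha$, elementarity of the collapse-extension embedding) is routine reverse-Easton-style forcing, following the template already used for Theorem~\ref{TheoremIntroI0Exact}.
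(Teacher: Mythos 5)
There is a genuine gap at the heart of your proposal. You correctly identify that the whole difficulty beyond the exacting case is arranging $j\restriction V_\lambda\in X$, but your mechanism for achieving this --- take $X$ to be the hull of $V_\lambda\cup\{\lambda, j\restriction V_\lambda\}$ and check that the exacting embedding ``descends to this slightly larger hull'' --- does not work. Putting $j\restriction V_\lambda$ into $X$ is trivial (it is an element of $V_{\lambda+1}$, so it costs nothing in cardinality); the problem is that the embedding $\map{i}{X_0}{X_1}$ produced in the proof of Theorem \ref{theorem:ExactFromI0} is obtained by an absoluteness argument (a tree of partial approximations to $j\restriction\lambda$ that must have \emph{some} infinite branch in the inner model), and the branch one obtains is in general \emph{not} $j\restriction\lambda$. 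Hence $i\restriction V_\lambda$ need not equal $j\restriction V_\lambda$ and need not be an element of $X_0$ at all --- this is exactly the obstruction the paper flags at the start of \S\ref{Ultraexact} when motivating ultraexactness. Nothing in your outline supplies the missing step of producing an embedding whose restriction to $V_\lambda$ is an object you control. You also misstate the cardinality constraint: since $V_\lambda\subseteq X$ is required, $X$ must have size $\lambda$, not the size of the critical point, so there is no J\'onsson-style tension to resolve and no reason to collapse anything to $\crit{j}$; a L\'evy collapse of $\lambda$ or of ranks near it would in fact destroy the large-cardinal structure of $\lambda$ (e.g.\ that it is the supremum of the critical sequence of an I3-embedding) that the definition needs.

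The paper's actual argument (Theorem \ref{theorem:UltraexactFromI0}) takes a different and more delicate route. One forces with $\Add{\lambda^+}{1}$, a single Cohen subset of $\lambda^+$: this is ${<}\lambda^+$-closed, hence does not touch $V_{\lambda+1}$, and generically codes a wellordering of $\mathcal{P}(\lambda)$, so that $L(V_{\lambda+1},E,G)$ becomes a model of $\mathsf{ZFC}$. The key claim is that for \emph{any} elementary submodel $X\prec V_\zeta$ of size $\lambda$ with $V_\lambda\cup\{\lambda\}\subseteq X$ in this model, there is an elementary $\map{i}{X}{V_{j(\zeta)}}$ with $i\restriction V_\lambda=j\restriction V_\lambda$ exactly: one collapses $X$ to a transitive $M\in H_{\lambda^+}^{L(V_{\lambda+1},E,G)}\subseteq L(V_{\lambda+1},E)$, uses the weak homogeneity of $\Add{\lambda^+}{1}$ to see that the existence of an anticollapse of $M$ fixing $V_\lambda$ pointwise is forced by the trivial condition, applies $j$ to transfer this statement to $j(M)$, and sets $i=k\circ(j\restriction M)\circ\pi$. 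Because $\pi$ and $k$ are the identity on $V_\lambda$, this composite restricts to $j\restriction V_\lambda$ on the nose, and since $j\restriction V_\lambda\in V_{\lambda+1}$ one simply chooses $X$ to contain it in advance; a final homogeneity-plus-elementarity argument shifts the critical sequence down. It is this ``agreement with $j$ on $V_\lambda$ by construction,'' obtained via homogeneity and an application of $j$ to the collapsed model, that your proposal is missing.
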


Nonetheless, ultraexacting cardinals challenge the linear--incremental picture of the large cardinal hierarchy by interacting in a very nontrivial way with other large cardinals.

\begin{theoremletter}\label{Thmxproperclass}
The theory $\mathsf{ZFC}$ + ``there is an ultraexacting cardinal $\lambda$ and $V_{\lambda+1}^\sharp$ exists'' proves the consistency of 
$\mathsf{ZFC}$ + ``there is a proper class of I0 embeddings.'' 
\end{theoremletter}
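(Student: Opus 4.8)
The plan is to use the two hypotheses in tandem. The ultraexacting embedding supplies genuine motion below $\lambda$ --- a critical point $\kappa_0<\lambda$ together with a critical sequence cofinal in $\lambda$ --- which is exactly the ingredient an $\mathrm{I}0$ embedding requires and which no sharp can provide on its own; while $V_{\lambda+1}^\sharp$ supplies both the indiscernibles and the definability apparatus needed to see that this motion extends to an elementary self-embedding of all of $L(V_{\lambda+1})$, and the iterability needed to push the resulting configuration cofinally through the ordinals.

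I would begin by unpacking the hypotheses. Fix a witness for ultraexactingness: an elementary $j\colon X\to X$ with $X\prec V_\theta$ for suitable $\theta$, with $V_\lambda\subseteq X$ and $\lambda\in X$, with $\kappa_0:=\crit{j}<\lambda$ and critical sequence $\seq{\kappa_n}{n<\omega}$ cofinal in $\lambda$ (so that $j(\lambda)=\lambda$), and --- the clause distinguishing ultraexactingness from exactingness --- with $k:=j\restriction V_\lambda\in X$. Since $j$ maps $V_\lambda$ into $V_\lambda$, the set $k$ is a subset of $V_\lambda$, so in fact $k\in V_{\lambda+1}$, and $k\colon V_\lambda\to V_\lambda$ is a nontrivial elementary embedding which $X$, being $\prec V_\theta$, correctly recognises as such. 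Fix also $\mathcal M=V_{\lambda+1}^\sharp$: a sound, iterable premouse over $V_{\lambda+1}$ carrying a single total measure above $\lambda$, which yields a closed unbounded class $I$ of ordinal indiscernibles for $L(V_{\lambda+1})$ over $V_{\lambda+1}$, all of them above $\lambda$, together with the associated Skolem-term representation of the elements of $L(V_{\lambda+1})$.

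The conceptual core, and the step I expect to be the main obstacle, is to manufacture from $k$ and $\mathcal M$ a nontrivial elementary $\hat\jmath\colon L(V_{\lambda+1})\to L(V_{\lambda+1})$ with $\crit{\hat\jmath}=\kappa_0$ --- that is, to show that $\mathrm{I}0$ holds at $\lambda$, while $V_{\lambda+1}^\sharp$ still exists, so that this $\mathrm{I}0$ embedding is itself iterable. One first extends $k$ canonically to $k^+\colon V_{\lambda+1}\to V_{\lambda+1}$ via $k^+(A)=\bigcup_{\alpha<\lambda}k(A\cap V_\alpha)$, and then attempts to set $\hat\jmath\bigl(t^{L(V_{\lambda+1})}(\vec\iota,a)\bigr):=t^{L(V_{\lambda+1})}(\vec\iota,k^+(a))$, leaving the indiscernibles $\vec\iota\in I$ fixed. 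Well-definedness and elementarity of $\hat\jmath$ do not follow from $k$ alone --- in general $k^+$ is not elementary on $(V_{\lambda+1};\in)$ --- and this is precisely where the clause $k\in X\prec V_\theta$ does its work. Arranging, by choosing $\theta$ large and closing off, that $\mathcal M$, $I$, and the relevant finite fragments of the indiscernible theory of $L(V_{\lambda+1})$ all belong to $X$, the embedding $j$ acts on them; using the elementarity of $j$ on $X$, together with the fact that $j(\mathcal M)$ is again the sharp of $L(V_{\lambda+1})$ because $j(\lambda)=\lambda$, one transports the instances of indiscernibility needed to verify the defining clauses for $\hat\jmath$ down to the level at which $k=j\restriction V_\lambda$ is actually available. (Equivalently, one compares $\mathcal M$ with $j(\mathcal M)$ and reads the extension of $k$ off the comparison map.) The upshot of this step is that $\mathsf{ZFC}$, together with ``$\lambda$ is ultraexacting'' and ``$V_{\lambda+1}^\sharp$ exists'', proves that $\mathrm{I}0$ holds at $\lambda$.

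Finally, I would iterate. Iterate $\hat\jmath$ --- equivalently, iterate the genuine $\mathsf{ZFC}$-model $V_\lambda$ along $k$ --- through the ordinals, forming $\hat\jmath_{\alpha,\beta}\colon W_\alpha\to W_\beta$; here the iterability built into $V_{\lambda+1}^\sharp$ is exactly what guarantees that every iterate is well-founded, since an ill-founded iterate would reflect to an ill-founded iterate of $\mathcal M$. Because $\hat\jmath$ moves $\kappa_0$ but fixes $\lambda$, the direct-limit map $\hat\jmath_{0,\omega}$ sends $\kappa_0$ to $\lambda$ and $\lambda$ to a strictly larger ordinal, so that, iterating block by block, the successive critical-sequence suprema $\lambda=\mu_0<\mu_1<\mu_2<\cdots$ form a club class of ordinals; and from a suitable iteration map one reads off, in the corresponding iterate, an $\mathrm{I}0$ embedding at $\mu_\xi$. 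Passing to a rank-initial segment $V_\eta^W$ of a long enough iterate $W$ of $V_\lambda$, at an inaccessible $\eta$ of $W$ above cofinally many of the $\mu_\xi$, one obtains a transitive model of $\mathsf{ZFC}$ in which cofinally many cardinals carry $\mathrm{I}0$ embeddings --- that is, a model of ``$\mathsf{ZFC}$ + there is a proper class of $\mathrm{I}0$ embeddings'', which gives the claimed consistency. The delicate points here, beyond the elementarity of the preceding step, are the transfinite iterability, the preservation of choice, and the verification that the $\mathrm{I}0$ embeddings at the successive $\mu_\xi$ are captured as sets --- rather than as external classes --- of the final model.
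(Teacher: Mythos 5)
Your first step is essentially sound and, for $E=\emptyset$, recovers the paper's Theorem \ref{theorem:UltraexactSharpsIcarus}: the point you correctly isolate is that $k^{+}=(j\restriction V_\lambda)_+$ agrees with $j$ on $V_{\lambda+1}\cap X$ (Lemma \ref{lemma:j_+}), that each relation $R_\varphi=\Set{a\in V_{\lambda+1}}{L(V_{\lambda+1})\models\varphi(\vec\iota,a)}$ coded by the sharp is fixed by $j$, and that the elementarity of $X$ then upgrades ``$k^{+}$ preserves $R_\varphi$ on $X$'' to ``$k^{+}$ preserves $R_\varphi$ everywhere'', so your term-model definition of $\hat\jmath$ is well defined and elementary. (The paper instead takes the ultrapower of $L(V_{\lambda+1},E)$ by $U=\Set{A}{j\restriction V_\lambda\in j_E(A)}$, which is longer but works for arbitrary $E$ and gives finer agreement with $j$.) So the derivation of an I0 embedding at $\lambda$ is fine, modulo writing out the transport argument you only gesture at.

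The genuine gap is in your second step. You propagate I0 \emph{upward} by iterating $\hat\jmath$ (equivalently, the extender derived from $k$) through the ordinals; the paper goes the opposite way and reflects I0 \emph{downward}: it constructs a nontrivial self-embedding of $L_\alpha(V_{\lambda+1},V_{\lambda+1}^\sharp)$ for some $\alpha>\Theta^{L(V_{\lambda+1})}$ --- an embedding that sees the sharp as a predicate --- and then applies Cramer's inverse limit reflection theorem \cite[Theorem 3.9]{Cramer:Inverse} to obtain unboundedly many $\bar\lambda<\lambda$ carrying I0 embeddings, so that $V_\lambda$ itself is the desired $\mathsf{ZFC}$ model. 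Your upward route has three concrete problems. First, ``the iterability built into $V_{\lambda+1}^\sharp$'' does not yield transfinite iterability of the embedding at $\lambda$: the sharp's iterability concerns its measure above $\lambda$ and says nothing about well-foundedness of iterates of $\hat\jmath$ or of $\mathrm{Ult}(V,E_k)$; well-foundedness of long iterates of rank-to-rank embeddings is itself a strictly increasing hierarchy of hypotheses (this is the iterability hierarchy of \cite{AnDi19} alluded to in the concluding remarks), and already the well-foundedness of the first ultrapower of $V$ by the extender of an I3 embedding is exactly the I3-versus-I2 distinction. Second, even granting iterability of $L(V_{\lambda+1})$ itself, its iterates are choiceless models (under I0, $L(V_{\lambda+1})$ has no well-ordering of $V_{\lambda+1}$), so no rank-initial segment $V_\eta^{W}$ of such an iterate is a model of $\mathsf{ZFC}$. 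Third, the I0 embeddings you produce at the ordinals $\mu_\xi$ live in the $\xi$-th iterate, whose version of $V_{\mu_\xi+1}$ need not be that of the final model, and nothing in your sketch ensures they survive as sets with well-founded ultrapowers of the \emph{final} model's $L(V_{\mu_\xi+1})$. The missing idea is the downward reflection, and for it one needs the self-embedding to act on a structure that incorporates $V_{\lambda+1}^\sharp$ beyond $\Theta^{L(V_{\lambda+1})}$, not merely on $L(V_{\lambda+1})$.
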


According to Theorem \ref{Thmxproperclass}, usual large cardinals do not add strength incrementally the typical way when in the presence of ultraexacting cardinals, and indeed these ``amplify'' the usual large cardinals into a natural extension of Woodin's Icarus hierarchy from \cite{MR2914848} (see \cite{MR3855762}). 
 The proof of  Theorem \ref{Thmxproperclass} relies on techniques developed in \S\ref{Ultraexact} that allow us to extend  embeddings witnessing the  ultraexactness of a cardinal to sets outside of the domain of the original embedding. These techniques can also be used to show that ultraexact cardinal interact with ordinal definability in a highly interesting way. For example, they allow us to show that the successors of ultraexacting cardinals are $\omega$-strongly measurable  in $\HOD$.\footnote{The first examples of successors of singular cardinals with this property were found by Woodin in \cite{WOEM1}} 
 Moreover, they can be used to show ultraexactingness implies non-trivial fragments of $\HOD$-Berkeleyness, a principle introduced in \cite{MR4022642} that implies that $V$ is very far from $\HOD$. 
 In addition, we use the developed theory to prove that ultraexacting  cardinals possess strong infinitary partition properties for ordinal definable colourings that were already studied by Goldberg in the context of the  $\HOD$ Conjecture in \cite{MR4693981}.

In  \S\ref{SectSSR} and \S\ref{SectChoiceless}, we explore alternative characterizations of ultraexacting cardinals and also provide evidence for the naturalness of this concept. In \S\ref{SectSSR}, we show that ultraexacting cardinals are equivalent to a natural principle of structural reflection.
The general principle of \textit{Structural Reflection}, for a class of structures $\mathcal{C}$ of the same type, asserts that there exists an ordinal $\alpha$ such that for every structure $A \in \mathcal{C}$ there is $B \in \mathcal{C}\cap V_\alpha$ and a non-trivial elementary embedding $\map{j}{B}{A}$ (see \cite{BagariaRefl}). Different forms of  structural reflection have been shown to yield a reformulation of large-cardinal notions of various kinds (see  \cite{Ba:CC,BCMR,BL, BL2,BV,BW,SRminus}, and the survey \cite{BagariaRefl}), covering all regions of the large-cardinal hierarchy. Based of this evidence, it has been argued in \cite{BagariaRefl} and \cite{BT} that Structural Reflection is a natural principle that underlies and unifies the whole of the large-cardinal hierarchy. We show %(Corollary \ref{coro4.6}) 
that ultraexacting cardinals are reasonable large-cardinal principles by their being equivalent to a form of Structural Reflection  we call  \emph{Square Root Exact Structural Reflection}. % (Definition \ref{DefSRESR}). 
 Further, in \S\ref{SectChoiceless}, we show that, in a choiceless setting, the existence of ultraexacting cardinals is implied by the existence of large cardinals beyond the Axiom of Choice, such as Reinhardt  cardinals, or Berkeley cardinals, and that the latter in turn imply global forms of Square Root Exact Structural Reflection.

 In light of Theorem \ref{TheoremIntroHODC}, it is not clear \emph{a priori} that, over $\mathsf{ZFC}$, the existence of ultraexacting cardinals should be jointly consistent with the existence of extendible cardinals below them. 
But in \S\ref{section6}, we derive the joint consistency of extendible cardinals with ultraexacting cardinals over $\mathsf{ZFC}$ from a relatively mild cardinal in the  hierarchy of \emph{large cardinals beyond the Axiom of Choice}.\footnote{Note that the theory used in the assumption of this theorem is stronger than the theory used in \cite{MR4022642} to derive a failure of the $\HOD$ Dichotomy from the consistency of large cardinals beyond choice. See Remark \ref{remark1} below.}

\begin{theoremletter}\label{TheoremConZF}
The theory $\mathsf{ZF}$ + ``there is a $C^{(3)}$-Reinhardt cardinal and a supercompact cardinal greater than the supremum of the critical sequence'' proves the consistency of the theory $\mathsf{ZFC}$ + ``there is an ultraexacting cardinal which is a limit of extendible cardinals.''
\end{theoremletter}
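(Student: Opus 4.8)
The plan is to extract, from a choiceless witnessing configuration, a $\mathsf{ZFC}$-inner model by a $\HOD$-style construction, in the spirit of Woodin's derivation of failures of the $\HOD$ Conjecture from large cardinals beyond choice (see \cite{MR4022642,WOEM1} and Remark~\ref{remark1}). So I would begin by fixing a model $V$ of $\mathsf{ZF}$ carrying a $C^{(3)}$-Reinhardt embedding $\map{j}{V}{V}$ with $\crit{j}=\kappa$, writing $\seq{\lambda_m}{m<\omega}$ for its critical sequence and $\lambda:=\sup_{m<\omega}\lambda_m$, and fixing a supercompact cardinal $\delta>\lambda$. The elementary observations on the choiceless side are that $\cf(\lambda)=\omega$ and hence $j(\lambda)=\lambda$, so that $j$ restricts to nontrivial elementary self-embeddings $\map{j\restriction V_\lambda}{V_\lambda}{V_\lambda}$ and $\map{j\restriction V_{\lambda+1}}{V_{\lambda+1}}{V_{\lambda+1}}$ — the second of which induces an $\mathrm{I}0$-type embedding of $L(V_{\lambda+1})$ — with the crucial feature that $j\restriction V_\lambda$ is a \emph{set}, lying in $V_{\lambda+1}$. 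Moreover, the $C^{(3)}$-clause supplies the reflection needed to see that each $\lambda_m$, and hence $\lambda$ itself, lies in $C^{(3)}$: composing the iterates $\map{j^n\restriction V_\alpha}{V_\alpha}{V_{j^n(\alpha)}}$ for $\alpha<\lambda$ shows that $\kappa$ is extendible for all targets below $\lambda$, whence, since extendibility is a $\Pi_3$ property and $\lambda\in C^{(3)}$, that $\kappa$ is genuinely extendible, and applying $j$ repeatedly (using $j(\lambda)=\lambda$) that each $\lambda_m$ is extendible; so $\lambda$ is already a limit of extendible cardinals in $V$.

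The next step is to build the $\mathsf{ZFC}$-model. I would let $N$ be a suitable model of $\mathsf{ZF}$ read off from the configuration — a sufficiently tall rank-initial segment of $V$, or $L(V_\eta)$ for an appropriate $\eta\geq\delta$, possibly after a homogeneous preparatory forcing — arranged so that $N$ is closed under (a restriction of) $j$, $\delta$ remains supercompact in $N$, and $V_{\lambda+1}$ together with the set $j\restriction V_\lambda$ and the iterates $j^n\restriction V_\alpha$ are coded into $\HOD^N$. Put $W:=\HOD^N$. Then $W\models\mathsf{ZFC}$; since $\HOD^N$ is definable in $N$ and invariant under the action of $j$, the embedding $j$ restricts to an \emph{external} elementary self-embedding $\map{\bar\jmath}{W}{W}$ with $\crit{\bar\jmath}=\kappa$, critical sequence cofinal in $\lambda$, and $\bar\jmath(\lambda)=\lambda$, and the coding arrangement puts $\bar\jmath\restriction V_\lambda^W$ into $W$. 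Using the supercompactness of $\delta$ above $\lambda$ — via Woodin's weak-extender-model machinery, which makes $W$ sufficiently close to $N$ past $\lambda$ — one checks that the extendibility of the $\lambda_m$ is preserved from $N$ to $W$, so that $\lambda$ is still a limit of extendible cardinals in $W$.

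It then remains to internalize $\bar\jmath$. Working inside $W$ and using $\bar\jmath$ exactly as in the proof of Theorem~\ref{TheoremIntroConUE} and the extension techniques of \S\ref{Ultraexact}, I would form the Skolem-hull $X=\Set{\bar\jmath(f)(s)}{f\in W,\ s\in V_\lambda^W}$ inside a sufficiently reflecting $V_\alpha^W$ with $\alpha>\lambda$. This yields a \emph{proper} elementary submodel $X\prec V_\alpha^W$ with $V_\lambda^W\cup\{\lambda\}\subseteq X$ together with a nontrivial elementary embedding $\map{e}{X}{V_\alpha^W}$ whose critical sequence converges to $\lambda$ and satisfies $e\restriction V_\lambda^W=\bar\jmath\restriction V_\lambda^W\in X$; this is precisely a witness that $\lambda$ is ultraexacting in $W$. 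The passage from ``exacting'' to ``ultraexacting'' here is exactly the availability of the \emph{set} $\bar\jmath\restriction V_\lambda^W$ in $X$ — which is why we needed $j\restriction V_\lambda$, rather than $j$ itself, in the domain — and properness of $X\subsetneq V_\alpha^W$ (so that $e$ is not surjective, keeping the configuration compatible with Kunen's theorem) follows from a cardinality count at $\lambda$ using $\cf(\lambda)=\omega$. Thus $W$ is a model of $\mathsf{ZFC}$ in which $\lambda$ is ultraexacting and a limit of extendible cardinals, as desired.

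The main obstacle is the simultaneous control of $\HOD^N$: one must choose $N$ and the preparatory forcing so that $N$ satisfies $\mathsf{ZF}$, is closed under $j$, and keeps $\delta$ supercompact — which forces $N$ to extend well past $\delta$ — while still ensuring that $\HOD^N$ retains faithful copies of $V_{\lambda+1}$, of the set $j\restriction V_\lambda$, and of the iterate-embeddings needed both for the hull construction and for the extendibility witnesses. These demands pull in opposite directions, and reconciling them is the delicate point; this is also where the exponent $3$ in ``$C^{(3)}$-Reinhardt'' is calibrated, since the reflected embeddings must be produced at the $C^{(n)}$-complexity at which they survive the passage to $\HOD^N$ and still witness genuine extendibility in the sense of Bagaria's $C^{(n)}$-hierarchy.
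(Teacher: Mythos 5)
Your strategy---passing to $\HOD^N$ of a choiceless model and ``coding'' $V_{\lambda+1}$ and the embedding fragments into it---is not the paper's route, and its central step is precisely the unproved crux. The paper never takes the $\HOD$ of anything: it applies Woodin's homogeneous iteration $\QQQ$ (Theorem \ref{theorem:WoodinForceZFC}) to \emph{force} the Axiom of Choice, obtaining the $\mathsf{ZFC}$ model as $V[G]_\delta$; the homogeneity and parameter-free definability of $\QQQ$ are what allow $j\restriction V_{\lambda+1}$ to lift to the extension and, later, what internalize the embedding. In your proposal the phrase ``arranged so that \ldots\ $V_{\lambda+1}$ together with the set $j\restriction V_\lambda$ \ldots\ are coded into $\HOD^N$'' carries the entire weight of the argument, and no mechanism is offered. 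Putting $V_{\lambda+1}$ inside an inner model of $\mathsf{AC}$ requires producing a well-ordering of $V_{\lambda+1}$, which in a Reinhardt model is exactly the nontrivial act of forcing choice: if you do it by forcing you are running the paper's argument, and if you try to do it by definability you face the fact that $\HOD^N$ of a choiceless model closed under $j$ has no reason to contain $V_{\lambda+1}^N$ at all. Similarly, your appeal to ``Woodin's weak-extender-model machinery'' to push extendibility of the $\lambda_m$ down to $\HOD^N$ invokes a $\mathsf{ZFC}$ theory inside a choiceless ambient model where it is not available; the paper instead proves preservation of extendibility under the specific forcing by a lifting argument (clause (iii) of Theorem \ref{theorem:WoodinForceZFC}).

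There is a second gap at the internalization step. The hull $X=\Set{\bar\jmath(f)(s)}{f\in W,\ s\in V_\lambda^W}$ together with $e=\bar\jmath\restriction X$ is an \emph{external} object; for $\lambda$ to be ultraexacting \emph{in} $W$, the embedding must be an element of $W$. The paper obtains this in Lemma \ref{lemma:WoddinsForcingUltraexact} by collapsing a suitable hull to a transitive $M$ that already lives in the intermediate extension $V[G_0]$, using the homogeneity of the tail forcing to transfer the existence of an uncollapse $\tau^{-1}$ into the final model, applying the set-sized lift $j_*\restriction V[G_0]_{\lambda+1}$ to $M$, and composing $\tau^{-1}\circ(j_*\restriction M)\circ\pi$. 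Nothing in your $\HOD^N$ setup replaces this homogeneity argument, so the assertion that the hull construction ``is precisely a witness that $\lambda$ is ultraexacting in $W$'' is unsupported. (Your preliminary observations on the choiceless side---that $\lambda$ has cofinality $\omega$, that $\lambda\in C^{(3)}$, and that the $\lambda_m$ are extendible in $V$---are correct and agree with the paper.)
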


It seems unlikely that the hypothesis of Theorem \ref{TheoremConZF} can be weakened substantially, and indeed we conjecture that the existence of an ultraexacting cardinal above an extendible cardinal proves the consistency of $\mathsf{ZF}$ with rank-Berkeley cardinals. \\

In summary, exacting and ultraexacting cardinals have simultaneous characterizations in terms of elementary embeddings with sufficiently correct initial segments of the $V$-hierarchy as targets, as the weak forms of rank-Berkeley cardinals, as strong forms of J\'onsson cardinals, and in terms of principles of structural reflection, arguably instituting the notions as legitimate large-cardinal axioms. 
 They are consistent with the Axiom of Choice relative to I0, and also with extendible cardinals below them relative to mild choiceless large-cardinal axioms.
 Moreover, 
 they behave unusually in that their strength increases dramatically in the presence of other large cardinals, and together with extendible cardinals they are powerful enough to disprove the Weak HOD Conjecture and the Weak Ultimate-L Conjecture. \\ 
 
 These results show that the notions of exacting and ultraexacting cardinals  offer  a natural path to refute the HOD Conjecture through strong axioms of infinity. 
 In addition, even if the existence of an exacting cardinal above an extendible cardinal turns out to be inconsistent and the HOD Conjecture cannot be disproved in the given way,  such a conclusion would necessitate a profound shift in our understanding of large cardinals. 
 So, either way, the results of this paper suggest  that we have to change something about the way we think of large infinities.

%%%%%%

\section{Exacting Cardinals}\label{SectExact}
Our notation is standard. For background on set theory and large cardinals, we invite the reader to \cite{Kan:THI}. Although not necessary to read the article,  we refer the reader to \cite{MR3855762} for a comprehensive survey on I0 and to \cite{MR4022642} for large cardinals beyond the Axiom of Choice.

Recall that a cardinal $\kappa$ belongs to the class $C^{(n)}$ if it is \emph{$\Sigma_n$-correct}, {i.e.,} the set $V_\kappa$ is a $\Sigma_n$-elementary substructure of $V$, written $V_\kappa \prec_{\Sigma_n} V$.
The following definition from  \cite{BL} captures the large cardinal notion corresponding to the strongest principles of structural reflection studied in that paper.  Although we will later prove and make use of alternate definitions of exactness, we shall keep that of \cite{BL} as the ``official'' definition.

\begin{definition}[\cite{BL}]
\label{defexact}
  Let $n>0$ be a natural number and let  $\lambda$   be a limit cardinal. 
  \begin{enumerate}
      \item Given a cardinal $\lambda<\eta\in C^{(n)}$, an elementary submodel $X$ of $V_\eta$ with $V_\lambda\cup\{\lambda\}\subseteq X$ and a cardinal $\lambda<\zeta\in C^{(n+1)}$, an elementary embedding $\map{j}{X}{V_\zeta}$ is an \emph{$n$-exact embedding at $\lambda$} if $j(\lambda)=\lambda$ and $j\restriction\lambda\neq\id_\lambda$. 

      \item Given a strictly increasing sequence $\vec{\lambda}=\seq{\lambda_m}{m<\omega}$ of cardinals with supremum $\lambda$, a cardinal $\kappa<\lambda_0$ is \emph{$n$-exact for $\vec{\lambda}$} if for every $A\in V_{\lambda+1}$, there exists 
      an $n$-exact embedding $\map{j}{X}{V_\zeta}$ at $\lambda$ with $A\in\ran{j}$,      $j(\kappa)=\lambda_0$ and $j(\lambda_m)=\lambda_{m+1}$ for all $m<\omega$.  
     If we further require that $j(\crit{j})=\kappa$, then we say that $\kappa$ is \emph{parametrically $n$-exact for $\vec{\lambda}$}.
  \end{enumerate}
\end{definition}
Observe that parametrically exact cardinals are defined as the image of critical points of elementary embeddings, in the style of Magidor's ``parametrical'' characterization of supercompact cardinals (see \cite{Mag}). Notice also that the set $X$ from the definition of an exact embedding cannot be transitive, for otherwise, since $V_{\lambda +2}\in X$, the restriction map $\map{j\restriction V_{\lambda +2}}{V_{\lambda +2}}{V_{\lambda +2}}$ would be an elementary embedding, thus contradicting Kunen's inconsistency theorem. 
 Furthermore, Woodin's proofs of Kunen's result give us more insight into which sets are not contained in the domains of $1$-exact embeddings:

 \begin{proposition}\label{proposition:NotInDomainExactt}
   If   $\map{j}{X}{V_\zeta}$ is a $1$-exact embedding at a cardinal $\lambda$, then $\lambda^+\nsubseteq X$ and $[\lambda]^\omega\nsubseteq X$.  
 \end{proposition}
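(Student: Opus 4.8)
The plan is to reduce, in both cases, to a clash with a classical ZFC partition theorem, carrying out the combinatorics on the transitive target $V_\zeta$. Write $\kappa:=\crit{j}<\lambda$ and let $\seq{\kappa_m}{m<\omega}$ be the critical sequence, so $\kappa_0=\kappa$ and $\kappa_{m+1}=j(\kappa_m)$. Since $j(\lambda)=\lambda$ and $j$ is order-preserving on the ordinals of $X$, an easy induction gives $\kappa_m<\lambda$ for all $m$; hence $\bar\lambda:=\sup_m\kappa_m\le\lambda$ lies in $X$ and satisfies $j(\bar\lambda)=\bar\lambda$. Since $V_\lambda\subseteq X$ and $j(V_\lambda)=V_\lambda$, the map $i:=j\restriction V_\lambda$ is a non-trivial elementary embedding $V_\lambda\to V_\lambda$ with critical point $\kappa$ (so $\kappa$ is inaccessible in $V_\lambda$, in particular $\kappa>\omega_1$); also $\kappa\notin\ran{j}$, so $i''\bar\lambda\subsetneq\bar\lambda$, and $j''\lambda^+\subsetneq\lambda^+$ once we know $j(\lambda^+)=\lambda^+$. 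The one computation used throughout is: \emph{for a finite or countable set $t\in X$ of ordinals, $j(t)=j''t$}. For finite $t$ this is immediate from elementarity; for countable $t$, $X$ thinks $t$ has some order type $\rho$, which is really $<\omega_1<\kappa$ and hence fixed by $j$, as are the indices $\xi<\rho$, so applying $j$ to the statements ``$\otp{t}=\rho$ and the $\xi$th element of $t$ is $t_\xi$'' shows $j(t)$ has order type $\rho$ with $\xi$th element $j(t_\xi)$, i.e. $j(t)=\set{j(t_\xi)}{\xi<\rho}=j''t$. If moreover $t\subseteq\lambda$ then also $j''t=i''t$.

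Suppose, toward a contradiction, that $[\lambda]^\omega\subseteq X$; then also $[\bar\lambda]^\omega\subseteq X$. By the Erd\H{o}s--Hajnal theorem there is an $\omega$-J\'onsson function on $\bar\lambda$, i.e. an $F\colon[\bar\lambda]^\omega\to\bar\lambda$ with $F''[A]^\omega=\bar\lambda$ for every $A\subseteq\bar\lambda$ with $\betrag{A}=\bar\lambda$; since such an $F$ has rank below $\eta$ and $X\prec V_\eta$, we may take $F\in X$ with $X$ believing $F$ has this property. As $V_\zeta$ is transitive and $\zeta$ is large, $j(F)$ is \emph{genuinely} an $\omega$-J\'onsson function on $[\,j(\bar\lambda)\,]^\omega=[\bar\lambda]^\omega$. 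Now put $A:=i''\bar\lambda=j''\bar\lambda$, of cardinality $\bar\lambda$. Each $s\in[A]^\omega$ equals $i''t=j(t)$ for some $t\in[\bar\lambda]^\omega\subseteq X$, so $j(F)(s)=j(F)(j(t))=j(F(t))=i(F(t))\in i''\bar\lambda=A$. Thus $j(F)''[A]^\omega\subseteq A\subsetneq\bar\lambda$, contradicting the $\omega$-J\'onsson-ness of $j(F)$; hence $[\lambda]^\omega\nsubseteq X$.

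The argument for $\lambda^+\nsubseteq X$ runs in parallel, with \emph{pairs} in place of countable sets. Suppose $\lambda^+\subseteq X$. Then $\lambda^+\in X$, being definable from $\lambda$, so $j(\lambda^+)=\lambda^+$ (as $V_\zeta$ computes $\lambda^+$ correctly and $j(\lambda)=\lambda$); moreover $j(\alpha)<j(\lambda^+)=\lambda^+$ for $\alpha<\lambda^+$, so $j\restriction\lambda^+$ maps $\lambda^+$ injectively into $\lambda^+$ and $j''\lambda^+$ is a subset of $\lambda^+$ of cardinality $\lambda^+$, proper since $\kappa\notin\ran{j}$. By the classical ZFC relation $\lambda^+\not\to[\lambda^+]^2_{\lambda^+}$, fix a colouring $F\colon[\lambda^+]^2\to\lambda^+$ with $F''[A]^2=\lambda^+$ for every $A\subseteq\lambda^+$ with $\betrag{A}=\lambda^+$; as before, take $F\in X$ with $X$ believing this, so that $j(F)$ is genuinely such a colouring. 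But for any $s=\{j(\alpha),j(\beta)\}\in[\,j''\lambda^+\,]^2$, the pair $t:=\{\alpha,\beta\}$ lies in $X$ and $j(t)=s$, so $j(F)(s)=j(F(t))\in j''\lambda^+$. Therefore $j(F)''[\,j''\lambda^+\,]^2\subseteq j''\lambda^+\subsetneq\lambda^+$, contradicting that $j(F)$ is onto on the squares of the $\lambda^+$-sized set $j''\lambda^+$.

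The step I expect to need the most care --- and the reason the two cases call for different partition principles --- is precisely that $X$ need not be transitive: the hypothesis $\lambda^+\subseteq X$ places only \emph{ordinals} into $X$, so the colouring used there must take finitely many arguments, whereas $[\lambda]^\omega\subseteq X$ directly furnishes the countable sets that feed an $\omega$-J\'onsson function. The remaining bookkeeping --- that the chosen $F$ can be located inside $X$ and that $j(F)$ is genuinely surjective on the relevant large sets, using transitivity and largeness of $V_\zeta$ ($\zeta\in C^{(2)}$) --- is routine.
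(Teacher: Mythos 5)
Your first half (ruling out $[\lambda]^\omega\subseteq X$) is correct: it is Kunen's original Erd\H{o}s--Hajnal argument transplanted to the non-transitive domain $X$, and the computation $j(t)=j''t$ for countable $t\in X$ (via the fixed order type $\rho<\omega_1<\crit{j}$) is exactly what is needed to make it go through. This is a genuinely different route from the paper, which proves $\lambda^+\nsubseteq X$ first and then deduces $[\lambda]^\omega\nsubseteq X$ from it by locating in $X$ a surjection of $[\lambda]^\omega$ onto $\lambda^+$ (available because $\cf(\lambda)=\omega$ forces $\lambda^\omega\geq\lambda^+$).

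The $\lambda^+$ half, however, has a genuine gap: the relation $\lambda^+\not\rightarrow[\lambda^+]^2_{\lambda^+}$ is not a ``classical ZFC relation'' for the cardinal $\lambda$ that actually occurs here. Since $V_\lambda\subseteq X$, the Kunen inconsistency forces $\lambda$ to be the supremum of the critical sequence, so $\lambda$ is singular of cofinality $\omega$ and $\lambda^+$ is the successor of a singular of countable cofinality. The classical negative square-bracket results do not reach this case: Erd\H{o}s--Hajnal--Rado need $2^\lambda=\lambda^+$ (unavailable here), and Todorcevi\'c's walks give $\kappa^+\not\rightarrow[\kappa^+]^2_{\kappa^+}$ only for regular $\kappa$. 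For successors of singulars of countable cofinality this relation was a long-standing open problem of Shelah and Eisworth, and the arguments that settle it (via pcf scales and club guessing, culminating in work of Rinot) are deep and recent, not routine combinatorics; so as written the step ``fix such a colouring $F$'' is unjustified. If you insist on a finitary colouring, the weaker Jónsson-algebra relation $\lambda^+\not\rightarrow[\lambda^+]^{<\omega}_{\lambda^+}$ would suffice for your argument (finite subsets of $\lambda^+\subseteq X$ lie in $X$ and satisfy $j(t)=j''t$), but Shelah's proof of that for successors of singulars is itself a serious pcf theorem. The paper avoids all of this with a much softer tool: Solovay's theorem that $S^{\lambda^+}_\omega$ splits into $\crit{j}$-many stationary sets (needing only that $\lambda^+$ is regular), together with the observation that $C=\Set{\gamma\in S^{\lambda^+}_\omega}{j(\gamma)=\gamma}$ is $\omega$-closed unbounded; a point of $C$ lying in the $\crit{j}$-th piece of $j(\vec{S})$ gives the contradiction. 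I recommend either adopting that argument or replacing ``classical ZFC relation'' with an explicit citation of the modern theorem you are actually invoking.
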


 \begin{proof}
  First, assume, towards a contradiction, that $\lambda^+\subseteq X$. Since $\zeta\in C^{(2)}$, we know that $\lambda^+<\zeta$ and $\lambda^+$ is definable in $V_\zeta$ from the parameter $\lambda$. This shows that $\lambda^+\in\ran{j}$ and we can now use the correctness properties of $X$ to conclude that $\lambda^+\in X$ with $j(\lambda^+)=\lambda^+$. The same argument now shows that $H_{\lambda^{++}}$ is an element of $X$. Since the existence of a partition of the set $S^{\lambda^+}_\omega=\Set{\gamma<\lambda^+}{\cof{\gamma}=\omega}$ into $\crit{j}$-many stationary sets can be stated in $H_{\lambda^{++}}$ by a formula that only uses the parameter $\lambda$, it follows that there exists such a partition $\vec{S}=\seq{S_\alpha}{\alpha<\crit{j}}$ that is an element of $X$. Elementarity then ensures that $j(\vec{S})=\seq{T_\beta}{\beta<j(\crit{j})}$ is a partition of $S^{\lambda^+}_\omega$ into $j(\crit{j})$-many stationary subsets. Define $C=\Set{\gamma\in S^{\lambda^+}_\omega}{j(\gamma)=\gamma}$.
  
  \begin{claim*}
    $C$ is an $\omega$-closed unbounded subset of $\lambda^+$. 
  \end{claim*}

  \begin{proof}[Proof of the Claim]
    Given $\gamma<\lambda^+$, let $\seq{\gamma_n}{n<\omega}$ denote the unique sequence with $\gamma_0=\gamma$ and $\gamma_{n+1}=j(\gamma_n+1)$ for all $n<\omega$. Set $\gamma_\omega=\sup_{n<\omega}\gamma_n\in S^{\lambda^+}_\omega\subseteq X$. The correctness properties of $X$ then imply that $X$ contains a strictly increasing sequence $\seq{\beta_m}{m<\omega}$ of ordinals that is cofinal in $\gamma_\omega$. Our construction now ensures that $j(\beta_m)<\gamma_\omega$ holds for all $m<\omega$. Since elementarity implies that $\seq{j(\beta_m)}{m<\omega}$ that is cofinal in $j(\gamma_\omega)$, it follows that $\gamma_\omega\in C$.  

    Next, fix an element $\gamma$ of $S^{\lambda^+}_\omega$ that is a limit point of $C$. As above, we know that $X$ contains a strictly increasing sequence $\seq{\beta_m}{m<\omega}$ that is cofinal in $\gamma$. Given $m<\omega$, there exists $\bar{\gamma}\in C$ with $\beta_m<\bar{\gamma}<\gamma$ and this implies that $j(\beta_m)<\bar{\gamma}<\gamma$. Since $\seq{j(\beta_m)}{m<\omega}$  is cofinal in $j(\gamma)$, we can now conclude that $\gamma\in C$. 
  \end{proof}

  The above claim shows that there exists $\gamma\in C\cap T_{\crit{j}}$. Then there is $\alpha<\crit{j}$ with $\gamma\in S_\alpha$ and we can conclude that $\gamma=j(\gamma)\in T_\alpha\cap T_{\crit{j}}=\emptyset$, a contradiction. 

  Now, assume that $[\lambda]^\omega\subseteq X$. Arguing as above, we can show that $X$ contains a surjection from $[\lambda]^\omega$ onto $\lambda^+$. Therefore, our assumption implies that $\lambda^+\subseteq X$, in contradiction to the above conclusion. 
 \end{proof}

The notion of exactness is motivated by its connection with Structural Reflection. Theorem \ref{theorem:ESRcorrespondence} below, quoted from \cite[Corollary 9.10]{BL},  presents this connection and clarifies the role of the parametric form of the definition.

 It is easy to see that, if $\map{j}{X}{V_\zeta}$ is a $1$-exact embedding at some limit cardinal $\lambda$, then $\map{j\restriction V_\lambda}{V_\lambda}{V_\lambda}$ is an I3-embedding, {i.e.},  a non-trivial elementary embedding from $V_\lambda$ to $V_\lambda$. 
 %
% Then the critical sequence of $j$ is cofinal in $\lambda$. 
 %
 From this it follows  that the existence of  a $1$-exact embedding at $\lambda$ implies that $\lambda$ is a limit of $n$-huge cardinals for all $n<\omega$ (see {\cite[p. 332]{Kan:THI}}). 

Our first goal, in \S\ref{SectExactBerkeley} and \S\ref{SectExactJonsson} below, is to further motivate the notion of exact embeddings by giving two alternate characterizations of their existence, one as a weak form of rank-Berkeleyness and another as a strong form of J\'onssonness.

\subsection{Exactness and rank-Berkeley cardinals}\label{SectExactBerkeley}
The following lemma establishes a fact about $n$-exact embeddings which might be surprising: the existence of such embeddings does not depend on the parameter $n$. 
%their existence does not depend on $n$ (for $1 < n$).
%Even though the notion of $n$-exact embedding appears to depend on $n$, it turns out this is not the case. 

\begin{lemma}\label{lemma:Equivalentexact}
Given a natural number $n>0$, the following statements are equivalent for every  limit cardinal $\lambda$ and every set $x$: 
\begin{enumerate}
    \item\label{item:Equivalentexact1} There is an $n$-exact embedding $\map{j}{X}{V_\zeta}$ at $\lambda$ with $x\in X$ and $j(x)=x$. 
    
    %\item There are elements $\eta$ and $\zeta$ of $C^{(2)}$ greater than $\lambda$, an elementary submodel $X$ of $V_\eta$ with $V_\lambda\cup\{\lambda,x\}\subseteq X$, and an elementary embedding $\map{j}{X}{V_\zeta}$ with $j(\lambda)=\lambda$, $j(x)=x$, and $j\restriction\lambda \neq \id_\lambda$. 

    \item\label{item:Equivalentexact2} For every $\zeta>\lambda$ with $x\in V_\zeta$, there is an elementary submodel $X$ of $V_\zeta$ with  $V_\lambda\cup\{\lambda,x\}\subseteq X$, and an elementary embedding $\map{j}{X}{V_\zeta}$ with $j(\lambda)=\lambda$, $j(x)=x$, and $j\restriction \lambda \neq \id_\lambda$. 
        
    \item\label{item:Equivalentexact3} For every $\zeta>\lambda$ with $x\in V_\zeta$ and every $\alpha<\lambda$, there is an elementary submodel $X$ of $V_\zeta$ with  $V_\lambda\cup\{\lambda,x\}\subseteq X$, and an elementary embedding $\map{j}{X}{V_\zeta}$ with $j(\lambda)=\lambda$, $j(x)=x$, $j\restriction\alpha=\id_\alpha$, and $j\restriction \lambda \neq \id_\lambda$. 
\end{enumerate}
\end{lemma}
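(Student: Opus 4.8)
The plan is to prove the cycle of implications $(iii)\Rightarrow(ii)\Rightarrow(i)\Rightarrow(iii)$. The first two are routine: for $(iii)\Rightarrow(ii)$ it suffices to take $\alpha=0$; and for $(ii)\Rightarrow(i)$, recall that $C^{(n+1)}$ is a closed unbounded class of cardinals, so one may fix $\zeta\in C^{(n+1)}$ with $\zeta>\lambda$ and $x\in V_\zeta$, feed this $\zeta$ into $(ii)$ to obtain an elementary submodel $X$ of $V_\zeta$ with $V_\lambda\cup\{\lambda,x\}\subseteq X$ and an elementary embedding $\map{j}{X}{V_\zeta}$ with $j(\lambda)=\lambda$, $j(x)=x$ and $j\restriction\lambda\neq\id_\lambda$, and observe that, since $C^{(n+1)}\subseteq C^{(n)}$, setting $\eta:=\zeta$ already witnesses that $j$ is an $n$-exact embedding at $\lambda$ with $x\in X$ and $j(x)=x$. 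So the whole content of the lemma is the implication $(i)\Rightarrow(iii)$.

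For that implication, fix an $n$-exact embedding $\map{j_0}{X_0}{V_{\zeta_0}}$ at $\lambda$ with $x\in X_0$ and $j_0(x)=x$, witnessed by $X_0\prec V_{\eta_0}$ with $\eta_0\in C^{(n)}$ and $\zeta_0\in C^{(n+1)}\subseteq C^{(2)}$; the mismatch between $\eta_0$ and $\zeta_0$ is a routine bookkeeping point that I will suppress, writing as if $\eta_0=\zeta_0$, so that $X_0\prec V_{\zeta_0}$. Two features of this configuration drive the proof. First, $e_0:=j_0\restriction V_\lambda$ is a non-trivial elementary embedding of $V_\lambda$ into itself --- an I3-embedding --- and its critical sequence, which has supremum $\lambda$, together with $e_0$ itself, lies inside $V_{\lambda+1}$ and is absolute between transitive models containing $V_{\lambda+1}$; in particular, by the standard theory of I3-embeddings (Laver's left-distributive algebra of elementary self-maps of $V_\lambda$, via iterated self-application) there are I3-embeddings of critical point above any prescribed $\alpha<\lambda$. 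Second, combining the elementarity of $j_0$ with $X_0\prec V_{\zeta_0}$ and the equalities $j_0(\lambda)=\lambda$, $j_0(x)=x$ shows that $V_{\zeta_0}$ is invariant under the $e_0$-shift of $V_\lambda$-parameters over $\{\lambda,x\}$: for every formula $\varphi$ and every finite tuple $\bar a$ from $V_\lambda$, one has $V_{\zeta_0}\models\varphi(\bar a,\lambda,x)$ if and only if $V_{\zeta_0}\models\varphi(e_0(\bar a),\lambda,x)$. Equivalently, the hull of $V_\lambda\cup\{\lambda,x\}$ in $V_{\zeta_0}$ admits an elementary embedding back into $V_{\zeta_0}$ that extends $e_0$ and fixes $\lambda$ and $x$.

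Granting the transfer discussed below, $(iii)$ follows at once. Given $\zeta'>\lambda$ with $x\in V_{\zeta'}$ and given $\alpha<\lambda$, suppose we are supplied with an I3-embedding $e$ with $\crit(e)>\alpha$ such that $V_{\zeta'}$ is invariant under the $e$-shift of $V_\lambda$-parameters over $\{\lambda,x\}$. Let $X'$ be the corresponding hull of $V_\lambda\cup\{\lambda,x\}$ in $V_{\zeta'}$; it is an elementary submodel of $V_{\zeta'}$ containing $V_\lambda\cup\{\lambda,x\}$, and it is \emph{proper}, since by Proposition \ref{proposition:NotInDomainExactt} it cannot contain $[\lambda]^\omega\subseteq V_{\zeta'}$. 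The $e$-shift of parameters then defines an elementary embedding $\map{j'}{X'}{V_{\zeta'}}$ with $j'\restriction V_\lambda=e$; hence $j'(\lambda)=\lambda$, $j'\restriction\alpha=e\restriction\alpha=\id_\alpha$, $j'\restriction\lambda=e\restriction\lambda\neq\id_\lambda$, and $j'(x)=x$ because $x$ is a frozen parameter. This is exactly the conclusion of $(iii)$.

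What remains --- and where I expect the real difficulty to lie --- is transferring $e_0$-shift invariance from $V_{\zeta_0}$ to every $V_{\zeta'}$. The starting observation is that, since $\zeta_0\in C^{(2)}$, the assertion ``there exist an I3-embedding $e$ of $V_\lambda$ and an ordinal $\mu>\lambda$ with $x\in V_\mu$ such that $V_\mu$ is invariant under the $e$-shift of $V_\lambda$-parameters over $\{\lambda,x\}$'' is a $\Sigma_2$ statement over $V$ in the parameters $\lambda,x$, and it holds, witnessed by $e_0$ and $\zeta_0$. Reflecting this at points of $C^{(2)}$, re-reflecting inside the resulting rank-initial segments, and --- at the appropriate stage --- adjoining the target ordinal $\zeta'$ (equivalently, $V_{\zeta'}$) to the list of frozen parameters, the aim is to produce, for every $\zeta'>\lambda$ with $x\in V_{\zeta'}$, an I3-embedding $e$, which may moreover be taken with $\crit(e)>\alpha$, such that $V_{\zeta'}$ is invariant under the $e$-shift of $V_\lambda$-parameters over $\{\lambda,x\}$. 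The two genuinely delicate points are: (a) obtaining the invariance at \emph{every} ordinal $\zeta'$, not merely on a club of them --- this is what forces the ``adjoin $\zeta'$'' step, whose non-circular execution must exploit the $C^{(n+1)}$-correctness of $\zeta_0$ together with the fact that the data below $\lambda$ is frozen in $V_{\lambda+1}$; and (b) obtaining the invariance for formulas of \emph{all} complexities, which is exactly what makes the resulting $j'$ fully elementary. Point (b) is subtle because the hypothesis only yields, for $V$ itself, invariance with respect to $\Sigma_{n+1}$-formulas (via $\zeta_0\in C^{(n+1)}$ and the all-complexities invariance of $V_{\zeta_0}$), whereas full elementarity of $j'$ needs all-complexities invariance of $V_{\zeta'}$; so the transfer cannot be a mere reflection of a bounded-complexity property of $V$, and must instead propagate the full invariance enjoyed by $V_{\zeta_0}$ --- something that can be arranged at each set-sized level $V_{\zeta'}$ but, consistently with the phenomena this paper is about, not globally.
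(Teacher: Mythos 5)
Your treatment of $(iii)\Rightarrow(ii)\Rightarrow(i)$ is fine and matches the paper. But the heart of the lemma, $(i)\Rightarrow(iii)$, is not proved: you reduce it to a ``transfer'' of $e_0$-shift invariance from $V_{\zeta_0}$ to an arbitrary $V_{\zeta'}$, explicitly flag the two places where this transfer is problematic (getting \emph{every} $\zeta'$ rather than a club of them, and getting full elementarity rather than bounded-complexity invariance), and then leave both unresolved. These are not bookkeeping issues; as you yourself observe, a reflection of a $\Sigma_2$ (or $\Sigma_{n+1}$) statement about $V$ cannot by itself produce fully elementary embeddings into $V_{\zeta'}$ for all $\zeta'$, and no mechanism for the ``adjoin $\zeta'$ as a frozen parameter'' step is given. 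So the proposal as written is a plan with an acknowledged hole exactly where the content lies.

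The paper closes this gap by a quite different device: a minimal-counterexample argument combined with the Kunen inconsistency. Assume $(i)$ holds and $(iii)$ fails, and let $\xi>\lambda$ be \emph{least} such that some $\alpha<\lambda$ witnesses failure at $V_\xi$, and then let $\beta<\lambda$ be \emph{least} such an $\alpha$ for that $\xi$. The singletons $\{\xi\}$ and $\{\beta\}$ are $\Sigma_2$-definable from the parameters $\lambda$ and $x$ only, and since the given $n$-exact embedding $j$ fixes $\lambda$ and $x$ and its target $V_\zeta$ is $\Sigma_{n+1}$-correct with $n+1\geq 2$, upward absoluteness of $\Sigma_2$-formulas from $X$ to $V$ forces $\xi,\beta\in X$ with $j(\xi)=\xi$ and $j(\beta)=\beta$. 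Then $Y=X\cap V_\xi$ and $i=j\restriction Y$ give an elementary embedding $\map{i}{Y}{V_\xi}$ fixing $\lambda$ and $x$ and moving something below $\lambda$; by the choice of $\beta$ it must satisfy $i\restriction\beta\neq\id_\beta$, and since $i(\beta)=\beta$ this yields a non-trivial elementary embedding of $V_{\beta+2}$ into itself, contradicting Kunen. Note that this argument never needs to build an embedding into an arbitrary $V_{\zeta'}$ from scratch, nor to manipulate I3-embeddings of $V_\lambda$; the whole of your ``shift-invariance transfer'' machinery is replaced by the single observation that the least counterexample is definable from parameters that $j$ fixes. If you want to salvage your approach, this is the idea you are missing.
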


\begin{proof}
  \ref{item:Equivalentexact1}$\Rightarrow$\ref{item:Equivalentexact3}: Assume, towards a contradiction that \ref{item:Equivalentexact1} holds and \ref{item:Equivalentexact3} fails,  and let $\xi>\lambda$ be  minimal with the property that $x\in V_\xi$ and there is an $\alpha<\lambda$ such that  for every  elementary submodel $X$ of $V_\xi$ with  $V_\lambda\cup\{\lambda,x\}\subseteq X$, there is no elementary embedding $\map{j}{X}{V_\xi}$ with $j(\lambda)=\lambda$, $j(x)=x$, $j\restriction\alpha=\id_\alpha$, and $j\restriction \lambda \neq \id_\lambda$. Then  the set $\{\xi\}$ is definable by a $\Sigma_2$-formula with parameters $\lambda$ and $x$. Now, let $\beta<\lambda$ be minimal such that for every  elementary submodel $X$ of $V_\xi$ with  $V_\lambda\cup\{\lambda,x\}\subseteq X$, there is no elementary embedding $\map{j}{X}{V_\xi}$ with $j(\lambda)=\lambda$, $j(x)=x$, $j\restriction\beta=\id_\beta$, and $j\restriction \lambda \neq \id_\lambda$. Then the set $\{\beta\}$ is also definable by a $\Sigma_2$-formula with parameters $\lambda$ and $x$. 
  Using \ref{item:Equivalentexact1}, we can now find a cardinal $\lambda<\eta\in C^{(n)}$, an elementary submodel $X$ of $V_\eta$ with $V_\lambda\cup\{\lambda,x\}\subseteq X$, a cardinal $\lambda<\zeta\in C^{(n+1)}$ and an elementary embedding $\map{j}{X}{V_\zeta}$ with $j(\lambda)=\lambda$, $j(x)=x$ and $j\restriction\lambda\neq\id_\lambda$. Since $n+1\geq 2$, the correctness properties properties of $V_\zeta$ ensure that the ordinals $\xi$ and $\beta$ are both smaller than $\zeta$, and the sets $\{\xi\}$ and $\{\beta\}$ are definable in $V_\zeta$ by $\Sigma_2$-formulas with parameters $\lambda$ and $x$. Moreover, since $j(\lambda)=\lambda$ and $j(x)=x$, we can use the fact that $\Sigma_2$-formulas are upwards absolute from $X$ to $V$ to conclude that $\xi$ and $\beta$ are both contained in $X$  with $j(\xi)=\xi$ and $j(\beta)=\beta$. Set $Y=X\cap V_\xi$ and $i=j\restriction Y$. Since elementarity implies that $V_\xi\in X$ with $j(V_\xi)=V_\xi$, we then know that $Y$ is an elementary submodel of $V_\xi$ with $V_\lambda\cup\{\lambda,x\}\subseteq Y$. Moreover, it follows that $\map{i}{Y}{V_\xi}$ is an elementary embedding with $i(\lambda)=\lambda$, $i(x)=x$, and $i\restriction\lambda \neq \id_\lambda$. By our assumptions, we now know that $i\restriction\beta\neq\id_\beta$. Using the fact that $i(\beta)=\beta$,  we can now conclude that $\beta>\omega$, $i(V_{\beta+2})=V_{\beta+2}$ and $\map{i\restriction V_{\beta+2}}{V_{\beta+2}}{V_{\beta+2}}$ is a non-trivial elementary embedding, contradicting the Kunen inconsistency.  
  \ref{item:Equivalentexact3}$\Rightarrow$\ref{item:Equivalentexact2}: Trivial.  
 \ref{item:Equivalentexact2}$\Rightarrow$\ref{item:Equivalentexact1}: Pick $\zeta\in C^{(n+1)}$ with $x\in V_\zeta$ and use \ref{item:Equivalentexact2} to obtain an $n$-exact embedding.  
\end{proof}

Note that the above implication \ref{item:Equivalentexact1}$\Rightarrow$\ref{item:Equivalentexact3} does not work when we prescribe the critical sequence of the embedding, because then the least counterexample is only definable by using the sequence as a parameter and it is no longer possible to show that this ordinal is fixed by the embedding, because the parameter is not. We will return to this in \S\ref{Ultraexact}.

In addition, observe that item \ref{item:Equivalentexact2} in the lemma above is $\Pi_2$ expressible, using $\lambda$ and $x$ as parameters. 
Thus, if $\lambda$ is the least ordinal for which \ref{item:Equivalentexact2} holds for a given set $x$, then there is no cardinal in $C^{(3)}$ below $\lambda$ and above the rank of $x$. 
In particular, there is no extendible cardinal below the least $\lambda$ for which there is a $1$-exact embedding at $\lambda$. Moreover, if $V$ is a model of ZFC which satisfies that there is a $1$-exact embedding at $\lambda$, with $\lambda$ being the least such, and there is an inaccessible cardinal $\kappa$ above $\lambda$, then, by \ref{item:Equivalentexact2} of the lemma above, the set $V_\kappa$ is a model of ZFC with a $1$-exact embedding at $\lambda$ and with no regular cardinal in $C^{(3)}$, hence with no extendible cardinals.

%Another interesting consequence of the above lemma is the fact that the existence of exact embeddings is downwards absolute to sufficiently large initial segments of $V$: 

%\begin{corollary}\label{corollary:ExactAbso}
 %   If  an $n$-exact embedding exists at a cardinal $\lambda$ for some $n>1$ and $\kappa$ is an inaccessible cardinal greater than $\lambda$, then there exists an $n$-exact embedding at  $\lambda$ in $V_\kappa$. 
%\end{corollary}

%\begin{proof}
 %  For each $\lambda<\zeta<\kappa$, Lemma \ref{lemma:Equivalentexact}  shows that, in $V$, there is an elementary submodel $X$ of $V_\zeta$ with  $V_\lambda\cup\{\lambda\}\subseteq X$, and an elementary embedding $\map{j}{X}{V_\zeta}$ with $j(\lambda)=\lambda$, and $j\restriction \lambda \neq \id_\lambda$. Then $X$ and $j$ are both contained in $V_\kappa$, and all of the listed properties of these sets are absolute between $V$ and $V_\kappa$. Another application of Lemma \ref{lemma:Equivalentexact}, this time within $V_\kappa$, now shows that there is an $n$-exact embedding  at  $\lambda$ in $V_\kappa$. 
%\end{proof}

In \cite{GoSch24}, working in ZF, Goldberg and Schlutzenberg defined a cardinal $\lambda$ to be \emph{rank-Berkeley} if for all $\zeta>\lambda$ and all $\alpha<\lambda$, there exists a non-trivial elementary embedding $\map{j}{V_\zeta}{V_\zeta}$ with $\alpha<\crit{j}<\lambda$ and $j(\lambda)=\lambda$. The equivalences given by Lemma \ref{lemma:Equivalentexact} now show that the existence of  $1$-exact embeddings corresponds to rank-Berkeleyness in the same way as strong unfoldability corresponds to supercompactness (as shown in \cite{luecke2021strong} and \cite{SRminus}). Only this time, instead of canonically weakening a large cardinal property implying $V\neq L$ to obtain a notion compatible with $V=L$, we weaken a property implying a failure of the Axiom of Choice in the same way to obtain a notion compatible with this axiom. 
%The following definition is motivated by the definitions ``weak'' large cardinals compatible with $V = L$ which one obtains by restricting the domains of elementary embeddings. 
Motivated by these analogies, we introduce a name for the isolated property:

\begin{definition}\label{DefinitionWRB}
A cardinal $\lambda$ is \emph{exacting} if for every $\zeta>\lambda$ there exist $X \prec V_\zeta$ with $V_\lambda \cup \{\lambda\}\subseteq X$ and an elementary embedding $\map{j}{X}{V_\zeta}$ with $j(\lambda)=\lambda$ and $j\restriction\lambda\neq\id_\lambda$. 
\end{definition}

If $\lambda$ is an exacting cardinal, then there is a non-trivial elementary embedding from $V_\lambda$ to $V_\lambda$. Therefore, the Kunen inconsistency implies that all exacting cardinal are elements of $C^{(1)}$ with countable cofinality. In particular, if   $X$ is as in Definition \ref{DefinitionWRB}, then we can assume that $\betrag{X}=\lambda$. Applying Lemma \ref{lemma:Equivalentexact} to $n = 1$ and $x = \varnothing$,  immediately yields the following equivalence:

\begin{corollary}\label{corollary:Exacting1ExactEmbedding}
 A cardinal $\lambda$ is exacting if and only if there is a $1$-exact embedding at $\lambda$.  \qed 
\end{corollary}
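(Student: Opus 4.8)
The plan is to read this off Lemma~\ref{lemma:Equivalentexact} with the parameters $n=1$ and $x=\varnothing$, after first noting that both sides of the claimed equivalence force $\lambda$ to be a limit cardinal, so that the lemma applies. Indeed, if $\lambda$ is exacting then, as observed in the text just before the corollary, $\lambda$ carries a non-trivial elementary embedding $V_\lambda\to V_\lambda$, whence the Kunen inconsistency forces $\lambda\in C^{(1)}$ of countable cofinality, and in particular $\lambda$ is a limit cardinal; and if there is a $1$-exact embedding at $\lambda$, then $\lambda$ is a limit cardinal by the very hypothesis of Definition~\ref{defexact}. So in either direction Lemma~\ref{lemma:Equivalentexact} is available for the limit cardinal $\lambda$.

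Next I would instantiate that lemma at $n=1$ and $x=\varnothing$ and observe that every clause mentioning $x$ becomes vacuous: $\varnothing\in V_\lambda\subseteq X$ and $\varnothing\in V_\zeta$ hold automatically, $V_\lambda\cup\{\lambda,\varnothing\}=V_\lambda\cup\{\lambda\}$, and every elementary embedding sends $\varnothing$ to $\varnothing$. Consequently, item~\ref{item:Equivalentexact1} of the lemma reads precisely ``there is a $1$-exact embedding $\map{j}{X}{V_\zeta}$ at $\lambda$'' (the $C^{(1)}$- and $C^{(2)}$-correctness requirements on the ambient ordinals being part of Definition~\ref{defexact}(1) and supplied inside the proof of the lemma, e.g.\ in the step \ref{item:Equivalentexact2}$\Rightarrow$\ref{item:Equivalentexact1}), while item~\ref{item:Equivalentexact2} reads precisely ``for every $\zeta>\lambda$ there is $X\prec V_\zeta$ with $V_\lambda\cup\{\lambda\}\subseteq X$ and an elementary embedding $\map{j}{X}{V_\zeta}$ with $j(\lambda)=\lambda$ and $j\restriction\lambda\neq\id_\lambda$'', which is verbatim Definition~\ref{DefinitionWRB}. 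Since the lemma asserts \ref{item:Equivalentexact1}$\Leftrightarrow$\ref{item:Equivalentexact2}, the two characterizations coincide, which is the corollary.

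I expect no genuine obstacle here: all the content has already been extracted by Lemma~\ref{lemma:Equivalentexact}, in particular by its implication \ref{item:Equivalentexact1}$\Rightarrow$\ref{item:Equivalentexact3}, which is what upgrades a single $1$-exact embedding into embeddings with targets $V_\zeta$ for all $\zeta>\lambda$; the only work in the corollary is the bookkeeping of matching up the two definitions, as above. As a side remark, one may also observe that in Definition~\ref{DefinitionWRB} the witnessing $X$ may be taken of size $\lambda$, by passing to a Skolem hull of $V_\lambda\cup\{\lambda\}$ of cardinality $\lambda$ inside any given witness, so the two formulations agree even with the size constraint imposed.
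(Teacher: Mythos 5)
Your proposal is correct and is essentially the paper's own proof: the corollary is obtained verbatim by instantiating Lemma~\ref{lemma:Equivalentexact} at $n=1$ and $x=\varnothing$ and matching item \ref{item:Equivalentexact1} with the existence of a $1$-exact embedding and item \ref{item:Equivalentexact2} with Definition~\ref{DefinitionWRB}. Your extra checks (that both sides force $\lambda$ to be a limit cardinal, and that the witness $X$ can be taken of size $\lambda$) are harmless refinements of the same argument.
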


\noindent We now turn to model-theoretic characterizations of exacting cardinals.

\subsection{Exactness and J\'onsson cardinals}\label{SectExactJonsson}

Recall that a cardinal $\lambda$ is \emph{J\'onsson} if every structure in a countable first-order language whose domain has cardinality $\lambda$ has a proper elementary substructure of cardinality $\lambda$ (see {\cite[{p.} 93]{Kan:THI}}). 
%Here, structures are assumed to have countable vocabularies in which every relation and function is finitary. 
In the following, we will show that being an exacting cardinal can be regarded as a strong form of J\'onssonness   that ensures the existence of proper elementary substructures of the same cardinality that also possess certain external properties of the original structure. 
 In addition, we will also derive a dual version of this result that demands the existence of proper elementary superstructures of the same cardinality and which has no non-trivial equivalent in the classical case.

\begin{proposition}
 Let $\lambda$ be an exacting cardinal and let  $\Ce$ be a class of structures in a countable first-order language that is definable by a formula with parameters in  $V_{\lambda}\cup\{\lambda\}$. If $B$ is a structure of cardinality $\lambda$ in $\Ce$, then there exist structures $A$ and $C$ of cardinality $\lambda$ in $\Ce$ such that $A$ is isomorphic to a proper elementary substructure of $B$ and $B$ is isomorphic to a proper elementary substructure of $C$. 
\end{proposition}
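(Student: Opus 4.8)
The plan is to extract from the exactingness of $\lambda$ a non-surjective elementary self-embedding of $B$, which then yields the statement with $A=C=B$. First I would reduce to the case $\dom{B}=\lambda$: if $\map{\psi}{B_0}{B}$ is an isomorphism with $\dom{B_0}=\lambda$ and $B_0^{\ast}$ is a proper elementary substructure of $B_0$ that is isomorphic to $B_0$, then $\psi[B_0^{\ast}]$ is a proper elementary substructure of $B$ isomorphic to $B$; so it suffices to prove that every structure of cardinality $\lambda$ in a countable language whose universe is $\lambda$ is isomorphic to a proper elementary substructure of itself.

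So suppose $\dom{B}=\lambda$, say $B=(\lambda;\vec{R})$ with $\vec{R}$ finitely many relations on $\lambda$. By Corollary \ref{corollary:Exacting1ExactEmbedding} together with Lemma \ref{lemma:Equivalentexact} applied to the set $x:=B$, I would fix a sufficiently large limit ordinal $\zeta$ for which $V_\zeta$ correctly computes the satisfaction relation of $B$, an elementary submodel $X\prec V_\zeta$ with $V_\lambda\cup\{\lambda,B\}\subseteq X$, and an elementary embedding $\map{j}{X}{V_\zeta}$ with $j(\lambda)=\lambda$, $j(B)=B$ and $j\restriction\lambda\neq\id_\lambda$. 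Let $\kappa:=\crit{j}$, the least ordinal moved by $j$; then $\kappa<\lambda$ since $j\restriction\lambda\neq\id_\lambda$, and $\kappa\notin\ran{j\restriction\lambda}$, because $j$ fixes every ordinal below $\kappa$ and sends every ordinal of $[\kappa,\lambda)$ strictly above $\kappa$. Since $V_\lambda\subseteq X$, we have $\lambda\subseteq X$, so $h:=j\restriction\lambda$ is a total map $\lambda\to\lambda$; and, using $X\prec V_\zeta$, the elementarity of $j$, $j(B)=B$, and the correctness of $V_\zeta$ for satisfaction in $B$, one verifies in the usual way that $\map{h}{B}{B}$ is elementary. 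As $\kappa\notin\ran{h}$, the embedding $h$ is not surjective, so $B^{\ast}:=B\restriction\ran{h}$ is a proper elementary substructure of $B$ of cardinality $\lambda$, and $h$ witnesses $B\cong B^{\ast}$. Undoing the reduction, $B$ is isomorphic to a proper elementary substructure of itself, so that $A:=B=:C$ — which have cardinality $\lambda$ and belong to $\Ce$ — are as required, both required isomorphisms being instances of $B\cong B^{\ast}$.

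The genuinely load-bearing step is the appeal to Lemma \ref{lemma:Equivalentexact}: exactingness only gives the instance $x=\varnothing$ of that lemma directly, whereas here I need its parametrised form, with (a code for) the structure $B$ both lying in the domain of the embedding and fixed by it. I expect this to be the main obstacle; it should be dealt with by a reflection argument in the spirit of the proof of \ref{item:Equivalentexact1}$\Rightarrow$\ref{item:Equivalentexact3} of that lemma — roughly, were the parametrised version to fail, a least-rank counterexample would be definable from $\lambda$ by a bounded formula, and an embedding witnessing the unparametrised case could be arranged to fix the corresponding ordinal, contradicting the Kunen inconsistency. It is also worth stressing that the reduction to $\dom{B}=\lambda$ is doing real work: an elementary self-embedding of an arbitrary structure can be an automorphism, and encoding the universe by the ordinals below $\lambda$ is precisely what forces non-surjectivity — this is the point at which the conclusion goes beyond classical J\'onssonness, for which $B$ need not be isomorphic to a proper elementary substructure of itself.
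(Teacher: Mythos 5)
There is a genuine gap, and it sits exactly at the step you yourself flagged as load-bearing: you need an exact embedding $\map{j}{X}{V_\zeta}$ at $\lambda$ with $B\in X$ and $j(B)=B$ for an \emph{arbitrary} structure $B$ with universe $\lambda$, and no reflection argument can supply this, because the required statement is false for some such $B$. Concretely, take $B=(\lambda;R)$ with $R$ a cofinal subset of $\lambda$ of order type $\omega$ (a legitimate member of a parameter-free definable class of structures of cardinality $\lambda$). If $j(B)=B$ then $j(R)=R$, and since the $n$-th element of $R$ is definable in $V_\zeta$ from $R$ and $n$, elementarity forces $j$ to fix $R$ pointwise; but $V_\lambda\subseteq X$ makes $\lambda$ the supremum of the critical sequence of $j\restriction V_\lambda$ by Kunen's theorem, so every ordinal in $[\crit{j},\lambda)$ is moved --- a contradiction. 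This is precisely the mechanism exploited in Proposition \ref{proposition:NotInDomainExactt} and Theorem \ref{VnotHOD}. The reflection argument of \ref{item:Equivalentexact1}$\Rightarrow$\ref{item:Equivalentexact3} in Lemma \ref{lemma:Equivalentexact} does not transfer to this situation: there the least counterexample is definable from parameters ($\lambda$ and $x$) that are \emph{already assumed} to be fixed by some embedding, whereas your parameter $B$ has rank at least $\lambda$ and cannot be absorbed below the critical point (the paper makes exactly this caveat in the remark following that lemma). Note also that you are trying to prove more than is asserted: the proposition only requires \emph{some} $A\in\Ce$ to be isomorphic to a proper elementary substructure of $B$, not $A=B$, and this weakening is not cosmetic --- it is what makes the statement provable.

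The paper's proof avoids the obstruction by never fixing $B$. One fixes only the parameters $z\in V_\lambda$ and $\lambda$ defining $\Ce$ (possible because $z$ has rank below $\lambda$), assumes towards a contradiction that \emph{some} structure in $\Ce$ of cardinality $\lambda$ is a counterexample, and uses the correctness of $X$ (arranged via $\zeta\in C^{(n+3)}$) to find such a counterexample $B'$ \emph{inside} $X$. Since $|B'|=\lambda$ and a bijection between $\lambda$ and the universe of $B'$ lies in $X$, that universe is contained in $X$, and $j$ restricts to a non-surjective elementary embedding of $B'$ into $j(B')$; as $j(B')$ is also a counterexample by elementarity and correctness, this is a contradiction, and the superstructure direction is symmetric. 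The witness $A$ so produced is in general a $j$-preimage of $B$, not $B$ itself. Your reduction to universes equal to $\lambda$ and your non-surjectivity computation via $\crit{j}\notin\ran{j\restriction\lambda}$ are fine; the unfixable point is the demand $j(B)=B$.
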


\begin{proof}
    Fix a natural number $n>0$, a $\Sigma_n$-formula $\varphi(v_0,v_1,v_2)$ and $z\in V_\lambda$ such that $\Ce=\Set{A}{\varphi(A,\lambda,z)}$. Pick $\lambda<\zeta\in C^{(n+3)}$ and use Lemma \ref{lemma:Equivalentexact} to find an elementary submodel $X$ of $V_\zeta$ with  $V_\lambda\cup\{\lambda\}\subseteq X$, and an elementary embedding $\map{j}{X}{V_\zeta}$ with $j(\lambda)=\lambda$, $j(z)=z$, and $j\restriction \lambda \neq \id_\lambda$. 

    First, assume, towards a contradiction, that $\Ce$ contains a structure     of cardinality $\lambda$ that does not contain a proper elementary substructure of cardinality $\lambda$ that is isomorphic to a structure in $\Ce$. The correctness of $X$ for $\Sigma_{n+3}$ sentences then implies that $X$ contains a structure $B$ with this property. 
    %Since $j(z)=z$ and $\zeta\in C^{(n+3)}$, we then have that  $j(B)$ is also an element of $\Ce$ with the property that no proper elementary substructure of $j(B)$ of cardinality $\lambda$ is isomorphic to a structure in $\Ce$. By elementarity, we know that $X$ contains a bijection, $b$, between $\lambda$ and the domain of  $j(B)$. It follows that $j(b)$ is a bijection between $\lambda$ and the domain of $B$. 
    Moreover, since  $\lambda$ is a subset of $X$, it follows that the domain of $B$ is also a subset of $X$ and this implies that $j$ induces an elementary embedding of $B$ into $j(B)$. Let $b\in X$ be a bijection between $\lambda$ and the domain of $B$. Given $\alpha<\lambda$, we have $$j(b(\alpha)) ~ = ~ j(b)(j(\alpha)) ~ \in ~ j(b)[j[\lambda]]$$ and, since $j[\lambda]$ is a proper subset of $\lambda$, we can conclude that the  elementary  embedding of $B$ into $j(B)$ induced by $j$ is not surjective. In particular, the pointwise image of $B$ under this embedding is a proper elementary submodel of $j(B)$ of cardinality $\lambda$ that is isomorphic to the element $B$ of $\Ce$, contradicting the properties of $j(B)$. 

    Now, assume, towards a contradiction, that $\Ce$ contains a structure  of cardinality $\lambda$ that is not isomorphic to a proper elementary substructure of a structure of cardinality $\lambda$ in $\Ce$. As above, we can find a structure $B$ in $X$ with this property. We then know that $j(B)$ is also structure in $\Ce$ and $j$ induces an elementary embedding of $B$ into $j(B)$ that is not surjective, contradicting the properties of $B$.  
\end{proof}

 The next proposition shows that the existence of a $2$-exact embedding follows from each of the above conclusions for a single $\Sigma_3$-definable class of structures:

\begin{proposition}
   There exists a class $\Ce$  of structures in a finite first-order language such that the following statements hold: 
   \begin{enumerate}
       \item The class $\Ce$ is definable by a $\Sigma_3$-formula without parameters. 

       \item A cardinal is contained in the class $C^{(1)}$ if and only if it is the cardinality of a structure in $\Ce$. 

       \item If $\lambda$ is cardinal with the property that  there are structures $A$ and $B$ of cardinality $\lambda$ in $\Ce$ such that $A$ is isomorphic to a proper substructure of $B$, then  $\lambda$ is an exacting cardinal.  
   \end{enumerate}
\end{proposition}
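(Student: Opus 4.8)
The plan is to have the members of $\Ce$ code Skolem hulls of $V_\lambda\cup\{\lambda\}$ inside large rank-initial segments $V_\zeta$, arranged so rigidly that a proper substructure of such a structure yields a $1$-exact embedding at $\lambda$ almost verbatim; by Corollary~\ref{corollary:Exacting1ExactEmbedding} this makes $\lambda$ exacting. I would work in a finite language $\calL$ with a binary relation symbol $\dot\in$, a constant $\dot c$, a binary function symbol $\dot h$, and a handful of auxiliary symbols (a constant naming $\omega$, a unary function for the $\omega$-successor, a pairing function) whose sole purpose is to force substructures to contain $\omega$ and all its elements and to be closed under coding of finite tuples. For $\lambda\in C^{(1)}$, $\zeta\in C^{(2)}$ with $\lambda<\zeta$, and a well-ordering $W$ of $V_\zeta$, let $h_W$ be the associated least-witness Skolem function for $(V_\zeta,\in,\lambda)$, let $X_W$ be the closure of $V_\lambda\cup\{\lambda\}$ under $h_W$ — so $X_W\prec(V_\zeta,\in)$ and $|X_W|=|V_\lambda|=\lambda$, using that $\lambda\in C^{(1)}$ forces $|V_\lambda|=\lambda$ — and let $\mathcal B_{\lambda,\zeta,W}$ be the $\calL$-structure on $X_W$ interpreting $\dot\in,\dot c,\dot h$ by $\in,\lambda,h_W\restriction X_W$ and the auxiliary symbols in the obvious way. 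Set $\Ce$ to be the class of $\calL$-structures isomorphic to some $\mathcal B_{\lambda,\zeta,W}$.

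For clause (i): the data $(X_W,\in,\lambda,h_W\restriction X_W,\dots)$ is uniformly $\Delta_1$-definable from $(\lambda,\zeta,W)$; ``$W$ is a well-order of $V_\zeta$'' and ``$\lambda\in C^{(1)}$'' are $\Pi_1$; ``$\zeta\in C^{(2)}$'' is $\Pi_2$; and ``$\mathcal A$ is isomorphic to the resulting structure'' is $\Sigma_1$. Hence ``$\mathcal A\in\Ce$'' has the form $\exists\zeta\,\exists\lambda\,\exists W\,[\Pi_2\wedge\cdots]$ and so is $\Sigma_3$. For clause (ii): every $\mathcal B_{\lambda,\zeta,W}$ has cardinality $\lambda\in C^{(1)}$; conversely, given $\lambda\in C^{(1)}$, picking any $\zeta\in C^{(2)}$ above $\lambda$ (there are a proper class of them) and any well-ordering $W$ of $V_\zeta$ yields a member of $\Ce$ of cardinality $\lambda$.

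For clause (iii), suppose $A$ is isomorphic to a proper substructure $B'$ of $B$, where $A,B\in\Ce$ have cardinality $\lambda$; write $B=\mathcal B_{\lambda,\zeta,W}$ and $A\cong\mathcal B_{\lambda,\zeta',W'}$. Since $B'$ contains $\lambda=\dot c^B$ and $\omega$, is closed under $\omega$-successor, pairing, and under $\dot h^B$, the Tarski--Vaught test shows that the $\{\dot\in,\dot c\}$-reduct of $B'$ is an elementary submodel of that of $B$, hence of $(V_\zeta,\in,\lambda)$; in particular $B'$ computes rank correctly, so $\{x\in B':\mathrm{rank}(x)<\lambda\}=V_\lambda\cap B'$. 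Next, $A$ — hence $B'$ — satisfies the $\calL$-sentence asserting that every element lies in the $\dot h$-closure of $\{x:\mathrm{rank}(x)<\dot c\}\cup\{\dot c\}$ (this is exactly the statement that $\dom{\mathcal B_{\lambda,\zeta',W'}}$ is the $h_{W'}$-closure of $V_\lambda\cup\{\lambda\}$), and unwinding this gives $B'=\mathrm{Hull}_{h_W}((V_\lambda\cap B')\cup\{\lambda\})$. Now let $\iota\colon\mathcal B_{\lambda,\zeta',W'}\to B'$ be an isomorphism; passing to $\{\dot\in,\dot c\}$-reducts and composing with $B'\prec(V_\zeta,\in,\lambda)$, $\iota$ becomes an elementary embedding of the model $X':=\dom{\mathcal B_{\lambda,\zeta',W'}}$ into $V_\zeta$ with $\iota(\lambda)=\lambda$, where $X'\prec V_{\zeta'}$ and $V_\lambda\cup\{\lambda\}\subseteq X'$. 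If $\iota\restriction\lambda=\id_\lambda$ then, since $\iota$ preserves rank, an $\in$-induction along ranks below $\lambda$ gives $\iota\restriction V_\lambda=\id_{V_\lambda}$; hence $V_\lambda\subseteq\ran{\iota}=B'$, so $V_\lambda\cap B'=V_\lambda$ and $B'=\mathrm{Hull}_{h_W}(V_\lambda\cup\{\lambda\})=\dom{B}$, contradicting that $B'$ is a proper substructure. Therefore $\iota\restriction\lambda\neq\id_\lambda$; by Definition~\ref{defexact} (with domain $X'\prec V_{\zeta'}$, $\zeta'\in C^{(1)}$, target $V_\zeta$, $\zeta\in C^{(2)}$, $\iota(\lambda)=\lambda$) this makes $\iota$ a $1$-exact embedding at $\lambda$, and so $\lambda$ is exacting by Corollary~\ref{corollary:Exacting1ExactEmbedding}.

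The step I expect to be the main obstacle is the choice of the finite language: one must arrange the auxiliary symbols and the single Skolem function $\dot h$ so that every substructure closed under them is automatically $\{\dot\in,\dot c\}$-elementary (Tarski--Vaught, with $\dot h$ coding witnesses for all formulas once $\omega$ and tuple-coding are available) and computes rank below $\lambda$ correctly, and so that ``my domain is the Skolem hull of my $V_\lambda$-part together with $\lambda$'' is a genuine first-order $\calL$-sentence. Once this bookkeeping is in place, the two substantive points in (iii) — that this sentence forces $B'=\mathrm{Hull}_{h_W}((V_\lambda\cap B')\cup\{\lambda\})$, and that an isomorphism fixing $\lambda$ pointwise on the ordinals below it would have to be onto all of $\dom{B}$ — are exactly what converts the existence of a proper substructure into a $1$-exact embedding; the $\Sigma_3$-bound of (i) and the cardinality computation of (ii) are then routine, using $|V_\lambda|=\lambda$ for $\lambda\in C^{(1)}$ and that $C^{(2)}$ is a proper class.
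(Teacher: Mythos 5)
Your proof is correct, and it reaches the same goal by a noticeably different gadget than the paper's. The paper's $\Ce$ consists of structures $\langle X,\in,\lambda,s\rangle$ where $X\prec V_\eta$ for some $\eta\in C^{(2)}$ with $V_\lambda\cup\{\lambda\}\subseteq X$ and where the extra function symbol is interpreted by a surjection $\map{s}{\lambda}{X}$; given an isomorphism $j$ of $A=\langle X,\in,\lambda,s\rangle$ onto a proper substructure of $B=\langle Y,\in,\lambda,t\rangle$, non-triviality of $j\restriction\lambda$ falls out in two lines: pick $\alpha<\lambda$ with $t(\alpha)\notin j[X]$ (possible since $t\restriction\lambda$ is onto $Y$ and $j[X]\subsetneq Y$), and note $t(j(\alpha))=j(s(\alpha))\in j[X]$, so $j(\alpha)\neq\alpha$. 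Your Skolem-hull version replaces the surjection by a master Skolem function and derives non-triviality from the hull-generation property instead; this costs you the coding bookkeeping you flag, but it buys something the paper's writeup glosses over, namely that a \emph{mere} substructure closed under the function symbols is automatically elementary in $V_\zeta$ (the paper's argument really needs the image $j[X]$ to sit elementarily in $V_\zeta$, which is immediate when the substructure is elementary --- the only form in which the proposition is invoked in Corollary \ref{CorollaryExactJonsson} --- but is exactly what your Tarski--Vaught arrangement delivers for arbitrary substructures). Two small remarks on your write-up: the first-order ``my domain is the $\dot h$-hull of my $V_\lambda$-part together with $\dot c$'' sentence is actually dispensable, since the contradiction only uses the case $\iota\restriction\lambda=\id_\lambda$, where $V_\lambda\subseteq B'$ and then $B'\supseteq\mathrm{Hull}_{h_W}(V_\lambda\cup\{\lambda\})=\dom{B}$ already follows from closure of $B'$ under $\dot h^{B}$, so the only genuinely delicate coding is the Tarski--Vaught setup itself; and your auxiliary symbols should be arranged to generate all of $\omega$ from below (a constant for $0$ and a successor function, rather than a constant for $\omega$ and an ``$\omega$-successor''), so that every substructure contains all formula codes. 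With those adjustments the argument is complete, and the $\Sigma_3$-bound and the cardinality computation match the paper's.
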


\begin{proof}
  Let $\calL$ denote the first-order language that extends the language of set theory by a constant symbol and a unary function symbol, and let $\Ce$ denote the class of all $\calL$-models $\langle X,E,\lambda,s\rangle$ such that the following statements hold: 
  \begin{itemize}
      \item $\lambda$ is a cardinal in $C^{(1)}$. 

      \item There is a cardinal $\eta$ in $C^{(2)}$ such that $X$ is an elementary substructure of $V_\zeta$ of cardinality $\lambda$ with $V_\lambda\cup\{\lambda\}\subseteq X$. 

      \item $E={{\in}\mathrel{\restriction}(X\times X)}$. 

      \item $\map{s\restriction \lambda}{\lambda}{X}$ is a surjection. 
  \end{itemize}
 Then $\Ce$ is definable by a $\Sigma_3$-formula without parameters. Moreover, it is easy to see that the class $C^{(1)}$ coincides with the class of cardinalities of structures in  $\Ce$. 

 Now, assume that $\lambda$ is cardinal with the property that the class $\Ce$ contains structures $A=\langle X,\in,\lambda,s\rangle$ and $B=\langle Y,\in,\lambda,t\rangle$  such that $A$ is isomorphic to a proper substructure of $B$. Then there exist cardinals $\eta$ and $\zeta$ in $C^{(2)}$ such that $X$ is an elementary submodel of $V_\eta$ with $V_\lambda\cup\{\lambda\}\subseteq X$ and $Y$ is an elementary submodel of $V_\zeta$ with $V_\lambda\cup\{\lambda\}\subseteq Y$. Our assumption now yields an elementary embedding $\map{j}{X}{V_\zeta}$ with $j(\lambda)=\lambda$, $j[X]\subsetneq Y$ and $j(s(\alpha))=t(j(\alpha))$ for all $\alpha<\lambda$. 
 We can then find $\alpha<\lambda$ with $t(\alpha)\notin j[X]$. Since $t(j(\alpha))=j(s(\alpha))\in j[X]$, it follows that $j(\alpha)\neq\alpha$ and $j\restriction\lambda\neq\id_\lambda$. By Lemma \ref{lemma:Equivalentexact}, this shows that $\lambda$ is an exacting cardinal.  
\end{proof}

The following corollary gives an example of how the above results can be applied to obtain a characterization of being an exacting cardinal  as a strengthening  of being J\'onsson:

\begin{corollary}\label{CorollaryExactJonsson}
    The following are equivalent for each cardinal $\lambda\in C^{(1)}$:
      \begin{enumerate}
        \item The cardinal $\lambda$ is exacting. 

        \item For every class $\Ce$  of structures in a countable first-order language that is definable by a formula with parameters contained in $V_\lambda\cup\{\lambda\}$, every structure  of cardinality $\lambda$ in $\Ce$ contains a proper elementary substructure of cardinality $\lambda$ that is isomorphic to a structure in $\Ce$. 

        \item For every class $\Ce$ of  structures in a countable first-order language that is definable by a formula with parameters contained in $V_\lambda\cup\{\lambda\}$, every structure  of cardinality $\lambda$ in $\Ce$ is isomorphic to a proper elementary substructure of a structure of cardinality $\lambda$ in $\Ce$.  \qed 
      \end{enumerate}
\end{corollary}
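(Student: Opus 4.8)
The plan is to read off the corollary from the two propositions of this subsection, so that the argument is a matter of aligning quantifiers and witnesses; I do not anticipate a genuine obstacle, since the substantive work is already contained in those propositions.

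For the implications from exactness, assume $(1)$, fix a class $\Ce$ of structures in a countable first-order language definable with parameters in $V_\lambda\cup\{\lambda\}$, and let $B\in\Ce$ have cardinality $\lambda$. The first proposition of this subsection supplies structures $A,C\in\Ce$ of cardinality $\lambda$ such that $A$ is isomorphic to a proper elementary substructure of $B$ and $B$ is isomorphic to a proper elementary substructure of $C$. The former conclusion is exactly what $(2)$ asks of $B$ (the witnessing proper elementary substructure of $B$ has cardinality $\lambda$, being isomorphic to $A\in\Ce$), and the latter is exactly what $(3)$ asks of $B$; hence $(1)\Rightarrow(2)$ and $(1)\Rightarrow(3)$.

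For the converse implications, recall that $\lambda\in C^{(1)}$ by hypothesis, and let $\Ce_0$ denote the $\Sigma_3$-definable class of structures in a finite first-order language furnished by the second proposition of this subsection; as it is definable without parameters, it is in particular definable with parameters in $V_\lambda\cup\{\lambda\}$. By the second clause of that proposition, there is a structure $B_0\in\Ce_0$ of cardinality $\lambda$. If $(2)$ holds, then $B_0$ has a proper elementary substructure of cardinality $\lambda$ isomorphic to some $A\in\Ce_0$, so $A$ and $B_0$ are structures of cardinality $\lambda$ in $\Ce_0$ with $A$ isomorphic to a proper substructure of $B_0$; the third clause of that proposition then yields that $\lambda$ is exacting. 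If $(3)$ holds instead, then $B_0$ is isomorphic to a proper elementary substructure of some $C_0\in\Ce_0$ of cardinality $\lambda$, and again we have two structures of cardinality $\lambda$ in $\Ce_0$, one isomorphic to a proper substructure of the other, so the third clause applies. This gives $(2)\Rightarrow(1)$ and $(3)\Rightarrow(1)$, completing the equivalence.

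The sole point of care is that the propositions produce proper \emph{elementary} substructures, whereas the third clause of the second proposition is phrased in terms of proper substructures; this causes no difficulty, since an elementary substructure is a substructure and properness is retained, so nothing more is needed.
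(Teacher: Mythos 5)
Your proposal is correct and is exactly the argument the paper intends: the corollary is stated with \qed precisely because it follows by combining the two propositions of \S\ref{SectExactJonsson} in the way you describe, using clause (ii) of the second proposition (together with the hypothesis $\lambda\in C^{(1)}$) to get a structure of cardinality $\lambda$ in the parameter-free class $\Ce_0$, and clause (iii) for the converse directions. Your closing remark about elementary substructures being in particular substructures is the right (and only) point of care.
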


Recall that the existence of a J\'onsson cardinal  implies $V\ne L$ (see \cite[8.13]{Kan:THI}). We will show in \S\ref{SectExactHOD} below that, similarly, the existence of an exacting cardinal implies $V \neq \HOD$. 
However, while 
the consistency strength of the existence of a J\'onsson cardinal of cofinality $\omega$ is that of a measurable cardinal (see \cite{MR534574}), and the existence of a measurable cardinal implies $V\neq L$,  the consistency of the existence of an exacting cardinal  can be established relative to large cardinals compatible with $V = \HOD$.

%%%%%%

\subsection{The consistency of exact cardinals}\label{SectConsistencyExact}

 In this section, we answer one of the main questions left open by the results of \cite[Question 10.3]{BL} by showing that the consistency of the existence of $n$-exact cardinals can be established from a well-studied very strong large cardinal assumption not known to be inconsistent with ZFC. 
Recall that I0 is the assertion that there exists a non-trivial elementary embedding $\map{j}{L(V_{\lambda+1})}{L(V_{\lambda+1})}$, for some $\lambda$, with $\crit{j}<\lambda$. We shall refer to such an embedding as an \emph{I0 embedding}. Note that since we work in the context of ZFC, an I0 embedding, being a proper class, is implicitly assumed to be definable, possibly with parameters (see the proof of {\cite[Lemma 5]{MR2914848}} for more details).

\begin{theorem}\label{theorem:ExactFromI0}
 If $\map{j}{L(V_{\lambda+1})}{L(V_{\lambda+1})}$ is an  I0 embedding with critical sequence $\vec{\lambda}=\seq{\lambda_m}{m<\omega}$, then there is a set-sized transitive  model $M$ of ZFC such that  $\vec{\lambda}\in M$ and, in $M$, the cardinal $\lambda_0$ is parametrically $n$-exact for $\seq{\lambda_{m+1}}{m<\omega}$ for every natural number $n>0$. 
\end{theorem}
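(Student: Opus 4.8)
The plan is to extract the model $M$ from the I0 structure $L(V_{\lambda+1})$ together with the embedding $j$, exploiting the fact that I0 embeddings come with a rich reflection apparatus. First I would recall the standard facts about I0: the embedding $j$ is (by a theorem in Woodin's analysis, cf.\ \cite{MR3855762}) definable over $L(V_{\lambda+1})$ from a parameter in $V_{\lambda+1}$, $L(V_{\lambda+1})$ satisfies a weak form of replacement/collection, and there is a proper class of $L(V_{\lambda+1})$-ordinals $\Theta$ that are "correct" in the sense that $L_\Theta(V_{\lambda+1}) \prec_{\Sigma_1} L(V_{\lambda+1})$ in the appropriate language. Using the reflection theorem inside $L(V_{\lambda+1})$ applied to the (parametrically definable) embedding $j$, I would find an ordinal $\Theta$ with $V_{\lambda+1} \subseteq L_\Theta(V_{\lambda+1})$ such that $N := L_\Theta(V_{\lambda+1})$ is a set model of a sufficiently large fragment of ZF, $j \restriction N : N \to N$ is elementary, and $\vec\lambda \in N$. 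One then wants to pass from $N$ to a model $M$ of full ZFC. Since $N$ need not satisfy choice or full replacement, I would instead work with $V_\Theta^{L(V_{\lambda+1})}$-style truncations: pick $\Theta$ above $\lambda$ in $C^{(n+2)}$ of the real $V$ (for all $n$ simultaneously, by taking $\Theta$ a limit of such, or by a single $\Theta \in C^{(\omega)}$-like choice done externally), so that $V_\Theta \models \mathsf{ZFC}$ and $V_\Theta$ correctly computes enough $C^{(n)}$'s below $\lambda$.

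The core of the argument is then to verify, inside the chosen model $M$, that $\lambda_0$ is parametrically $n$-exact for $\seq{\lambda_{m+1}}{m<\omega}$. Here I would use the observation already recorded in the excerpt (right before \S\ref{SectExactBerkeley}) that if $\map{k}{X}{V_\zeta}$ is a $1$-exact embedding at $\lambda$ then $k \restriction V_\lambda$ is an I3 embedding; conversely, an I0 embedding $j$ restricts to $j \restriction V_\lambda : V_\lambda \to V_\lambda$, an I3 embedding with the right critical sequence, and moreover $j$ moves $V_{\lambda+1}$ correctly. The task is to build, from $j$, for each $A \in V_{\lambda+1}$, an elementary submodel $X \prec V_\zeta^M$ with $V_\lambda \cup \{\lambda\} \subseteq X$, $A \in \ran(k)$, and an embedding $\map{k}{X}{V_\zeta^M}$ with $k(\lambda_0) = \lambda_1$, $k(\lambda_m) = \lambda_{m+1}$, $k(\crit k) = \lambda_0$, and $k(\lambda) = \lambda$. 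The natural candidate is to let $X$ be the Skolem hull in $V_\zeta^M$ of $V_\lambda \cup \{\lambda, A, \vec\lambda, \dots\}$ computed using a well-ordering available in $L(V_{\lambda+1})$, and to let $k$ be induced by $j$. One needs: (i) $j[X] \subseteq V_\zeta^M$ or rather that $j \restriction X$ makes sense and lands in a correct $V_\zeta$ — this is where the correctness of $\Theta$ and the choice of $\zeta \in (C^{(n+1)})^M$ enters; (ii) $j(\lambda) = \lambda$, which holds because $j$ fixes $\lambda$ (as $\crit j = \lambda_0 < \lambda$ and $\lambda = \sup_m \lambda_m$ is the supremum of the critical sequence, hence fixed); (iii) the parametric condition $j(\crit j) = \lambda_0$, i.e.\ $j(\lambda_0) = \lambda_1$, which is immediate from the definition of the critical sequence; (iv) that $k \restriction \lambda \neq \id_\lambda$, which holds since $\crit j = \lambda_0 < \lambda$.

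The main obstacle, I expect, will be step (i): guaranteeing that the target of the reflected embedding is a genuine $V_\zeta$ with $\zeta \in C^{(n+1)}$ \emph{as computed in $M$}, and that $A$ (an arbitrary element of $V_{\lambda+1}$, so potentially of high $L(V_{\lambda+1})$-rank but still a subset of $V_\lambda$) lies in the range of $k$. To handle this I would argue as follows: since $V_{\lambda+1} \subseteq N = L_\Theta(V_{\lambda+1})$ and $j \restriction V_{\lambda+1}$ is the identity on... no — rather $j$ acts on $V_{\lambda+1}$ nontrivially, but $A$ and $j^{-1}$-preimages of the relevant objects can be arranged to be in the hull by closing under $j^{-1}$ as well, using that $j$ and $j^{-1}$ are both available on the relevant sets inside $L(V_{\lambda+1})$. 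Concretely, one takes $X$ to be the hull of $V_\lambda \cup \{\lambda\}$ together with enough preimages so that $A = k(\bar A)$ for suitable $\bar A$; since $j$ is surjective onto $L(V_{\lambda+1})$ when... (it is not surjective, but $A \in V_{\lambda+1} = \ran(j \restriction V_{\lambda+1})$ only if $A \in j[V_{\lambda+1}]$, which need not hold, so one instead builds $X$ with $A \in X$ directly and argues $A \in \ran(k)$ via correctness, using $j(A) $ vs.\ $A$ bookkeeping). The cleanest route is probably: define $X$ and $k$ so that $k = j \restriction X$ where $X$ is chosen with $k[X] \subseteq X' \subseteq V_\zeta^M$ and then note $A \in V_{\lambda+1} \cap X$ implies, by $\Sigma_2$-correctness of $\zeta$ and the agreement of $j$ with identity below $\crit j = \lambda_0$ on ranks, that $A \in \ran(k)$ after possibly replacing $A$ by a code. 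I would then wrap up by checking that all the correctness hypotheses needed ($\eta \in C^{(n)}$, $\zeta \in C^{(n+1)}$) can be simultaneously met inside $M$ by choosing $\Theta$ large enough that $V_\Theta^{L(V_{\lambda+1})}$ — equivalently the ambient $V_\Theta$ — contains cofinally many such cardinals above $\lambda$, which is automatic since $\lambda < \Theta$ and $\Theta$ is a limit of $C^{(n+1)}$-cardinals by the reflection used to obtain it.
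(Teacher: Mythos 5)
There are several genuine gaps here, and the most serious one is your choice of $M$. You propose taking $M$ to be a rank-initial segment $V_\Theta$ of the real universe. But if the witnessing structures $X\prec V_\zeta^M=V_\zeta$ and embeddings lived in the real $V$, the theorem would say that I0 outright implies the existence of an exacting cardinal in $V$ --- which is false: I0 is consistent with $V=\HOD$ (Corollary \ref{corollary:noimplication}), while exacting cardinals imply $V\neq\HOD$ (Theorem \ref{VnotHOD}). The model $M$ must therefore be a \emph{thin} inner model. The paper takes $M=L_{\lambda^+}(\Gamma)$ with $\Gamma=V_\lambda\cup\{\vec\lambda,\lhd\}$, where $\lhd$ is a $j$-fixed wellordering of $V_\lambda$ of order-type $\lambda$ (built by iterating $j$ on a wellordering of $V_{\lambda_0}$); this gives a ZFC model (using Woodin's theorem that $\lambda^+$ is measurable in $L(V_{\lambda+1})$, hence inaccessible in $L(\Gamma)$) on which $j$ still acts elementarily. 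Your pivot away from $L$-like truncations "since $N$ need not satisfy choice" discards exactly the feature that makes the theorem true, and it also breaks elementarity: $j$ is elementary only on $L(V_{\lambda+1})$, so $j\restriction X$ for $X\prec V_\zeta$ (real) is not an elementary map into the real $V_\zeta$.

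The second gap is that even inside the correct $M$, you cannot take $k$ to be "induced by $j$": the restriction $j\restriction X_0$ is in general \emph{not an element} of $M$ (only $j\restriction M$ acting on $M$ from outside is available), and moreover it fails the parametric condition --- if $k=j\restriction X$ then $\crit{k}=\lambda_0$ and $k(\crit{k})=\lambda_1\neq\lambda_0$, so your item (iii) is simply wrong ($j(\crit{j})$ equals $\lambda_1$, not $\lambda_0$, by definition of the critical sequence). The paper handles both problems at once with two ideas absent from your proposal: (a) a reflection trick --- assume in $M$ that some $A$ has no witnessing embedding with $A\in\ran{i}$ and $i(\crit{i})=\lambda_0$, apply $j$ to get that no embedding has $j(A)\in\ran{i}$ and $i(\crit{i})=\lambda_1$, and then refute \emph{that}; and (b) an absoluteness argument --- fix $X_0\ni A$ with a bookkeeping bijection $b_0$, set $X_1=j(X_0)$, $b_1=j(b_0)$, and form the tree $T$ of finite-stage approximations $\map{t}{\lambda_m}{\lambda_{m+1}}$ whose transfers $b_1\circ t\circ b_0^{-1}$ are partial elementary maps from $X_0$ to $X_1$. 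The maps $j\restriction\lambda_m$ show $T$ has an infinite branch in $V$, so by absoluteness of illfoundedness $T$ has a branch in $M$, and its union yields an elementary embedding $\map{i}{X_0}{X_1}$ \emph{in $M$} with $i\restriction\lambda_0=\id_{\lambda_0}$ (hence $i(\crit{i})=\lambda_1$) and $j(A)=i(A)\in\ran{i}$. Without this tree/absoluteness step there is no way to get the required embedding to be an element of $M$, and your "correctness" hand-waving for $A\in\ran{k}$ does not substitute for it.
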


\begin{proof}
  %By a result of Woodin \cite[Lemma 10]{MR2914848}, we may assume that   $j$ is, in fact, an ultrapower embedding given by an $L(V_{\lambda+1})$-ultrafilter over $V_{\lambda +1}$. 
  We start with an observation needed for the construction of the  model with the listed properties. 

\begin{claim*}
 There exists a wellordering $\lhd$ of $V_\lambda$ of order-type $\lambda$, with $j(\lhd)=\lhd$. 
\end{claim*}

\begin{proof}[Proof of the Claim] 
 Pick a wellordering $\lhd_0$ of $V_{\lambda_0}$, and let $\lhd_1=j(\lhd_0)\setminus \lhd_0$. Given $\lhd_m$ for some $0<m<\omega$, set  $\lhd_{m+1}=j(\lhd_m)$. Finally, let $\lhd=\bigcup_{m<\omega}\lhd_m$. Then $\lhd$ is as required.
\end{proof}

Now, set $\Gamma=V_\lambda\cup\{\vec{\lambda},\lhd\}$ and note that this set belongs to $L(V_{\lambda+1})$.  
 By using $\lhd$, in $L(\Gamma)$, we may easily wellorder $\Gamma$ in order-type $\lambda$, so  that $L(\Gamma)$ is a model of ZFC. Moreover,  since $j(\lhd)=\lhd$, we have that $j(\Gamma)=V_\lambda \cup \{ j(\vec{\lambda}), \lhd\}$, hence $L(\Gamma)=L(j(\Gamma))$, and so $\map{j\restriction L(\Gamma)}{L(\Gamma)}{L(\Gamma)}$ is an elementary embedding. Let $M = L_{\lambda^+}(\Gamma)$.
 
\begin{claim*}
$M$ satisfies ZFC and $\map{j\restriction M}{M}{M}$ is an elementary embedding. 
\end{claim*}

\begin{proof}[Proof of the Claim]
Since $\map{j}{L(V_{\lambda+1})}{L(V_{\lambda+1})}$ is an I0 embedding, it follows that  $\lambda^+$ is %$\omega$-strongly 
 measurable in $L(V_{\lambda+1})$ by a theorem of Woodin (see, for example,  %\cite[Theorem 2.5]{Kaf:2004}, or 
  \cite[Theorem 6.2]{MR3855762}). %for a proof). 
   Thus, $L(\Gamma)$ is a model of ZFC and there is an $L(\Gamma)$-ultrafilter on $\lambda^+$, so  $\lambda^+$ is   strongly inaccessible in $L(\Gamma)$, and so $M$ is a model of ZFC. Moreover, since $j(\lambda)=\lambda$ and $(\lambda^+)^{L(V_{\lambda+1})}=\lambda^+$, 
   %$j$ is an ultrapower embedding with critical point less than $\lambda$,   
   %
%In particular, in $L(V_{\lambda+1})$there is a stationary $S\subset \lambda^+$ such that the club filter on $\lambda^+$ restricted to $S$ is an ultrafilter. Let $U$ be this ultrafilter. We may then define an $L(\Gamma)$-ultrapower embedding
%\[k: L(\Gamma) \to \mathrm{Ult}(L(\Gamma),U)\]
%the usual way, where $\mathrm{Ult}(L(\Gamma),U)$ is the set of all functions $f: \lambda^+ \to L(\Gamma)$ with $f \in L(\Gamma)$, taken modulo $U$. Running the usual proof of {\L}o\'s's theorem (using that the axiom of choice holds in $L(\Gamma)$ for the existential step of the induction), we see that $k$ is an elementary embedding. 
%Since $U$ is $\lambda^+$-complete, $\lambda^+$ is the critical point of $k$ and in particular $k$ fixes $\Gamma$.
%Hence, $k$ is a nontrivial elementary embedding from $L(\Gamma)$ to itself fixing $\Gamma$.
%By a result of Kunen, this implies that $\Gamma^\sharp$ exists and thus that every $L(V_{\lambda+1})$-cardinal above $\lambda$ -- and in particular $\lambda^+$  -- is inaccessible in $L(\Gamma)$. Therefore $M$ is a model of ZFC. 
 %
 the cardinal $\lambda^+$ is a fixed point of $j$ and therefore  $\map{j\restriction M}{M}{M}$ is an elementary embedding.
\end{proof}

 Now, fix a natural number $n>0$ and assume, aiming for a contradiction, that, in $M$, the cardinal  $\lambda_0$ is not parametrically $n$-exact for $\seq{\lambda_{m+1}}{m<\omega}$. Pick an ordinal $\lambda<\zeta<\lambda^+$ such that $j(\zeta)=\zeta$ and $\zeta\in (C^{(n+1)})^M$.

 Working  in $M$, pick   $A\in V_{\lambda+1}$ with the property that there is no $n$-exact embedding $\map{i}{Y}{V_\zeta}$ with $Y$ an elementary submodel of $V_\zeta$ containing $V_\lambda\cup\{\lambda\}$, $A\in\ran{i}$, $i(\crit{i})=\lambda_0$,  and $i(\lambda_m)=\lambda_{m+1}$ for all $m<\omega$.  
 Without loss of generality, we may assume that $A\notin\lambda_0\cup\Set{\lambda_m}{m<\omega}$. 
 The elementarity of $j\restriction M$ then implies that, in $M$, for every  elementary submodel $Y$ of $V_\zeta$ with $V_\lambda\cup\{\lambda\}\subseteq Y$, there is no elementary embedding $\map{i}{Y}{V_\zeta}$ with $j(A)\in\ran{i}$, $i(\crit{i})=\lambda_1$,  and $i(\lambda_{m+1})=\lambda_{m+2}$ for all $m<\omega$. 
 
 Still working in $M$,  let $X_0$ be an elementary submodel of $V_\zeta$ of cardinality $\lambda$ with the property that $V_{\lambda}\cup \{\lambda,A\}\subseteq X_0$. Pick a bijection $\map{b_0}{\lambda}{X_0}$ satisfying $b_0(0)=A$,   $b_0(m+1)=\lambda_m$ for all $m<\omega$ and $b_0(\omega+\alpha)=\alpha$ for all $\alpha<\lambda_0$.
Set $X_1=j(X_0)$ and $b_1=j(b_0)$. Since $M$ is closed under $j$, we have $X_1, b_1 \in M$.
The set $X_1$ is  an elementary submodel of $V_\zeta$ of cardinality $\lambda$ with $V_\lambda\cup\{\lambda,j(A)\}\subseteq X_1$ and $\map{b_1}{\lambda}{X_1}$ is a bijection with $b_1(0)=j(A)$, $b_1(m+1)=\lambda_{m+1}$ for all $m<\omega$, and $b_1(\omega+\alpha)=\alpha$ for all $\alpha<\lambda_1$.   
 Moreover, note that  
 \begin{equation}\label{equation:commute}
  b_1\circ (j\restriction\lambda) ~ = ~ (j\restriction X_0)\circ b_0   
 \end{equation} 
 holds, {i.e.,} the following diagram commutes:
     \[
\xymatrix{ 
\lambda\ar[r]^{b_0}\ar[d]_{j} & X_0 \ar[d]^j\\
\lambda \ar[r]_{b_1}  & X_1 
}
\] 

From the fact that $j(X_0) = X_1$ it follows that $\map{j\upharpoonright X_0}{X_0}{X_1}$ is an elementary embedding. Using this,  consider now the following diagram for a fixed $m < \omega$,
     \[
\xymatrix@=4pc{ 
\lambda_m\ar[r]^{b_0\restriction \lambda_m}\ar[d]_{j\restriction\lambda_m} & b_0[\lambda_m] \ar[d]^{{j\restriction (b_0[\lambda_m])}}\\
\lambda_{m+1} \ar[r]_{b_1\restriction \lambda_{m+1}}  & X_1 
}
\] 
and observe that 
the map $j\restriction \lambda_m$ is the identity on $\lambda_0$ and that the map $j\restriction (b_0[\lambda_m])$ yields a partial elementary embedding from $X_0$ to $X_1$, in the sense that for every first-order formula $\phi(\vec x)$ and every tuple $\vec a \in b_0[\lambda_m]$, we have 
\[X_0 \models \phi(\vec a) \quad \mbox{if and only if} \quad  X_1 \models \phi( j(\vec a)).\]

Let us define $T$ to be the set of all functions $\map{t}{\lambda_m}{\lambda_{m+1}}$, for some $m<\omega$, satisfying the following properties:
\begin{enumerate}
\item $t\restriction\lambda_0=\id_{\lambda_0}$,
\item $t(\lambda_k) = \lambda_{k+1}$ for $k < m$,
\item the  function 
\begin{align*}
t_*: b_0[\lambda_m] &\longrightarrow X_1\\
x &\mapsto (b_1\circ t\circ b_0^{{-}1})(x)
\end{align*}
is a partial elementary embedding from $X_0$ to $X_1$, {i.e.,} the map $t$ has the property that the following diagram commutes:  

   \[
\xymatrix@=4pc{ 
\lambda_m\ar[r]^{b_0\restriction \lambda_m}\ar[d]_{t} & b_0[\lambda_m] \ar[d]^{t_\ast}\\
\lambda_{m+1} \ar[r]_{b_1\restriction \lambda_{m+1}}  & X_1 
}
\] 
\end{enumerate}

Note that \eqref{equation:commute} ensures that $$(j\restriction\lambda_m)_* ~ = ~ j\restriction(b_0[\lambda_m])$$ holds for all $m<\omega$, and therefore we have verified that $j\upharpoonright \lambda_m \in T$ for all $m < \omega$.

By ordering $T$ under end-extensions, we can turn $T$ into a tree of height at most $\omega$. 
Since $j\upharpoonright \lambda_m \in T$ for all $m < \omega$, the tree 
$T$ has an infinite branch in $V$ and hence it has an infinite branch $B$ in $M$ as well. Then $\bigcup B$ is a function from $\lambda$
 to $\lambda$ and, if we define $$\map{i ~ = ~ b_1\circ\left(\bigcup B\right)\circ b_0^{{-}1}}{X_0}{X_1}$$ then $i$ yields an elementary embedding of $X_0$ into $V_\zeta^M$ in $M$ with $j(A)\in\ran{i}$, $i\restriction\lambda_0=\id_{\lambda_0}$, and $i(\lambda_m)=\lambda_{m+1}$ for all $m<\omega$. This contradicts our earlier assumptions.
\end{proof} 

The existence of an exacting cardinal appears to be a sensible large-cardinal principle, for, as shown in \cite{BL} (see Theorem \ref{theorem:ESRcorrespondence} below), it is equivalent to a principle of Structural Reflection. This impression  is further reinforced by the equivalence between being an exacting cardinal and a strong form  of J\'onssonness, given in \S\ref{SectExactJonsson} above. However, as we will next show, the existence of these cardinals fundamentally differs from all standard large cardinal axioms, as they imply the existence of non ordinal-definable sets.

\subsection{V is not HOD}\label{SectExactHOD}

Recall that, given a class $\Gamma$, the class $\OD_\Gamma$ consists of all sets that are definable from parameters in $\Gamma\cup\On$,  and  $\HOD_\Gamma$ is the class of all sets $x$ with $\tc{\{x\}}\subseteq\OD_\Gamma$.

\begin{theorem}\label{VnotHOD}
    If $\lambda$ is an exacting cardinal, then $\lambda$ is a regular cardinal in $\HOD_{V_\lambda}$,  and therefore $\HOD_{V_\lambda}\neq V$.
\end{theorem}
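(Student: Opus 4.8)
The plan is to exploit the fact that an exacting cardinal $\lambda$ has a $1$-exact embedding $\map{j}{X}{V_\zeta}$ with $j(\lambda)=\lambda$ and $j\restriction\lambda\neq\id_\lambda$, for $\zeta$ chosen large and correct enough that $\HOD_{V_\lambda}$ and its basic structure are reflected between $X$ and $V$. First I would observe that $\cof(\lambda)=\omega$ (as noted right after Definition \ref{DefinitionWRB}, since $j\restriction V_\lambda$ is a nontrivial elementary embedding of $V_\lambda$ into itself, so $\lambda$ is a limit of the critical sequence), so $\lambda$ is \emph{singular} in $V$. The whole point is that it is nonetheless \emph{regular} in $\HOD_{V_\lambda}$; once that is shown, $\HOD_{V_\lambda}\neq V$ is immediate, since $V$ sees a cofinal $\omega$-sequence in $\lambda$ that $\HOD_{V_\lambda}$ cannot contain.

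The core of the argument is to suppose toward a contradiction that $\lambda$ is singular in $\HOD_{V_\lambda}$, i.e.\ there is a function $\map{f}{\alpha}{\lambda}$ cofinal in $\lambda$ with $f\in\HOD_{V_\lambda}$ and $\alpha<\lambda$. Then $f$ is definable in $V$ from some parameter $p\in V_\lambda$ together with ordinal parameters; I would pick $\zeta\in C^{(k)}$ for $k$ large enough that this definition is absolute between $V_\zeta$ and $V$, and apply the exacting embedding using Lemma \ref{lemma:Equivalentexact} so as to arrange $p\in X$ and $j(p)=p$ (possible since $p\in V_\lambda\subseteq X$ and $V_\lambda$ is fixed pointwise by $j$ up to the action on $\lambda$ — more precisely $j\restriction V_\lambda$ has critical point $\crit j<\lambda$, but $p$ lies in some $V_{\lambda_m}$ and by choosing the right form of the embedding via Lemma \ref{lemma:Equivalentexact}, item (iii), with $j\restriction\mathrm{rank}(p)$ the identity, we get $j(p)=p$). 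Since $\zeta\in C^{(k)}$ with $k$ large, $V_\zeta$ correctly computes ``$\lambda$ is singular in $\HOD_{V_\lambda}$ via a function definable from $p$,'' hence so does $X$, and hence $X$ contains such a cofinal map; moreover $\HOD_{V_\lambda}$ is definable from $V_\lambda$ alone, and every ordinal below $\lambda$ lies in $X$, so by elementarity the witnessing function $g\in X\cap\HOD_{V_\lambda}$ has domain some $\alpha<\lambda$ with $\alpha\subseteq X$.

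Now comes the contradiction. Since $g$ is ordinal-definable from the parameter $V_\lambda$ (equivalently from a real or set in $V_\lambda$ plus ordinals), and $j$ fixes $\lambda$ and fixes the relevant parameter, elementarity gives $j(g)=g$: indeed $g$ is the unique object satisfying its defining formula with parameters all fixed by $j$. But $\alpha\subseteq X$ and $j\restriction\alpha$ need not be the identity; here is where I would be careful. The key claim is: for each $\beta<\alpha$, $g(\beta)<\lambda$, and applying $j$ we get $j(g)(j(\beta))=j(g(\beta))$, i.e.\ $g(j(\beta))=j(g(\beta))$. Since $g$ is cofinal in $\lambda$ and $\lambda=\sup_m\lambda_m$ with $j(\lambda_m)=\lambda_{m+1}$ (or at least $j$ moves a cofinal set of points of $\lambda$), one derives that $\sup\ran(g)$ is moved by $j$ — but $\sup\ran(g)=\lambda$ and $j(\lambda)=\lambda$, a contradiction only if we also use that $g(\beta)$ is moved for cofinally many $\beta$. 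To make this airtight I would instead argue as in Proposition \ref{proposition:NotInDomainExactt}: reflect not $g$ itself but the \emph{least} ordinal witnessing singularity — take $\alpha$ minimal, definable from $V_\lambda$ hence fixed by $j$, so $\alpha\subseteq X$ with $j\restriction\alpha$ the identity forced by Kunen-style absoluteness (an elementary $j$ fixing $\alpha<\lambda$ pointwise-on-ordinals-below-$\alpha$ yields $j\restriction V_{\alpha+2}$ an endomorphism if $j\restriction\alpha$ is nontrivial). Then $g=j(g)$ and $g\restriction\alpha$ is fixed, so $\ran(g)\subseteq X$ is a cofinal $\alpha$-sequence in $\lambda$ that is fixed pointwise by $j$; but then $j$ fixes a cofinal subset of $\lambda$, forcing $j\restriction V_\lambda$ to be trivial by standard cofinality arguments, contradicting $j\restriction\lambda\neq\id_\lambda$. \emph{The main obstacle} I anticipate is exactly this bookkeeping: ensuring the singularizing function can be taken fixed by $j$ (controlling the parameter and the domain simultaneously via the several equivalent forms in Lemma \ref{lemma:Equivalentexact}) and then converting ``$j$ fixes a cofinal subset of $\lambda$'' into ``$j\restriction\lambda=\id$,'' which uses that any elementary $j\colon V_\lambda\to V_\lambda$ fixing a cofinal set of ordinals is the identity — a consequence of the critical-sequence analysis combined with Kunen's inconsistency applied at the sup of what would otherwise be the critical sequence.
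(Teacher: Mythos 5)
Your final (``airtight'') version is correct and is essentially the paper's proof: fix a parameter $x\in V_\lambda$ and the least-in-the-canonical-wellordering cofinalizing function $c$ of $\HOD_{\{x\}}$, arrange (via Lemma \ref{lemma:Equivalentexact}) that $j$ fixes $x$, $\lambda$, $\cof{\lambda}^{\HOD_{\{x\}}}$ and hence $c$ and its domain pointwise, and contradict the fact that $j$ moves all but boundedly many ordinals below $\lambda$. The only cosmetic difference is that the paper prescribes $j\restriction\alpha=\id_\alpha$ up front using Lemma \ref{lemma:Equivalentexact}\ref{item:Equivalentexact3} rather than deriving it afterwards from Kunen's inconsistency as you do, but both routes are valid.
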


\begin{proof}
 Assume, towards a contradiction, that $\lambda$ is singular in $\HOD_{V_\lambda}$. Then there is $x\in V_\lambda$ such that $\lambda$ is singular in $\HOD_{\{x\}}$. Pick $\lambda<\zeta\in C^{(3)}$ and $\alpha<\lambda$ with $x\in V_\alpha$ and $\cof{\lambda}^{\HOD_{\{x\}}}<\alpha$. Since $\lambda$ is exacting, we can find an elementary submodel $X$ of $V_\zeta$ with $V_\lambda\cup\{\lambda\}\subseteq X$ and an elementary embedding  $\map{j}{X}{V_\zeta}$ with $j(\lambda)=\lambda$ and $j\restriction\alpha=\id_\alpha$.

    Now, let $\map{c}{\cof{\lambda}^{\HOD_{\{x\}}}}{\lambda}$ be the least cofinal function from $\cof{\lambda}^{\HOD_{\{x\}}}$ to $\lambda$ in $\HOD_{\{x\}}$ with respect to the $\Sigma_2$-definable canonical  wellordering of $\HOD_{\{x\}}$. 
    Since $\zeta \in C^{(3)}$, it follows that, in $V_\zeta$, the function $c$ is also the least cofinal function from $\cof{\lambda}^{\HOD_{\{x\}}}$ to $\lambda$ in $\HOD_{\{x\}}$, and therefore it is uniquely defined by this property, in the  parameters $\lambda$,  $\cof{\lambda}^{\HOD_{\{x\}}}$ and $x$. Hence, we know that $c$ is an element of  $X$, and this set is defined in $X$ by the same property and with  the same parameters.
    Since $j$ fixes $\lambda$, $\cof{\lambda}^{\HOD_{\{x\}}}$ and $x$, it follows that $j(c)=c$. Moreover, since $j\restriction\alpha=\id_\alpha$, we also know that $j(c(\xi))=c(\xi)$ holds for all $\xi<\cof{\lambda}^{\HOD_{\{x\}}}$. 
    Now, note that, since $V_\lambda\subseteq X$, the Kunen inconsistency implies that $\lambda$ is the supremum of the critical sequence of $j$ and all but boundedly many ordinals below $\lambda$ are moved by $j$.  We now derived a contradiction, because we have also shown that  $\ran{c}$ is a cofinal subset of $\lambda$ with $j\restriction\ran{c}=\id_{\ran{c}}$.  
    \end{proof}

Theorem \ref{VnotHOD} shows that exacting cardinals cause a strong form of the Axiom of Choice to fail. More specifically, if $\lambda$ is an exacting cardinal, then no wellordering of $V$ is definable using parameters in $V_\lambda$.\footnote{Note that the existence of such a wellordering is equivalent to the assumption $V=\HOD_{V_\lambda}$.}
%in fact that if there is a $2$-exact embedding, then a strong form of the Axiom of Choice fails: the existence of a wellordering of $V$ definable without parameters.
 However, by the proof of Theorem \ref{theorem:ExactFromI0}, the existence of an exacting cardinal  $\lambda$ is consistent with the assumption that $V = \HOD_{V_{\lambda+1}}$ (since the model $M$ in the proof satisfies this) and therefore also with  the existence of a wellordering of $V$ that is definable from parameters in $V_{\lambda+1}$, so Theorem \ref{theorem:ExactFromI0} is best possible.

\begin{corollary}\label{corollary:noimplication}
    If ZFC together with the existence of an I0 embedding is consistent, then this theory does not prove the existence of an exacting cardinal. 
\end{corollary}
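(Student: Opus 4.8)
The plan is to combine Theorem~\ref{VnotHOD} with the fact that the existence of an I0 embedding is compatible with $V=\HOD$, so that the two theories are separated at the level of provability rather than of consistency strength; note that the corollary does \emph{not} follow from Theorem~\ref{theorem:ExactFromI0} alone, since that theorem only yields that the consistency of I0 implies the consistency of an exacting cardinal, which says nothing about whether I0 \emph{proves} the existence of one. Concretely, I would suppose toward a contradiction that $\mathsf{ZFC}$ together with the existence of an I0 embedding proves ``there is an exacting cardinal'', and let $\lambda$ be such a cardinal. By Theorem~\ref{VnotHOD}, $\HOD_{V_\lambda}\neq V$, and since $\HOD\subseteq\HOD_{V_\lambda}$ this gives $\HOD\neq V$; hence the theory $\mathsf{ZFC}$ + ``there is an I0 embedding'' + ``$V=\HOD$'' would be inconsistent. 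It therefore suffices to show that if $\mathsf{ZFC}$ + ``there is an I0 embedding'' is consistent, then so is $\mathsf{ZFC}$ + ``there is an I0 embedding'' + ``$V=\HOD$''.

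To build a model of the latter, I would start from a model $W\models\mathsf{ZFC}$ carrying an I0 embedding $\map{j}{L(V_{\lambda+1})}{L(V_{\lambda+1})}$ with $\crit{j}=\kappa<\lambda$, and force over $W$ with a reverse-Easton iteration $\mathbb{P}$ coding the universe into the $\mathrm{GCH}$ pattern --- of the kind used to make definable well-orders coexist with large cardinals --- so that $W[G]\models V=\HOD$. The point to check is that $j$ lifts to an I0 embedding of the extension; for this one defines the part of $\mathbb{P}$ below $\lambda$ from $j$-invariant data (coding only at a definable, $j$-closed class of cardinals, with the individual stages sufficiently closed), so that a master condition for $j$ can be found and the standard lifting argument produces an I0 embedding $\map{j}{L(V_{\lambda+1})^{W[G]}}{L(V_{\lambda+1})^{W[G]}}$, while the part of $\mathbb{P}$ above $\lambda$ is taken highly closed and causes no trouble. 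Alternatively, since this is a preservation statement about the well-studied axiom I0, one may simply invoke that the existence of an I0 embedding is consistent with $V=\HOD$.

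The main obstacle is exactly this preservation step: I0 is sensitive to any forcing that alters $V_{\lambda+1}$, so essentially all of the difficulty lies in organizing the coding below $\lambda$ so that the I0 embedding survives into the extension. Everything else is immediate: by Theorem~\ref{VnotHOD} exactness of $\lambda$ is outright incompatible with $V=\HOD$, whereas I0 is not, so a model of $\mathsf{ZFC}$ + ``there is an I0 embedding'' + ``$V=\HOD$'' witnesses that $\mathsf{ZFC}$ + ``there is an I0 embedding'' does not prove the existence of an exacting cardinal.
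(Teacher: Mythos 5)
Your reduction of the corollary to the consistency of $\mathsf{ZFC}$ + I0 + ``$V=\HOD$'' is exactly the paper's strategy, and that part is fine: Theorem \ref{VnotHOD} gives $\HOD_{V_\lambda}\neq V$ at any exacting $\lambda$, and since $\HOD\subseteq\HOD_{V_\lambda}$ this rules out $V=\HOD$. The gap is in establishing the consistency of I0 with $V=\HOD$, which you correctly identify as ``the main obstacle'' but do not actually resolve. The construction you sketch --- a coding iteration whose part below $\lambda$ is arranged from $j$-invariant data so that ``a master condition for $j$ can be found and the standard lifting argument produces an I0 embedding'' --- is not a proof: there is no standard lifting argument for I0 embeddings through forcing that alters $V_{\lambda+1}$, because the domain $L(V_{\lambda+1})$ of the embedding is itself changed by any such forcing, and preservation of I0 under forcing below $\lambda$ is a genuinely delicate matter that cannot be waved through. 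Your fallback (``simply invoke that I0 is consistent with $V=\HOD$'') just restates the claim to be proved.

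The missing idea is that no coding below $\lambda$ is needed at all. Fix a cardinal $\mu>\lambda$ (the paper takes $\mu\in C^{(1)}$), first force GCH above $\mu$, and then code every set --- including the sets of rank below $\mu$ --- into the power-set function at cardinals above $\mu$, using class forcing that is $\mu$-closed. This Jensen--McAloon-style coding yields $V=\HOD$ in the extension while adding no new elements of $V_\mu$; in particular $V_{\lambda+1}$, and hence the inner model $L(V_{\lambda+1})$, are literally unchanged. Arranging (by the proof of {\cite[Lemma 5]{MR2914848}}) that $j$ is induced by an ultrafilter $U$ on $V_{\lambda+2}^{L(V_{\lambda+1})}$, the ultrapower of $L(V_{\lambda+1})$ by $U$ computed in the extension coincides with the one computed in the ground model, so the very same I0 embedding witnesses I0 there. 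No master condition and no lifting are required, and the extension is a model of $\mathsf{ZFC}$ + I0 + ``$V=\HOD$,'' which by Theorem \ref{VnotHOD} has no exacting cardinals.
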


\begin{proof}
  Standard class forcing arguments (due to Jensen and MacAloon) show that if    ZFC is consistent with the existence of an I0 embedding, then this theory is also consistent with the statement that $V=\HOD$. For suppose $\map{j}{L(V_{\lambda +1})}{L(V_{\lambda +1})}$ is an I0 embedding. By the proof of {\cite[Lemma 5]{MR2914848}}, we  may assume that $j$ is induced by an ultrapower of $L(V_{\lambda+1})$ using an ultrafilter $U$ on $V_{\lambda+2}^{L(V_{\lambda+1})}$.   Let $\mu\in C^{(1)}$ be a cardinal greater than $\lambda$. Then one can first force the GCH above $\mu$, and then force $V=\HOD$ via a class forcing $\mathbb{P}$ that codes every set in the forcing extension into the power-set function on cardinals above $\mu$, so that the forcing notions are $\mu$-closed, and therefore do not change $V_{\mu}$. %Moreover, $\mathbb{P}$ is $\Delta_2$-definable, with $\mu$ as a parameter. Hence any definable I0-embedding in $V$ yields a definable, with the same parameter $p$,  I0-embedding in the forcing extension. 
  %
  %In $V$, define $$U:=\{ X\subseteq V_{\lambda +1}: X\in L(V_{\lambda +1}) \mbox{ and } j\restriction V_\lambda \in j(X)\}.$$
  %Then $U$ is a normal  $L(V_{\lambda +1})$-ultrafilter over $V_{\lambda +1}$. 
  %
  Since  these class forcings  do not add new elements of $V_\gamma$, 
  these forcings do not change $L(V_{\lambda+1})$ and, when we construct the ultrapower of $L(V_{\lambda +1})$ by $U$ in the forcing extension, then we again obtain the embedding $\map{j}{L(V_{\lambda +1})}{L(V_{\lambda +1})}$. 
  %$U$ remains a normal $L(V_{\lambda +1})$-ultrafilter over $V_{\lambda +1}$ . Now, by taking in the forcing extension the 
  %, the corresponding ultrapower embedding yields an I0 embedding $L(V_{\lambda +1})\to L(V_{\lambda +1})$, possibly different form $j$ (see \cite[Section 4]{MR3855762} for details). 
  This shows that the given class forcing extension is a model of ZFC containing an I0 embedding and satisfying $V=\HOD$. Finally, an application of Theorem \ref{VnotHOD} shows that this model does not contain exacting cardinals. 
\end{proof}

We have shown (Theorem \ref{VnotHOD}) that the existence of an exacting cardinal causes $\HOD$ to be incorrect about its regularity. 
Thus, in view of Woodin's HOD Dichotomy Theorem {\cite[Theorem 3.39]{MR3632568}}, the existence of an exacting cardinal is therefore particularly interesting in the presence of other large cardinals, {e.g.,} extendible cardinals, for the existence of an extendible cardinal below an exacting cardinal implies the failure of Woodin's HOD Hypothesis. This will be the theme of \S\ref{section6} below. 

%%%%%%%%%%%%%%%%%%%
%%%%%%%%%%%%%%%%%%%

\section{Ultraexacting Cardinals}\label{Ultraexact}

Proposition \ref{proposition:NotInDomainExactt} shows that we can strengthen the notion of an exact embedding at a cardinal $\lambda$ by demanding that certain elements of $V_{\lambda+1}$ are contained in the domain of the embedding. 
In the proof of Theorem \ref{theorem:ExactFromI0} above, the embedding $\map{i}{X_0}{X_1}$ constructed is, in general, different from the embedding $\map{j\restriction X_0}{X_0}{X_1}$ induced by the I0-embedding, and its existence is only guaranteed by an absoluteness argument. 
In particular, since we first chose the model $X_0$ and then constructed the embedding $i$, we cannot expect  the map $i\restriction V_\lambda$ to be an element of $X_0$. 
These observations motivate the notion introduced in the next definition:

\begin{definition}
\label{defultraexact}
  Let $n>0$ be a natural number and let  $\lambda$   be a limit cardinal. 
  \begin{enumerate}
      \item An \emph{$n$-ultraexact embedding at $\lambda$} is an $n$-exact embedding $\map{j}{X}{V_\zeta}$ at $\lambda$ with the property that $j\restriction V_\lambda\in X$.  

    \item Given a strictly increasing sequence $\vec{\lambda}=\seq{\lambda_m}{m<\omega}$ of cardinals with supremum $\lambda$, a cardinal $\kappa<\lambda_0$ is \emph{$n$-ultraexact for $\vec{\lambda}$} if for every $A\in V_{\lambda+1}$, there exists 
      an $n$-ultraexact embedding $\map{j}{X}{V_\zeta}$ at $\lambda$ with $A\in\ran{j}$,      $j(\kappa)=\lambda_0$ and $j(\lambda_m)=\lambda_{m+1}$ for all $m<\omega$.  
     If we further require that $j(\crit{j})=\kappa$, then we say that $\kappa$ is \emph{parametrically $n$-ultraexact for $\vec{\lambda}$}. 
  \end{enumerate}
\end{definition}

\subsection{Ultraexactness and rank-Berkeley cardinals}
 Lemma \ref{lemma:EquivalentUltra} below is an analogue of Lemma \ref{lemma:Equivalentexact} which establishes that, as in the case of $n$-exact embeddings, the existence of an $n$-ultraexact embedding at a limit ordinal $\lambda$ does not depend on the parameter $n$. %\jpa{What about $n = 1$? Is this weaker?}

\begin{lemma}\label{lemma:EquivalentUltra}
Given a natural number $n>0$, the following statements are equivalent for every  limit ordinal $\lambda$ and every set $x$: 
\begin{enumerate}
    \item There is an $n$-ultraexact embedding $\map{j}{X}{V_\zeta}$ at $\lambda$ with $x\in X$ and $j(x)=x$. 
    
    %\item There are elements $\eta$ and $\zeta$ of $C^{(2)}$ greater than $\lambda$, an elementary submodel $X$ of $V_\eta$ with $V_\lambda\cup\{\lambda,x\}\subseteq X$, and an elementary embedding $\map{j}{X}{V_\zeta}$ with $j(\lambda)=\lambda$, $j(x)=x$, $j\restriction\lambda \neq \id_\lambda$, and  $j\restriction V_\lambda\in X$. 
    
    \item\label{item:EquivalentUltra2} For every $\zeta>\lambda$ with $x\in V_\zeta$, there is an elementary submodel $X$ of $V_\zeta$ with  $V_\lambda\cup\{\lambda,x\}\subseteq X$, and an elementary embedding $\map{j}{X}{V_\zeta}$ with $j(\lambda)=\lambda$, $j(x)=x$, $j\restriction \lambda \neq \id_\lambda$ and  $j\restriction V_\lambda\in X$. 

    \item For every $\zeta>\lambda$ with $x\in V_\zeta$ and every $\alpha<\lambda$, there is an elementary submodel $X$ of $V_\zeta$ with  $V_\lambda\cup\{\lambda,x\}\subseteq X$, and an elementary embedding $\map{j}{X}{V_\zeta}$ with $j(\lambda)=\lambda$, $j(x)=x$, $j\restriction\alpha=\id_\alpha$, $j\restriction \lambda \neq \id_\lambda$ and  $j\restriction V_\lambda\in X$. 
\end{enumerate}
\end{lemma}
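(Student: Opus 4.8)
The plan is to mirror the proof of Lemma \ref{lemma:Equivalentexact} as closely as possible, with the extra bookkeeping needed to carry the condition $j\restriction V_\lambda\in X$ through all the arguments. As in that proof, the only implication requiring work is (i)$\Rightarrow$(iii); the implications (iii)$\Rightarrow$(ii) and (ii)$\Rightarrow$(i) go through verbatim, the latter by choosing $\zeta\in C^{(n+1)}$ with $x\in V_\zeta$ and reading off an $n$-ultraexact embedding from the witness supplied by (ii) (note that $j\restriction V_\lambda$ being an element of $X$ is preserved under this step, since $X$ and $j$ are unchanged).

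For (i)$\Rightarrow$(iii) I would argue by contradiction, exactly as before. Suppose (i) holds but (iii) fails, and let $\xi>\lambda$ be minimal such that $x\in V_\xi$ and, for some $\alpha<\lambda$, there is no elementary submodel $X$ of $V_\xi$ with $V_\lambda\cup\{\lambda,x\}\subseteq X$ admitting an elementary embedding $\map{i}{X}{V_\xi}$ with $i(\lambda)=\lambda$, $i(x)=x$, $i\restriction\alpha=\id_\alpha$, $i\restriction\lambda\neq\id_\lambda$ \emph{and $i\restriction V_\lambda\in X$}. The key point is that this failure statement about $\xi$ is still $\Sigma_2$ in the parameters $\lambda$ and $x$ --- the new clause ``$i\restriction V_\lambda\in X$'' is a bounded assertion about the model $X$ and the embedding $i$, so it adds no quantifier complexity. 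Hence $\{\xi\}$, and then the analogously chosen minimal $\beta<\lambda$, are $\Sigma_2$-definable from $\lambda$ and $x$. Using (i), pick $\eta\in C^{(n)}$, an elementary submodel $X$ of $V_\eta$ with $V_\lambda\cup\{\lambda,x\}\subseteq X$, $\zeta\in C^{(n+1)}$, and an $n$-ultraexact embedding $\map{j}{X}{V_\zeta}$ with $j(x)=x$ and $j\restriction\lambda\neq\id_\lambda$. Correctness of $V_\zeta$ for $\Sigma_2$ sentences puts $\xi,\beta<\zeta$ and makes $\{\xi\},\{\beta\}$ $\Sigma_2$-definable in $V_\zeta$; upward $\Sigma_2$-absoluteness from $X$ together with $j(\lambda)=\lambda$, $j(x)=x$ gives $\xi,\beta\in X$ with $j(\xi)=\xi$, $j(\beta)=\beta$. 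Set $Y=X\cap V_\xi$ and $i=j\restriction Y$; then $Y\prec V_\xi$, $V_\lambda\cup\{\lambda,x\}\subseteq Y$, and $\map{i}{Y}{V_\xi}$ is elementary with $i(\lambda)=\lambda$, $i(x)=x$, $i\restriction\lambda\neq\id_\lambda$. Crucially, since $\lambda<\xi$ we have $j\restriction V_\lambda = i\restriction V_\lambda$, and $j\restriction V_\lambda\in X$ together with $j\restriction V_\lambda\in V_\xi$ gives $i\restriction V_\lambda\in X\cap V_\xi = Y$; so $i$ witnesses the ultraexact version of the statement for $\xi$. Since $\xi$ was a counterexample for some $\alpha<\lambda$, we conclude $i\restriction\beta\neq\id_\beta$; combined with $i(\beta)=\beta$ this forces $\beta>\omega$ and makes $\map{i\restriction V_{\beta+2}}{V_{\beta+2}}{V_{\beta+2}}$ a non-trivial elementary embedding, contradicting Kunen's inconsistency.

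The main subtlety --- and the only place the argument genuinely differs from Lemma \ref{lemma:Equivalentexact} --- is verifying that adding the clause $j\restriction V_\lambda\in X$ does not raise the definability complexity of the minimal counterexample $\xi$, and that this clause descends correctly to the restricted embedding $\map{i}{Y}{V_\xi}$. Both are handled by the observation that for $\lambda<\xi$ the object $j\restriction V_\lambda$ lives inside $V_\xi$ and equals $i\restriction V_\lambda$, so membership in $Y=X\cap V_\xi$ is automatic from membership in $X$. I do not expect any other obstacle: everything else is a routine transcription of the exact case, and the remark immediately following Lemma \ref{lemma:Equivalentexact} about why prescribing the critical sequence breaks this argument applies here unchanged.
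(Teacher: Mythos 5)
Your proposal is correct and is essentially the paper's own proof: the paper simply says to argue as in Lemma \ref{lemma:Equivalentexact} while carrying along the extra requirement $j\restriction V_\lambda\in X$, and your write-up supplies exactly the two checks that make this work (the clause does not raise the $\Sigma_2$ complexity of the minimal counterexample, and $j\restriction V_\lambda\in V_{\lambda+1}\subseteq V_\xi$ so it descends to $Y=X\cap V_\xi$).
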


\begin{proof}
Argue as in the proof of Lemma \ref{lemma:Equivalentexact}, taking into account the extra requirement that $j\restriction V_\lambda \in X$.  
\end{proof}

As before, we can abstract from Lemma \ref{lemma:EquivalentUltra} a characterization of the existence of ultraexact embeddings  as a weak form of rank-Berkeleyness, leading to the following definition:

\begin{definition}
A cardinal $\lambda$ is \emph{ultraexacting} if for every $\zeta>\lambda$ there exist $X \prec V_\zeta$ with $V_{\lambda} \cup \{\lambda\}\subseteq X$ and an elementary embedding $\map{j}{X}{V_\zeta}$ such that $j(\lambda)=\lambda$, $j\restriction\lambda\neq\id_\lambda$, and  $j\restriction V_\lambda \in X$. 
\end{definition}

Thus, ultraexacting cardinals are defined just like  exacting cardinals, except for the requirement that the embeddings satisfy $j\upharpoonright V_\lambda \in X$. Observe that this condition holds trivially for embeddings obtained from a rank-Berkeley cardinal.  
%The requirement is somewhat reminiscent of the \textit{Hauser embedding property} for weakly compact or strongly unfoldable cardinals, which explains the terminology. 
Applying Lemma \ref{lemma:EquivalentUltra} to $n = 1$ and $x = \varnothing$, we obtain the following direct analog of Corollary \ref{corollary:Exacting1ExactEmbedding}:

\begin{corollary}
A cardinal $\lambda$ is ultraexacting if and only if there is a $1$-ultraexact embedding at $\lambda$. \qed
\end{corollary}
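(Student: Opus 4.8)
The plan is to obtain this corollary directly from Lemma \ref{lemma:EquivalentUltra}, in exactly the way that Corollary \ref{corollary:Exacting1ExactEmbedding} is obtained from Lemma \ref{lemma:Equivalentexact}: all that is required is to match the two clauses in the statement with items (i) and (ii) of the lemma, instantiated at $n = 1$ and $x = \varnothing$.

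First I would unpack the right-hand side. By Definition \ref{defultraexact} (together with Definition \ref{defexact}), ``there is a $1$-ultraexact embedding at $\lambda$'' means there are cardinals $\lambda < \eta \in C^{(1)}$ and $\lambda < \zeta \in C^{(2)}$, an elementary submodel $X$ of $V_\eta$ with $V_\lambda \cup \{\lambda\} \subseteq X$, and an elementary embedding $\map{j}{X}{V_\zeta}$ with $j(\lambda) = \lambda$, $j \restriction \lambda \neq \id_\lambda$ and $j \restriction V_\lambda \in X$; since $\varnothing \in X$ and $j(\varnothing) = \varnothing$ hold automatically, this is precisely item (i) of Lemma \ref{lemma:EquivalentUltra} with $n = 1$ and $x = \varnothing$. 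Next I would unpack the left-hand side: by the definition of an ultraexacting cardinal, ``$\lambda$ is ultraexacting'' means that for every $\zeta > \lambda$ there are $X \prec V_\zeta$ with $V_\lambda \cup \{\lambda\} \subseteq X$ and an elementary embedding $\map{j}{X}{V_\zeta}$ with $j(\lambda) = \lambda$, $j \restriction \lambda \neq \id_\lambda$ and $j \restriction V_\lambda \in X$, and again using that $\varnothing \in V_\zeta$ and $j(\varnothing) = \varnothing$ are vacuous, this is exactly item \ref{item:EquivalentUltra2} of Lemma \ref{lemma:EquivalentUltra} with $n = 1$ and $x = \varnothing$. Since that lemma asserts the equivalence of its three items, the two clauses of the corollary are equivalent. (As in the exacting case, one may additionally note that if $\lambda$ is ultraexacting then $\map{j \restriction V_\lambda}{V_\lambda}{V_\lambda}$ is a non-trivial elementary self-embedding, so the Kunen inconsistency forces $\lambda \in C^{(1)}$ to have countable cofinality and allows us to take the witnessing $X$ of cardinality $\lambda$; but this is not needed for the equivalence.)

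There is essentially no obstacle beyond this bookkeeping. The only subtlety is that the definition of an ultraexacting cardinal uses a single rank-initial segment $V_\zeta$ as both the ambient model of $X$ and the target of $j$, whereas a $1$-ultraexact embedding a priori allows the domain to sit inside a $V_\eta$ with $\eta \neq \zeta$ (and imposes membership of $\eta,\zeta$ in the relevant $C^{(n)}$-classes); reconciling these two formulations is precisely the content of Lemma \ref{lemma:EquivalentUltra}, which in turn is proved by the same $\Sigma_2$-definability and Kunen-inconsistency argument as Lemma \ref{lemma:Equivalentexact}, now carrying along the extra clause $j \restriction V_\lambda \in X$. Hence the corollary follows immediately once the lemma is in hand, and no further argument is needed.
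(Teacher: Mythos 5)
Your proposal is correct and matches the paper's argument exactly: the corollary is obtained by applying Lemma \ref{lemma:EquivalentUltra} with $n=1$ and $x=\varnothing$, identifying item (i) with the existence of a $1$-ultraexact embedding and item \ref{item:EquivalentUltra2} with the definition of an ultraexacting cardinal. Your extra bookkeeping about the vacuous conditions on $\varnothing$ and the reconciliation of $V_\eta$ versus $V_\zeta$ is exactly the content the lemma is designed to supply, so nothing further is needed.
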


It is easy to see that, by Lemma \ref{lemma:EquivalentUltra}, the observations about the interplay between $1$-exact embeddings and extendible cardinals stated after Lemma \ref{lemma:Equivalentexact} also apply in the case of $1$-ultraexact embeddings.
In the following, we  discuss an aspect of $n$-ultraexact embeddings that has no direct analog for $n$-exact embeddings. Lemma \ref{lemma:EquivalentUltra} shows that,  if there is a cardinal that is $1$-ultraexact for some sequence of cardinals with supremum $\lambda$, then $\lambda$ is an ultraexacting cardinal. The next corollary uses the lemma to derive a strong converse of this implication. In particular, it shows that the existence of $n$-ultraexact cardinals is equivalent for all natural numbers $n>0$.

\begin{corollary}
  For each natural number $n>0$, every ultraexacting cardinal $\lambda$ is a limit of cardinals that are parametrically $n$-ultraexact for a sequence of cardinals  with supremum $\lambda$.  
\end{corollary}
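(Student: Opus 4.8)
The plan is to prove the apparently stronger statement that for every $\gamma<\lambda$ there is a cardinal $\kappa$ with $\gamma<\kappa<\lambda$ that is parametrically $n$-ultraexact for a strictly increasing $\omega$-sequence of cardinals with supremum $\lambda$. So I would fix $\gamma<\lambda$ and some $\zeta>\lambda$ in $C^{(n+3)}$, and apply Lemma \ref{lemma:EquivalentUltra}(iii) (with $x=\emptyset$, $\alpha=\gamma+1$, and this $\zeta$) to obtain an $n$-ultraexact embedding $\map{j}{X}{V_\zeta}$ at $\lambda$ with $\crit{j}>\gamma$. By ultraexactness $e:=j\restriction V_\lambda$ lies in $X$, and $e$ is an $\mathrm{I}3$-embedding of $V_\lambda$; let $\vec\kappa=\seq{\kappa_m}{m<\omega}$ be its critical sequence. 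Since $V_\lambda\subseteq X$, the Kunen inconsistency gives $\sup_m\kappa_m=\lambda$, and since $\vec\kappa$ is definable from $e$ we have $\vec\kappa\in X$ with $j(\vec\kappa)=\seq{\kappa_{m+1}}{m<\omega}$. I would take as witness the cardinal $\kappa:=\kappa_1=j(\crit{j})$ together with $\vec\lambda:=\seq{\kappa_{m+2}}{m<\omega}$: then $\gamma<\kappa_0<\kappa<\kappa_2$, the sequence $\vec\lambda$ is strictly increasing with supremum $\lambda$, and $\kappa<\lambda_0$. As $\gamma$ is arbitrary, it then suffices to check that $\kappa$ is parametrically $n$-ultraexact for $\vec\lambda$.

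For that, first note that $j$ itself is an $n$-ultraexact embedding at $\lambda$ with $j(\crit{j})=\kappa$, $j(\kappa)=\lambda_0$ and $j(\lambda_m)=\lambda_{m+1}$ for all $m<\omega$, so $j$ already witnesses the required instance of parametric $n$-ultraexactness of $\kappa$ for $\vec\lambda$ for every $A\in\ran{j}$. The decisive reduction is then the observation that, for an arbitrary $A\in V_{\lambda+1}$, it is enough to produce \emph{some} $n$-ultraexact embedding $\map{i}{Y}{V_{\zeta'}}$ at $\lambda$ with $A\in\ran{i}$ and $i\restriction V_\lambda=e$: the condition $i\restriction V_\lambda=e$ forces the critical sequence of $i$ to equal $\vec\kappa$, whence automatically $\crit{i}=\kappa_0$, $i(\crit{i})=\kappa_1=\kappa$, $i(\kappa)=\kappa_2=\lambda_0$ and $i(\lambda_m)=\lambda_{m+1}$, while $i\restriction V_\lambda=e\in Y$ makes $i$ ultraexact. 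Thus the entire content is: for every $A\in V_{\lambda+1}$, the embedding $e=j\restriction V_\lambda$ extends to an $n$-ultraexact embedding at $\lambda$ whose range contains $A$.

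To establish this extension claim I would argue by contradiction, in the spirit of the proof of Theorem \ref{theorem:ExactFromI0} and using the embedding-extension machinery of this section. Assuming that some $A\in V_{\lambda+1}$ admits no such extension, one reflects the witnessing $A$ into $X$ (legitimate since $\lambda,e,\vec\kappa\in X$ and $V_\zeta$ is sufficiently correct), so that $A\in X$; applying $j$ transports the obstruction to $j(A)$ and to the $\mathrm{I}3$-embedding $j(e)$ (whose critical sequence is $\seq{\kappa_{m+1}}{m<\omega}$). Using that $V_\zeta$ satisfies the $\Sigma_2$ statement ``$e$ extends to an ultraexact embedding at $\lambda$'', one then has, inside $X$ and hence via $j$, ultraexact embeddings restricting to $e$, resp.\ $j(e)$, on $V_\lambda$ with $\lambda$-sized domains that one is free to prescribe. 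Fixing such a target model $X_1$ with $j(A)\in X_1$ and enumerations arranged so that the restrictions of $j$ supply an infinite branch of the tree of partial approximations compatible with the prescribed action on $\vec\kappa$, the absoluteness of the existence of $\omega$-branches through a tree of height $\omega$ yields a branch whose union is the desired $n$-ultraexact embedding with $j(A)$ in its range, contradicting badness of $A$.

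The step I expect to be the main obstacle is exactly this last one --- extending $e$ past the domain of $j$ so as to realize a given $A\in V_{\lambda+1}$ in the range of an ultraexact embedding --- and it is precisely here that ultraexactness (i.e.\ $e=j\restriction V_\lambda\in X$) is indispensable: it is $e\in X$ that lets $V_\zeta$ ``see'' the relevant extension data. As the remark following Lemma \ref{lemma:Equivalentexact} warns, one cannot simply feed the critical sequence $\vec\kappa$ to Lemma \ref{lemma:EquivalentUltra} as a fixed parameter, since $j$ moves it; the work lies in the tree/absoluteness argument above, which is a special case of the general technique (developed in the remainder of this section) for extending embeddings witnessing ultraexactness, and the only delicate bookkeeping is to make the transported partial maps land inside the prescribed target while keeping the restriction to $V_\lambda$ equal to $e$ (respectively $j(e)$) and realizing the prospective bad parameter in the range.
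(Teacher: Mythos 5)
Your setup (obtaining $j$ with $\crit{j}>\gamma$ via Lemma \ref{lemma:EquivalentUltra}, extracting the critical sequence $\vec\kappa$ from $e=j\restriction V_\lambda\in X$, and proposing $\kappa=\kappa_1$ together with $\seq{\kappa_{m+2}}{m<\omega}$) coincides with the paper's. The problem is your ``decisive reduction'': the statement you reduce to --- that \emph{every} $A\in V_{\lambda+1}$ lies in the range of some ultraexact embedding $i$ with $i\restriction V_\lambda=e$ --- is false. If $i\restriction V_\lambda=e$ then $\ran{i}\cap\lambda=e[\lambda]$ (any $\beta\in\ran{i}\cap\lambda$ equals $i(\delta)=e(\delta)$ for some $\delta<\lambda$, since $i(\lambda)=\lambda$), so $A=\crit{e}=\kappa_0$ can never be realized in $\ran{i}$; more generally, Lemma \ref{lemma:j_+} forces $\ran{i}\cap V_{\lambda+1}\subseteq\ran{e_+}$, which omits most of $V_{\lambda+1}$. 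Parametric $n$-ultraexactness only demands, for each $A$, \emph{some} embedding $i$ with $i(\crit{i})=\kappa_1$ and $i(\kappa_{m+1})=\kappa_{m+2}$; the critical point and the restriction to $V_\lambda$ are allowed to vary with $A$, and pinning them to $\kappa_0$ and $e$ destroys the claim. Consistently with this, your sketched proof of the reduction cannot close: after reflecting a bad $A$ into $X$ and applying $j$, you need an ultraexact embedding extending $j(e)$ (not $e$) with $j(A)$ in its range; $j$ itself does not qualify, the candidate $j(i_0)$ for $i_0\in X$ extending $e$ realizes $j(A)$ only if $A\in\ran{i_0}$ --- which is the very statement being proved --- and the tree/absoluteness argument of Theorem \ref{theorem:ExactFromI0} produces \emph{some} branch with no control over the resulting embedding's restriction to $V_\lambda$ (prescribing that restriction would shrink the tree to the single branch coming from $j$, whose existence in the target model is exactly what is in question).

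The paper's argument is much shorter and avoids all of this. Suppose $\kappa_1$ is not parametrically $n$-ultraexact for $\seq{\kappa_{m+2}}{m<\omega}$. The badness of a set $A$ is a statement whose parameters are $\lambda$, $\kappa_1=j(\kappa_0)$ and $\seq{\kappa_{m+2}}{m<\omega}=j(\seq{\kappa_{m+1}}{m<\omega})$, all of which lie in $\ran{j}$ --- and this is precisely where ultraexactness enters, since $e\in X$ puts $\vec\kappa$ into $X$. Pulling the existential statement ``there is a bad $A$'' back through $j$ and pushing the resulting witness forward yields a bad $A\in\ran{j}$; but $j$ itself is an $n$-ultraexact embedding with $A\in\ran{j}$, $j(\crit{j})=\kappa_1$, $j(\kappa_1)=\kappa_2$ and $j(\kappa_{m+2})=\kappa_{m+3}$ for all $m<\omega$, a contradiction. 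No extension of $e$ beyond the domain of $j$ is needed.
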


\begin{proof}
 Fix $\alpha<\lambda$ and $\lambda<\eta\in C^{(n+2)}$. Our assumption on $\lambda$ yields an elementary submodel $X$ of $V_\eta$ with $V_\lambda\cup\{\lambda\}\subseteq X$ and an elementary embedding $\map{j}{X}{V_\eta}$  with $j(\lambda)=\lambda$, $j\restriction\alpha=\id_\alpha$, $j\restriction\lambda\neq\id_\lambda$, and $j\restriction V_\lambda\in X$. Let $\seq{\lambda_m}{m<\omega}$ denote the critical sequence of $j$. Then $\alpha\leq\lambda_0<\lambda_1$ and $\sup_{m<\omega}\lambda_{m+2}=\lambda$. 
  Assume, towards a contradiction, that $\lambda_1$ is not parametrically $n$-ultraexact for $\seq{\lambda_{m+2}}{m<\omega}$. Then there exists $A\in V_{\lambda+1}$ with the property that there is no $n$-ultraexact embedding $\map{i}{Y}{V_\zeta}$ at $\lambda$ with $A\in\ran{i}$, $i(\crit{i})=\lambda_1$,  $i(\lambda_1)=\lambda_2$ and $i(\lambda_{m+2})=\lambda_{m+3}$ for all $m<\omega$. Since $\lambda$, $\lambda_1$ and the sequence $\seq{\lambda_{m+2}}{m<\omega}$ are all contained in $\ran{j}$, the correctness properties of $V_\eta$ and $X$ allow us to find such an $A\in V_{\lambda+1}$ that is contained in $\ran{j}$. But, this yields a contradiction, because $j$ is an $n$-ultraexact embedding at $\lambda$ with $A\in\ran{j}$, $j(\crit{j})=\lambda_1$,  $j(\lambda_1)=\lambda_2$ and $j(\lambda_{m+2})=\lambda_{m+3}$ for all $m<\omega$.  
\end{proof}

 Note that the above argument cannot be adapted to $n$-exact embeddings $\map{j}{X}{V_\zeta}$ with critical sequence $\seq{\lambda_m}{m<\omega}$, because the sequence $\seq{\lambda_{m+2}}{m<\omega}$ might not be an element of $\ran{j}$. 
 Very similar problems arise in {\cite[Question 10.4]{BL}}.

 Analogous to the results of \S\ref{SectExactJonsson}, it is also possible to characterize the property of being an ultraexacting cardinal as a further strengthening of the property of being J\'onsson. However, the resulting characterizations appear less elegant as compared to the characterizations of exacting cardinals, and so we will not discuss them further. In contrast, it turns out that ultraexacting cardinals can be naturally characterized as a principle of Structural Reflection.  We will explore this connection in detail in  \S\ref{SectSSR} below.

\subsection{Extending ultraexact embeddings}\label{subsec3.1}

 Throughout   this section, we aim to show that the domains of ultraexact embeddings  contain extensions of these embeddings to certain sets that are not necessarily subsets of these domains. More specifically, given a $2$-ultraexact embedding $\map{j}{X}{V_\zeta}$, we want to show that for certain $x$ in $X$, there exists a map $k$ with domain $x$ that is an element of $X$ and satisfies $k\restriction(X\cap x)=j\restriction(X\cap x)$. Note that the domain of a $2$-ultraexact embedding always contains elements for which such extensions do not exist. For example, if $\map{j}{X}{V_\zeta}$ is such an embedding at some cardinal $\lambda$, then $V_{\lambda+2}\in X$ and there is no function $k$ in $X$ with domain $V_{\lambda+2}$ and $k\restriction(V_{\lambda+2}\cap X)=j\restriction(V_{\lambda+2}\cap X)$, because otherwise $X$ would think that $k$ is a non-trivial elementary embedding of $V_{\lambda+2}$ into itself and the correctness properties of $X$
 would ensure that this statement also holds in $V$.

 We start by extending ultraexact embeddings at some cardinal $\lambda$ to $V_{\lambda+1}$. For this purpose, we recall the canonical method for extending I3-embeddings $\map{j}{V_\lambda}{V_\lambda}$ to maps from $V_{\lambda+1}$ to $V_{\lambda+1}$. 
 %Given a limit ordinal $\lambda$ and a function $\map{f}{V_\lambda}{V_\lambda}$, we recall the definition of $f_+$ given by \ref{DefinitionFPlus}.

 \begin{definition}\label{DefinitionFPlus}
 Given   a limit ordinal $\lambda$ and a function $\map{f}{V_\lambda}{V_\lambda}$, we define 
\begin{align*}
  f_+ : V_{\lambda+1} &\longrightarrow V_{\lambda+1}\\
A &\mapsto \bigcup\Set{f(A\cap V_\alpha)}{\alpha<\lambda}. 
\end{align*} 
\end{definition}

It is a well-known fact that, if $\map{j}{V_\lambda}{V_\lambda}$ is an I3-embedding, then $\map{j_+}{V_{\lambda+1}}{V_{\lambda+1}}$ is $\Sigma_0$-elementary embedding that extends $j$ (see {\cite[Lemma 1.6]{MR2768690}}). We will now show that  ultraexact embeddings at $\lambda$ induce \emph{I1-embeddings}, {i.e.,} non-trivial, fully elementary embeddings from $V_{\lambda+1}$ to $V_{\lambda+1}$.

 \begin{lemma}\label{lemma:j_+}
     If $\map{j}{X}{V_\zeta}$ is a $1$-ultraexact embedding at $\lambda$, then the map $$\map{(j\restriction V_\lambda)_+}{V_{\lambda+1}}{V_{\lambda+1}}$$ is an  I1-embedding   belonging to  $X$ and satisfying
     \begin{equation}\label{equation:j_+=j}
      j\restriction(V_{\lambda+1}\cap X) ~ = ~ (j\restriction V_\lambda)_+\restriction(V_{\lambda+1}\cap X).
    \end{equation}
 \end{lemma}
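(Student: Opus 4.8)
The plan is to establish the three assertions in the lemma—membership of $(j\restriction V_\lambda)_+$ in $X$, full elementarity, and the agreement equation \eqref{equation:j_+=j}—in an order that exploits the correctness of $X$ and the hypothesis $j\restriction V_\lambda\in X$. First I would observe that $j\restriction V_\lambda$ is genuinely an I3-embedding of $V_\lambda$ into $V_\lambda$: this follows from the remark in the text that for a $1$-exact embedding $\map{j}{X}{V_\zeta}$ at $\lambda$, the restriction $\map{j\restriction V_\lambda}{V_\lambda}{V_\lambda}$ is a non-trivial elementary embedding, together with $V_\lambda\subseteq X$ ensuring it is total. Hence, by the cited fact about $(\,\cdot\,)_+$ (\cite[Lemma 1.6]{MR2768690}), the map $k := (j\restriction V_\lambda)_+$ is a $\Sigma_0$-elementary self-embedding of $V_{\lambda+1}$ extending $j\restriction V_\lambda$, and in particular it is non-trivial.

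Next I would handle \textbf{membership in $X$}. Since $\map{j}{X}{V_\zeta}$ is $1$-\emph{ultra}exact, by definition $j\restriction V_\lambda\in X$. The operation $f\mapsto f_+$ of Definition \ref{DefinitionFPlus} is defined by an absolute formula using only $\lambda$ and $f$ as parameters, and $\lambda\in X$, $V_\lambda\in X$ (as $V_{\lambda+1}\in X$), so $X\models$ ``$k=(j\restriction V_\lambda)_+$ exists''; since $\zeta$ has enough correctness and $X\prec V_\zeta$, the witness computed inside $X$ is the true $k$. Thus $k\in X$, and likewise $V_{\lambda+1}\in X$ with $j(V_{\lambda+1})=V_{\lambda+1}$.

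For the \textbf{agreement equation}, fix $A\in V_{\lambda+1}\cap X$. For each $\alpha<\lambda$ we have $A\cap V_\alpha\in V_\lambda\subseteq X$, so $j(A\cap V_\alpha)=(j\restriction V_\lambda)(A\cap V_\alpha)$, and by elementarity of $j$ on $X$ together with the fact that $\lambda,A\in X$ are fixed appropriately, $j(A)=\bigcup_{\alpha<\lambda} j(A\cap V_\alpha)$—this is because the statement ``$A=\bigcup_{\alpha<\lambda}(A\cap V_\alpha)$'' holds in $X$ and is preserved by $j$ once $j(\lambda)=\lambda$. Hence $j(A)=\bigcup_{\alpha<\lambda}(j\restriction V_\lambda)(A\cap V_\alpha)=k(A)$, which is exactly \eqref{equation:j_+=j}. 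Finally, for \textbf{full elementarity} of $k$ as an I1-embedding, I would argue via reflection: $k\restriction(V_{\lambda+1}\cap X)=j\restriction(V_{\lambda+1}\cap X)$ is (the restriction of) an elementary embedding into $V_\zeta$, so for any first-order $\varphi$ and parameters $\bar a\in V_{\lambda+1}\cap X$ we have $V_{\lambda+1}\models\varphi(\bar a)\iff V_{\lambda+1}\models\varphi(k(\bar a))$ using that $V_{\lambda+1}\in X$ is fixed by $j$ and $\Sigma_0$-correctness of $V_{\lambda+1}$ in $V_\zeta$; since $V_{\lambda+1}\cap X\prec V_{\lambda+1}$ (as $X\prec V_\zeta$ and $V_{\lambda+1}\in X$) and $k$ maps $V_{\lambda+1}$ into itself while agreeing with $j$ on the elementary submodel $V_{\lambda+1}\cap X$, the elementarity propagates to all of $V_{\lambda+1}$.

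\textbf{Main obstacle.} The delicate point is the last step: passing from elementarity of $k$ on the submodel $V_{\lambda+1}\cap X$ to genuine full ($\Sigma_\omega$) elementarity of $k$ on all of $V_{\lambda+1}$. One must be careful that $k=(j\restriction V_\lambda)_+$ is \emph{a priori} only known to be $\Sigma_0$-elementary from the classical fact, and the upgrade to I1 must come from the ultraexactness hypothesis via the identification \eqref{equation:j_+=j} plus the fact that $X$ (hence $V_{\lambda+1}\cap X$) sees $k$ correctly; spelling out precisely which correctness of $\zeta$ (and hence of $X$) is needed to run this reflection is where the real work lies.
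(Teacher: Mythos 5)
Your overall strategy matches the paper's: show $(j\restriction V_\lambda)_+\in X$, prove the agreement equation on $V_{\lambda+1}\cap X$, and then upgrade to full elementarity by a reflection argument. Two steps need repair. The minor one is your justification of the agreement equation: applying $j$ to the statement ``$A=\bigcup_{\alpha<\lambda}(A\cap V_\alpha)$'' only yields the tautology $j(A)=\bigcup_{\alpha<\lambda}(j(A)\cap V_\alpha)$; it does not express $j(A)$ as a union of $j$-images of the pieces $A\cap V_\alpha$. What the paper uses instead is that $X$ contains an $\omega$-sequence $\seq{\lambda_n}{n<\omega}$ cofinal in $\lambda$ (recall $\cof{\lambda}=\omega$ and $\lambda\in X\prec V_\zeta$); applying $j$ to the function $n\mapsto A\cap V_{\lambda_n}$, whose domain $\omega$ is fixed pointwise, gives $j(A)=\bigcup_{n<\omega}j(A\cap V_{\lambda_n})$, and since each $A\cap V_{\lambda_n}$ lies in $V_\lambda\subseteq X$ and the sets $j(A\cap V_\alpha)$ increase with $\alpha$, this union equals $(j\restriction V_\lambda)_+(A)$.

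The more serious issue is the final upgrade. Knowing that $k=(j\restriction V_\lambda)_+$ maps $V_{\lambda+1}$ into itself and agrees with $j$ on the elementary submodel $V_{\lambda+1}\cap X$ does \emph{not} by itself propagate elementarity to all of $V_{\lambda+1}$: a map can agree with an elementary embedding on an elementary submodel and behave arbitrarily off it. The correct mechanism, and the one the paper uses, is that $k$ and $V_{\lambda+1}$ are \emph{elements} of $X$ with $j(V_{\lambda+1})=V_{\lambda+1}$. For a fixed formula $\varphi$, the assertion ``for all $\vec a\in V_{\lambda+1}$, $V_{\lambda+1}\models\varphi(\vec a)\leftrightarrow V_{\lambda+1}\models\varphi(k(\vec a))$'' is a single first-order statement over $V_\zeta$ with parameters $k,V_{\lambda+1}\in X$; one verifies it \emph{inside} $X$ --- exactly your computation for tuples $\vec a\in V_{\lambda+1}\cap X$, using the agreement equation, the elementarity of $j$, and the fact that $k(\vec a)=j(\vec a)\in X$ --- and then $X\prec V_\zeta$ transfers it to all tuples in $V_{\lambda+1}$. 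Your ``main obstacle'' paragraph gestures at this ($X$ ``sees $k$ correctly''), but the proof itself attributes the propagation to $V_{\lambda+1}\cap X\prec V_{\lambda+1}$, which is the wrong (and insufficient) elementarity; the reflection has to go through $X\prec V_\zeta$ with $k$ itself as a parameter.
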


 \begin{proof}
   Set $i=(j\restriction V_\lambda)_+$. First, notice that our assumptions on $X$ and $j$  ensure that both $V_{\lambda+1}$ and $i$ are elements of $X$ with $j(V_{\lambda+1})=V_{\lambda+1}$. 
   Moreover, since $X$ contains an $\omega$-sequence that is cofinal in $\lambda$, elementarity directly implies that $$i\restriction(V_{\lambda+1}\cap X) ~ = ~ j\restriction(V_{\lambda+1}\cap X).$$
   
   Now, pick $a_0,\ldots,a_{n-1}\in V_{\lambda+1}\cap X$ and a formula $\varphi(v_0,\ldots,v_{n-1})$ in the language of set theory with the property that, in $X$, the statement $\varphi(a_0,\ldots,a_{n-1})$ holds in $V_{\lambda+1}$. Then the elementarity of $j$ and our earlier observations  ensure that $\varphi(i(a_0),\ldots,i(a_{n-1}))$ holds in $V_{\lambda+1}$. Since $i(a_0),\ldots,i(a_{n-1}),V_{\lambda+1}\in X$, we can now conclude that, in $X$, the statement $\varphi(i(a_0),\ldots,i(a_{n-1}))$ holds in $V_{\lambda+1}$.

We have thus shown  that $i$ is an I1-embedding in $X$, and the correctness properties of $X$ imply that it is also an I1-embedding in $V$. 
 \end{proof}

 The following lemma is the first step for showing how one can  extend ultraexact embeddings to larger sets.

 \begin{lemma}\label{lemma:ExtendWithSurjection}
  Let  $\map{j}{X}{V_\zeta}$ be a $1$-ultraexact embedding at $\lambda$, and let $\gamma$ be an ordinal in $X$  such that there is a surjection  $\map{s}{V_{\lambda+1}}{\gamma}$  in $X$ with $j(s)\in X$. Then there exists  a unique function $\map{j_\gamma}{\gamma}{j(\gamma)}$ that is an element of  $X$ and satisfies 
      \begin{equation}\label{equation:j_s=j_for_gamma}
         j\restriction(X\cap\gamma) ~ = ~ j_\gamma\restriction(X\cap\gamma).
    \end{equation}
 \end{lemma}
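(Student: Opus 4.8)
The plan is to realise $j_\gamma$ as a composition of three maps that already belong to $X$. By Lemma \ref{lemma:j_+}, the map $i:=(j\restriction V_\lambda)_+$ is an I1-embedding of $V_{\lambda+1}$ into itself, it is an element of $X$, and it satisfies $i\restriction(V_{\lambda+1}\cap X)=j\restriction(V_{\lambda+1}\cap X)$ by \eqref{equation:j_+=j}. The crucial feature of $i$ is that it is a \emph{total} function on $V_{\lambda+1}$, so that $i(A)$ is defined for every $A\in V_{\lambda+1}$ and not merely for $A\in X$; this is exactly the place where the hypothesis $j\restriction V_\lambda\in X$ (i.e., ultraexactness rather than mere exactness) is used, since no such $i\in X$ is available otherwise. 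Since $\zeta\in C^{(2)}$, the model $V_\zeta$, and hence $X$ (as $V_{\lambda+1},\gamma,s$ all lie in $X$), satisfies $\mathsf{ZFC}$ together with the statement ``$s$ is a surjection of $V_{\lambda+1}$ onto $\gamma$''. Working inside $X$ and invoking the Axiom of Choice there, I fix a right inverse $\map{\sigma}{\gamma}{V_{\lambda+1}}$ of $s$, so that $\sigma\in X$ and $s\circ\sigma=\id_\gamma$.

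I then set $j_\gamma:=j(s)\circ i\circ\sigma$. Note that $j(s)$ has domain $j(V_{\lambda+1})=V_{\lambda+1}$ since $j(\lambda)=\lambda$, so the composition makes sense; its domain is $\gamma$, and applying the elementarity of $j$ to ``$\ran{s}\subseteq\gamma$'' gives $\ran{j(s)}\subseteq j(\gamma)$, whence $\ran{j_\gamma}\subseteq\ran{j(s)}\subseteq j(\gamma)$. Since $j(s)\in X$ by hypothesis, $i\in X$ and $\sigma\in X$, and composition of functions is absolute between $X$ and $V_\zeta$, the map $j_\gamma$ is an element of $X$. Thus $\map{j_\gamma}{\gamma}{j(\gamma)}$ belongs to $X$, as required.

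It remains to verify \eqref{equation:j_s=j_for_gamma}. Fix $\xi\in X\cap\gamma$. Then $\sigma(\xi)\in X$ (as $\sigma,\xi\in X$) and $\sigma(\xi)\in V_{\lambda+1}$, so $\sigma(\xi)\in V_{\lambda+1}\cap X$, whence $i(\sigma(\xi))=j(\sigma(\xi))$ by \eqref{equation:j_+=j}. Moreover $s(\sigma(\xi))=\xi$, and applying the elementarity of $\map{j}{X}{V_\zeta}$ to this equation (recall $s,\sigma(\xi)\in X$) yields $j(s)(j(\sigma(\xi)))=j(\xi)$. Combining these,
\[
j_\gamma(\xi) ~ = ~ j(s)\bigl(i(\sigma(\xi))\bigr) ~ = ~ j(s)\bigl(j(\sigma(\xi))\bigr) ~ = ~ j(\xi).
\]
Uniqueness is then automatic: if $j_\gamma'\in X$ were another function from $\gamma$ to $j(\gamma)$ satisfying \eqref{equation:j_s=j_for_gamma}, then the least $\xi<\gamma$ with $j_\gamma(\xi)\neq j_\gamma'(\xi)$, if one existed, would be definable in $X$ from the parameters $j_\gamma$ and $j_\gamma'$, hence would lie in $X\cap\gamma$; but there the two maps agree, as both coincide with $j$. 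I do not expect a genuine obstacle in this argument; the only delicate points are the use of the total extension $i$ in place of $j\restriction X$, and the check that $j(s)\circ i\circ\sigma$ genuinely lies in $X$, which is precisely where the extra hypothesis $j(s)\in X$ is consumed.
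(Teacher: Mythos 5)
Your proof is correct and follows essentially the same route as the paper: both transport $j$ to $\gamma$ through the surjection $s$ using the total map $(j\restriction V_\lambda)_+\in X$ together with the hypothesis $j(s)\in X$, the only difference being that you choose a section $\sigma$ of $s$ inside $X$, whereas the paper instead checks that $s(x)\mapsto j(s)\bigl((j\restriction V_\lambda)_+(x)\bigr)$ is well defined on the fibers of $s$ and so never needs a choice function. One small imprecision: for $\zeta\in C^{(2)}$ the set $V_\zeta$ need not satisfy full $\mathsf{ZFC}$ (replacement can fail when $\zeta$ is singular), but a right inverse of $s$ exists in $V$ by the Axiom of Choice and lies in $V_\zeta$ by a rank computation, so elementarity of $X$ in $V_\zeta$ still produces $\sigma\in X$ and the rest of your argument goes through unchanged.
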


 \begin{proof}
     Given $x,y\in V_{\lambda+1}\cap X$ with $s(x)=s(y)$, we can apply \eqref{equation:j_+=j} to see that $$j(s)((j\restriction V_\lambda)_+(x)) ~ = ~ j(s)(j(x)) ~ = ~ j(s(x)) ~ =  ~ j(s(y)) ~ =  ~ j(s)(j(y)) ~ =  ~ j(s)((j\restriction V_\lambda)_+(y))$$ holds. Hence, in $X$ it holds that  there is a well-defined function $\map{j_\gamma}{\gamma}{j(\gamma)}$ satisfying $$j_\gamma(s(x)) ~ =  ~ j(s)((j\restriction V_\lambda)_+(x))$$ for all $x\in V_{\lambda+1}$. By the correctness of $X$, the function  $\map{j_\gamma}{\gamma}{j(\gamma)}$ has  the same properties in $V$. 

     Now, fix $\beta\in X\cap\gamma$. Then there is $x\in V_{\lambda+1}\cap X$ with $s(x)=\beta$ and \eqref{equation:j_+=j} shows that $$j_\gamma(\beta) ~ = ~ j(s)((j\restriction V_\lambda)_+(x)) ~ = ~ j(s)(j(x)) ~ = ~ j(s(x)) ~ = ~ j(\beta).$$

     Finally, to show uniqueness, let $\map{k}{\gamma}{j(\gamma)}$ be a function in $X$ such that  $k\restriction(X\cap\gamma)=j\restriction(X\cap\gamma)$. Then  we have $k(\beta)=j(\beta)=j_\gamma(\beta)$ for all $\beta\in X\cap\gamma$ and elementarity allows us to conclude that $k=j_\gamma$. 
  \end{proof}

  %As an application of the two results above, we show that every ultraexact embedding can be extended to the successor of the corresponding limit cardinals: 

  %\begin{corollary}
   %   If $\map{j}{X}{V_\zeta}$ is a $2$-ultraexact embedding at $\lambda$, then there is a surjection $\map{s}{V_{\lambda+1}}{\lambda^+}$ in $X$ with $j(s)=s$ and hence $\map{j_s}{\lambda^+}{\lambda^+}$ is the   unique function $k$ from $\lambda^+$ to $\lambda^+$ in $X$ with $k\restriction(X\cap\lambda^+)=j\restriction(X\cap\lambda^+)$. 
  %\end{corollary}

  %\begin{proof}
   %   Let $\map{s}{V_{\lambda+1}}{\lambda^+}$ be the unique function that sends a well-ordering of an ordinal less than or equal to $\lambda$ to its order-type and all other elements of $V_{\lambda+1}$ to $0$. Then $s$ is an element of $X$ with $j(s)$. Moreover, the uniqueness of $j_s$ directly follows from Lemma \ref{lemma:ExtendWithSurjection}. 
  %\end{proof}

  The following lemma will play a crucial role in the applications of the results of this section.

\begin{lemma}\label{lemma:FixedPointsInXclub}
    If $\map{j}{X}{V_\zeta}$ is a $1$-ultraexact embedding at $\lambda$, the ordinal $\gamma\in X$ is   greater than $\lambda$, of uncountable cofinality and  such that   $j(\gamma)=\gamma$, and $\map{s}{V_{\lambda+1}}{\gamma}$ is a surjection in $X$  with $j(s)\in X$, then   the set  $$C_{j,\gamma} ~ := ~ \Set{\beta<\gamma}{j_\gamma(\beta)=\beta}$$ is an unbounded and $\omega$-closed subset of $\gamma$ that is an element of $X$. 
\end{lemma}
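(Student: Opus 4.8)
The plan is to prove the three asserted properties of $C_{j,\gamma}$ — membership in $X$, $\omega$-closedness, and unboundedness — separately, leaning on Lemma \ref{lemma:ExtendWithSurjection} to produce the extension $\map{j_\gamma}{\gamma}{j(\gamma)}$ and on Lemma \ref{lemma:j_+} to control the behaviour of $j$ on $V_{\lambda+1}\cap X$. First, membership: since $j(\gamma)=\gamma$, the function $j_\gamma$ maps $\gamma$ to $\gamma$, and $j_\gamma\in X$ by Lemma \ref{lemma:ExtendWithSurjection}. The set $C_{j,\gamma}$ is then definable in $X$ from $j_\gamma$ and $\gamma$, so $C_{j,\gamma}\in X$. (One should note $C_{j,\gamma}$ as computed inside $X$ agrees with the genuine fixed-point set, since both $j_\gamma$ and $\gamma$ are the same in $X$ and in $V$.)

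Second, $\omega$-closedness. Suppose $\seq{\beta_n}{n<\omega}$ is an increasing sequence in $C_{j,\gamma}$ with supremum $\beta_\omega<\gamma$; I want $j_\gamma(\beta_\omega)=\beta_\omega$. Here the key point is that $j_\gamma$ restricted to $\sup$ of a countable set behaves continuously: because $j_\gamma\in X$ and $\gamma$ has uncountable cofinality, $X$ contains cofinal-in-$\beta_\omega$ sequences, and elementarity together with the fact that $j_\gamma$ fixes each $\beta_n$ forces $j_\gamma(\beta_\omega)=\sup_n j_\gamma(\beta_n)=\sup_n\beta_n=\beta_\omega$. More carefully: $j_\gamma$ need not be continuous everywhere, but $j_\gamma\restriction(X\cap\gamma)=j\restriction(X\cap\gamma)$ by \eqref{equation:j_s=j_for_gamma}, and one can choose the witnessing cofinal sequence inside $X$; since $j$ fixes each $\beta_n$ (they lie in $C_{j,\gamma}\subseteq \gamma$, and if they also happen to lie in $X$ this is immediate, but in general one argues with the $X$-internal cofinal sequence below $\beta_\omega$ whose terms are fixed by $j_\gamma$). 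I expect this to require a small amount of care about which ordinals below $\beta_\omega$ actually lie in $X$ versus merely having $j_\gamma$-image equal to themselves.

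Third — and this is the main obstacle — unboundedness of $C_{j,\gamma}$ in $\gamma$. The standard trick is: given $\beta_0<\gamma$, set $\beta_{n+1}=j_\gamma(\beta_n)$ (or the sup of $j_\gamma$ on $\beta_n$, to stay in $\gamma$) and take $\beta_\omega=\sup_n\beta_n$; then $j_\gamma(\beta_\omega)=\beta_\omega$ by the closure argument just described, provided each $\beta_{n+1}<\gamma$. The delicate point is showing $j_\gamma(\beta_n)<\gamma$ — i.e., that $j_\gamma$ does not send ordinals below $\gamma$ up to or past $\gamma$. Since $j_\gamma$ is built via the surjection $s$ from $V_{\lambda+1}$ and $(j\restriction V_\lambda)_+$, and $j(\gamma)=\gamma$, one sees $j_\gamma$ actually maps $\gamma$ into $\gamma$; that already gives $j_\gamma(\beta_n)<\gamma$. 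So the genuine content is just the closure step, and unboundedness follows by iterating it. I would therefore organize the proof as: (1) $C_{j,\gamma}\in X$; (2) closure under countable sups, using uncountable cofinality of $\gamma$ and $j_\gamma\in X$ to pick cofinal sequences witnessing fixedness; (3) unboundedness by the $\beta_\omega$-closure construction, using $j_\gamma[\gamma]\subseteq\gamma$ (from $j(\gamma)=\gamma$) to keep the approximations below $\gamma$. The hard part is purely step (2): making precise the interplay between "$j_\gamma$ fixes $\beta$" and "$\beta$ or a cofinal set below it lies in $X$", so that \eqref{equation:j_+=j} and \eqref{equation:j_s=j_for_gamma} can be invoked to push continuity through.
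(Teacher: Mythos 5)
Your outline of parts (1) and (3) matches the paper, but the step you yourself flag as unresolved --- $\omega$-closure for an \emph{arbitrary} increasing $\omega$-sequence from $C_{j,\gamma}$, whose terms and supremum need not lie in $X$ --- is a genuine gap, and the auxiliary claim you lean on there is false as stated: if $\beta_\omega\notin X$, then $X$ need not contain any sequence cofinal in $\beta_\omega$, so there is no ``$X$-internal cofinal sequence below $\beta_\omega$'' to argue with, and for ordinals outside $X$ you have no control over $j_\gamma$ beyond its being some strictly increasing function. The missing idea is that you should never confront arbitrary sequences directly. Since $C_{j,\gamma}$, $\gamma$ and $j_\gamma$ are all elements of $X$ and $X\prec V_\zeta$ with $\zeta\in C^{(2)}$, the assertions ``$C_{j,\gamma}$ is $\omega$-closed in $\gamma$'' and ``$C_{j,\gamma}$ is unbounded in $\gamma$'' are first-order statements about parameters in $X$, so it suffices to verify them \emph{inside} $X$, i.e.\ only for sequences and ordinals that are elements of $X$; elementarity and the correctness of $V_\zeta$ then export the full statements to $V$. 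For a sequence $\seq{\alpha_i}{i<\omega}\in X$ with terms in $C_{j,\gamma}$, every term and the supremum $\alpha$ lie in $X$, so \eqref{equation:j_s=j_for_gamma} gives $j_\gamma(\alpha)=j(\alpha)=\sup_i j(\alpha_i)=\sup_i j_\gamma(\alpha_i)=\alpha$, where the middle equality is the elementarity of $j$ applied to the sequence as an element of $X$. No continuity of $j_\gamma$ at points outside $X$ is ever needed.

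The same reduction streamlines unboundedness: given $\beta\in X\cap\gamma$ (which suffices, again by elementarity), use $j_\gamma\in X$ and the uncountable cofinality of $\gamma$ to pick, inside $X$, a strictly increasing sequence $\seq{\alpha_i}{i<\omega}$ in $(\beta,\gamma)$ with $\alpha_{i+1}>j_\gamma(\alpha_i)$; its supremum is then a fixed point by the computation above. Your iteration $\beta_{n+1}=j_\gamma(\beta_n)$ works just as well once the starting point is taken in $X$ (so that the whole sequence is definable in $X$ and hence an element of $X$), and your observation that $j(\gamma)=\gamma$ forces $j_\gamma$ to map $\gamma$ into $\gamma$ is correct and is indeed what keeps the approximations below $\gamma$. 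So the architecture is right; what must be added is the explicit reduction-to-$X$-parameters via elementarity, without which step (2) does not close.
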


\begin{proof}
  First, note that the fact that  $\gamma$ and $j_\gamma$ (as given by Lemma \ref{lemma:ExtendWithSurjection}) are both elements of $X$ implies that $C_{j,\gamma}$ also belongs to $X$. Now suppose $\vec{\alpha}=\seq{\alpha_i}{i<\omega}$ is a strictly increasing sequence of ordinals in $C_{j,\gamma}$ that is an element of $X$. Set $\alpha=\sup_{i<\omega}\alpha_i\in X\cap\gamma$. Then the elementarity of $j$ and \eqref{equation:j_s=j_for_gamma} ensure that $$j_\gamma(\alpha) ~ = ~ j(\alpha) ~ = ~ \sup_{i<\omega}j(\alpha_i) ~ = ~ \sup_{i<\omega}j_\gamma(\alpha_i) ~ = ~ \sup_{i<\omega}\alpha_i ~ = ~ \alpha$$
  and therefore $\alpha  \in  C_{j,\gamma}$. This  shows that $C_{j,\gamma}$ is an $\omega$-closed subset of $\gamma$ in $X$, and the correctness of $X$ ensures that $C_{j,\gamma}$ also has this property in $V$. 

  Now, pick an ordinal $\beta\in X\cap\gamma$. Note that \eqref{equation:j_s=j_for_gamma} and the elementarity of $j$ ensure that the function $j_\gamma$ is strictly increasing. Since $j_\gamma\in X$, we know that $X$ contains a strictly increasing sequence $\seq{\alpha_i}{i<\omega}$ of ordinals in the interval $(\beta,\gamma)$ with the property that $\alpha_{i+1}>j_\gamma(\alpha_i)$ holds for all $i<\omega$. Set $\alpha=\sup_{i<\omega}\alpha_i$ and note that $\alpha \in X\cap\gamma$. As above, we have that $$j_\gamma(\alpha) ~ = ~ j(\alpha) ~  = ~ \sup_{i<\omega}j(\alpha_i) ~ = ~ \sup_{i<\omega}j_\gamma(\alpha_i) ~ = ~ \alpha$$
  and so $\alpha \in  C_{j,\gamma}\setminus\beta$. This shows that $C_{j,\gamma}$ is an unbounded subset of $\gamma$ in $X$, and the correctness  of $X$ again implies that this also true in $V$. 
\end{proof}

   Recall that, given some non-empty set $X$, the ordinal $\Theta_X$ is defined to be the supremum of all ordinals $\gamma$ such that there is a surjection from $X$ onto $\gamma$. As usual, we  are particularly interested in computing the ordinal $\Theta_X^M$ in inner models $M$  of ZF containing $X$. 
 In the following, given a set $E$, we shall  write $\Theta^{L(V_{\lambda+1},E)}$ instead of $\Theta_{V_{\lambda+1}}^{L(V_{\lambda+1},E)}$. An argument due to Solovay (see \cite[Exercise 28.19]{Kan:THI}) shows that $\Theta^{L(V_{\lambda+1},E)}$ is a regular cardinal in $L(V_{\lambda+1},E)$.

  \begin{lemma}\label{proposition:ThetaFixed}
       If $\map{j}{X}{V_\zeta}$ is a $1$-ultraexact embedding at $\lambda$ and $E\in V_{\lambda+2}\cap X$ is such that $j(E)=E$, then $\Theta^{L(V_{\lambda+1},E)}\in X$ and  $j(\Theta^{L(V_{\lambda+1},E)})=\Theta^{L(V_{\lambda+1},E)}$. 
  \end{lemma}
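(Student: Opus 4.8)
The ordinal $\Theta^{L(V_{\lambda+1},E)}$ is uniformly definable from the two parameters $V_{\lambda+1}$ and $E$ by a first-order formula of low complexity, and the embedding $j$ fixes both of these parameters; so the only genuine work is to check that this definition is \emph{correctly computed inside} $V_\zeta$, after which both conclusions follow by two applications of elementarity. First I would record the trivial facts: $V_\lambda\in X$ and $\lambda\in X$, hence $V_{\lambda+1}=\mathcal P(V_\lambda)\in X$, while $E\in X$ by hypothesis; and $j(\lambda)=\lambda$ gives $j(V_\lambda)=V_\lambda$, hence $j(V_{\lambda+1})=V_{\lambda+1}$, while $j(E)=E$ is assumed.

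\textbf{The size bound.} Since $\map{j}{X}{V_\zeta}$ is a $1$-ultraexact, hence $1$-exact, embedding at $\lambda$, the ordinal $\zeta$ lies in $C^{(2)}\subseteq C^{(1)}$, so $\zeta$ is a $\beth$-fixed point above $\lambda$; in particular $\zeta$ is a strong-limit limit cardinal and $V_\zeta=H_\zeta$, so $\betrag{V_{\lambda+1}}<\zeta$ and therefore $(2^{\betrag{V_{\lambda+1}}})^+<\zeta$. Now any surjection $V_{\lambda+1}\twoheadrightarrow\gamma$ yields, via fibres, an injection $\gamma\hookrightarrow\mathcal P(V_{\lambda+1})$, so $\Theta^{L(V_{\lambda+1},E)}\le(2^{\betrag{V_{\lambda+1}}})^+<\zeta$; more generally, every ordinal $\gamma$ admitting a surjection $V_{\lambda+1}\twoheadrightarrow\gamma$ inside $L(V_{\lambda+1},E)$ lies below $\zeta$. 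This a priori bound is exactly what makes the ordinal $\Theta^{L(V_{\lambda+1},E)}$ visible inside $V_\zeta$.

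\textbf{Absoluteness and conclusion.} The relativised constructible hierarchy $\alpha\mapsto L_\alpha(V_{\lambda+1},E)$ is $\Delta_1$-definable in the parameters $V_{\lambda+1}$ and $E$, so ``there is a surjection from $V_{\lambda+1}$ onto $\gamma$ belonging to $L(V_{\lambda+1},E)$'' is $\Sigma_1$ in $V_{\lambda+1},E,\gamma$, and hence ``$\delta$ is the supremum of all $\gamma$ with this property'' is $\Pi_2$ in $V_{\lambda+1}$ and $E$. Since $\zeta\in C^{(2)}$, this statement is absolute between $V_\zeta$ and $V$ for these parameters; together with $V_{\lambda+1},E\in V_\zeta$ and $\delta:=\Theta^{L(V_{\lambda+1},E)}<\zeta$, it follows that $\delta$ is \emph{the} ordinal of $V_\zeta$ satisfying this defining formula with parameters $V_{\lambda+1}$ and $E$. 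As $V_{\lambda+1},E\in X\prec V_\zeta$, elementarity gives $\delta\in X$ with $X$ satisfying the same formula, so $\Theta^{L(V_{\lambda+1},E)}\in X$; applying $j$ and using $j(V_{\lambda+1})=V_{\lambda+1}$ and $j(E)=E$, we get $V_\zeta\models$ ``$j(\delta)$ is the supremum of all $\gamma$ admitting a surjection $V_{\lambda+1}\twoheadrightarrow\gamma$ in $L(V_{\lambda+1},E)$'', and the uniqueness of $\delta$ in $V_\zeta$ forces $j(\delta)=\delta$, i.e.\ $j(\Theta^{L(V_{\lambda+1},E)})=\Theta^{L(V_{\lambda+1},E)}$.

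\textbf{Main obstacle.} The only delicate point is that $L(V_{\lambda+1},E)$ is a proper class from the standpoint of $V_\zeta$, so ``$\delta=\Theta^{L(V_{\lambda+1},E)}$'' has to be treated purely as a first-order property — a supremum of a $\Sigma_1$-definable set of ordinals — whose absoluteness one then extracts from $\zeta\in C^{(2)}$, crucially in combination with the bound $\Theta^{L(V_{\lambda+1},E)}<\zeta$ coming from the fibre-injection argument and from $\zeta$ being a $\beth$-fixed point. Note that Solovay's regularity of $\Theta^{L(V_{\lambda+1},E)}$ in $L(V_{\lambda+1},E)$ is not needed here; it enters only later, when this lemma is combined with Lemma~\ref{lemma:FixedPointsInXclub}, which requires the fixed ordinal to have uncountable cofinality.
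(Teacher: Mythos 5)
Your proof is correct and follows essentially the same route as the paper's: $\Theta^{L(V_{\lambda+1},E)}$ is defined by a formula of complexity at most $\Sigma_2$/$\Pi_2$ in parameters fixed by $j$, the $C^{(2)}$-correctness of $\zeta$ guarantees both the bound $\Theta^{L(V_{\lambda+1},E)}<\zeta$ and the correctness of the definition inside $V_\zeta$, and then elementarity of $X\prec V_\zeta$ and of $j$ yields membership in $X$ and fixedness. You merely spell out in more detail the cardinality bound and the complexity computation that the paper states summarily.
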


  \begin{proof}
   First, note that the ordinal $\Theta^{L(V_{\lambda+1},E)}$ can be defined by a $\Sigma_2$-formula with parameters  $E$ and $\lambda$. Since $\zeta$ is an element of $C^{(2)}$, it follows that $\Theta^{L(V_{\lambda+1},E)}<\zeta$ and this ordinal can be defined in $V_\zeta$ by the same $\Sigma_2$-formula. Since $j(\lambda)=\lambda$ and $j(E)=E$, the elementarity of $j$ and the fact that all $\Sigma_2$-statement are upwards absolute from $X$ to $V$ now ensure that $\Theta^{L(V_{\lambda+1},E)}$ is an element of $X$ with $j(\Theta^{L(V_{\lambda+1},E)})=\Theta^{L(V_{\lambda+1},E)}$. 
  \end{proof}

  \begin{lemma}\label{lemma:ExtendBelowTheta}
   If $\map{j}{X}{V_\zeta}$ is a $1$-ultraexact embedding at $\lambda$, $E\in V_{\lambda+2}\cap X$ with $j(E)=E$  and $\gamma\in X\cap\Theta^{L(V_{\lambda+1},E)}$ with $j(\gamma)\in X$, then there is a surjection $\map{s}{V_{\lambda+1}}{\gamma}$ in $X$ with $j(s)\in X$. 
  \end{lemma}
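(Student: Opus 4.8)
The plan is to define $s$ canonically from the parameters $\gamma$, $E$, $\lambda$ inside $V_\zeta$, and then let elementarity do the rest. Recall from Lemma~\ref{proposition:ThetaFixed} and its proof that $\Theta^{L(V_{\lambda+1},E)}<\zeta$, that it is definable in $V_\zeta$ by a $\Sigma_2$-formula in $E$ and $\lambda$, and that $j$ fixes it; and recall Solovay's argument, cited before that lemma, that $\Theta^{L(V_{\lambda+1},E)}$ is regular in $L(V_{\lambda+1},E)$. A standard unfolding of a surjection into its defining data (using that every element of $L(V_{\lambda+1},E)$ is definable over some level from $V_{\lambda+1}$, $E$, ordinals, and finitely many members of $V_{\lambda+1}$) shows that $\Theta^{L(V_{\lambda+1},E)}$ is already the supremum of the ordinals that are surjective images of $V_{\lambda+1}$ via a map definable over $L(V_{\lambda+1},E)$ from $V_{\lambda+1}$, $E$ and ordinal parameters. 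Since such ``ordinal-definable'' surjections are canonically wellordered (by least defining level, then least G\"odel number, then least ordinal parameters), for $\gamma<\Theta^{L(V_{\lambda+1},E)}$ we may let $s=s(\gamma)$ be the least surjection $\map{s}{V_{\lambda+1}}{\gamma}$ of this form. This makes $s$ definable from $\gamma$, $E$, $\lambda$ by a single formula $\varphi$, with $V\models\exists!\,v\,\varphi(v,\gamma,E,\lambda)$ whenever $\gamma<\Theta^{L(V_{\lambda+1},E)}$.

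The step I expect to be the main obstacle is to show that $\varphi$ is evaluated correctly in $V_\zeta$, i.e.\ that $V_\zeta\models\varphi(s,\gamma,E,\lambda)$ and that $s$ is the unique witness there. The point is that the defining level of $s$, and hence the portion of the $L(V_{\lambda+1},E)$-hierarchy needed to recognise $s$ as least, must be shown to lie below $\zeta$. This is a condensation argument internal to $L(V_{\lambda+1},E)$: given an ordinal-definable surjection $\map{t}{V_{\lambda+1}}{\gamma}$, defined over $L_\rho(V_{\lambda+1},E)$ from ordinal parameters $\vec\beta$, collapse the Skolem hull $H$ of $V_{\lambda+1}\cup\gamma$ together with $E$, $t$, $\vec\beta$ inside $L_\rho(V_{\lambda+1},E)$; since $t\in H$ witnesses that $V_{\lambda+1}$ surjects onto $\gamma$ (hence onto finite sequences from $\gamma$), and $V_{\lambda+1}$ surjects onto its own finite sequences, $V_{\lambda+1}$ surjects onto $H$, so $H\cap\On$ has order type below $\Theta^{L(V_{\lambda+1},E)}$; its transitive collapse is some $L_{\bar\rho}(V_{\lambda+1},E)$ with $\bar\rho<\Theta^{L(V_{\lambda+1},E)}$ over which $t$ is still definable from ordinal parameters, with $\gamma$ not moved because $\gamma\subseteq H$. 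Thus every ordinal-definable surjection onto $\gamma$, and in particular $s$, is witnessed below $\Theta^{L(V_{\lambda+1},E)}<\zeta$; since $V_\zeta$ computes $L_\eta(V_{\lambda+1},E)$ correctly for $\eta<\zeta$, the formula $\varphi$ is absolute between $V_\zeta$ and $V$ for the relevant parameters.

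Granting this, the rest is elementarity. Since $\gamma,E,\lambda\in X\prec V_\zeta$ and $V_\zeta\models\exists!\,v\,\varphi(v,\gamma,E,\lambda)$ with witness $s$, the unique witness in $X$ is $s$ as well, so $s\in X$; moreover $s$ is a surjection $\map{s}{V_{\lambda+1}}{\gamma}$, this being $\Delta_0$ in parameters drawn from $V_\zeta$. Applying the elementary embedding $j$ to $X\models\varphi(s,\gamma,E,\lambda)$, and using $j(E)=E$ and $j(\lambda)=\lambda$, gives $V_\zeta\models\varphi(j(s),j(\gamma),E,\lambda)$. Finally $j(\gamma)<\Theta^{L(V_{\lambda+1},E)}$, because $\gamma<\Theta^{L(V_{\lambda+1},E)}$ with both ordinals in $X$ while $j$ fixes $\Theta^{L(V_{\lambda+1},E)}$ by Lemma~\ref{proposition:ThetaFixed}; hence $\varphi$ has a unique witness over $V_\zeta$ from the parameters $j(\gamma),E,\lambda$, all of which lie in $X$, and that witness---which is $j(s)$---therefore belongs to $X$. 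This gives $s\in X$ with $j(s)\in X$, as required.
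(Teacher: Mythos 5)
Your proof is correct, and its skeleton matches the paper's: produce a surjection $s$ onto $\gamma$ that is canonically definable in $V_\zeta$, use the correctness of $X$ to get $s\in X$, and then use the elementarity of $j$ together with the fact that all defining parameters of $j(s)$ lie in $X$ to get $j(s)\in X$. The one genuine divergence is in the parameter bookkeeping. The paper takes $s=S(\alpha,x)$ for the canonical $\Sigma_1$ class surjection $\map{S}{\On\times V_{\lambda+1}}{L(V_{\lambda+1},E)}$ and a parameter $x\in V_{\lambda+1}\cap X$, so that $j(s)$ is definable from $j(\gamma)$, $E$, $V_{\lambda+1}$ and $j(x)$; to see $j(x)\in X$ it invokes Lemma \ref{lemma:j_+}, i.e.\ $j(x)=(j\restriction V_\lambda)_+(x)$ with $(j\restriction V_\lambda)_+\in X$ --- this is where ultraexactness enters. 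You instead eliminate the $V_{\lambda+1}$-parameter by the unfolding trick (replacing a surjection defined from $x\in V_{\lambda+1}$ by one that averages over all $V_{\lambda+1}$-parameters via a definable pairing on $V_{\lambda+1}$), so that $s$, and hence $j(s)$, is definable from $\gamma$ (resp.\ $j(\gamma)$), $E$ and $\lambda$ alone. A consequence worth noting is that your argument never uses $j\restriction V_\lambda\in X$ at all: it establishes the lemma for $1$-exact embeddings, not just $1$-ultraexact ones, at the cost of the extra condensation/unfolding work. (Two small simplifications available to you: the existence of an ordinal-definable surjection onto $\gamma$ at some level is a $\Sigma_1$ statement in $V_{\lambda+1}$, $E$, $\gamma$, so the $\Sigma_1$-correctness of $V_\zeta$ already places a witnessing level below $\zeta$ without running the condensation argument through $\Theta$; and the uniqueness of the witness for the parameters $j(\gamma),E,\lambda$ in $V_\zeta$, which you need in the last step, follows uniformly from the same absoluteness once $j(\gamma)<\Theta^{L(V_{\lambda+1},E)}$, which you correctly derive from Lemma \ref{proposition:ThetaFixed}.)
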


\begin{proof}
    Let $\map{S}{\On\times V_{\lambda+1}}{L(V_{\lambda+1},E)}$ be the canonical class surjection onto $L(V_{\lambda+1},E)$. Then the map $S$ is definable by a $\Sigma_1$-formula with parameters $V_{\lambda+1}$ and $E$. 
    Moreover, since $\gamma\in X\cap\Theta^{L(V_{\lambda+1},E)}$, the correctness properties of $X$ allow us to find $x\in V_{\lambda+1}\cap X$ with the property that there exists an ordinal $\beta$ such that $S(\beta,x)$ is a surjection from $V_{\lambda+1}$ onto $\gamma$. 
    Let $\alpha$ be the minimal ordinal with the property that $S(\alpha,x)$ is a surjection from $V_{\lambda+1}$ onto $\gamma$  and set $s=S(\alpha,x)$. 
    Then $\alpha$ is definable by a $\Sigma_1$-formula with parameters $E$, $V_{\lambda+1}$, $x$ and  $\gamma$. The correctness properties of $X$ then ensure that  $\alpha$ is an element of $X$, and therefore we know that $s$ is also an element of $X$. Moreover, since $j(E)=E$, we know that $j(\alpha)$ is the minimal ordinal $\beta$ such that $S(\beta,j(x))$ is a surjection from $V_{\lambda+1}$ onto $j(\gamma)$, and thus  $j(\alpha)$ is definable by a $\Sigma_1$-formula with parameters $E$, $V_{\lambda+1}$, $j(x)$ and  $j(\gamma)$. An application of \eqref{equation:j_+=j} (see Lemma \ref{lemma:j_+}) now gives that $j(x)=(j\restriction V_\lambda)_+(x)\in X$. Since we also have $j(\gamma)\in X$, we can again use the correctness properties of $X$ to conclude that $j(\alpha)$ is an element of $X$, and therefore so is $j(s)=S(j(\alpha),j(x))$. 
\end{proof}

  We will next show that, by possibly changing to another embedding, we can find ultraexact embeddings that extend to ordinals above $\lambda^+$.

  \begin{lemma}\label{lemma:NewEmbeddingSurjection}
    If  $\map{i}{Y}{V_\eta}$ is a $1$-ultraexact embedding at  $\lambda$ and $E\in V_{\lambda +2} \cap Y$ is such that $i(E)=E$, then for every $n>1$, every $\gamma<\Theta^{L(V_{\lambda+1},E)}$, and every $\alpha<\lambda$, there exists a surjection $\map{s}{V_{\lambda+1}}{\gamma}$ and an $n$-ultraexact embedding $\map{j}{X}{V_\zeta}$ at $\lambda$ with $E,s,j(s),\gamma\in X$, $j(E)=E$, $j(\gamma)=\gamma$ and $j\restriction\alpha=\id_\alpha$. 
  \end{lemma}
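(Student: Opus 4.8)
The plan is to reduce the statement, for a fixed $\gamma<\Theta:=\Theta^{L(V_{\lambda+1},E)}$, to the single assertion that there is an $n$-ultraexact embedding at $\lambda$ fixing the pair $\langle E,\gamma\rangle$, and then to prove that assertion by the ``least counterexample is definable, hence fixed'' device from the proof of Lemma~\ref{lemma:Equivalentexact}.

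First I would note that, since $\map{i}{Y}{V_\eta}$ is a $1$-ultraexact embedding at $\lambda$ with $E\in Y$ and $i(E)=E$, statement (i) of Lemma~\ref{lemma:EquivalentUltra} holds with $x=E$ for $n=1$; as statements (ii) and (iii) of that lemma do not mention $n$, it follows that (i)--(iii) hold with $x=E$ for \emph{every} $n>0$. Next I would record the reduction: for fixed $n>1$, $\gamma<\Theta$ and $\alpha<\lambda$, it is enough to produce an $n$-ultraexact embedding $\map{j}{X}{V_\zeta}$ at $\lambda$ with $E,\gamma\in X$, $j(E)=E$, $j(\gamma)=\gamma$ and $j\restriction\alpha=\id_\alpha$. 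Indeed, for such a $j$ we have $\gamma\in X\cap\Theta$ and $j(\gamma)=\gamma\in X$, so Lemma~\ref{lemma:ExtendBelowTheta} yields a surjection $\map{s}{V_{\lambda+1}}{\gamma}$ in $X$ with $j(s)\in X$, and then $E,s,j(s),\gamma\in X$ together with the conditions $j(E)=E$, $j(\gamma)=\gamma$, $j\restriction\alpha=\id_\alpha$ are exactly what the lemma demands.

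To obtain such a $j$ I would argue by contradiction. By Lemma~\ref{lemma:EquivalentUltra}, the assertion $\Psi(\delta)$ that ``there is an $n$-ultraexact embedding at $\lambda$ with $E,\delta\in X$, $j(E)=E$ and $j(\delta)=\delta$'' is, for every $n>0$, equivalent to statement (ii) of that lemma applied to $x=\langle E,\delta\rangle$, which is $\Pi_2$ in the parameters $\lambda,E,\delta$ and involves neither $n$ nor $\alpha$; moreover statement (iii) then shows that $\Psi(\delta)$ already delivers an embedding as in the reduction with the extra clause $j\restriction\alpha=\id_\alpha$, for any prescribed $\alpha<\lambda$. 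So it suffices to prove $\Psi(\delta)$ for all $\delta<\Theta$. Suppose some $\delta<\Theta$ fails $\Psi$, and let $\gamma^\ast$ be the least such. As $\Theta$ is $\Sigma_2$-definable from $\lambda$ and $E$ (see the proof of Lemma~\ref{proposition:ThetaFixed}) and $\neg\Psi$ is $\Sigma_2$ in $\lambda,E,\delta$, the singleton $\{\gamma^\ast\}$ is definable by a formula of some fixed finite complexity $\Sigma_k$ with parameters $\lambda$ and $E$ only. Using the first step, I would fix an $n$-ultraexact embedding $\map{i'}{Y'}{V_\zeta}$ at $\lambda$ with $E\in Y'$ and $i'(E)=E$, where $\zeta\in C^{(n+1)}\cap C^{(k)}$ and $\zeta>\Theta$. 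Then $\gamma^\ast\in V_\zeta$, the chosen formula still defines $\{\gamma^\ast\}$ inside $V_\zeta$ by $\Sigma_k$-correctness, and since $i'$ fixes $\lambda$ and $E$, the full elementarity of $Y'\prec V_\zeta$ and the uniqueness of $\gamma^\ast$ force $\gamma^\ast\in Y'$ and $i'(\gamma^\ast)=\gamma^\ast$. Hence $i'$ itself witnesses $\Psi(\gamma^\ast)$, a contradiction.

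The step I expect to need the most care --- rather than a genuine obstacle --- is the complexity bookkeeping in the last paragraph: one must invoke Lemma~\ref{lemma:EquivalentUltra} precisely in order to see that ``there is an $n$-ultraexact embedding at $\lambda$ fixing $E$ and $\delta$'' is a $\Pi_2$ condition that is uniform in $n$ and independent of $\alpha$, so that the least counterexample is definable from $\lambda$ and $E$ alone and is therefore fixed by any embedding fixing those two parameters --- just as in the proof of Lemma~\ref{lemma:Equivalentexact}. I would also point out that the hypothesised embedding $i$ enters only to certify that statement (i) of Lemma~\ref{lemma:EquivalentUltra} holds for $x=E$; every embedding used afterwards is produced afresh from that lemma, and the surjection $s$ in the final step comes from Lemma~\ref{lemma:ExtendBelowTheta}.
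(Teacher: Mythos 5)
Your proof is correct and follows essentially the same route as the paper's: both take the least $\gamma<\Theta^{L(V_{\lambda+1},E)}$ for which the relevant statement fails, observe that it is definable from $\lambda$ and $E$ alone (so that a sufficiently correct ultraexact embedding supplied by Lemma~\ref{lemma:EquivalentUltra} must contain and fix it), and then invoke Lemma~\ref{lemma:ExtendBelowTheta} to produce the surjection and reach a contradiction. Your preliminary reduction to the statement $\Psi(\delta)$ only repackages the paper's ``let $m>n$ be sufficiently large'' step, so no further comment is needed.
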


  \begin{proof}
   Assume, towards a contradiction, that the above conclusion fails for some natural number $n$, and let $\gamma < \Theta^{L(V_{\lambda+1},E)}$ be the least ordinal witnessing the failure. Then $\gamma$ is definable from the parameters $\lambda$ and $E$. 
   %with the property that there is a $\rho<\lambda$ such that    for every $n$-ultraexact embedding $\map{j}{X}{V_\zeta}$ at $\lambda$ with $E,\beta\in X$, $j(E)=E$, $j(\beta)=\beta$ and $\crit{j}>\rho$, the set $X$ does not contain a surjection   $\map{s}{V_{\lambda+1}}{\beta}$ with $j(s)\in X$. 
   %
   Let $\alpha<\lambda$ be an ordinal such that for every $n$-ultraexact embedding $\map{j}{X}{V_\zeta}$ at $\lambda$ with $E,\gamma\in X$, $j(E)=E$, $j(\gamma)=\gamma$ and $j\restriction\alpha=\id_\alpha$, the set $X$ does not contain a surjection   $\map{s}{V_{\lambda+1}}{\gamma}$ with $j(s)\in X$. 
   Now, let $m>n$ be a sufficiently large natural number and apply Lemma \ref{lemma:EquivalentUltra} to find an $m$-ultraexact embedding $\map{j}{X}{V_\zeta}$ with $E\in X$, $j(E)=E$ and $j\restriction\alpha=\id_\alpha$. Our choice of  $m$  ensures that  $\gamma\in X$ and  $j(\gamma)=\gamma$. This allows us to   apply    Lemma \ref{lemma:ExtendBelowTheta} to find a surjection $\map{s}{V_{\lambda+1}}{\gamma}$ in $X$ with $j(s)\in X$, a contradiction. 
 \end{proof}

 Equipped with the lemmata above we are now ready to derive, in the next three subsections, some strong consequences of the existence of ultraexact embeddings.

%%%%%%%%%%%%%%%%%%%%%%%%%%%%%%

\subsection{$\omega$-strongly measurable cardinals} \label{SectHOD}

We already proved in Theorem \ref{VnotHOD} that if there exists a $2$-exact embedding, then $V$ is not equal to $\HOD$. We will show next  that the existence of a $2$-ultraexact embedding at a cardinal $\lambda$ implies a much stronger divergence between $V$ and $\HOD$, as, e.g., $\lambda^+$ becomes  $\omega$-strongly measurable in $\HOD$. 
We start by recalling the definition of this notion. Important results analyzing  this property can be found in \cite{doi:10.1142/S0219061324500181}, \cite{Ben-Neria_Hayut_2023}, \cite{MR4693981} and \cite{WOEM1}.

\begin{definition}[Woodin, \cite{WOEM1}]
\label{defstronglymeasurable}
  An uncountable regular cardinal $\kappa$ is \emph{$\omega$-strongly measurable in $\HOD$} if there exists a cardinal $\delta<\kappa$ with $(2^\delta)^\HOD<\kappa$ such that there is no partition $\seq{S_\alpha}{\alpha<\delta}$ in $\HOD$ of the set $S^\kappa_\omega:=\{ \alpha <\kappa: \cof{\alpha}=\omega\}$  into stationary sets. 
\end{definition}

The next result and Theorem \ref{theorem:UltraexactFromI0} below should be compared with {\cite[Lemma 190]{WOEM1}}.

\begin{theorem}\label{theorem:stronglymeasurable}
  If $\map{i}{Y}{V_\xi}$ is a $2$-ultraexact embedding at a  cardinal $\lambda$ and $E\in V_{\lambda+2}\cap Y$ with $i(E)=E$, then every regular cardinal in the interval $(\lambda,\Theta^{L(V_{\lambda+1},E)})$  is $\omega$-strongly measurable in $\HOD$.    
\end{theorem}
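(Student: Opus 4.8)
The plan is to run a localized descendant of the argument behind Proposition~\ref{proposition:NotInDomainExactt}: instead of assuming $\kappa^+\subseteq X$ (which is false here), I will work with the extension $j_\kappa$ of an ultraexact embedding to $\kappa$, use its $\omega$-club of fixed points in place of the club there, and — crucially — choose the number of pieces of the witnessing partition to be the critical point of the embedding, so that the image partition has strictly more pieces and produces the desired clash. Throughout I will use that a $2$-ultraexact embedding at $\lambda$ induces a non-trivial elementary $V_\lambda\to V_\lambda$, whence $\lambda$ is a strong limit cardinal of cofinality $\omega$; in particular $(2^\delta)^{\HOD}<\lambda$ for every $\delta<\lambda$.

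Fix a regular cardinal $\kappa$ with $\lambda<\kappa<\Theta^{L(V_{\lambda+1},E)}$ and suppose, towards a contradiction, that $\kappa$ is not $\omega$-strongly measurable in $\HOD$. First I would pass, via Lemma~\ref{lemma:EquivalentUltra} with $x=E$, to a $1$-ultraexact embedding at $\lambda$ having $E$ in its domain and fixing $E$, and then apply Lemma~\ref{lemma:NewEmbeddingSurjection}: for a large enough natural number $n$ and a large enough $\alpha<\lambda$ this yields a surjection $\map{s}{V_{\lambda+1}}{\kappa}$ together with an $n$-ultraexact embedding $\map{j}{X}{V_\zeta}$ at $\lambda$ with $E,s,j(s),\kappa\in X$, $j(E)=E$, $j(\kappa)=\kappa$, and $j\restriction\alpha=\id_\alpha$. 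Taking $n$ (hence the correctness level of $\zeta$) and $\alpha$ large serves two purposes: $\zeta$ should be $\Sigma_k$-correct for a fixed $k$ large enough that $V_\zeta$ correctly computes the canonical $\Sigma_2$-wellordering $\lhd_{\HOD}$ of $\HOD$ and stationarity of subsets of $\kappa$, and $\crit{j}$ should exceed the ordinals named below. Since $\crit{j}<\lambda<\kappa$ and $(2^{\crit{j}})^{\HOD}<\lambda$, the failure of $\omega$-strong measurability provides, inside $\HOD$, a partition of $S^\kappa_\omega$ into $\crit{j}$ stationary pieces; let $\vec{S}=\seq{S_\alpha}{\alpha<\crit{j}}$ be the $\lhd_{\HOD}$-least such. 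The bookkeeping point is that $\vec{S}$ is then definable by a $\Sigma_k$-formula from the parameters $\kappa$ and $\crit{j}$, both of which lie in $X$, so by correctness of $V_\zeta$ and $X\prec V_\zeta$ we get $\vec{S}\in X$; applying $j$ and using that $(2^{j(\crit{j})})^{\HOD}<\lambda$ as well, $j(\vec{S})=\seq{T_\beta}{\beta<j(\crit{j})}$ is the $\lhd_{\HOD}$-least partition of $S^\kappa_\omega$ into $j(\crit{j})$ stationary sets and again lies in $X$ (it is $\Sigma_k$-definable from $\kappa$ and $j(\crit{j})\in X$). As $\crit{j}<j(\crit{j})$, the ``surplus'' piece $T_{\crit{j}}$ exists, is genuinely stationary, and belongs to $X$.

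Next I would bring in the extension machinery. By Lemma~\ref{lemma:ExtendWithSurjection} there is $\map{j_\kappa}{\kappa}{\kappa}$ in $X$ with $j_\kappa\restriction(X\cap\kappa)=j\restriction(X\cap\kappa)$, and by Lemma~\ref{lemma:FixedPointsInXclub} the set $C_{j,\kappa}=\Set{\gamma<\kappa}{j_\kappa(\gamma)=\gamma}$ is an unbounded, $\omega$-closed subset of $\kappa$ lying in $X$. Its topological closure $\bar{C}$ is a club in $\kappa$, and any $\gamma\in\bar{C}$ of cofinality $\omega$ is the supremum of a strictly increasing $\omega$-sequence from $C_{j,\kappa}$ and hence belongs to $C_{j,\kappa}$ by $\omega$-closure; thus $\bar{C}\cap S^\kappa_\omega\subseteq C_{j,\kappa}$, and since $T_{\crit{j}}\subseteq S^\kappa_\omega$ is stationary we get $\emptyset\neq\bar{C}\cap T_{\crit{j}}\subseteq C_{j,\kappa}\cap T_{\crit{j}}\cap S^\kappa_\omega$. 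As $C_{j,\kappa},T_{\crit{j}},S^\kappa_\omega\in X$ and $X\prec V_\zeta$, the model $X$ contains a witness $\gamma\in C_{j,\kappa}\cap T_{\crit{j}}\cap S^\kappa_\omega$. Then $j(\gamma)=j_\kappa(\gamma)=\gamma$; letting $\alpha<\crit{j}$ be the index with $\gamma\in S_\alpha$, all of $\gamma,\alpha,S_\alpha$ lie in $X$, so elementarity of $j$ gives $\gamma=j(\gamma)\in j(S_\alpha)=j(\vec{S})(j(\alpha))=T_\alpha$, using $j(\alpha)=\alpha$. But $\alpha\neq\crit{j}$ and $j(\vec{S})$ is a partition, so $T_\alpha\cap T_{\crit{j}}=\emptyset$, contradicting $\gamma\in T_\alpha\cap T_{\crit{j}}$.

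I expect the main obstacle to be the complexity bookkeeping in the second paragraph: one must fix a single absolute description of ``the $\lhd_{\HOD}$-least partition of $S^\kappa_\omega$ into $\mu$-many subsets that are stationary in $V$'', verify that its logical complexity is bounded independently of $\kappa$ and $\mu$, and then calibrate the degree $n$ of the ultraexact embedding (and the correctness level of $\zeta$) so that this description is evaluated the same way in $X$, in $V_\zeta$, and in $V$, and is respected by $j$ — it is here that one genuinely needs the ultraexact hypothesis, so that Lemma~\ref{lemma:NewEmbeddingSurjection} can deliver embeddings of arbitrarily high degree that fix $\kappa$ and $E$ along with the auxiliary surjection $s$. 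The remaining verification, that $C_{j,\kappa}$ meets every stationary subset of $S^\kappa_\omega$, is routine once one passes to the topological closure $\bar{C}$ and absorbs the cofinality-$\omega$ points, as sketched above.
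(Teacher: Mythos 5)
Your proposal is correct and follows essentially the same route as the paper's proof: both invoke Lemma~\ref{lemma:NewEmbeddingSurjection} to get a surjection $s$ and an ultraexact embedding fixing $\kappa$ and $E$, take the $\lhd_{\HOD}$-least partition of $S^\kappa_\omega$ into $\crit{j}$-many $V$-stationary pieces (definable from $\kappa$ and $\crit{j}$, hence in $X$ and moved to the least partition into $j(\crit{j})$-many pieces), and intersect the surplus piece $T_{\crit{j}}$ with the $\omega$-closed unbounded fixed-point set $C_{j,\kappa}$ of Lemma~\ref{lemma:FixedPointsInXclub} to produce a point lying in two disjoint cells. Your explicit verification that an unbounded $\omega$-closed set meets every stationary subset of $S^\kappa_\omega$ (via its topological closure) is a correct filling-in of a step the paper leaves implicit.
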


\begin{proof}
  Pick a regular cardinal $\lambda<\kappa<\Theta^{L(V_{\lambda+1},E)}$ and apply Lemma \ref{lemma:NewEmbeddingSurjection} to find a surjection $\map{s}{V_{\lambda+1}}{\kappa}$ and  a $2$-ultraexact embedding $\map{j}{X}{V_\zeta}$ with $E,s,\kappa\in X$, $j(E)=E$, $j(\kappa)=\kappa$ and $j(s)\in X$.    Set $\delta=\crit{j}<\lambda$. Then $(2^\delta)^\HOD<\lambda<\kappa$.      
  Assume, towards a contradiction, that $\HOD$ contains a partition of $S^\kappa_\omega$ into $\delta$-many sets such that each set is a stationary subset of $\kappa$ in $V$.     Let $\vec{S}=\seq{S_\alpha}{\alpha<\delta}$ denote the least  partition with this property in the canonical wellordering of $\HOD$. 
  
  Since both $\HOD$ and its canonical wellordering are $\Sigma_2$-definable, the set $\vec{S}$ is  definable from the parameters $\kappa$ and $\delta$. 
  Hence, the correctness properties of $X$ ensure that  $\vec{S}$ is an element of $X$. 
  %in any $V_\alpha$ with $\alpha \in C^{(2)}$ that contains it.   since $X\preceq V_\eta$ for some $\eta \in C^{(2)}$, and $\vec{S}\in V_\eta$, it follows that $\vec{S}$ is an element of $X$. 

    The elementarity of $j$ and the fact that $\zeta \in C^{(3)}$ then imply that $j(\vec{S})$ is the least partition of $S^{\kappa}_\omega$ into $j(\delta)$-many stationary sets, in the canonical wellordering of $\HOD$. 
    
    Again, this shows that the set $j(\vec{S})$ is $\Sigma_2$-definable from the parameters $\kappa$ and $j(\delta)$, and,  since $j(\delta)\in X$, we can conclude  that $j(\vec{S})=\seq{T_\beta}{\beta<j(\delta)}$ is also an element of $X$.  
    
    Let $C=C_{j,\kappa} \in X$ be the unbounded and $\omega$-closed subset of $\kappa$ defined in Lemma \ref{lemma:FixedPointsInXclub}. 
    Since elementarity implies that $T_\delta$ is a stationary subset of $S^\kappa_\omega$, we know that $C\cap T_\delta\neq\emptyset$, and the fact that $C$ and $T_\delta$ belong to $X$ implies that  $\gamma=\min(C\cap T_\delta)\in S^\kappa_\omega\cap X$. But, then there is $\alpha<\delta$ with $\gamma\in S_\alpha$ and, since $\gamma \in C$, we can use \eqref{equation:j_s=j_for_gamma} to conclude that $$\gamma ~ = ~ ~ j_\kappa (\gamma) ~ = ~ j(\gamma) ~ \in ~ j(S_\alpha)\cap T_\delta ~ = ~ T_\alpha\cap T_\delta ~ = ~ \emptyset,$$ thus yielding a contradiction.  
\end{proof}

As a particular case of the theorem, we have the following:

\begin{corollary}\label{corollary:omegaHOD}
    If $\lambda$ is an ultraexacting cardinal, then  $\lambda^+$ is $\omega$-strongly measurable in $\HOD$. \qed    
\end{corollary}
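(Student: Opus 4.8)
The plan is to read this off directly from Theorem \ref{theorem:stronglymeasurable} by taking $E=\varnothing$, so the argument will be very short. First I would observe that the definition of ``ultraexacting'' is exactly statement \ref{item:EquivalentUltra2} of Lemma \ref{lemma:EquivalentUltra} instantiated with the parameter $x=\varnothing$ (note $\varnothing\in V_\zeta\cap X$ automatically, and $j(\varnothing)=\varnothing$ by elementarity). Since the content of Lemma \ref{lemma:EquivalentUltra} is precisely that the equivalent conditions there do not depend on $n$, it follows that there is a $2$-ultraexact embedding $\map{i}{Y}{V_\xi}$ at $\lambda$.

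Next I would set $E=\varnothing$. Then $\varnothing\in V_{\lambda+2}\cap Y$ and $i(\varnothing)=\varnothing$, so the hypotheses of Theorem \ref{theorem:stronglymeasurable} are met, and moreover $L(V_{\lambda+1},\varnothing)=L(V_{\lambda+1})$. It therefore remains only to verify that $\lambda^+$ lies in the interval $(\lambda,\Theta^{L(V_{\lambda+1})})$. The lower bound $\lambda<\lambda^+$ is trivial. For the upper bound I would argue that, working inside $L(V_{\lambda+1})$, the assignment sending a well-ordering of the set $\lambda$ (which is a subset of $\lambda\times\lambda\subseteq V_\lambda$, hence an element of $V_{\lambda+1}$) to its order type is, after composing with the obvious order-isomorphism $[\lambda,\lambda^+)\cong\lambda^+$, a surjection of $V_{\lambda+1}$ onto $\lambda^+$; since the set of ordinals that are surjective images of $V_{\lambda+1}$ in $L(V_{\lambda+1})$ is downward closed and closed under successor, this yields $\lambda^+<\Theta^{L(V_{\lambda+1})}$. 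As $\lambda^+$ is an uncountable regular cardinal, Theorem \ref{theorem:stronglymeasurable} now applies and gives that $\lambda^+$ is $\omega$-strongly measurable in $\HOD$.

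There is essentially no obstacle here: the only two small points that need a word of justification are that $\varnothing$ is an admissible value of $E$ in Theorem \ref{theorem:stronglymeasurable} and that $\lambda^+<\Theta^{L(V_{\lambda+1})}$, both of which are immediate. All of the substantive content -- extending the ultraexact embedding via the lemmas of \S\ref{subsec3.1} and running the $\omega$-club argument against the least $\HOD$-definable partition of $S^{\kappa}_\omega$ -- is already contained in the proof of Theorem \ref{theorem:stronglymeasurable}.
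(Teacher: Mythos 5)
Your proposal is correct and is exactly the route the paper intends: the corollary is stated as an immediate instance of Theorem \ref{theorem:stronglymeasurable} with $E=\varnothing$, and the two small points you verify (that $\varnothing$ is a legitimate choice of $E$ fixed by the embedding, and that $\lambda^+<\Theta^{L(V_{\lambda+1})}$ via coding well-orderings of $\lambda$ as elements of $V_{\lambda+1}$) are precisely what the paper leaves implicit. Nothing further is needed.
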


Similar to the results of \S\ref{SectExactHOD}, the results of this section reveal interesting connections between the notion of ultraexacting cardinals and Woodin's HOD Dichotomy Theorem that will be explored in  \S\ref{section6} below.

%This leads us to formulate the following conjecture:

%\begin{conjecture}\label{ConjectureSupercompact}
%The following are equiconsistent:
   % \begin{enumerate}
         %\item ZFC + ``there is an elementary embedding $j: L(V_{\lambda+1}) \to L(V_{\lambda+1})$ with critical point $\kappa$ and a supercompact cardinal $\delta \leq\kappa$.'' 
%
         %\item ZFC + ``there is a $2$-ultraexact embedding at some limit cardinal with critical point $\kappa$ and a supercompact cardinal $\delta \leq\kappa$.''
%
         %\item
%ZFC($\vec\lambda,\kappa,\delta$) + $\{``\kappa$ is parametrically $n$-ultraexact for $\vec{\lambda}$''$: n\in\mathbb{N}\}$ + $\delta \leq\kappa$ + ``$\delta$ is supercompact.''
%\end{enumerate}
%\end{conjecture}

%It is not even clear that ultraexact cardinals are simultaneously consistent with supercompact cardinals in the presence of the Axiom of Choice. 
% \todo{P.L.: I think it is not even clear if they are consistent with the existence of an element of $C^{2}$ below them.} 
% If they are not, we would obtain a very interesting example of two large cardinal notions independently consistent, yet incompatible with each other.\todo{JPA: I'm not sure how to get a model of ZFC with a supercompact and ultraexact cardinals (from any assumption). Any thoughts? Choiceless cardinals don't seem to help either: in models with e.g., Reinhardt cardinals such that $V_\lambda$ satisfies ZFC, the fact that $V_\lambda$ satisfies ZFC makes $\lambda$ very definable, so ultraexact reflection from the point of view of $V$ does not translate to $V_\lambda$.}

%%%%%%%%%%%%%%%%%%%

\subsection{Fragments of HOD-Berkeleyness}
\label{SectHODB}
We will next show that the existence of ultraexacting cardinals implies non-trivial fragments of the $\HOD$-versions of large cardinals beyond choice, considered by Koellner, Woodin and the second author in {\cite[Section 8.2]{MR4022642}}.

\begin{definition}
    Given an inner model $N$ and ordinals $\lambda<\vartheta$, the ordinal $\lambda$ is \emph{$N$-$\vartheta$-Berkeley} if for every $\alpha<\lambda$ and every transitive set $M$ in $N$ that contains the ordinal $\lambda$ as an element and has cardinality less than $\vartheta$ in $N$, there exists a non-trivial elementary embedding $\map{j}{M}{M}$ with $\alpha<\crit{j}<\lambda$.  
\end{definition}

\begin{theorem}\label{theorem:FragBerkeley}
  If $\map{i}{Y}{V_\xi}$  is a $1$-ultraexact embedding at  $\lambda$ and  $E\in V_{\lambda+2}\cap Y$ is such that $i(E)=E$, then $\lambda$ is $\HOD$-$\Theta^{L(V_{\lambda+1},E)}$-Berkeley. 
\end{theorem}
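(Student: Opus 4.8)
The plan. Fix $\alpha<\lambda$ and a transitive $M\in\HOD$ with $\lambda\in M$ and $\bar\gamma:=\betrag{M}^{\HOD}<\vartheta$, where I abbreviate $\vartheta:=\Theta^{L(V_{\lambda+1},E)}$; since $\lambda\subseteq M$ we have $\lambda\le\bar\gamma$. I must produce a non-trivial elementary $k\colon M\to M$ with $\alpha<\crit k<\lambda$. The strategy mirrors the proof of Theorem \ref{theorem:stronglymeasurable}: first I would encode $M$ as a relation on an ordinal $\gamma<\vartheta$, then use the surjection machinery of \S\ref{subsec3.1} to obtain an $n$-ultraexact embedding $j\colon X\to V_\zeta$ at $\lambda$ that fixes this code and has $\crit j\in(\alpha,\lambda)$, extend $j$ to a map $j_\gamma\colon\gamma\to\gamma$ lying in $X$ via Lemma \ref{lemma:ExtendWithSurjection}, and finally transport $j_\gamma$ through the Mostowski collapse of the code to get $k$.

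Concretely, using the canonical $\Sigma_2$-definable wellorder of $\HOD$ I would fix a cardinal $\gamma$ of $\HOD$ with $\max(\lambda,\bar\gamma)<\gamma<\vartheta$ (such a $\gamma$ exists since $\lambda,\bar\gamma<\vartheta$, using $\lambda^{+}<\vartheta$), an injection $e\colon M\to\gamma$ in $\HOD$ with $e\restriction\lambda=\id_\lambda$ (possible because $\lambda\subseteq M$ and $\betrag{M\setminus\lambda}^{\HOD}\le\bar\gamma<\betrag{\gamma\setminus\lambda}^{\HOD}$), and the resulting code $E_M=\Set{(\mu,\nu)\in\gamma\times\gamma}{\mu,\nu\in e[M]\text{ and }e^{{-}1}(\mu)\in e^{{-}1}(\nu)}\in\HOD$, so that $(e[M],E_M)$ is well-founded and extensional with Mostowski collapse $(M,\in)$ via $e$. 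Then I would invoke Lemma \ref{lemma:NewEmbeddingSurjection} (with the hypothesis of our theorem as input) to obtain a surjection $s\colon V_{\lambda+1}\to\gamma$ and an $n$-ultraexact embedding $j\colon X\to V_\zeta$ at $\lambda$ (with $n$ large) such that $E,\gamma,s,j(s)\in X$, $j(E)=E$, $j(\gamma)=\gamma$, $j\restriction(\alpha+1)=\id_{\alpha+1}$, and — this is the extra clause I will need, discussed below — $E_M\in X$ with $j(E_M)=E_M$. Granting this, Lemma \ref{lemma:ExtendWithSurjection} applied to $s$ and $\gamma$ yields a unique $j_\gamma\colon\gamma\to\gamma$ in $X$ with $j_\gamma\restriction(X\cap\gamma)=j\restriction(X\cap\gamma)$. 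Since every ordinal below $\lambda$ lies in $V_\lambda\subseteq X$, we get $j_\gamma\restriction\lambda=j\restriction\lambda$; hence $j_\gamma$ fixes every ordinal $\le\alpha$ and moves $\lambda_0:=\crit j$, which therefore satisfies $\alpha<\lambda_0<\lambda$ (the upper bound because $\lambda_0$ is the first point of the critical sequence of the I3-embedding $j\restriction V_\lambda$, which converges to $\lambda$ by the Kunen inconsistency).

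From $E_M\in X$ and $j(E_M)=E_M$ the standard elementarity-transfer argument applies: a first-order statement about $j_\gamma$, $E_M$ and $\gamma$ holds in $V$ iff it holds in $V_\zeta$ iff it holds in $X$, and in $X$ it reduces to a statement about $X\cap\gamma$, on which $j_\gamma$ coincides with the elementary map $j$; consequently $j_\gamma$ restricts to an elementary self-embedding of the structure $(e[M],E_M)$ (the field of $E_M$ is $\emptyset$-definable and so is preserved). Composing with $e$ and using $e\restriction\lambda=\id_\lambda$ — so that $e$ is the identity on the ordinals $<\lambda_0$ and fixes $\lambda_1:=j(\lambda_0)<\lambda$ — one obtains a non-trivial elementary $k\colon M\to M$ whose least moved ordinal is $\lambda_0$. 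Thus $\alpha<\crit k<\lambda$, as required.

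The main obstacle is precisely securing the clause $j(E_M)=E_M$. Lemma \ref{lemma:NewEmbeddingSurjection} lets one fix $E$ and the ordinal $\gamma$, but an arbitrary transitive $M\in\HOD$ of $\HOD$-cardinality $<\vartheta$ need not be ordinal-definable from parameters below $\vartheta$, so its code $E_M$ is not automatically in the range of an embedding supplied by that lemma. The route I would pursue is to reduce to fixing a subset of $V_{\lambda+1}$: pull $E_M$ back along a surjection $V_{\lambda+1}\to\gamma$ obtained as in Lemma \ref{lemma:ExtendBelowTheta}, re-encode the result as some $\breve E_M\in V_{\lambda+2}$, and then re-run the minimal-counterexample bootstrap of Lemma \ref{lemma:NewEmbeddingSurjection} with $\breve E_M$ built into the data — equivalently, prove directly that for every transitive $M\in\HOD$ with $\lambda\in M$ and $\betrag{M}^{\HOD}<\vartheta$ and every $\alpha<\lambda$ there is an $n$-ultraexact $j\colon X\to V_\zeta$ at $\lambda$ with $M\in X$, $j(M)=M$ and $j\restriction(\alpha+1)=\id$. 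Checking that $M$ always admits such a $\vartheta$-absorbable code and that the bootstrap survives the extra parameter is the technical heart of the theorem; everything downstream is a routine variant of the proof of Theorem \ref{theorem:stronglymeasurable}.
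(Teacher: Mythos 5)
Your transport machinery (encode $M$ on an ordinal $\gamma<\Theta^{L(V_{\lambda+1},E)}$, extend $j$ to $j_\gamma$ via Lemma \ref{lemma:ExtendWithSurjection}, conjugate through the code to get $k\colon M\to M$) matches the paper's, and that part is fine. But the step you yourself flag as "the technical heart" --- arranging $j(E_M)=E_M$ for an \emph{arbitrary} transitive $M\in\HOD$ of $\HOD$-cardinality below $\Theta$ --- is a genuine gap, and the route you sketch for closing it does not work. The bootstrap of Lemma \ref{lemma:NewEmbeddingSurjection} succeeds only because the least counterexample there is \emph{definable} from parameters ($\lambda$, $E$) that the embedding already fixes; if you feed an arbitrary code $\breve E_M\in V_{\lambda+2}$ into that argument as an extra parameter, the least counterexample becomes definable only from $\breve E_M$ itself, and you would need the embedding to fix $\breve E_M$ to conclude --- which is exactly what you are trying to prove. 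Indeed, the statement you propose as an "equivalent" reformulation (every such $M$ is fixed by some ultraexact embedding) is strictly stronger than the theorem and is not what the lemmas deliver; there is no reason a generic $M\in\HOD$ should be ordinal-definable from parameters the embedding fixes.

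The missing idea is to run the minimal-counterexample argument at the level of the theorem itself: assume the conclusion fails and take $M$ to be the \emph{least} counterexample in the canonical $\Sigma_2$-definable wellordering of $\HOD$ (and likewise take the bijection $b\colon\gamma\to M$ least in that wellordering, normalized so that $b(\omega\cdot\beta)=\beta$ for $\beta<\lambda$). Then $M$ and $b$ are $\Sigma_3$-definable from $E$, $\lambda$ and $\Theta$, all of which are fixed by a $3$-ultraexact embedding supplied by Lemma \ref{lemma:NewEmbeddingSurjection} (using Proposition \ref{proposition:ThetaFixed} for $\Theta$), so $M,b\in X$ with $j(M)=M$ and $j(b)=b$ come for free from the correctness of $X$. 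After that, your conjugation argument produces a non-trivial elementary $k\colon M\to M$ with critical point $\crit{j}$ above the threshold $\alpha$ (taken to be the sup of critical points of self-embeddings of $M$), contradicting the choice of $M$. Everything downstream of the fixing issue in your write-up is then a routine variant of Theorem \ref{theorem:stronglymeasurable}, as you say.
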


\begin{proof}
  Assume, towards a contradiction, that the above conclusion fails. Set $\Theta:=\Theta^{L(V_{\lambda+1},E)}$ and let $M\in\HOD$ be   minimal in the canonical wellordering of $\HOD$ with the property that $M$ is transitive,  $\lambda\in M$, $\betrag{M}^\HOD<\Theta$ and   for some $\alpha<\lambda$, there is no non-trivial embedding $\map{k}{M}{M}$ with $\alpha<\crit{k}<\lambda$.  Let $\gamma:=\betrag{M}^{\HOD}$ and define $\alpha$ to be the least upper bound of all critical points of non-trivial elementary embeddings $\map{k}{M}{M}$ with $\crit{k}<\lambda$. Note that our assumption implies that $\alpha<\lambda$. 
  Now, apply Lemma \ref{lemma:NewEmbeddingSurjection} to find a surjection $\map{s}{V_{\lambda+1}}{\gamma}$ and a $3$-ultraexact embedding $\map{j}{X}{V_\zeta}$ at $\lambda$ with $E,s,\gamma\in X$, $j(E)=E$, $j(\gamma)=\gamma$,   $j(s)\in X$ and $j\restriction\alpha=\id_\alpha$. In this situation, Proposition \ref{proposition:ThetaFixed} ensures that $\Theta\in X$ and $j(\Theta)=\Theta$. 
  Moreover, since $M$ is definable by a $\Sigma_3$-formula with parameters   $E$, $\lambda$ and $\Theta$, it follows that $M\in X$ and $j(M)=M$. 
  Let $b\in\HOD$ be the minimal bijection between $\gamma$ and $M$ in the canonical wellordering of $\HOD$ that has the property that $b(\omega\cdot\alpha)=\alpha$ holds for all $\alpha<\lambda$. Then, again, we know that the set  $b$ is definable by a $\Sigma_3$-formula with  parameters $E$, $\gamma$ and $\lambda$, and we know that  $b\in X$ with $j(b)=b$.  
  Let $\map{j_\gamma}{\gamma}{\gamma}$ be the function given by  Lemma \ref{lemma:ExtendWithSurjection} and define $$\map{k ~ = ~ b\circ j_\gamma\circ b^{{-}1}}{M}{M}.$$ Since both $b$ and $j_\gamma$ belong to $X$, so does $k$, and it is easily checked  that  $k\restriction\lambda=j\restriction\lambda$.  
  Now, fix a formula $\varphi(v_0,\ldots,v_{n-1})$ and ordinals $\beta_0,\ldots,\beta_{n-1}\in X\cap\gamma$ such that $\varphi(b(\beta_0),\ldots,b(\beta_{n-1}))$ holds in $M$. The elementarity of $j$ then implies that the statement $\varphi(j(b(\beta_0)),\ldots,j(b(\beta_{n-1})))$  holds in $M$. 
  Since $$j(b(\beta_m)) ~ = ~ b(j(\beta_m)) ~ = ~ b(j_\gamma(\beta_m)) ~ = ~ k(b(\beta_m))$$ holds for all $m<n$, we now know that, in $X$, the map $k$ is an elementary embedding from $M$ to $M$. It then follows that  $\map{k}{M}{M}$ is a non-trivial elementary embedding with critical point $\crit{j}$. Since $\crit{j}>\rho$, this yields a contradiction.  
\end{proof}

Let us note that recent work by Blue and Sargsyan \cite{blue2024ad} shows that a similar result holds for $\omega_1$ in determinacy models. 

\medskip

%Observe that Theorem \ref{theorem:FragBerkeley} above yields Theorem  \ref{theorem:stronglymeasurable} as a corollary. Indeed, in ZFC, if $\lambda$ is a $\HOD$-$\vartheta$-Berkeley cardinal, then every regular cardinal in the interval $(\lambda, \vartheta)$ is $\omega$-strongly measurable in $\HOD$. For let $\mu$ be a regular cardinal in the interval, and let $j:\HOD_{\mu^+}\to \HOD_{\mu^+}$ be an elementary embedding with critical point, $\delta$, below $\lambda$. \todo{P.: I am not sure if this argument works, because we do not know if $\Theta$ is an inaccessible cardinal in $\HOD$ (note that this is not $\HOD^{L(V_{\lambda+1})}$ but $\HOD^V$) and therefore I do not see how to prove the cardinality requirement in the definition of $\HOD$-$\Theta$-Berkeleyness. It might be possible to get around this problem by considering smaller structures; but this result in mostly reproving Theorem  \ref{theorem:stronglymeasurable}.} Then $(2^\delta)^\HOD <\lambda$. So if $\langle S_\alpha : \alpha <\delta\rangle \in \HOD$ is a partition of $S^\mu_\omega$ into stationary sets, then one gets a contradiction by arguing as in Woodin's proof of Kunen's inconsistency theorem.

%%%%%%%%%%%%%%%%%%%

\subsection{Definable infinitary partition properties}
\label{Sect3.4}

 Following {\cite[Section 3.2]{MR4693981}}, we shall now consider  restrictions to ordinal definable functions of infinitary partition properties that are incompatible with the Axiom of Choice.

 \begin{definition}
   Given infinite cardinals $\mu<\kappa$, the cardinal $\kappa$ is  \emph{definably $\mu$-J\'{o}nsson} if for every ordinal definable function $\map{c}{[\kappa]^\mu}{\kappa}$, there exists $H\in[\kappa]^\kappa$ with the property that $c[[H]^\mu]$ is a proper subset of $\kappa$.    
 \end{definition}

 \begin{theorem}
   If $\map{i}{Y}{V_\xi}$  is a $1$-ultraexact embedding at $\lambda$ and $E$ is an element of $V_{\lambda+2}\cap Y$ with $i(E)=E$, then every cardinal in the interval $[\lambda,\Theta^{L(V_{\lambda+1},E)})$ is definably $\rho$-J\'{o}nsson for every infinite cardinal $\rho<\lambda$.  
 \end{theorem}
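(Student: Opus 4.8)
The plan is to mimic the structure of the proofs of Theorems \ref{theorem:stronglymeasurable} and \ref{theorem:FragBerkeley}: reduce the general claim to the existence of a single suitable ultraexact embedding that fixes the relevant ordinal-definable data, then exploit the fact that the fixed-point set $C_{j,\gamma}$ from Lemma \ref{lemma:FixedPointsInXclub} provides a large homogeneous-type set on which the colouring must be constant. First I would fix an infinite cardinal $\rho<\lambda$, fix a cardinal $\kappa$ with $\lambda\le\kappa<\Theta^{L(V_{\lambda+1},E)}$, and fix an ordinal-definable function $\map{c}{[\kappa]^\rho}{\kappa}$; say $c$ is definable from an ordinal parameter by a formula of some fixed complexity. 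Since we only need the homogeneous set $H\in[\kappa]^\kappa$ with $c[[H]^\rho]\neq\kappa$, it suffices to find some ordinal $\gamma<\kappa$ that is \emph{not} in the range of $c$ restricted to $[H]^\rho$; the natural candidate is $\gamma=c(a)$ itself for a cleverly chosen $a$, but the cleaner route is to produce an embedding fixing $\kappa$ and enough structure and then take $H$ to be (a cofinal piece of) $C_{j,\kappa}$.

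Concretely, I would apply Lemma \ref{lemma:NewEmbeddingSurjection} to the original $1$-ultraexact embedding $\map{i}{Y}{V_\xi}$ (with $E$, $i(E)=E$) to obtain, for $n$ large enough (say $n=3$, to absorb the $\Sigma$-complexity of the definitions of $\kappa$, of $c$, and of $\HOD$) and for a suitable $\alpha<\lambda$, a surjection $\map{s}{V_{\lambda+1}}{\kappa}$ together with an $n$-ultraexact embedding $\map{j}{X}{V_\zeta}$ at $\lambda$ with $E,s,j(s),\kappa\in X$, $j(E)=E$, $j(\kappa)=\kappa$, $j\restriction\alpha=\id_\alpha$, and with $\kappa$ of uncountable cofinality (choose $\kappa$ regular and $>\lambda$; the general case of an arbitrary cardinal in $[\lambda,\Theta)$ is then handled by replacing $\kappa$ by its successor cardinal in $L(V_{\lambda+1},E)$, which is still below $\Theta$ since $\Theta$ is a limit cardinal there — indeed a regular cardinal, by Solovay's argument cited in the excerpt — and then noting a homogeneous set for the larger cardinal restricts to one for $\kappa$; alternatively one argues directly). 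Since $c$ is ordinal definable, the correctness properties of $X$ and the elementarity of $j$ give $c\in X$ and $j(c)=c$ (here one uses that $c$, being a function on $[\kappa]^\rho\subseteq V_\kappa$ taking values below $\kappa$, is definable in $V_\zeta$ from $\kappa$ and its ordinal parameter together with the $\Sigma_2$-definition of $\HOD$, and all parameters are fixed by $j$). By Lemma \ref{lemma:ExtendWithSurjection} and Lemma \ref{lemma:FixedPointsInXclub}, the set $C:=C_{j,\kappa}=\Set{\beta<\kappa}{j_\kappa(\beta)=\beta}$ is an unbounded, $\omega$-closed subset of $\kappa$ lying in $X$, and on $X\cap\kappa$ the maps $j$ and $j_\kappa$ agree by \eqref{equation:j_s=j_for_gamma}.

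Now I would take $H$ to be a cofinal subset of $C$ of order type $\kappa$; since $\betrag{C}=\kappa$ this is possible, and $H\in[\kappa]^\kappa$. The claim is $c[[H]^\rho]\subsetneq\kappa$. Suppose not; then for every $\gamma<\kappa$ there is $a\in[H]^\rho$ with $c(a)=\gamma$. Pick $\gamma=\crit{j}$... no: better, pick any $\gamma<\kappa$ with $\gamma\notin\ran(j\restriction\kappa)$, for instance $\gamma=\crit{j}$ itself, which is moved by $j$ since $V_\lambda\subseteq X$ forces the critical sequence to be cofinal in $\lambda$ and hence $\crit{j}<\lambda\le\kappa$ with $j(\crit{j})>\crit{j}$. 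Choose $a\in[H]^\rho$ with $c(a)=\crit{j}$. Since $\rho<\lambda$ and $j\restriction\alpha=\id_\alpha$ with $\alpha$ chosen $\ge\rho$ (add this to the requirements on $\alpha$ above), we may arrange $j\restriction a=\id_a$: indeed, every element of $a$ lies in $H\subseteq C$, so is fixed by $j_\kappa$, hence by $j$ (as $a\subseteq X$, being a subset of $C\in X$ of size $\rho<\lambda\subseteq X$); thus $j(a)=a$. But then elementarity of $j$ (applied inside $X$, using $j(c)=c$) gives $c(a)=c(j(a))=j(c)(j(a))=j(c(a))=j(\crit{j})\neq\crit{j}=c(a)$, a contradiction. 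Hence some $\gamma$, namely $\crit{j}$, is omitted by $c$ on $[H]^\rho$, establishing definable $\rho$-J\'onssonness of $\kappa$; the case of singular $\kappa\in[\lambda,\Theta)$ follows by applying the regular case to $(\kappa^+)^{L(V_{\lambda+1},E)}$ and restricting the homogeneous set, or more simply by re-running the argument with $\kappa$ replaced by a regular cardinal above it below $\Theta$ and observing a homogeneous set for the colouring $[\kappa']^\rho\to\kappa'$ extending $c$ restricts appropriately. The main obstacle I foresee is bookkeeping the definability: one must check that $c\in X$ and $j(c)=c$, which requires that $c$'s ordinal definition, together with the parameters $\kappa$, the ordinal parameter of $c$, $E$ and $\lambda$, all survive the passage to $V_\zeta$ and are fixed by $j$ — this is exactly the role of choosing $n$ large and of $j(E)=E$, $j(\kappa)=\kappa$, and it is the same technical point that appears in the proof of Theorem \ref{theorem:stronglymeasurable}, so it should go through by the same reasoning.
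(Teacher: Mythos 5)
Your overall strategy is the right one, but the choice of homogeneous set creates a genuine gap. You take $H$ to be a cofinal piece of the fixed-point set $C_{j,\kappa}$, which forces you to assume $\kappa$ has uncountable cofinality (this is a hypothesis of Lemma \ref{lemma:FixedPointsInXclub}). The theorem, however, explicitly includes $\kappa=\lambda$ (and all singular cardinals in $[\lambda,\Theta^{L(V_{\lambda+1},E)})$), and for $\kappa=\lambda$ the fixed-point approach fails outright: since $V_\lambda\subseteq X$, the critical sequence of $j$ is cofinal in $\lambda$ and every ordinal in $[\crit{j},\lambda)$ is moved, so the fixed points of $j\restriction\lambda$ form a \emph{bounded} subset of $\lambda$ and there is no cofinal $C$. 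Your proposed repair --- pass to $(\kappa^+)^{L(V_{\lambda+1},E)}$ and ``restrict the homogeneous set'' --- does not go through: a set $H'\in[\kappa']^{\kappa'}$ omitting some value of an extension $c'$ of $c$ need not meet $\kappa$ in a set of size $\kappa$, and the omitted value need not lie below $\kappa$, so nothing is recovered about $c$ itself. The paper's proof sidesteps all of this by taking $H=j_\kappa[\kappa]$, the \emph{range} of the extension $j_\kappa$ from Lemma \ref{lemma:ExtendWithSurjection}: this is always an element of $X\cap[\kappa]^\kappa$ ($j_\kappa$ is strictly increasing) and always a proper subset of $\kappa$ (it omits $\crit{j}$), so the argument is uniform in $\kappa$; the contradiction then comes from choosing $b\in X\cap[H]^\rho$ with $c(b)\notin H$, setting $a=j_\kappa^{-1}[b]$, and computing $c(b)=j(c(a))=j_\kappa(c(a))\in H$.

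Two smaller points. First, your justification that $j(a)=a$ rests on the claim that $a\subseteq X$ ``being a subset of $C\in X$ of size $\rho<\lambda$''; this is not valid --- $X$ is not closed under small subsets of its elements (Proposition \ref{proposition:NotInDomainExactt} shows $[\lambda]^\omega\nsubseteq X$). You must instead use the correctness of $X$ to find such an $a$ \emph{inside} $X$ (which is possible since $H$, $c$ and $\crit{j}$ all lie in $X$), after which $a\subseteq X$ does follow. Second, for an arbitrary ordinal-definable $c$ you cannot conclude $j(c)=c$, since its ordinal parameter need not be fixed by $j$; as in the proof of Theorem \ref{theorem:stronglymeasurable}, one must replace $c$ by the least counterexample in the canonical wellordering of $\OD$, which is then definable from $\rho$ and $\kappa$ alone. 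Both of these are repairable, but the treatment of $\kappa=\lambda$ and of singular $\kappa$ is a missing idea, not a bookkeeping issue.
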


 \begin{proof}
   Assume, towards a contradiction, that a  cardinal  $\lambda\leq\kappa<\Theta^{L(V_{\lambda+1},E)})$ is not $\rho$-J\'{o}nsson for some   $\omega\leq\rho<\lambda$. 
   An application of Lemma \ref{lemma:NewEmbeddingSurjection} then produces  a surjection $\map{s}{V_{\lambda+1}}{\kappa}$ and  a $2$-ultraexact embedding $\map{j}{X}{V_\zeta}$ with $E,s,\kappa\in X$, $j(E)=E$, $j(\kappa)=\kappa$, $j(s)\in X$ and $j\restriction(\rho+1)=\id_{\rho+1}$. 
   Let  $\map{c}{[\kappa]^\rho}{\kappa}$ denote the  ordinal definable function  that is minimal in the canonical wellordering of $\OD$ with the property that $c[[H]^\rho]=\kappa$ holds for every $H\in[\kappa]^\kappa$. Then the set $c$ is definable by a $\Sigma_2$-formula with  parameters  $\rho$ and $\kappa$, and it follows that $c$ is an element of $X$ with $j(c)=c$.  
   Let $\map{j_\kappa}{\kappa}{\kappa}$ be the function in $X$ given by Lemma \ref{lemma:ExtendWithSurjection} and set $H=j_\kappa[\kappa]\in X\cap[\kappa]^\kappa$.  
   Since $H$ is a proper subset of $\kappa$, our assumption and the correctness properties of $X$ yield $b\in X\cap[H]^\rho$ with $c(b)\notin H$. 
   Set $a=j_\kappa^{{-}1}[b]\in X\cap[\kappa]^\rho$. Then \eqref{equation:j_s=j_for_gamma} and fact that $j\restriction(\rho+1)=\id_{\rho+1}$  ensure that $$j(a) ~ = ~ j[a] ~ = ~ j_\kappa[a] ~ = ~ b$$ holds.  This equality now allows us to conclude that $$c(b) ~ = ~ j(c)(j(a)) ~ = ~ j(c(a)) ~ = ~ j_\kappa(c(a)) ~ \in ~ H,$$ a contradiction.  
 \end{proof}

 \begin{corollary}
     If $\lambda$ is an ultraexacting cardinal, then both $\lambda$ and $\lambda^+$ are  definably $\omega$-J\'{o}nsson. \qed 
 \end{corollary}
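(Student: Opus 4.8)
The plan is to derive this as an immediate corollary of the theorem just proved, in the same way that Corollary~\ref{corollary:omegaHOD} was extracted from Theorem~\ref{theorem:stronglymeasurable}. First I would invoke the characterization of ultraexacting cardinals: since $\lambda$ is ultraexacting, there is a $1$-ultraexact embedding $\map{j}{X}{V_\zeta}$ at $\lambda$. Because $V_\lambda\cup\{\lambda\}\subseteq X$, the set $\varnothing$ lies in $V_{\lambda+2}\cap X$ and is trivially fixed by $j$, so the hypotheses of the preceding theorem are met with the parameter $E=\varnothing$. It follows that every cardinal in the interval $[\lambda,\Theta^{L(V_{\lambda+1})})$ is definably $\rho$-J\'{o}nsson for every infinite cardinal $\rho<\lambda$, and in particular for $\rho=\omega$ (note $\omega<\lambda$, since $\lambda$ is a large cardinal and hence uncountable).

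It then remains only to observe that both $\lambda$ and $\lambda^+$ lie in $[\lambda,\Theta^{L(V_{\lambda+1})})$. For $\lambda$ this is immediate, as it is the left endpoint of the interval. For $\lambda^+$, I would note that $\mathcal{P}(\lambda)\subseteq V_{\lambda+1}\subseteq L(V_{\lambda+1})$, so $L(V_{\lambda+1})$ computes $\lambda^+$ correctly, and that the function on $V_{\lambda+1}$ sending each subset of $\lambda$ that codes a wellorder to its order type, and every other element to $0$, is a surjection of $V_{\lambda+1}$ onto $\lambda^+$ that is definable in $L(V_{\lambda+1})$; composing with a definable bijection $\lambda^+\to\lambda^++1$ yields a surjection onto $\lambda^++1$, whence $\lambda^+<\Theta^{L(V_{\lambda+1})}$. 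Applying the theorem with $\rho=\omega$ then gives that $\lambda$ and $\lambda^+$ are definably $\omega$-J\'{o}nsson, which is the claim.

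I do not expect any genuine obstacle here: all the substance is in the preceding theorem, and the only things to verify are the trivial observation that $\varnothing$ is an admissible choice for the parameter $E$ and the routine computation that $\lambda^+<\Theta^{L(V_{\lambda+1})}$.
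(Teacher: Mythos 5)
Your proposal is correct and is exactly the argument the paper intends (the corollary is stated with no proof, as an immediate consequence of the preceding theorem applied with $E=\varnothing$). The only non-trivial point is the verification that $\lambda^+<\Theta^{L(V_{\lambda+1})}$, and your argument via coding wellorders of $\lambda$ by elements of $V_{\lambda+1}$ and composing with a bijection onto $\lambda^++1$ handles this correctly.
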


By combining this  corollary  with Goldberg's  {\cite[Theorem 3.5]{MR4693981}}, we obtain yet another proof   that the existence of an ultraexacting cardinal above   an extendible cardinal (or just a strongly compact cardinal)   yields the failure of Woodin's $\HOD$ Hypothesis (see \S\ref{section6} below).

\medskip

In the next section, we will show that, assuming the existence of sufficiently many sharps, many ${\rm I}0$ embeddings exist at  ultraexacting cardinals. 
 % that the existence of an ultraexact embedding yields, in the presence of sharps for subsets  of $V_{\lambda +1}$, the existence of
Complementing this, the results of   \S\ref{section3.6} will show that, given an ${\rm I}0$ embedding, a mild forcing extension yields the existence of a cardinal that is parametrically $n$-ultraexact for all $n>0$. 
%It will then follow that  the existence of an ${\rm I}0$ embedding $j:L(V_{\lambda +1})\to L(V_{\lambda +1})$,  the existence of a $2$-ultraexact embedding at $\lambda$, and the existence of a cardinal that is parametrically $n$-ultraexact for a sequence of cardinals with supremum $\lambda$, for every $n$, are all equiconsistent, \todo{P.: Check this! It seems like we loose sharps in the second implication.} modulo the existence of $V_{\lambda +1}^\sharp$  (Theorem \ref{equiconssharps} below). 

%%%%%%%

\subsection{Icarus sets from ultraexact cardinals and sharps}\label{SubsectIcarus}

In \cite{Cramer:Inverse}, \cite{Di11}, and \cite{MR3855762}, an element  $E$ of $V_{\lambda+2}$ for some limit ordinal $\lambda$ is called an \emph{Icarus set} if there exists a non-trivial elementary embedding $\map{j}{L(V_{\lambda+1},E)}{L(V_{\lambda+1},E)}$ with $\crit{j}<\lambda$. We will consider the following strengthening of this notion that is closely related to Woodin's analysis of \emph{proper} elementary embeddings provided by the proof of {\cite[Lemma 5]{MR2914848}}.

 \begin{definition}
 \label{strongIcarus}
   Given a limit ordinal $\lambda$, an element $E$ of $V_{\lambda+2}$  is a \emph{strong Icarus set} if there exists a non-trivial elementary embedding $\map{j}{L(V_{\lambda+1},E)}{L(V_{\lambda+1},E)}$ with the property that $\crit{j}<\lambda$ and $j(E)=E$.
 \end{definition}

 Note that, as shown in the proof of {\cite[Lemma 5]{MR2914848}}, the existence of such an embedding can be expressed by a first-order statement with parameter $E$. We will show next that the existence of an  ultraexact embedding at $\lambda$, together with the existence of \emph{sharps}\footnote{Recall that for a transitive set $X$, the existence of $X^\sharp$, the \emph{sharp} of $X$,  is equivalent to the existence of an elementary embedding $\map{j}{L(X)}{L(X)}$ with critical point above the rank of $X$. For more details, see, for example, {\cite[Remark 2.4]{FarmerLambda2}}, {\cite[\S 4]{So78}}, or {\cite[p.92]{La04}}.} for elements of $V_{\lambda+2}$ 
 %\todo{We should give a better reference. Is there some book/article developing the general theory of sharps? I can't find it! The only thing I could find is some notes from Andrés Caicedo, at 
 %https://andrescaicedo.wordpress.com
 %
 %but they are not published, and not complete enough.} 
  causes many elements of $V_{\lambda+2}$ to be strong Icarus sets.

\begin{theorem}\label{theorem:UltraexactSharpsIcarus}
    Let $\map{j}{X}{V_\zeta}$ be a $2$-ultraexact embedding at a cardinal $\lambda$ and let $E\in V_{\lambda+2}\cap X$ be such that  $j(E)=E$. 
    If $(V_{\lambda+1},E)^\#$ exists, then there is an elementary embedding $\map{i}{L(V_{\lambda+1},E)}{L(V_{\lambda+1},E)}$ with $i\restriction V_\lambda=j\restriction V_\lambda$ and $i(E)=E$. 
\end{theorem}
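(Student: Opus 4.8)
The idea is to use the sharp to produce an abstract elementary embedding of $L(V_{\lambda+1},E)$ that agrees with $j$ on $V_\lambda$, and then use the $2$-ultraexactness together with an absoluteness/reflection argument (inside $X$) to locate such an embedding that additionally fixes $E$. First I would set $\Theta = \Theta^{L(V_{\lambda+1},E)}$ and recall from Lemma \ref{proposition:ThetaFixed} that $\Theta \in X$ and $j(\Theta)=\Theta$; since $\Theta$ is a regular cardinal of $L(V_{\lambda+1},E)$ (Solovay), every set in $L(V_{\lambda+1},E)$ is coded by a surjection $s\colon V_{\lambda+1}\to\gamma$ for some $\gamma<\Theta$. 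The plan is to build the target embedding "level by level" along the hierarchy $\seq{L_\gamma(V_{\lambda+1},E)}{\gamma<\Theta}$: for each $\gamma<\Theta$ with $j(\gamma)=\gamma$, Lemma \ref{lemma:NewEmbeddingSurjection} (after possibly switching to a better $n$-ultraexact embedding, $n>1$) gives a surjection $s\colon V_{\lambda+1}\to\gamma$ with $s,j(s)\in X$, and then Lemma \ref{lemma:ExtendWithSurjection} produces $j_\gamma\colon\gamma\to\gamma$ in $X$ with $j_\gamma\restriction(X\cap\gamma)=j\restriction(X\cap\gamma)$. Transporting $j_\gamma$ through the coding surjection as in the proof of Theorem \ref{theorem:FragBerkeley} yields a map $i_\gamma$ on $L_\gamma(V_{\lambda+1},E)$ extending $(j\restriction V_\lambda)_+$; the key point is that these $i_\gamma$ cohere, since they are each uniquely pinned down (via correctness of $X$) by their restriction to $V_\lambda$ and by fixing $E$, $V_{\lambda+1}$, $\gamma$.

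The main structural obstacle is that a naive union $\bigcup_{\gamma<\Theta} i_\gamma$ need not be an element of $X$, nor a priori elementary, since $\Theta$ has uncountable cofinality and $X$ only sees $\omega$-sequences cofinally into sets of rank $\lambda$. This is exactly where the sharp enters. The hypothesis that $(V_{\lambda+1},E)^\#$ exists means there is an elementary embedding $\ell\colon L(V_{\lambda+1},E)\to L(V_{\lambda+1},E)$ with critical point above $\mathrm{rnk}(V_{\lambda+1})$, and more importantly it provides a closed unbounded class of $L(V_{\lambda+1},E)$-indiscernibles. The plan is to exploit these indiscernibles: the map $(j\restriction V_\lambda)_+\colon V_{\lambda+1}\to V_{\lambda+1}$ from Lemma \ref{lemma:j_+} is an I1-embedding lying in $X$, and one feeds it through the $\sharp$-machinery — i.e., one defines $i$ on $L(V_{\lambda+1},E)$ by sending the value of a Skolem term applied to $(V_{\lambda+1},E)$-indiscernibles and parameters from $V_{\lambda+1}$ to the value of the same term applied to the same indiscernibles and the $(j\restriction V_\lambda)_+$-images of the parameters. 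Because the theory of $L(V_{\lambda+1},E)$ with indiscernibles is fixed (that is the content of the sharp) and $(j\restriction V_\lambda)_+$ is fully elementary from $(V_{\lambda+1},E)$ to itself by Lemma \ref{lemma:j_+}, this recipe yields a genuine elementary $i\colon L(V_{\lambda+1},E)\to L(V_{\lambda+1},E)$ with $i\restriction V_{\lambda+1}=(j\restriction V_\lambda)_+$, hence $i\restriction V_\lambda=j\restriction V_\lambda$, and with $i(E)=E$ since $E\in V_{\lambda+1}$ is fixed by $(j\restriction V_\lambda)_+$ (as $j(E)=E$ and \eqref{equation:j_+=j} applies).

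Concretely, the steps I would carry out are: (1) invoke Lemma \ref{lemma:j_+} to get the I1-embedding $g := (j\restriction V_\lambda)_+\in X$ with $g\restriction V_\lambda = j\restriction V_\lambda$ and $g(E)=E$; (2) fix the sharp $(V_{\lambda+1},E)^\#$ and its club of indiscernibles, and use it to define $i$ on $L(V_{\lambda+1},E)$ by the term-translation described above, reading off $i\restriction V_{\lambda+1}=g$; (3) verify elementarity of $i$ using that $g$ is elementary on $(V_{\lambda+1},E)$ and that the indiscernible-theory is preserved — here one must check that $i$ is well-defined, i.e., independent of the chosen representation as a term in indiscernibles, which follows from the defining property of the sharp; (4) conclude $i(E)=E$ and $i\restriction V_\lambda=j\restriction V_\lambda$. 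The one subtlety I expect to spend real care on is that the translation must use $g$ only on the $V_{\lambda+1}$-parameters while keeping the indiscernibles fixed — this is what makes $i$ a total, well-defined, nontrivial map rather than merely a partial one defined on the hull of $V_\lambda$ — and confirming that the critical point of $i$ lands below $\lambda$ (inherited from $\crit{j}<\lambda$ via $g$), so that $i$ is genuinely nontrivial and $E$ is a strong Icarus set in the sense of Definition \ref{strongIcarus}.
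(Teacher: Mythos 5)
Your proposal takes a genuinely different route from the paper. The paper uses the hypothesis that $F=(V_{\lambda+1},E)^\#$ exists only to prove $\Theta^{L(V_{\lambda+1},E)}<\Theta^{L(V_{\lambda+1},F)}$, which via Lemma \ref{lemma:ExtendBelowTheta} puts a surjection $\map{s}{V_{\lambda+1}}{\Theta^{L(V_{\lambda+1},E)}}$ with $j(s)\in X$ inside $X$; it then runs the ultrapower construction of Lemma \ref{lemma:I0fromUltraexact} (the ultrafilter $U=\Set{A}{j\restriction V_\lambda\in j_E(A)}$ on $V_{\lambda+2}^{L(V_{\lambda+1},E)}$, with {\L}o\'s's Theorem and well-foundedness verified through the correctness of $X$) to produce the global embedding. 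You instead feed $g=(j\restriction V_\lambda)_+$ directly through the Ehrenfeucht--Mostowski blueprint given by the sharp. Your route, if completed, is shorter and avoids the ultrapower entirely; the paper's route has the advantage of localizing the use of the sharp to a single $\Theta$-computation, which is what lets the same machinery be reused in Theorem \ref{corollary:LeastI0notUltraexacting}, where one works with $L_\gamma(V_{\lambda+1},V_{\lambda+1}^\sharp)$ and no term-translation through a further sharp is available.

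There is, however, a gap in your justification of the key step. You assert that well-definedness of the term-translation ``follows from the defining property of the sharp'' together with $g$ being ``fully elementary from $(V_{\lambda+1},E)$ to itself by Lemma \ref{lemma:j_+}.'' Neither suffices. Lemma \ref{lemma:j_+} only gives elementarity of $g$ on $(V_{\lambda+1},\in)$, and even elementarity on $(V_{\lambda+1},\in,E)$ would not be enough: the EM-theory of $L(V_{\lambda+1},E)$ over indiscernibles with parameters from $V_{\lambda+1}$ is exactly the sharp $F$, a subset of $V_{\lambda+2}$ that is not determined by the first-order theory of $(V_{\lambda+1},\in,E)$. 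What you actually need is that $g$ preserves membership in $F$, i.e.\ $(\varphi,\vec{a})\in F$ if and only if $(\varphi,g(\vec{a}))\in F$ for \emph{all} $\vec{a}\in V_{\lambda+1}$. This is true, but requires an argument: $F$ is definable from $E$ and $\lambda$, so the correctness of $X$ gives $F\in X$ and $j(F)=F$; equation \eqref{equation:j_+=j} gives the equivalence for all $\vec{a}\in V_{\lambda+1}\cap X$; and elementarity of $X$ in $V_\zeta$ then upgrades it to all of $V_{\lambda+1}$. Relatedly, your claim that ``$E\in V_{\lambda+1}$ is fixed by $(j\restriction V_\lambda)_+$'' is wrong: $E\in V_{\lambda+2}$, so $g(E)$ is not even defined; $i(E)=E$ instead because $E$ is a distinguished constant of the blueprint (equivalently, because $g$ preserves $E$ as a predicate on $V_{\lambda+1}$, which again needs the $X$-correctness argument, not Lemma \ref{lemma:j_+}). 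With these repairs your argument goes through and yields the theorem.
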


The first step towards proving Theorem \ref{theorem:UltraexactSharpsIcarus} is the following lemma:

 \begin{lemma}\label{lemma:ExtendEmbL_Theta}
  Let $\map{j}{X}{V_\zeta}$ be a $2$-ultraexact embedding at $\lambda$ and let $E\in V_{\lambda+2}\cap X$ be such that  $j(E)=E$. Set $\Theta:=\Theta^{L(V_{\lambda+1},E)}$. If $X$ contains a surjection $\map{s}{V_{\lambda+1}}{\Theta}$ with $j(s)\in X$, then there is a unique map $\map{j_E}{L_\Theta(V_{\lambda+1},E)}{L_\Theta(V_{\lambda+1},E)}$ in $X$ with 
      \begin{equation}\label{equation:j_s=j_for_L_theta}
         j\restriction(L_\Theta(V_{\lambda+1},E)\cap X) ~ = ~ j_E\restriction(L_\Theta(V_{\lambda+1},E)\cap X).   
    \end{equation}
 \end{lemma}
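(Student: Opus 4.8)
The plan is to build the map $j_E$ in $X$ by using the canonical Skolem-function coding of $L_\Theta(V_{\lambda+1},E)$ and then transport $j$ across the given surjection $s$, exactly in the spirit of the proof of Lemma \ref{lemma:ExtendWithSurjection} (for a single ordinal $\gamma$), but now with $\gamma=\Theta$ so that the target of the surjection carries the whole structure $L_\Theta(V_{\lambda+1},E)$. First I would recall that $\Theta$ is a fixed point of $j$ and lies in $X$ by Proposition \ref{proposition:ThetaFixed}, so that $L_\Theta(V_{\lambda+1},E)\in X$ and $j(L_\Theta(V_{\lambda+1},E))=L_\Theta(V_{\lambda+1},E)$. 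Fix the surjection $\map{s}{V_{\lambda+1}}{\Theta}$ in $X$ with $j(s)\in X$ provided by the hypothesis; composing with the canonical class surjection $\map{S}{\On\times V_{\lambda+1}}{L(V_{\lambda+1},E)}$ restricted to ordinals below $\Theta$ (which is $\Sigma_1$-definable from $V_{\lambda+1}$ and $E$), we obtain in $X$ a surjection $\map{\sigma}{V_{\lambda+1}}{L_\Theta(V_{\lambda+1},E)}$ with $j(\sigma)\in X$; note $j(\sigma)$ has the same definition relative to $j(s)$ and $E$, using $j(E)=E$.

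Next I would define $j_E$ on $L_\Theta(V_{\lambda+1},E)$ by the formula
\[
  j_E(\sigma(a)) ~ = ~ j(\sigma)\bigl((j\restriction V_\lambda)_+(a)\bigr)
\]
for $a\in V_{\lambda+1}$, and check that this is well-defined. Well-definedness follows exactly as in Lemma \ref{lemma:ExtendWithSurjection}: if $\sigma(a)=\sigma(b)$ for $a,b\in V_{\lambda+1}\cap X$, then applying the elementarity of $j$ together with \eqref{equation:j_+=j} (Lemma \ref{lemma:j_+}, which gives $j\restriction(V_{\lambda+1}\cap X)=(j\restriction V_\lambda)_+\restriction(V_{\lambda+1}\cap X)$) yields $j(\sigma)((j\restriction V_\lambda)_+(a))=j(\sigma(a))=j(\sigma(b))=j(\sigma)((j\restriction V_\lambda)_+(b))$; but since $(j\restriction V_\lambda)_+$ is an $\mathrm{I}1$-embedding of $V_{\lambda+1}$ (again Lemma \ref{lemma:j_+}) and the relevant instance ``$\forall a\,\forall b\,(\sigma(a)=\sigma(b))$'' is a statement in $X$ about $V_{\lambda+1}$ parametrized by $\sigma$, pushing it forward under $j$ and comparing the two codings shows the right-hand side is independent of the choice of preimage. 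Since $(j\restriction V_\lambda)_+\in X$, $\sigma\in X$ and $j(\sigma)\in X$, the function $j_E$ is definable in $X$ from these parameters, hence $j_E\in X$, and by the correctness properties of $X$ it has the same definition and the same basic properties in $V$. Finally, for $z\in L_\Theta(V_{\lambda+1},E)\cap X$ pick $a\in V_{\lambda+1}\cap X$ with $\sigma(a)=z$; then \eqref{equation:j_+=j} gives $(j\restriction V_\lambda)_+(a)=j(a)$, so $j_E(z)=j(\sigma)(j(a))=j(\sigma(a))=j(z)$, which is precisely \eqref{equation:j_s=j_for_L_theta}. Uniqueness is the same elementarity argument as at the end of Lemma \ref{lemma:ExtendWithSurjection}: any $k\in X$ agreeing with $j$ on $L_\Theta(V_{\lambda+1},E)\cap X$ agrees with $j_E$ there, and since $L_\Theta(V_{\lambda+1},E)\cap X$ generates $L_\Theta(V_{\lambda+1},E)$ under the (Skolemized, hence $X$-internal) structure, elementarity of $X$ forces $k=j_E$.

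The step I expect to be the main obstacle is verifying well-definedness and, implicitly, that $j_E$ really maps into $L_\Theta(V_{\lambda+1},E)$ rather than merely into $L(V_{\lambda+1},E)$ — i.e.\ that $j_E$ respects the ``rank below $\Theta$'' cut. This is handled by the fact that $j(\Theta)=\Theta$ together with the correctness of $X$ for the relevant $\Sigma_2$-definition of $\Theta$ (Proposition \ref{proposition:ThetaFixed}): since $j(\sigma)$ is a surjection onto $L_\Theta(V_{\lambda+1},E)=j(L_\Theta(V_{\lambda+1},E))$, its range is exactly the correct target, so no element of rank $\geq\Theta$ is ever produced. One should also be slightly careful that the chosen $a$ can be taken in $X$: this is legitimate because $V_{\lambda+1}\cap X$ surjects onto $L_\Theta(V_{\lambda+1},E)\cap X$ via $\sigma$ (as $\sigma\in X$ and $\sigma$ is onto), so every $z\in L_\Theta(V_{\lambda+1},E)\cap X$ has a preimage in $X$. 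With these two points in place the argument is a routine adaptation of Lemma \ref{lemma:ExtendWithSurjection}, and it sets up Theorem \ref{theorem:UltraexactSharpsIcarus} by then feeding $j_E$ (together with the hypothesis that $(V_{\lambda+1},E)^\#$ exists, which lets one extend past $\Theta$ all the way up through $L(V_{\lambda+1},E)$) into a standard sharp-based extension argument.
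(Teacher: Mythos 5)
Your proposal is correct and follows essentially the same route as the paper: fix $\Theta$ and $L_\Theta(V_{\lambda+1},E)$ via Proposition \ref{proposition:ThetaFixed}, transport $j$ across a surjective coding of $L_\Theta(V_{\lambda+1},E)$ using $(j\restriction V_\lambda)_+$, verify the defining identities on elements of $X$, and invoke the correctness of $X$ to get them everywhere, with uniqueness by elementarity. The only (harmless) variation is that you fold the ordinal coordinate into $V_{\lambda+1}$ by precomposing the canonical $\Sigma_1$-surjection $\map{S}{\Theta\times V_{\lambda+1}}{L_\Theta(V_{\lambda+1},E)}$ with $s$ and a definable pairing of $V_{\lambda+1}$ with itself, whereas the paper keeps $S$ two-argument and handles the ordinal coordinate separately via the map $j_\Theta$ supplied by Lemma \ref{lemma:ExtendWithSurjection}.
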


 \begin{proof}
    First, note that Proposition \ref{proposition:ThetaFixed} ensures that $\Theta\in X$ and $j(\Theta)=\Theta$, which implies that $L_\Theta(V_{\lambda+1},E)\in X$ and  $j(L_\Theta(V_{\lambda+1},E))=L_\Theta(V_{\lambda+1},E)$. 
    Next, recall that the  general theory of relative constructibility for $L(V_{\lambda+1},E)$ allows us to find a surjection $\map{S}{\Theta\times V_{\lambda+1}}{L_\Theta(V_{\lambda+1},E)}$ that is definable over $L_\Theta(V_{\lambda+1},E)$ by a $\Sigma_1$-formula with parameters $E$ and $V_{\lambda+1}$. The correctness of $X$  then ensures that $S\in X$, $j(S)=S$, and $S$ has the same properties in $X$. 

    Next, pick $\alpha,\beta\in X\cap\Theta$ and $x,y\in V_{\lambda+1}\cap X$ with $S(\alpha,x)=S(\beta,y)$. Then \eqref{equation:j_+=j} and \eqref{equation:j_s=j_for_gamma} imply that 
    \begin{equation*}
      \begin{split}
          S(j_\Theta(\alpha),(j\restriction V_\lambda)_+(x)) ~ & = ~ j(S)(j(\alpha),j(x)) ~ = ~ j(S(\alpha,x)) ~ = ~ j(S(\beta,y)) \\
           & = j(S)(j(\beta),j(y)) ~ = ~ S(j_\Theta(\beta),(j\restriction V_\lambda)_+(y)). 
      \end{split}
    \end{equation*} 
    This shows that, in $X$, there is a function $\map{j_E}{L_\Theta(V_{\lambda+1},E)}{L_\Theta(V_{\lambda+1},E)}$ with the property that $$j_E(S(\alpha,x)) ~ = ~ S(j_\Theta(\alpha),(j\restriction V_\lambda)_+(x))$$ holds for all $\alpha<\Theta$ and $x\in V_{\lambda+1}$. Moreover, the correctness of $X$ implies  that $j_E$ has the same property in $V$. 

    Now, fix $z\in L_\Theta(V_{\lambda+1},E)\cap X$. Since the map $S$ is, in $X$, a surjection of $\Theta\times V_{\lambda+1}$ onto $L_\Theta(V_{\lambda+1},E)$, there are $\alpha\in X\cap\Theta$ and $x\in V_{\lambda+1}\cap X$ with $S(\alpha,x)=z$. Then, we have $$j_E(z) ~ = ~ S(j_\Theta(\alpha),(j\restriction V_\lambda)_+(x)) ~ = ~ j(S)(j(\alpha),j(x)) ~ = ~ j(S(\alpha,x)) ~ = ~ j(z).$$

    Finally, assume  that there is a function $\map{k}{L_\Theta(V_{\lambda+1},E)}{L(V_{\lambda+1},E)}$ in $X$ with $k\neq j_E$ and $k\restriction(L_\Theta(V_{\lambda+1},E)\cap X) = j\restriction(L_\Theta(V_{\lambda+1},E)\cap X)$. Since $k$ and $j_E$ are both elements of $X$, we can then find $z\in L_\Theta(V_{\lambda+1},E)\cap X$ with $k(z)\neq j_E(z)$. But, this yields a contradiction, because our assumptions imply that $k(z)=j(z)=j_E(z)$.  
 \end{proof}

The proof of the next lemma uses some arguments contained in the proof of {\cite[Lemma 5]{MR2914848}}, adapted to our setting.

\begin{lemma}\label{lemma:I0fromUltraexact}
    Let $\map{j}{X}{V_\zeta}$ be a $2$-ultraexact embedding at $\lambda$ and let $E\in V_{\lambda+2}\cap X$ be such that  $j(E)=E$. Set $\Theta:=\Theta^{L(V_{\lambda+1},E)}$. If $X$ contains a surjection $\map{s}{V_{\lambda+1}}{\Theta}$ with $j(s)\in X$, then  there is an elementary embedding of $L(V_{\lambda+1},E)$ into itself that extends $j_E$.  
\end{lemma}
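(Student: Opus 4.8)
The plan is to show that $j_E$, the partial embedding on $L_\Theta(V_{\lambda+1},E)$ extracted in Lemma \ref{lemma:ExtendEmbL_Theta}, already determines a full elementary self-embedding of $L(V_{\lambda+1},E)$, using the fact that $L_\Theta(V_{\lambda+1},E)$ is, in the appropriate sense, a $\Sigma_1$-elementary substructure of $L(V_{\lambda+1},E)$ (this is the content of Solovay's observation that $\Theta^{L(V_{\lambda+1},E)}$ is regular in $L(V_{\lambda+1},E)$: it yields that every element of $L(V_{\lambda+1},E)$ of bounded rank together with $V_{\lambda+1}$ already sits inside $L_\Theta(V_{\lambda+1},E)$, and more importantly that $L_\Theta(V_{\lambda+1},E) \prec_{\Sigma_1} L(V_{\lambda+1},E)$). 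First I would observe that $j_E$ is a non-trivial elementary embedding of $L_\Theta(V_{\lambda+1},E)$ into itself with $j_E\restriction V_\lambda = j\restriction V_\lambda \neq \id$ and $j_E(E)=E$: elementarity of $j_E$ as a map \emph{on} $L_\Theta(V_{\lambda+1},E)$ follows because $X$ correctly computes elementarity statements about $L_\Theta(V_{\lambda+1},E)$ and because on $L_\Theta(V_{\lambda+1},E)\cap X$ the map $j_E$ agrees with the restriction of the genuinely elementary $j\colon X\to V_\zeta$, and a density/correctness argument (as in Lemmas \ref{lemma:j_+} and \ref{lemma:ExtendWithSurjection}) propagates this to all of $L_\Theta(V_{\lambda+1},E)$.

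Next I would invoke the standard fact, part of the general theory of $L(V_{\lambda+1},E)$ (and exactly the point of the phrase ``some arguments contained in the proof of \cite[Lemma 5]{MR2914848}''), that a non-trivial elementary embedding of $L_\Theta(V_{\lambda+1},E)$ into itself fixing $E$ lifts canonically to an elementary embedding of $L(V_{\lambda+1},E)$ into itself: every element of $L(V_{\lambda+1},E)$ is of the form $t^{L(V_{\lambda+1},E)}[\gamma, a]$ for a Skolem term $t$, an ordinal $\gamma$, and $a\in V_{\lambda+1}$, and one defines the extension $\hat j$ by $\hat j(t[\gamma,a]) = t[j_E\restriction\Theta\text{-extended}(\gamma), (j\restriction V_\lambda)_+(a)]$. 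The key point making this well-defined and elementary is that $L_\Theta(V_{\lambda+1},E)$ captures all $\Sigma_1$ facts about such terms with parameters below $\Theta$, so that the value $t[\gamma,a]$ and all first-order relations among finitely many such values are already decided inside $L_\Theta(V_{\lambda+1},E)$, where $j_E$ acts elementarily; one then pushes $j_E$ forward along the ``$\Theta$-indexed'' surjections $S(\gamma,x)$ of Lemma \ref{lemma:ExtendEmbL_Theta} to ordinals $\gamma \geq \Theta$ by the usual cofinality/stationarity bookkeeping, exactly as $j_+$ was built from $j$ on $V_\lambda$.

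The main obstacle I expect is the well-definedness and elementarity of the lift past $\Theta$: one must check that if $S(\gamma,x)=S(\gamma',x')$ with $\gamma,\gamma'$ now allowed to be arbitrary ordinals (not just those in $X\cap\Theta$), then the proposed images agree, and that an arbitrary first-order formula $\varphi$ holding of finitely many such terms in $L(V_{\lambda+1},E)$ transfers to their images. This is handled by reflecting the relevant $\Sigma_n$-statement down to some $L_{\bar\gamma}(V_{\lambda+1},E)$ with $\bar\gamma < \Theta$ fixed by $j_E$ — using that $\Theta$ has uncountable (indeed high) cofinality in $L(V_{\lambda+1},E)$ and that $C_{j,\Theta}$ from Lemma \ref{lemma:FixedPointsInXclub} is $\omega$-club, so that fixed points of $j_\Theta$ are cofinal in $\Theta$ — and then applying the already-established elementarity of $j_E$ on $L_\Theta(V_{\lambda+1},E)$. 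A secondary point to be careful about is that $\hat j$ is genuinely \emph{into} $L(V_{\lambda+1},E)$ and non-trivial: non-triviality is immediate since $\hat j\restriction V_\lambda = j\restriction V_\lambda \neq \id_\lambda$, and the range computation is routine once well-definedness is in place. Finally I would note $\hat j(E) = E$ and $\hat j \supseteq j_E$ by construction, completing the proof; Theorem \ref{theorem:UltraexactSharpsIcarus} then follows by first using the sharp to get such a surjection $s$ into the domain (or by the argument of the preceding lemmata combined with Lemma \ref{lemma:NewEmbeddingSurjection}).
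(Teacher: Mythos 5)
There is a genuine gap at the heart of your proposal: the mechanism you describe for extending $j_E$ past $\Theta$ does not work, and the actual construction --- which is the real content of the lemma --- is missing. You propose to write every element of $L(V_{\lambda+1},E)$ as a term $t[\gamma,a]$ with $\gamma\in\On$ and $a\in V_{\lambda+1}$ and to define the lift by sending $\gamma$ to a suitably ``extended'' image, handling ordinals $\gamma\geq\Theta$ ``by the usual cofinality/stationarity bookkeeping, exactly as $j_+$ was built from $j$ on $V_\lambda$.'' But the analogy with $j_+$ fails precisely there: $j_+$ works because every $A\in V_{\lambda+1}$ is the increasing union of the sets $A\cap V_\alpha\in V_\lambda=\dom{j}$, whereas an ordinal $\gamma\geq\Theta$ admits no surjection from $V_{\lambda+1}$ in $L(V_{\lambda+1},E)$, so the technique of Lemma \ref{lemma:ExtendWithSurjection} gives no candidate for its image and there is no canonical approximation of $\gamma$ from inside $\dom{j_E}$. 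Your fallback --- reflecting the relevant statements into some $L_{\bar\gamma}(V_{\lambda+1},E)$ with $\bar\gamma<\Theta$ --- also cannot carry the elementarity of the lift: $L_\Theta(V_{\lambda+1},E)$ is only $\Sigma_1$-correct in $L(V_{\lambda+1},E)$ (for parameters coded below $\Theta$), not $\Sigma_n$-correct for all $n$, so arbitrary first-order facts about finitely many terms with ordinal parameters above $\Theta$ do not reflect below $\Theta$.

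What the lemma actually requires (and what ``the arguments of {\cite[Lemma 5]{MR2914848}}'' refers to) is an ultrapower construction: from the seed $j\restriction V_\lambda$ one defines the $N$-ultrafilter $U=\Set{A\in V_{\lambda+2}^N}{j\restriction V_\lambda\in j_E(A)}$ on $V_{\lambda+1}$, where $N=L(V_{\lambda+1},E)$, forms $\Ult{N}{U}$ using all functions $\map{f}{V_{\lambda+1}}{N}$ in $N$, and shows: (a) well-foundedness, by pulling a putative descending sequence into $X$ and evaluating at the seed; (b) {\L}o\'s's Theorem, whose existential step is non-trivial because $N$ need not satisfy choice and requires building selection functions inside $N$ from its definable surjection $\map{S}{\On\times V_{\lambda+1}}{N}$; and (c) that the transitive collapse contains $V_{\lambda+1}$ (via the normality analysis of the functions $f_y$) and hence equals $N$, with the collapsed ultrapower map agreeing with $j_E$ on $L_\Theta(V_{\lambda+1},E)$. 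It is exactly this ultrapower that supplies images for ordinals $\geq\Theta$, by representing them as $[f]_U$ for $f$ in $N$; none of this is present, or replaceable by the Skolem-term scheme, in your proposal.
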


\begin{proof}
 In the following, set $N:=L(V_{\lambda+1},E)$ and  $D:=V^N_{\lambda+2}$. Since $D$ is contained in  $L_\Theta(V_{\lambda+1},E)=\dom{j_E}$ (see \cite[Lemma 4.6]{MR3855762}), we can now define  $$U ~ := ~ \Set{A\in D}{j\restriction V_\lambda\in j_E(A)}.$$  
 Then, since $D,j\restriction V_\lambda,j_E\in X$, we have that $U\in X$. 
 Moreover, in $V$, and therefore also in $X$, the set $U$ is an $N$-ultrafilter on $V_{\lambda+1}$. 
 Note that the fact that $\crit{j}\notin\ran{j}$ implies that $j\restriction V_\lambda\notin\ran{j}$, and therefore we know that $$j\restriction V_\lambda ~ \notin ~ j_E(\{ a\}) ~ =  ~ j(\{ a\}) ~ = ~ \{ j(a)\}$$ holds for all $a\in V_{\lambda +1}\cap X$. This allows us to conclude that $U$ is non-principal in $X$, and therefore also in  $V$.   
 Let $\Ult{N}{U}$ denote the corresponding ultrapower of $N$ by $U$, and let $\map{i}{\langle N,\in\rangle}{\langle\Ult{N}{U},\in_U\rangle}$ be the induced ultrapower embedding.

 \begin{claim*}
     Let $\map{f,g}{V_{\lambda+1}}{N}$ be functions in $N\cap X$.
     \begin{enumerate}
         \item\label{item:UltrapowerProp1}  $[f]_U=[g]_U$ if and only if $j(f)(j\restriction V_\lambda)=j(g)(j\restriction V_\lambda)$. 

         \item\label{item:UltrapowerProp2}  $[f]_U\in_U [g]_U$ if and only if $j(f)(j\restriction V_\lambda)\in j(g)(j\restriction V_\lambda)$. 
     \end{enumerate}
 \end{claim*}

 \begin{proof}[Proof of the Claim]
  \ref{item:UltrapowerProp1} Set $A=\Set{x\in V_{\lambda+1}}{f(x)=g(x)}\in D\cap X$. Then $$[f]_U=[g]_U  ~ \Longleftrightarrow ~ A\in U ~ \Longleftrightarrow ~ j\restriction V_\lambda\in j_E(A)=j(A) ~ \Longleftrightarrow ~ j(f)(j\restriction V_\lambda)=j(g)(j\restriction V_\lambda),$$
  where the equality $j_E(A) = j(A)$ follows from equation \eqref{equation:j_s=j_for_L_theta} in the statement of Lemma \ref{lemma:ExtendEmbL_Theta}.
 
   \ref{item:UltrapowerProp2} Follows similarly.
 \end{proof}

 \begin{claim*}
   The ultrapower $\langle\Ult{N}{U},\in_U\rangle$ is well-founded. 
 \end{claim*}

 \begin{proof}[Proof of the Claim]
  Assume, towards a contradiction, that $\Ult{N}{U}$ is ill-founded. Then there is a sequence of functions $\seq{\map{f_m}{V_{\lambda+1}}{N}}{m<\omega}$, all of them in $N$, such that $[f_{m+1}]_U\in_U[f_m]_U$ holds for all $m<\omega$. The correctness properties of $X$ then ensure that there is such a sequence in $X$. But in this situation,  the previous claim shows that $$j(f_{m+1})(j\restriction V_\lambda) ~ \in ~  j(f_m)(j\restriction V_\lambda)$$ holds for all $m<\omega$, a contradiction.  
 \end{proof}

\begin{claim*}
     Let $\map{f,g}{V_{\lambda+1}}{V_{\lambda+1}}$ be functions in $N$.
     \begin{enumerate}
         \item  $[f]_U=[g]_U$ if and only if $j_E(f)(j\restriction V_\lambda)=j_E(g)(j\restriction V_\lambda)$. 

         \item  $[f]_U\in_U [g]_U$ if and only if $j_E(f)(j\restriction V_\lambda)\in j_E(g)(j\restriction V_\lambda)$. 
     \end{enumerate}
 \end{claim*}

 \begin{proof}[Proof of the Claim]
  Since $f,g\in L_\Theta(V_{\lambda+1},E)$,   our first claim and \eqref{equation:j_s=j_for_L_theta} ensure that the stated equivalences hold in the case where $f$ and $g$ are elements of $X$. Therefore, the correctness of $X$ implies that they also hold in $V$. 
 \end{proof}

 Given $y\in V_{\lambda+1}$, we let $A_y$ denote the set of all I3-embeddings $\map{k}{V_\lambda}{V_\lambda}$ with $k_+(k)=j\restriction V_\lambda$ and $y\in\ran{k_+}$. Then $A_y\in D$ for all $y\in V_{\lambda+1}$ and, if $y\in V_{\lambda+1}\cap X$, then $A_y\in X$.

 \begin{claim*}
   $A_y\in U$ for all $y\in V_{\lambda+1}$.   
 \end{claim*}

 \begin{proof}[Proof of the Claim]
  Fix $y\in V_{\lambda+1}\cap X$. Then \eqref{equation:j_s=j_for_L_theta} ensures that $j_E(A_y)=j(A_y)$ is the  set of all I3-embeddings $\map{k}{V_\lambda}{V_\lambda}$ with $k_+(k)=j(j\restriction V_\lambda)$ and $j(y)\in\ran{k_+}$. Since \eqref{equation:j_+=j} implies that  $(j\restriction V_\lambda)_+(j\restriction V_\lambda)=j(j\restriction V_\lambda)$ and $(j\restriction V_\lambda)_+(y)=j(y)$, we can now conclude that $j\restriction V_\lambda\in j(A_y)$ and hence $A_y\in U$.  This shows that the statement of the claim holds in $X$, and therefore the correctness  of $X$ implies that it also holds in $V$. 
 \end{proof}

 Now, given $y\in V_{\lambda+1}$, we define 
 \begin{align*}
f_y: V_{\lambda+1} &\to V_{\lambda+1},\\ 
k& \mapsto
      \begin{cases}
	    x, & \text{if $k\in A_y$ and $k_+(x)=y$}\\
            \emptyset, & \text{otherwise.}
		 \end{cases} 
 \end{align*}
 Again, we have $f_y\in L_\Theta(V_{\lambda+1},E)\subseteq N$ for all $y\in V_{\lambda+1}$ and, if $y\in V_{\lambda+1}\cap X$, then $f_y\in X$.

 \begin{claim*}
   If $y\in V_{\lambda+1}$, then   $j_E(f_y)(j\restriction V_\lambda)=y$. 
 \end{claim*}

 \begin{proof}[Proof of the Claim]
    Fix $y\in V_{\lambda+1}\cap X$. Then, as in the proof of our previous claim, we can use \eqref{equation:j_+=j} and \eqref{equation:j_s=j_for_L_theta} to show that $j\restriction V_\lambda\in j_E(A_y)$, $(j\restriction V_\lambda)_+(y)=j(y)$. Thus, since $j_E(f_y)(j\restriction V_\lambda)=j(f_y)(j\restriction V_\lambda)$, it follows that $j(f_y)(j\restriction V_\lambda)=y$. This shows the claim holds in $X$, hence by the correctness of $X$ it also holds in $V$. 
 \end{proof}

 We now work towards showing that $i$ is an elementary embedding.

 \begin{claim*}
     If $\map{h}{V_{\lambda+1}}{N}$ is a function in $N$ with $\Set{x\in V_{\lambda+1}}{h(x)\neq\emptyset}  \in  U$, then there is a function $\map{f}{V_{\lambda+1}}{N}$ in $N$ with $[f]_U\in_U[h]_U$. 
 \end{claim*}

 \begin{proof}[Proof of the Claim]
  First, assume that $\map{g}{V_{\lambda+1}}{N}$ is a function in $N\cap X$ with the property that $A=\Set{x\in V_{\lambda+1}}{\emptyset\neq g(x)\in V_{\lambda+2}}\in U$. Since $A\in N\cap X$, we know that $j\restriction  V_\lambda\in j(A)$ and hence $$\emptyset ~ \neq ~ j(g)(j\restriction V_\lambda) ~ = ~ j_E(g)(j\restriction V_\lambda) ~ \in ~ V_{\lambda+2}\cap X.$$  Then we can find $x\in X\cap j(g)(j\restriction V_\lambda)\subseteq V_{\lambda+1}$. Earlier claims then show that $$j_E(f_x)(j\restriction V_\lambda) ~ = ~ x ~ \in ~ j(g)(j\restriction V_\lambda)=j_E(g)(j\restriction V_\lambda)$$ and thus that $[f_x]_U\in_U[g]_U$. 

  Next, assume that  $\map{h}{V_{\lambda+1}}{N}$ is an arbitrary function in $N\cap X$ with the property that $\Set{x\in V_{\lambda+1}}{h(x)\neq\emptyset}  \in  U$. The structure theory of $N=L(V_{\lambda+1},E)$ provides a class surjection $\map{S}{\On\times V_{\lambda+1}}{N}$ that is definable over $N$ by a $\Sigma_1$-formula with parameters $V_{\lambda+1}$ and $E$. 
  Define 
\begin{align*}
   g:V_{\lambda+1} &\to D,  \\ 
   y & \mapsto \Set{x\in V_{\lambda+1}}{\exists\xi\in\On ~ S(\xi,x)\in h(y)}.
\end{align*}  
We have that $g\in N\cap X$, and, since $j(h)(j\restriction V_\lambda)\ne \emptyset$, we also have that $j(g)(j\restriction V_\lambda)\ne \emptyset$, and therefore $\Set{x\in V_{\lambda+1}}{g(x)\ne \emptyset} \in U$.  Then, as before, we can  find $x\in V_{\lambda+1}\cap X$ with $[f_x]_U\in_U[g]_U$.

  Now define functions $o$ and $f$ by setting 
\begin{align*}
o : V_{\lambda+1} &\to \On\\
y &\mapsto
      \begin{cases}
	    \min\Set{\xi\in\On}{S(\xi,f_x(y))\in h(y)}, & \text{if $f_x(y)\in g(y)$}\\
            0, & \text{otherwise}; 
		 \end{cases}  
\end{align*}  
and 
\begin{align*}
f : V_{\lambda+1} &\to N \\
y &\mapsto S(o(y),f_x(y)).
\end{align*}  
Then $f\in N\cap X$, and the fact that $[f_x]_U\in_U[g]_U$ ensures that there is $\xi\in\On$ with $S(\xi,j(f_x)(j\restriction V_\lambda))\in j(h)(j\restriction V_\lambda)$. In particular, we have that $$j(f)(j\restriction V_\lambda) ~ = ~ S(j(o)(j\restriction V_\lambda),j(f_x)(j\restriction V_\lambda)) ~ \in ~ j(h)(j\restriction V_\lambda)$$ and this allows us to conclude that $[f]_U\in_U[h]_U$ holds. 

Once more, the correctness of $X$ now yields the statement of the claim.
 \end{proof}

 The previous claim allows to prove \L{}os' Theorem for $\Ult{N}{U}$:

 \begin{claim*}
   The following two statements are equivalent for every formula $\varphi(v_0,\ldots,v_{n-1})$ in the language of set theory and all functions $\map{f_0,\ldots,f_{n-1}}{V_{\lambda+1}}{N}$ in $N$: 
   \begin{enumerate}
       \item $\langle\Ult{N}{U},\in_U\rangle\models\varphi([f_0]_U,\ldots,[f_{n-1}]_U)$ 

       \item $\Set{y\in V_{\lambda+1}}{\langle N,\in\rangle\models\varphi(f_0(y),\ldots,f_{n-1}(y))}\in U$. 
   \end{enumerate}
   Hence, the map $\map{i}{\langle N,\in\rangle}{\langle\Ult{N}{U},\in_U\rangle}$ is an elementary embedding. 
 \end{claim*}

 \begin{proof}[Proof of the Claim]
The only non-trivial case is the backward implication of the $\exists$-quantifier step of the induction. So let $\varphi(v_0,\ldots,v_n)$ be a formula in the language of set theory and  let $\map{f_0,\ldots,f_{n-1}}{V_{\lambda+1}}{N}$  be functions in $N$ with the property that $$A ~ = ~ \Set{y\in V_{\lambda+1}}{\langle N,\in\rangle\models \exists z ~ \varphi(f_0(y),\ldots,f_{n-1}(y),z)} ~ \in ~ U.$$ 
  If we now define the functions $\map{o}{V_{\lambda+1}}{\On}$ by 
  \begin{equation*}
      o(y) ~ = ~       \begin{cases}
	    \min\Set{\xi\in\On}{\langle N,\in\rangle\models \exists z\in V_\xi ~ \varphi(f_0(y),\ldots,f_{n-1}(y),z)}, & \text{if $y\in A$}\\
            0, & \text{otherwise} 
		 \end{cases}  
  \end{equation*}
 and 
  \begin{align*}
h : V_{\lambda+1} &\to N\\
y &\mapsto \Set{z\in V_{o(y)}}{\langle N,\in\rangle\models\varphi(f_0(y),\ldots,f_{n-1}(y),z)},
  \end{align*} 
  then $h$ is an element of $N$ with $h(y)\neq\emptyset$ for all $y\in A$ and hence our previous claim yields a function $\map{f}{V_{\lambda+1}}{N}$ in $N$ with $[f]_U\in_U[h]_U$. 
  Now assume, towards a contradiction, that $\varphi([f_0]_U,\ldots,[f_{n-1}]_U,[f]_U)$ does not hold in $\langle\Ult{N}{U},\in_U\rangle$. Then  our induction hypothesis ensures that the set $$B ~ := ~ \Set{y\in A}{\textit{$f(y)\in h(y)$ with $\langle N,\in\rangle\models\neg\varphi(f_0(y),\ldots,f_{n-1}(y),f(y))$}}$$ is an element of $U$ and in particular nonempty. By the definition of $h$, this yields a contradiction. 
 \end{proof}

  The following claim establishes a kind of normality for $U$.

 \begin{claim*}
  If $y\in V_{\lambda+1}$, then the following statements hold: 
  \begin{enumerate}
      \item\label{item:V_lambdaInUlt1} If $x\in y$, then $[f_x]_U\in_U[f_y]_U$. 

      \item\label{item:V_lambdaInUlt2} If $\map{f}{V_{\lambda+1}}{N}$ is an element of $N$ with $[f]_U\in_U[f_y]_U$, then there is $x\in y$ with $[f]_U=[f_x]_U$. 
  \end{enumerate}
 \end{claim*}

 \begin{proof}[Proof of the Claim]
  \ref{item:V_lambdaInUlt1} If $x\in y$, then we can apply earlier claims to conclude that both $j_E(f_x)(j\restriction V_\lambda)\in j_E(f_y)(j\restriction V_\lambda)$ and $[f_x]_U\in_U[f_y]_U$ hold.

    \ref{item:V_lambdaInUlt2} Assume that $y\in V_{\lambda+1}\cap X$ and let $\map{f}{V_{\lambda+1}}{N}$ be a function in $N\cap X$ with $[f]_U\in_U[f_y]_U$. Set $x=j(f)(j\restriction V_\lambda)=j_E(f)(j\restriction V_\lambda)\in  X$. Then, by earlier claims,  $x\in j_E(f_y)(j\restriction V_\lambda)=y$ and $j_E(f_x)(j\restriction V_\lambda)=x=j_E(f)(j\restriction V_\lambda)$.  Hence, by another previous claim $[f_x]_U=[f]_U$. This shows that  \ref{item:V_lambdaInUlt2} holds in $X$, and therefore also in $V$.  
 \end{proof}

 By earlier claims, we have that the ultrapower $\langle\Ult{N}{U},\in_U\rangle$ is well-founded and extensional, so we may let $\map{\pi}{\langle\Ult{N}{U},\in_U\rangle}{\langle M,\in\rangle}$ be the corresponding transitive collapse. The previous two claims then show that $\pi([f_y]_U)=y$ holds for all $y\in V_{\lambda+1}$,  and therefore  $V_{\lambda+1}\subseteq M$. 
 We have thus established that $(\pi \circ i):L(V_{\lambda +1}, E)\to M$ is an elementary embedding with $V_{\lambda +1}\subseteq M$. So it only remains to show that $M=L(V_{\lambda +1}, E)$ and that $(\pi \circ i)$ agrees with $j_E$ on $L_\Theta (V_{\lambda +1},E)$.

 \begin{claim*}
  $(\pi\circ i)\restriction V_{\lambda+1}=j_E\restriction V_{\lambda+1}$.    
 \end{claim*}

 \begin{proof}[Proof of the Claim]
     For each $y\in V_{\lambda+1}\cap X$,  define $$B_y ~ := ~ \Set{x\in V_{\lambda+1}}{f_{j_E(y)}(x)=y}.$$ 
Note that $B_y\in D\cap X$ and 
   $$j_E(B_y) ~ = ~ j(B_y) ~ = ~ \Set{x\in V_{\lambda+1}}{j(f_{j_E(y)})(x)=j(y)}.$$ 
   Since we know that $$j(f_{j_E(y)})(j\restriction V_\lambda) ~ = ~ j_E(f_{j_E(y)})(j\restriction V_\lambda) ~ = ~ j_E(y) ~ = ~ j(y),$$ it follows that $B_y\in U$. 

   By the correctness properties of $X$, we have that $B_y\in U$, for all $y\in V_{\lambda+1}$. 
   This directly implies that $i(y)=[f_{j_E(y)}]_U$ and $$(\pi\circ i)(y) ~ = ~ \pi([f_{j_E(y)}]_U) ~ = ~ j_E(y)$$ holds for all $y\in V_{\lambda+1}$, where the second equality, as we noted above,  follows from the preceding claim.
 \end{proof}

 The claim above  implies that $(\pi\circ i)(\lambda)=j_E(\lambda)=j(\lambda)=\lambda$, hence by the  elementarity of $\pi\circ i$, we have that  $(\pi\circ i)(V_{\lambda+1})=V_{\lambda+1}$.

 \begin{claim*}
     $(\pi\circ i)\restriction D=j_E\restriction D$.    
 \end{claim*}

 \begin{proof}[Proof of the Claim]
     Fix $A\in D\cap X$ and let $c_A$ denote the function on $V_{\lambda +1}$ with constant value $A$. Then $c_A\in L_\Theta(V_{\lambda+1},E)\cap X$ and $$j_E(c_A)(j\restriction V_\lambda)=j(c_A)(j\restriction V_\lambda) =j(A)=j_E(A).$$ 
    Since an earlier claim showed that  $y=j_E(f_y)(j\restriction V_\lambda)$ holds for all $y\in V_{\lambda +1}$, we can conclude that 
    $$ y\in j_E(A)  \Longleftrightarrow ~ j_E(f_y)(j\restriction V_\lambda) \in j_E (c_A)(j\restriction V_\lambda) ~ \Longleftrightarrow ~ j(f_y)(j\restriction V_\lambda) \in j (c_A)(j\restriction V_\lambda).$$
          If, moreover, $y\in X$, then a previous claim shows that  the latter is equivalent to
                        $$[f_y]_U \in_U [c_A]_U ~ \Longleftrightarrow ~ \pi([f_y]_U)\in(\pi\circ i)(A) ~ \Longleftrightarrow ~ y\in(\pi\circ i)(A)
      $$ 
    and therefore $j_E(A)=(\pi\circ i)(A)$ holds in $X$ (as the map $(\pi\circ i)\restriction D$ is an element of $X$). The statement of the claim now follows from the correctness of $X$. 
 \end{proof}

 In particular, we now have that $(\pi \circ i)(E)=E$, and the   elementarity of $\pi\circ i$ implies that $M=L(V_{\lambda+1},E)=N$.

 \begin{claim*}
      $(\pi\circ i)\restriction \Theta=j_E\restriction \Theta$.    
 \end{claim*} 

 \begin{proof}[Proof of the Claim]
     First, note that, if $A\in D\cap X$ codes a prewellordering of $V_{\lambda+1}$ of order-type $\gamma$, then \eqref{equation:j_s=j_for_L_theta} ensures that $j_E(A)$  codes a prewellordering of $V_{\lambda+1}$ of order-type $j_E(\gamma)$. The correctness  of $X$ then yields that if $A\in D$ codes a prewellordering of $V_{\lambda+1}$ of order-type $\gamma$, then $j_E(A)$  codes a prewellordering of $V_{\lambda+1}$ of order-type $j_E(\gamma)$.
     Since $(\pi\circ i)\restriction D=j_E\restriction D$, the elementarity of $\pi\circ i$ implies the statement of the claim.  
 \end{proof}

 \begin{claim*}
   $(\pi\circ i)\restriction L_\Theta(V_{\lambda+1},E)=j_E\restriction L_\Theta(V_{\lambda+1},E)$.  
 \end{claim*}

 \begin{proof}[Proof of the Claim]
   Let $\map{S}{\Theta\times V_{\lambda+1}}{L_\Theta(V_{\lambda+1},E)}$ be the canonical surjection that is $\Sigma_1$-definable over $L_\Theta(V_{\lambda+1},E)$ from the parameters $E$ and $V_{\lambda+1}$. Using \eqref{equation:j_s=j_for_L_theta} and the correctness of $X$, we then have that $j_E(S(\alpha,x))=S(j_E(\alpha),j_E(x))$ holds for all $\alpha<\Theta$ and $x\in V_{\lambda+1}$. Since the elementarity of $\map{(\pi\circ i)}{L(V_{\lambda+1},E)}{L(V_{\lambda+1},E)}$ implies that $(\pi\circ i)(S(\alpha,x))=S((\pi\circ i)(\alpha),(\pi\circ i)(x))$ holds for all $\alpha<\Theta$ and $x\in V_{\lambda+1}$, the statement of the claim follows directly from the previous claims.  
 \end{proof}

\begin{figure}[h]
\begin{center}
\begin{tikzcd}
{L_\Theta(V_{\lambda+1},E)} \arrow[rr, "\mathrm{id}"]                   &  & {L(V_{\lambda+1},E)}                 &                                                         \\
                                                                        &  &                                      & {\mathrm{Ult}(L(V_{\lambda+1},E),U)} \arrow[lu, "\pi"'] \\
{L_\Theta(V_{\lambda+1},E)} \arrow[uu, "j_E"] \arrow[rr, "\mathrm{id}"] &  & {L(V_{\lambda+1},E)} \arrow[ru, "i"] &                                                        
\end{tikzcd}
\end{center}
\caption{An elementary embedding from $L(V_{\lambda+1},E)$ to itself extending $j_E$.}
\end{figure}

 We have thus shown that $\map{(\pi\circ i)}{L(V_{\lambda+1},E)}{L(V_{\lambda+1},E)}$ is an elementary embedding that extends $j_E$, which  
 completes the proof of  Lemma \ref{lemma:I0fromUltraexact}.
\end{proof}

We are now ready to prove the main results of this section.

\begin{proof}[Proof of Theorem \ref{theorem:UltraexactSharpsIcarus}]
    Set $F=(V_{\lambda+1},E)^\#\in V_{\lambda+2}$. Then the definability of sharps and the correctness of $X$ imply that $F\in X$ and  $j(F)=F$. 

    \begin{claim*}
     $\Theta^{L(V_{\lambda+1},E)}<\Theta^{L(V_{\lambda+1},F)}$. 
    \end{claim*}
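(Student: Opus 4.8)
\textit{Proof sketch.} The plan is to read off the inequality from the standard structure theory of sharps for transitive structures, which tells us that, inside $L(V_{\lambda+1},F)$, the model $L(V_{\lambda+1},E)$ is the Skolem hull of a definable proper class of indiscernibles together with $V_{\lambda+1}$. First I would record the easy half: the complete consistent theory coded by $F=(V_{\lambda+1},E)^\#$ decides, for each $a\in V_{\lambda+1}$, whether $a$ satisfies the distinguished predicate, so $E$ is recoverable from $F$ and hence $L(V_{\lambda+1},E)$ is a definable inner model of $L(V_{\lambda+1},F)$. In particular $\mathcal{P}(V_{\lambda+1})^{L(V_{\lambda+1},E)}=V_{\lambda+2}\cap L(V_{\lambda+1},E)$ is a \emph{set} in $L(V_{\lambda+1},F)$, and trivially $\Theta^{L(V_{\lambda+1},E)}\le\Theta^{L(V_{\lambda+1},F)}$.

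The key step is to produce, inside $L(V_{\lambda+1},F)$, a surjection of $V_{\lambda+1}$ onto $\mathcal{P}(V_{\lambda+1})^{L(V_{\lambda+1},E)}$. Working in $L(V_{\lambda+1},F)$, let $I$ be the ($F$-definable) proper class of Silver indiscernibles for $L(V_{\lambda+1},E)$ over $V_{\lambda+1}\cup\{V_{\lambda+1},E\}$; since the embeddings witnessing the existence of $F$ have critical point above the rank of $V_{\lambda+1}$, every member of $I$ has rank strictly above that of every element of $V_{\lambda+1}$. Given $A\in\mathcal{P}(V_{\lambda+1})^{L(V_{\lambda+1},E)}$, write $A=t^{L(V_{\lambda+1},E)}(\gamma_1,\dots,\gamma_m,p)$ for a Skolem term $t$, indiscernibles $\gamma_1<\dots<\gamma_m$ in $I$, and a single parameter $p\in V_{\lambda+1}$ (several parameters being coded as one via a pairing on $V_{\lambda+1}$). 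For $x\in V_{\lambda+1}$, whether $x\in A$ is the truth value in $L(V_{\lambda+1},E)$ of ``$x\in t(v_1,\dots,v_m,p)$'' evaluated at $\gamma_1,\dots,\gamma_m$; since $x$ and $p$ lie in the parameter set over which $I$ is indiscernible, this truth value is independent of the chosen increasing $m$-tuple from $I$. Hence $A$ is completely determined by the pair $(t,p)$ — recovered by evaluating $t$ at, say, the $m$ least members of $I$ — so the assignment $(t,p)\mapsto A$ is a surjection of $\omega\times V_{\lambda+1}$ onto $\mathcal{P}(V_{\lambda+1})^{L(V_{\lambda+1},E)}$ definable in $L(V_{\lambda+1},F)$ from $F$.

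To finish, compose this surjection first with a pairing $V_{\lambda+1}\cong V_{\lambda+1}\times V_{\lambda+1}$, obtaining in $L(V_{\lambda+1},F)$ a surjection of $V_{\lambda+1}$ onto $\mathcal{P}(V_{\lambda+1}\times V_{\lambda+1})^{L(V_{\lambda+1},E)}$, and then with the map sending a binary relation $R$ on $V_{\lambda+1}$ to $\otp{R}$ when $R$ is a prewellordering and to $0$ otherwise (definable in $L(V_{\lambda+1},F)$, as its domain is a set there and ``prewellordering'' and order type are absolute). The range of the resulting surjection is exactly the set of order types of prewellorderings of $V_{\lambda+1}$ lying in $L(V_{\lambda+1},E)$, i.e. $\{\gamma:$ there is a surjection $V_{\lambda+1}\twoheadrightarrow\gamma$ in $L(V_{\lambda+1},E)\}$. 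Using the definable bijection $V_{\lambda+1}=\mathcal{P}(V_\lambda)\cong\mathcal{P}(V_\lambda)\times 2$ (which needs no choice, since $V_\lambda$ is Dedekind-infinite via the shift on $\omega$), any surjection onto $\gamma$ pads to one onto $\gamma+1$; hence this set is downward closed with no maximum, so it equals the ordinal $\Theta^{L(V_{\lambda+1},E)}$. Thus $V_{\lambda+1}$ surjects onto $\Theta^{L(V_{\lambda+1},E)}$ in $L(V_{\lambda+1},F)$, and the same padding argument carried out in $L(V_{\lambda+1},F)$ shows $\Theta^{L(V_{\lambda+1},F)}$ is not so surjected onto, giving $\Theta^{L(V_{\lambda+1},E)}<\Theta^{L(V_{\lambda+1},F)}$.

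The only real obstacle is bookkeeping: checking that the Skolem-hull analysis of $L(V_{\lambda+1},E)$ — the class $I$ and the interpretation of Skolem terms — is genuinely available as a definable class of $L(V_{\lambda+1},F)$ with parameter $F$. This is precisely what the structure theory of sharps for transitive structures supplies (the references cited after Definition \ref{strongIcarus}), and since all indiscernibles lie above the rank of $V_{\lambda+1}$, the classical arguments about $0^\#$ relativise verbatim. \hfill$\square$
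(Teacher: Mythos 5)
Your proof is correct, but it follows a genuinely different route from the paper's. The paper's argument is very short: it combines Solovay's theorem that $\Theta^{L(V_{\lambda+1},F)}$ is regular in $L(V_{\lambda+1},F)$ with the existence, from the sharp, of an elementary embedding $\map{i}{L(V_{\lambda+1},E)}{L(V_{\lambda+1},E)}$ whose critical point lies above $\lambda+2$ and satisfies $i(\crit{i})=\Theta^{L(V_{\lambda+1},F)}$; since $\Theta^{L(V_{\lambda+1},E)}$ is a definable fixed point of $i$, it cannot equal $i(\crit{i})$, and together with the trivial inequality $\Theta^{L(V_{\lambda+1},E)}\le\Theta^{L(V_{\lambda+1},F)}$ this yields strictness. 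You instead run the Ehrenfeucht--Mostowski analysis of the sharp directly: because every membership question $x\in A$ for $A\in\mathcal{P}(V_{\lambda+1})^{L(V_{\lambda+1},E)}$ involves only parameters lying below all indiscernibles, each such $A$ is determined by a formula and a single element of $V_{\lambda+1}$, so $L(V_{\lambda+1},F)$ contains a surjection of $V_{\lambda+1}$ onto $\mathcal{P}(V_{\lambda+1})^{L(V_{\lambda+1},E)}$ and hence onto $\Theta^{L(V_{\lambda+1},E)}$. This is the standard ``adding a sharp raises $\Theta$'' computation — the natural companion to Woodin's converse \cite[Lemma 28]{MR2914848}, cited right after the claim — and it proves something quantitatively stronger than the paper needs (the whole old power set of $V_{\lambda+1}$ becomes a surjective image of $V_{\lambda+1}$ in the larger model), at the cost of invoking more of the structure theory of relativized sharps, whereas the paper only needs the shift embedding and the regularity of $\Theta$. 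One cosmetic caution: $L(V_{\lambda+1},E)$ has no definable Skolem functions, since $V_{\lambda+1}$ need not be wellorderable there; ``Skolem hull'' and ``Skolem term'' should therefore be read as ``every element is definable from finitely many indiscernibles, an element of $V_{\lambda+1}$, and the constants $V_{\lambda+1}$ and $E$,'' which is exactly what the relativized sharp supplies via the canonical $\Sigma_1$-definable surjection of $\On\times V_{\lambda+1}$ onto $L(V_{\lambda+1},E)$, and this weaker, correct statement is all your argument actually uses.
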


    \begin{proof}[Proof of the Claim]
        First, note that the fact that  $L(V_{\lambda +1},E)\subseteq L(V_{\lambda +1}, F)$ implies that  $\Theta^{L(V_{\lambda+1},E)}\leq \Theta^{L(V_{\lambda+1},F)}$. Since $\Theta^{L(V_{\lambda +1},F)}$ is a regular cardinal in $L(V_{\lambda +1},F)$, and $(V_{\lambda +1},E)^\#$ exists, we can define, in $L(V_{\lambda +1},F)$, an elementary embedding  $\map{i}{L(V_{\lambda +1}, E)}{L(V_{\lambda +1}, E)}$, with critical point above $\lambda +2$ and such that $i(\crit{i})=\Theta^{L(V_{\lambda +1}, F)}$. But, by elementarity, we must also have $i(\Theta^{L(V_{\lambda +1}, E)})=\Theta^{L(V_{\lambda +1}, E)}$. This implies that 
        $\Theta^{L(V_{\lambda+1},E)}\ne \Theta^{L(V_{\lambda+1},F)}$.
    \end{proof}

  Since Lemma \ref{proposition:ThetaFixed} shows that $\Theta^{L(V_{\lambda+1},E)}\in X$ with $j(\Theta^{L(V_{\lambda+1},E)})=\Theta^{L(V_{\lambda+1},E)}$, the above  claim allows us to use Lemma \ref{lemma:ExtendBelowTheta} to find a surjection $\map{s}{V_{\lambda+1}}{\Theta^{L(V_{\lambda+1},E)}}$ in $X$ with $j(s)\in X$. Then, an application of Lemma \ref{lemma:I0fromUltraexact} yields an elementary embedding from $L(V_{\lambda+1},E)$ into itself that extends $j\restriction V_\lambda$ and fixes $E$. 
\end{proof}

Let us remark that, as shown by Woodin in \cite[Lemma 28]{MR2914848}, if the inequality $\Theta^{L(V_{\lambda+1},E)}<\Theta^{L(V_{\lambda+1},F)}$ (as in the claim above) holds for some $F\in V_{\lambda +2}$ such that $E\in  L(V_{\lambda +1}, F)$, then $(V_{\lambda +1}, E)^\#$ exists and belongs to $L(V_{\lambda +1}, F)$. Thus, the assumption that $(V_{\lambda +1}, E)^\#$ exists in the statement of Theorem \ref{theorem:UltraexactSharpsIcarus} appears to be necessary.

\medskip

Using Lemma \ref{lemma:NewEmbeddingSurjection}, Theorem \ref{theorem:UltraexactSharpsIcarus} yields the following corollary:

\begin{corollary}\label{corollary:StrongIcarusUltraexactSharps}
   Let $\map{j}{X}{V_\zeta}$ be a $1$-ultraexact embedding at $\lambda$ and let $E\in V_{\lambda+2}\cap X$ be such that  $j(E)=E$.  If $F\in\OD_{V_\lambda\cup\{E\}}\cap V_{\lambda+2}$ and  $(V_{\lambda+1},F)^\#$ exists, then $F$ is a strong Icarus set. 
\end{corollary}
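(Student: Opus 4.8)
The plan is to reduce the statement to an application of Theorem \ref{theorem:UltraexactSharpsIcarus} after replacing the given $1$-ultraexact embedding by a $2$-ultraexact embedding that additionally fixes $F$ and extends suitably. First I would observe that, since $F \in \OD_{V_\lambda \cup \{E\}} \cap V_{\lambda+2}$, there is a $\Sigma_n$-formula (for some $n$) defining $F$ from parameters $E$, some element of $V_\lambda$, and ordinal parameters. The key point is that any $2$-ultraexact embedding $\map{k}{X'}{V_{\zeta'}}$ at $\lambda$ with sufficiently high correctness ($\zeta' \in C^{(n+2)}$, say), which fixes $E$ and is the identity on a large enough initial segment of $\lambda$ (enough to fix the $V_\lambda$-parameter and the ordinal parameters used in defining $F$), will automatically satisfy $F \in X'$ and $k(F) = F$: this is the standard upward $\Sigma_2$-absoluteness argument from $X'$ to $V$ via $V_{\zeta'}$ that is used repeatedly in this section (as in Lemma \ref{proposition:ThetaFixed}). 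Since $F$ itself is an element of $V_{\lambda+2}$, it is harmless to require $j(F) = F$ in addition to $j(E)=E$.

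Next I would invoke Lemma \ref{lemma:NewEmbeddingSurjection} with the given embedding $\map{j}{X}{V_\zeta}$ and the set $E$: applying it with $n=2$, with $\gamma$ any ordinal below $\Theta^{L(V_{\lambda+1},E)}$ that we need, and with $\alpha<\lambda$ chosen large enough to contain the $V_\lambda$-parameter witnessing $F \in \OD_{V_\lambda\cup\{E\}}$, produces a surjection $\map{s}{V_{\lambda+1}}{\gamma}$ together with a $2$-ultraexact embedding $\map{j'}{X'}{V_{\zeta'}}$ at $\lambda$ with $E, s, j'(s), \gamma \in X'$, $j'(E)=E$, $j'(\gamma)=\gamma$, and $j'\restriction\alpha = \id_\alpha$. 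By the correctness of $X'$ (taking $\zeta'$ large enough in the hypotheses of Lemma \ref{lemma:NewEmbeddingSurjection}, which we may, since $m$ there is "sufficiently large") and the choice of $\alpha$, the defining formula for $F$ is absolute between $X'$ and $V$, so $F \in X'$ and $j'(F) = F$. Thus $j'$ is a $2$-ultraexact embedding at $\lambda$ with $F \in V_{\lambda+2}\cap X'$ and $j'(F)=F$.

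Finally, since $(V_{\lambda+1},F)^\#$ exists by hypothesis, I would apply Theorem \ref{theorem:UltraexactSharpsIcarus} with the embedding $j'$ and the set $F$ in place of $E$. This yields an elementary embedding $\map{i}{L(V_{\lambda+1},F)}{L(V_{\lambda+1},F)}$ with $i\restriction V_\lambda = j'\restriction V_\lambda$ and $i(F) = F$. Since $j'\restriction\lambda \neq \id_\lambda$, the critical point of $i$ is below $\lambda$, so $i$ is non-trivial with $\crit{i} < \lambda$. By Definition \ref{strongIcarus}, this shows that $F$ is a strong Icarus set, as required.

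The main obstacle I anticipate is the bookkeeping in the first step: verifying carefully that one can arrange a $2$-ultraexact embedding which simultaneously fixes $E$, fixes $F$, is the identity below the relevant $\alpha<\lambda$, and still carries the surjection $s$ onto an ordinal large enough for Theorem \ref{theorem:UltraexactSharpsIcarus} to apply — i.e.\ onto $\Theta^{L(V_{\lambda+1},F)}$ rather than merely some $\gamma < \Theta^{L(V_{\lambda+1},E)}$. Concretely, the proof of Theorem \ref{theorem:UltraexactSharpsIcarus} needs a surjection onto $\Theta^{L(V_{\lambda+1},E)}$ (with $E$ there being our $F$), and it obtains one from Lemma \ref{lemma:ExtendBelowTheta} using the strict inequality $\Theta^{L(V_{\lambda+1},F)} < \Theta^{L(V_{\lambda+1},(V_{\lambda+1},F)^\#)}$; so rather than running Lemma \ref{lemma:NewEmbeddingSurjection} directly onto $\Theta^{L(V_{\lambda+1},F)}$, the cleanest route is to first use Lemma \ref{lemma:NewEmbeddingSurjection} (with parameter $E$) merely to obtain a $2$-ultraexact embedding $j'$ fixing $E$, $F$ and with $j'\restriction\alpha=\id_\alpha$, and then hand $j'$ and $F$ straight to Theorem \ref{theorem:UltraexactSharpsIcarus}, which internally produces the needed surjection onto $\Theta^{L(V_{\lambda+1},F)}$ from the sharp hypothesis. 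This is exactly the pattern already used to derive Theorem \ref{theorem:UltraexactSharpsIcarus} from its own lemmas, so the corollary follows by the same mechanism with $F$ substituted for $E$.
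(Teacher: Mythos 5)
Your overall strategy is exactly the paper's: use Lemma \ref{lemma:NewEmbeddingSurjection} to pass to an $n$-ultraexact embedding that fixes $E$ and $F$, and then hand that embedding together with $F$ (in the role of $E$) to Theorem \ref{theorem:UltraexactSharpsIcarus}, which internally manufactures the needed surjection onto $\Theta^{L(V_{\lambda+1},F)}$ from the sharp hypothesis. Your closing remarks about why one should not try to run Lemma \ref{lemma:NewEmbeddingSurjection} directly onto $\Theta^{L(V_{\lambda+1},F)}$ are also on target.

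However, there is a genuine gap in the first step, where you claim that an embedding that fixes $E$ and is the identity on a large enough initial segment of $\lambda$ ``will automatically satisfy $F\in X'$ and $k(F)=F$.'' By definition, $\OD_{V_\lambda\cup\{E\}}$ allows \emph{arbitrary} ordinal parameters, not just ordinals below $\lambda$. If the definition of $F$ uses an ordinal parameter $\xi\geq\lambda$, then no choice of $\alpha<\lambda$ helps: such a $\xi$ need not lie in $X'$ at all (recall $\lambda^+\nsubseteq X'$ by Proposition \ref{proposition:NotInDomainExactt}), and even when it does it need not be fixed by the embedding, so the absoluteness argument that yields $F\in X'$ with $k(F)=F$ breaks down. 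The paper circumvents this by arguing by contradiction: assuming some $F\in\OD_{\{E,z\}}$ (with $z\in V_\lambda$) is a counterexample, it takes the \emph{least} such $F$ in the canonical wellordering of $\OD_{\{E,z\}}$. That minimization eliminates the ordinal parameters entirely --- the least counterexample is definable by a formula whose only parameters are $E$ and $z$ --- and only then does the upward-absoluteness argument (with $n$ chosen according to the complexity of that definition, and the embedding fixing $E$ and $z$) give $F\in X$ and $j(F)=F$. Your direct argument needs this least-counterexample device (or an equivalent way of discharging the ordinal parameters) to go through; with it inserted, the rest of your proof is correct and coincides with the paper's.
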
  

\begin{proof}
  %Let $\map{i}{Y}{V_\eta}$ be a $2$-ultraexact embedding at a cardinal $\lambda$ and let $E\in V_{\lambda+2}\cap X$ be such that $i(E)=E$. 
  Assume, towards a contradiction, that there exists $F\in\OD_{V_\lambda\cup\{E\}}$ with the property that $(V_{\lambda+1},F)^\#$ exists and $F$ is not a strong Icarus set. Pick $z\in V_\lambda$ with the property that such a set exists in $\OD_{\{E,z\}}$.  In the following, let $F$ denote the least such element in the canonical wellordering of $\OD_{\{E,z\}}$. Then the set $F$ is definable by a formula with parameters $E$ and $z$. 
  
  Let $n>1$ be a natural number such that the set $F$ is definable by a $\Sigma_n$-formula with parameters $E$ and $z$. We can now apply Lemma \ref{lemma:NewEmbeddingSurjection} to find   an $n$-ultraexact embedding $\map{i}{X}{V_\zeta}$ at $\lambda$ with $E\in X$, $j(E)=E$ and $j(z)=z$. 
  Then  $F\in X$ and $j(F)=F$. Since   $(V_{\lambda+1},F)^\#$ exists, we may now use  Theorem \ref{theorem:UltraexactSharpsIcarus} to conclude that $F$ is a strong Icarus set, a contradiction. 
\end{proof}

 In \S\ref{section3.6} below, we will show that  the consistency of the existence of an ultraexacting cardinal can be established from the existence of an I0 embedding. In contrast, in the next theorem we shall apply  Theorem \ref{theorem:UltraexactSharpsIcarus} to show that the existence of an ultraexacting cardinal $\lambda$ together with the existence of $V_{\lambda +1}^\#$  has much higher consistency strength then the existence of an I0 embedding $\map{j}{L(V_{\lambda+1})}{L(V_{\lambda+1})}$, as they implies the existence of many I0 embeddings $\map{j}{L(V_{\bar{\lambda}+1})}{L(V_{\bar{\lambda}+1})}$ with $\bar{\lambda} <\lambda$. Moreover,  while Theorem \ref{theorem:UltraexactSharpsIcarus} yields that every ultraexacting cardinal $\lambda$ below a measurable cardinal is the least non-trivial fixed point of an I0 embedding, the next theorem shows that some of the cardinals $\bar{\lambda}<\lambda$ carrying I0 embeddings $\map{j}{L(V_{\bar{\lambda}+1})}{L(V_{\bar{\lambda}+1})}$  are not ultraexacting.
 %, which shows that  if $\map{j}{L(V_{\lambda +1})}{L(V_{\lambda +1})}$ is an I0 embedding with the least possible $\lambda$, and  a measurable cardinal above $\lambda$ exists, then $\lambda$ is not an ultraexacting cardinals. In particular, if $\kappa$ is the least measurable cardinal above such a $\lambda$, then $V_\kappa$ is a model of ZFC with an I0 embedding at $\lambda$ and no ultraexacting cardinals. 
 %
 %In contrast, according to Theorem \ref{TheoremIntroConI0_2}, a $2$-ultraexact embedding in the presence of sharps yields an elementary embedding $j:L(V_{\lambda+1}^\sharp) \to L(V_{\lambda+1}^\sharp)$. By work of Cramer \cite[3.9]{Cramer:Inverse}, such a $\lambda$ is a limit of ordinals $\bar\lambda$ for which there is an non-trivial elementary embedding $\bar j: L(V_{\bar\lambda+1})\to L(V_{\bar\lambda+1})$. Thus, $2$-ultraexact cardinals become much stronger consistency-wise in the presence of sharps or other large cardinals. In particular, the existence of a $2$-ultraexact embedding at a cardinal greater than an extendible cardinal implies the existence of a proper class of I0 embeddings.

\begin{theorem}\label{corollary:LeastI0notUltraexacting}
\label{noimplication}
Suppose that $\lambda$ is an ultraexacting cardinal and $V_{\lambda+1}^\#$  exists. Then $\lambda$ is a limit of cardinals $\bar{\lambda}$ such that there is an I0 embedding $\map{j}{L(V_{\bar{\lambda}+1})}{L(V_{\bar{\lambda}+1})}$. 
 %and $X^\#$ exists for every $X\in V_{\bar{\lambda}+2}$. 
 %
In particular, the set $V_\lambda$ is a model of $\mathsf{ZFC}$ satisfying ``there is a proper class of I0 embeddings.'' %in this situation, the set $V_\lambda$ is a transitive model of ZFC in which $X^\#$ exists for every set $X$ and there is a proper class of cardinals $\bar{\lambda}$  with the property that there is an I0-embedding $\map{j}{L(V_{\bar{\lambda}+1})}{L(V_{\bar{\lambda}+1})}$. 
 %  Assume that there is a cardinal $\rho$ with the property that exists an I0-embedding $\map{j}{L(V_{\rho+1})}{L(V_{\rho+1})}$ and . 
 %If $\lambda$ is the least cardinal with the property that there is an I0-embedding $\map{j}{L(V_{\lambda+1})}{L(V_{\lambda+1})}$, then both $V_{\lambda+1}^\#$ and $(V_{\lambda+1},V_{\lambda+1}^\#)^\#$ exist and  $\lambda$ is not an ultraexacting cardinal.  
\end{theorem}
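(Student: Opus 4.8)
Here is the plan.

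\medskip

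\noindent\textbf{Step 1: From ultraexactness plus the sharp to an I0 embedding at $\lambda$, with control on the critical point.} Since $\lambda$ is ultraexacting, Lemma~\ref{lemma:EquivalentUltra} (applied with $n=2$ and $x=\varnothing$) provides, for every prescribed $\alpha<\lambda$, a $2$-ultraexact embedding $\map{j}{X}{V_\zeta}$ at $\lambda$ with $j\restriction\alpha=\id_\alpha$; then $j\restriction V_\lambda\colon V_\lambda\to V_\lambda$ is an I3-embedding with $\mathrm{crit}(j)\geq\alpha$, and (as in the proof of Theorem~\ref{VnotHOD}) the critical sequence $\seq{\kappa_m}{m<\omega}$ of $j$ has supremum $\lambda$. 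Because $V_{\lambda+1}^\#=(V_{\lambda+1},\varnothing)^\#$ exists, Theorem~\ref{theorem:UltraexactSharpsIcarus} (with $E=\varnothing$) yields an elementary embedding $\map{i}{L(V_{\lambda+1})}{L(V_{\lambda+1})}$ with $i\restriction V_\lambda=j\restriction V_\lambda$; hence $i$ is an I0 embedding with $\mathrm{crit}(i)=\mathrm{crit}(j)\geq\alpha$ and critical sequence $\seq{\kappa_m}{m<\omega}$ converging to $\lambda$. Also, since $V_{\lambda+1}^\#$ exists, restricting the canonical embedding witnessing it to $L(V_{\bar\lambda+1})$ (resp. $L(V_{\bar\lambda+1},E)$) shows that $(V_{\bar\lambda+1},E)^\#$ exists for every $\bar\lambda<\lambda$ and every $E\in V_{\bar\lambda+2}$.

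\medskip

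\noindent\textbf{Step 2 (the crux): reflecting the I0 embedding below $\lambda$.} Fixing $\alpha<\lambda$ arbitrary, I want some $\bar\lambda$ with $\alpha<\bar\lambda<\lambda$ carrying a genuine I0 embedding $\map{}{L(V_{\bar\lambda+1})}{L(V_{\bar\lambda+1})}$. The I0 embedding $i$ from Step~1 together with the existence of $V_{\lambda+1}^\#$ puts us exactly in the situation analysed by Woodin in his study of Icarus sets: $L(V_{\lambda+1})$ carries Silver indiscernibles over $V_{\lambda+1}$, and combining this homogeneity with the inverse-limit structure of the tower $\seq{V_{\kappa_m+1}}{m<\omega}$ determined by $i$ (cf. the proof of \cite[Lemma~5]{MR2914848}, and \cite{Cramer:Inverse}, \cite{MR3855762}) produces a club-in-$\lambda$ class of $\bar\lambda<\lambda$ — in particular some $\bar\lambda\in(\alpha,\lambda)$, which may be taken among the $\kappa_m$ — such that there is an I0 embedding $\map{}{L(V_{\bar\lambda+1})}{L(V_{\bar\lambda+1})}$; here the hypothesis $V_{\lambda+1}^\#$ is essential, as an I0 embedding at $\lambda$ alone does not reflect I0 below $\lambda$. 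As $\alpha$ was arbitrary, the class $S:=\Set{\bar\lambda<\lambda}{\text{there is an I0 embedding }L(V_{\bar\lambda+1})\to L(V_{\bar\lambda+1})}$ is cofinal in $\lambda$.

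\medskip

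\noindent\textbf{Step 3: $V_\lambda\models\mathsf{ZFC}$ and the conclusion in $V_\lambda$.} I first check $V_\lambda\models\mathsf{ZFC}$; all axioms except Replacement are immediate since $\lambda$ is a limit cardinal above $\omega$ which is a strong limit. For Replacement, suppose a $V_\lambda$-definable class function $g$, with set parameter $p\in V_\lambda$, mapped some $a\in V_\lambda$ cofinally into $\lambda$. Pick (Lemma~\ref{lemma:EquivalentUltra}) a $1$-ultraexact embedding at $\lambda$ whose restriction gives an I3-embedding $\map{k}{V_\lambda}{V_\lambda}$ with $\mathrm{crit}(k)$ above the ranks of $p$ and $a$; then $k$ fixes $p$ and fixes $a$ pointwise, so by elementarity $k(g(x))=g(x)$ for all $x\in a$, i.e. $\ran{g}\subseteq\Set{\beta<\lambda}{k(\beta)=\beta}$. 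But an I3-embedding $k\colon V_\lambda\to V_\lambda$ has no fixed ordinals in the interval $(\mathrm{crit}(k),\lambda)$ (if $\mathrm{crit}(k)<\beta<\lambda$, then $\kappa_m<\beta<\kappa_{m+1}$ for some $m$ forces $k(\beta)>\kappa_{m+1}>\beta$), so $\ran{g}$ is bounded below $\lambda$, a contradiction. Hence $V_\lambda\models\mathsf{ZFC}$. Finally, by \cite[Lemma~5]{MR2914848}, for each $\bar\lambda<\lambda$ the existence of an I0 embedding at $\bar\lambda$ is witnessed by an ultrafilter on $V_{\bar\lambda+2}^{L(V_{\bar\lambda+1})}$ lying in $V_{\bar\lambda+3}\in V_\lambda$ whose ultrapower is well-founded; since well-foundedness and the relevant first-order properties transfer down to the transitive ZF-model $V_\lambda$, every $\bar\lambda\in S$ carries an I0 embedding in the sense of $V_\lambda$ as well. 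As $S$ is cofinal in $\lambda$, $V_\lambda\models\mathsf{ZFC}+$``there is a proper class of I0 embeddings''.

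\medskip

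\noindent The main obstacle is Step~2: turning ``I0 at $\lambda$ together with $V_{\lambda+1}^\#$'' into ``I0 cofinally below $\lambda$''. This is not a reflection that follows formally from elementarity of $j$ (which fixes $\lambda$); it requires the homogeneity supplied by the sharp, used in the style of Woodin's inverse-limit analysis of I0 embeddings — alternatively, one may attempt to realise it internally by applying the extension lemmata of \S\ref{subsec3.1} and Theorem~\ref{theorem:UltraexactSharpsIcarus} / Corollary~\ref{corollary:StrongIcarusUltraexactSharps} at the levels $\kappa_m$, which is where the real work lies.
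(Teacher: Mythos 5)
Your Steps 1 and 3 are fine, but Step 2 --- which you yourself flag as the crux --- is a genuine gap, and the particular route you sketch for it would not work. You propose to reflect the I0 embedding $\map{i}{L(V_{\lambda+1})}{L(V_{\lambda+1})}$ obtained in Step 1, using only $i$ together with the \emph{external} fact that $V_{\lambda+1}^\#$ exists. But Cramer's inverse limit reflection theorem (\cite[Theorem 3.9]{Cramer:Inverse}, which is the tool the paper invokes) requires a non-trivial elementary embedding of a structure such as $L_\omega(V_{\lambda+1},V_{\lambda+1}^\sharp)$ --- a structure containing $V_{\lambda+1}^\sharp$ \emph{as an element} and strictly past $L(V_{\lambda+1})$ --- into itself. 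An I0 embedding has domain $L(V_{\lambda+1})$, which does not contain $V_{\lambda+1}^\sharp$, and extending $i$ so that it acts coherently on $V_{\lambda+1}^\sharp$ is precisely the next step up in the Icarus hierarchy: it is a strictly stronger hypothesis than ``I0 at $\lambda$ plus $V_{\lambda+1}^\#$ exists,'' and is not implied by it. So the homogeneity/indiscernibility hand-wave does not produce the reflection; if it did, the theorem would follow from I0 plus a sharp alone, whereas the whole point of the result is that the extra strength comes from ultraexactness.

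What the paper actually does in place of your Step 2 is to go back to the $2$-ultraexact embedding $\map{j}{X}{V_\zeta}$ itself (not the derived I0 embedding) and run the extension machinery of \S\ref{subsec3.1} relative to $N=L_\gamma(V_{\lambda+1},V_{\lambda+1}^\sharp)$, where $\gamma$ is least with $N\models\mathsf{ZC}$: Lemma \ref{lemma:ExtendWithSurjection}/\ref{lemma:ExtendEmbL_Theta} give a map $j_{V_{\lambda+1}^\sharp,\gamma}\in X$ agreeing with $j$ on $L_\Theta(V_{\lambda+1},V_{\lambda+1}^\sharp)\cap X$, the ultrapower construction of Lemma \ref{lemma:I0fromUltraexact} then yields a wellfounded elementary $\map{i_*}{N}{\Ult{N}{U}}$ fixing $V_{\lambda+1}^\sharp$, and a minimality-of-$\gamma$ argument shows $\Ult{N}{U}=N$. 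Restricting $i_*$ gives a non-trivial elementary embedding of $L_\alpha(V_{\lambda+1},V_{\lambda+1}^\sharp)$ into itself for some $\alpha>\Theta^{L(V_{\lambda+1})}$, and only \emph{then} does Cramer's theorem apply to produce unboundedly many $\bar\lambda<\lambda$ with embeddings $\map{k}{L(V_{\bar\lambda+1})}{L(V_{\bar\lambda+1})}$. Passing through the I0 embedding at $\lambda$, as you do, discards exactly the information (the action of $j$ on $V_{\lambda+1}^\sharp$ and on ordinals beyond $\Theta^{L(V_{\lambda+1})}$) that the reflection needs; your closing ``alternative'' of applying the extension lemmata is indeed the right idea, but it must be carried out at $\lambda$ relative to $E=V_{\lambda+1}^\sharp$, not ``at the levels $\kappa_m$,'' and it is the entire content of the proof rather than a remark.
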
 

\begin{proof}
Let $\map{j}{X}{V_\zeta}$ be a $2$-ultraexact embedding at $\lambda$.
First, observe that $V_{\lambda+1}$ is definable from $\lambda$ and thus is fixed by $j$. Similarly, $V_{\lambda+1}$ and $V_{\lambda+1}^\sharp$ are fixed by $j$. 

The idea for what follows is to try to imitate the proof of Theorem \ref{theorem:UltraexactSharpsIcarus}. While we cannot expect to obtain an elementary embedding 
\[\map{i}{L(V_{\lambda+1},V_{\lambda+1}^\sharp)}{L(V_{\lambda+1},V_{\lambda+1}^\sharp)},\]
the idea is that we can obtain very close approximations to this in the form of embeddings
\[\map{i}{L_{\alpha}(V_{\lambda+1},V_{\lambda+1}^\sharp)}{L_{\alpha}(V_{\lambda+1},V_{\lambda+1}^\sharp)}.\]
By slightly increasing $\alpha$ if necessary, a reflection argument will yield many embeddings of this form for smaller $\bar\lambda$. This will in particular yield the statement of the theorem via Cramer's work on inverse limit reflection in \cite{Cramer:Inverse}. The main ingredient is the following claim:

\begin{claim*}
There exists an ordinal $\alpha$ such that the following statements hold:
\begin{enumerate}
    \item $\Theta^{L(V_{\lambda+1})} < \alpha < \Theta^{L(V_{\lambda+1},V_{\lambda+1}^\sharp)}$,
    \item There is a non-trivial elementary embedding $\map{i}{L_{\alpha}(V_{\lambda+1},V_{\lambda+1}^\sharp)}{L_{\alpha}(V_{\lambda+1},V_{\lambda+1}^\sharp)}$.
    %\item $\Theta^{L(V_\lambda)}$ is definable from $\lambda$ in $L_{\alpha}(V_{\lambda+1},V_{\lambda+1}^\sharp)$.
\end{enumerate}
\end{claim*}

\begin{proof}[Proof of the Claim]
 First, we repeat the argument of Lemma \ref{lemma:ExtendEmbL_Theta} with $\Theta := \Theta^{L_\gamma(V_{\lambda+1}, V_{\lambda+1}^\sharp)}_{V_{\lambda+1}}$, where $\gamma$ is least such that $L_\gamma(V_{\lambda+1}, V_{\lambda+1}^\sharp)$ satisfies a sufficiently strong fragment of $\mathsf{ZFC}$, such as $\mathsf{ZC}$. By correctness and the minimality of $\gamma$, we have $\gamma \in X$ and $j(\gamma) = \gamma$. Moreover, once more by correctness, we see that there is a surjection 
\[\map{s}{V_{\lambda+1}}{\Theta}\]
in $X$ with $j(s)\in X$. 
This is enough for the argument of Lemma \ref{lemma:ExtendEmbL_Theta} to go through and yields a map \[\map{j_{V_{\lambda+1}^\sharp,\gamma}}{L_\Theta(V_{\lambda+1}, V_{\lambda+1}^\sharp)}{L_\Theta(V_{\lambda+1}, V_{\lambda+1}^\sharp)}\]
that is an element of  $X$ and agrees with $j$ on arguments in $L_\Theta(V_{\lambda+1}, V_{\lambda+1}^\sharp) \cap X$. Let $N = L_\gamma(V_{\lambda+1}, V_{\lambda+1}^\sharp)$ and $D = V_{\lambda+2}^N$. 
Using the map $j_{V_{\lambda+1}^\sharp,\gamma}$, we can now define the ultrafilter $U$ as in the proof of Lemma \ref{lemma:I0fromUltraexact} and obtain an ultrapower embedding 
$\map{i_*}{N}{\Ult{N}{U}}$ such that $\Ult{N}{U}$ is wellfounded, $i_*$ fixes $V_{\lambda+1}$ and $V_{\lambda+1}^\sharp$, and $i_*$ satisfies {\L}o\'s's Theorem. Observe that, by choice of $\gamma$, we have 
\begin{align*}
    N \models \mathsf{ZC} ~ + ~ V = L(V_{\lambda+1},V_{\lambda+1}^\sharp) ~ + ~  \forall \bar\gamma\in\On ~ ``L_{\bar\gamma}(V_{\lambda+1},V_{\lambda+1}^\sharp) \not\models \mathsf{ZC}\text{''}
\end{align*} 
and so, by elementarity, the model $\Ult{N}{U}$ satisfies this schema too. Since $N$ is the only wellfounded model of this theory which contains $V_{\lambda+1}^\sharp$, we have $N=\Ult{N}{U}$. Therefore, the map $i$ can be obtained by restricting $i_*$ to some submodel of the form $L_{\alpha}(V_{\lambda+1},V_{\lambda+1}^\sharp)$ such that $\Theta^{L(V_{\lambda+1})} < \alpha < \Theta$, so the claim follows.\footnote{Although not necessary for the rest of the argument, the proof of the claim could be modified to yield arbitrarily large such $\alpha < \Theta^{L(V_{\lambda+1},V_{\lambda+1}^\sharp)}$ by replacing $\gamma$ with the least ordinal such that $L_\gamma(V_{\lambda+1},V_{\lambda+1}^\sharp)$ satisfies some given theory, and then applying a $\Sigma_1$-reflection argument in $L(V_{\lambda+1},V_{\lambda+1}^\sharp)$, using the fact that the existence of such embeddings $i$ is absolute to $L(V_{\lambda+1},V_{\lambda+1}^\sharp)$ (this is by an argument of Laver \cite{Lav01} and Woodin; see {\cite[Theorem 6.40]{MR3855762})}.}
\end{proof}

The theorem now follows from the above claim, as Cramer \cite[Theorem 3.9]{Cramer:Inverse} has shown that even the existence of an elementary embedding 
\[\map{i}{L_\omega(V_{\lambda+1}, V_{\lambda+1}^\sharp)}{L_\omega(V_{\lambda+1},V_{\lambda+1}^\sharp)}\]
implies the existence of unboundedly many $\bar\lambda < \lambda$ for which there is an elementary embedding 
\[\map{k}{L(V_{\bar\lambda+1})}{L(V_{\bar\lambda+1})},\]
as desired.
\end{proof}

Let us observe  that the argument above also yields the following result:

\begin{corollary}\label{CorollaryIntroConI0_2}
 If there is an ultraexacting cardinal and $X^\#$ exists for every set $X$, then there exists a transitive set-sized model of the theory $$\mathrm{ZFC} ~ + ~ \textit{``There is an ordinal $\lambda$ such that $V_{\lambda+1}^\#$ exists and is a strong Icarus set"}.$$
 %The theory $\mathsf{ZFC}$ + ``there is a $2$-ultraexact embedding at $\lambda$'' + ``$X^\#$ exists for every set $X$'' proves the consistency of $\mathsf{ZFC} +$ ``There is an elementary embedding $j: L(V_{\lambda+1}^\sharp) \to L(V_{\lambda+1}^\sharp)$ with critical point $\kappa<\lambda$".
\end{corollary}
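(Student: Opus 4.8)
The plan is to recombine the ingredients of the proof of Theorem~\ref{corollary:LeastI0notUltraexacting} with Corollary~\ref{corollary:StrongIcarusUltraexactSharps}, and then to pass to a rank-initial segment of $V$. Fix an ultraexacting cardinal $\lambda$; since $X^\#$ exists for every set $X$, both $V_{\lambda+1}^\#$ and $(V_{\lambda+1},V_{\lambda+1}^\#)^\#$ exist. As orientation, I would first observe that $V_{\lambda+1}^\#$ is already a strong Icarus set in $V$: this is Corollary~\ref{corollary:StrongIcarusUltraexactSharps} applied with $E=V_{\lambda+1}$ (an element of $V_{\lambda+2}$ which belongs to the domain of any $1$-ultraexact embedding at $\lambda$ and is fixed by it, being definable from $\lambda$) and $F=V_{\lambda+1}^\#\in\OD_{\{V_{\lambda+1}\}}\cap V_{\lambda+2}$. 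Hence $V$ satisfies ``there is an ordinal $\mu$ such that $V_{\mu+1}^\#$ exists and is a strong Icarus set'', and the remaining task is to produce a transitive set model of $\mathsf{ZFC}$ satisfying this sentence.

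To do this I would invoke the Claim in the proof of Theorem~\ref{corollary:LeastI0notUltraexacting}: there is an ordinal $\alpha$ with $\Theta^{L(V_{\lambda+1})}<\alpha<\Theta^{L(V_{\lambda+1},V_{\lambda+1}^\#)}$ (which, by the strengthening noted in the footnote there, can be taken as large as desired below $\Theta^{L(V_{\lambda+1},V_{\lambda+1}^\#)}$), together with a nontrivial elementary embedding $\map{i}{L_\alpha(V_{\lambda+1},V_{\lambda+1}^\#)}{L_\alpha(V_{\lambda+1},V_{\lambda+1}^\#)}$. Applying (a refinement of) Cramer's inverse-limit reflection \cite[Theorem~3.9]{Cramer:Inverse} to the structure $(L_\alpha(V_{\lambda+1},V_{\lambda+1}^\#),i)$, one should obtain unboundedly many $\bar\lambda<\lambda$ carrying a nontrivial self-embedding of a structure $L_{\bar\alpha}(V_{\bar\lambda+1},\bar E)$, where $\bar E$ is the image of $V_{\lambda+1}^\#$ under the associated reflecting map. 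Since the inverse-limit maps preserve ``being a sharp'', the reflected predicate $\bar E$ is again a sharp, namely $V_{\bar\lambda+1}^\#$, and since $\alpha$ was taken large, $\bar\alpha$ can be arranged to lie above $\Theta^{L(V_{\bar\lambda+1})}$. By the Laver--Woodin absoluteness recalled in the footnote of Theorem~\ref{corollary:LeastI0notUltraexacting} (see \cite[Theorem~6.40]{MR3855762}), a self-embedding of $L_{\bar\alpha}(V_{\bar\lambda+1},V_{\bar\lambda+1}^\#)$ at such a level extends to a nontrivial elementary embedding $\map{k}{L(V_{\bar\lambda+1},V_{\bar\lambda+1}^\#)}{L(V_{\bar\lambda+1},V_{\bar\lambda+1}^\#)}$ with $\crit{k}<\bar\lambda$ and $k(V_{\bar\lambda+1}^\#)=V_{\bar\lambda+1}^\#$. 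Thus, for unboundedly many $\bar\lambda<\lambda$, the set $V_{\bar\lambda+1}^\#$ exists and is a strong Icarus set.

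Finally I would extract a set model. Each such $\bar\lambda$ carries a strong Icarus set whose witnessing embedding restricts to an ${\rm I}3$ embedding of $V_{\bar\lambda}$, so $\bar\lambda$ is a limit of $n$-huge cardinals for every $n<\omega$, and in particular a limit of inaccessibles (see \cite[p.~332]{Kan:THI}). Choosing two of these ordinals $\bar\lambda_0<\bar\lambda_1<\lambda$ and an inaccessible $\kappa$ with $\bar\lambda_0<\kappa<\bar\lambda_1$, we obtain $V_\kappa\models\mathsf{ZFC}$; moreover, since $\kappa$ is inaccessible and $\bar\lambda_0+\omega<\kappa$, the set $V_\kappa$ computes $V_{\bar\lambda_0+1}^\#$ correctly, and ``$V_{\bar\lambda_0+1}^\#$ is a strong Icarus set'' --- being first-order with parameter $V_{\bar\lambda_0+1}^\#$ and having its witnesses inside $V_{\bar\lambda_0+3}\subseteq V_\kappa$ (see the remark after Definition~\ref{strongIcarus} and \cite[Lemma~5]{MR2914848}) --- is absolute between $V$ and $V_\kappa$. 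Hence $V_\kappa$ is a transitive set-sized model of $\mathsf{ZFC}$ satisfying the required sentence.

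The step I expect to be the main obstacle is the use of inverse-limit reflection in the middle paragraph. Theorem~3.9 of \cite{Cramer:Inverse}, as stated, only reflects the predicate-free structure $L_\omega(V_{\lambda+1},V_{\lambda+1}^\#)$ down to embeddings of $L(V_{\bar\lambda+1})$ --- which is all that is needed in Theorem~\ref{corollary:LeastI0notUltraexacting} --- whereas here one must run Cramer's construction with the additional predicate $V_{\lambda+1}^\#$ in place, check that it is transported to $V_{\bar\lambda+1}^\#$, and arrange for the reflected self-embedding to live at a level $\bar\alpha$ high enough that the Laver--Woodin absoluteness upgrades it to a self-embedding of all of $L(V_{\bar\lambda+1},V_{\bar\lambda+1}^\#)$ fixing $V_{\bar\lambda+1}^\#$, i.e.\ to a genuine witness that $V_{\bar\lambda+1}^\#$ is a strong Icarus set rather than merely an ${\rm I}0$ embedding at $\bar\lambda$. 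The remaining points --- in particular the absoluteness of ``being a strong Icarus set'' between $V$ and $V_\kappa$ --- are routine.
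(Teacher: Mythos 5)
Your opening paragraph is correct and matches the paper's starting point: applying Corollary~\ref{corollary:StrongIcarusUltraexactSharps} (with $(V_{\lambda+1},V_{\lambda+1}^\#)^\#$ supplied by the global sharps hypothesis) to conclude that $V_{\lambda+1}^\#$ is a strong Icarus set in $V$. But the route you then take to a set-sized model has a genuine gap exactly where you suspect it. Cramer's Theorem~3.9 reflects the structure $L_\omega(V_{\lambda+1},V_{\lambda+1}^\sharp)$ down to embeddings of $L(V_{\bar\lambda+1})$ --- that is, to plain ${\rm I}0$ embeddings at $\bar\lambda$ --- and nothing in the cited results transports the predicate $V_{\lambda+1}^\sharp$ to $V_{\bar\lambda+1}^\sharp$ or produces a self-embedding of $L(V_{\bar\lambda+1},V_{\bar\lambda+1}^\sharp)$ fixing $V_{\bar\lambda+1}^\sharp$. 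The assertion that ``the inverse-limit maps preserve being a sharp'' is not something you can take off the shelf; carrying predicates through inverse-limit reflection is precisely the hard content of Cramer's \emph{strong} inverse-limit reflection and would need a separate argument. Since your entire second paragraph rests on this, the proof as written does not go through. (Your final $V_\kappa$ step is also under-justified: inaccessibility of $\kappa$ alone does not obviously make ``$V_{\bar\lambda_0+1}^\#$ is a strong Icarus set'' absolute between $V$ and $V_\kappa$, since the witnessing embedding is defined on the proper class $L(V_{\bar\lambda_0+1},V_{\bar\lambda_0+1}^\#)$; one has to reduce to the ultrafilter formulation of {\cite[Lemma 5]{MR2914848}} and check locality carefully.)

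The detour is unnecessary, and this is where the paper's proof diverges: it never reflects below $\lambda$. Having established that $V_{\lambda+1}^\#$ is a strong Icarus set, one uses the proof of {\cite[Lemma 5]{MR2914848}} to realize the witnessing embedding as the ultrapower by a filter $U$ on $V_{\lambda+2}^{L(V_{\lambda+1},V_{\lambda+1}^\#)}$, chooses a set $X$ with $U,V_{\lambda+1},V_{\lambda+1}^\#\in L(X)$ such that $L(X)\models\mathsf{ZFC}$ and $L(X)$ still sees that $U$ induces the embedding, and then --- this is the second, essential use of the hypothesis that $X^\#$ exists for \emph{every} set $X$, which your argument never exploits --- uses $X^\#$ to find an ordinal $\alpha>\lambda$ such that all of these statements reflect to the transitive set $L_\alpha(X)$. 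That single truncation step replaces your entire second and third paragraphs and avoids both the reflection problem and the $V_\kappa$-absoluteness problem.
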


\begin{proof}
  By Theorem \ref{corollary:LeastI0notUltraexacting} and the proof of {\cite[Lemma 5]{MR2914848}},     our assumptions allow us to find a cardinal $\lambda$ and a filter $U$ on $V_{\lambda+2}^{L(V_{\lambda+1},V_{\lambda+1}^\#)}$ that induces a      non-trivial elementary embedding $\map{j}{L(V_{\lambda+1},V_{\lambda+1}^\#)}{L(V_{\lambda+1},V_{\lambda+1}^\#)}$ with critical point below $\lambda$.  Then we can pick a set $X$ such that $U,V_{\lambda+1},V_{\lambda+1}^\#\in L(X)$, the class $L(X)$ is a model of ZFC and, in $L(X)$, the filter $U$ induces a non-trivial elementary embedding of $L(V_{\lambda+1},V_{\lambda+1}^\#)$ into itself with critical point below $\lambda$.    Since $X^\#$ exists, we can find an ordinal $\alpha>\lambda$ such that  all of these statements hold with respect to  $L_\alpha(X)$.  
\end{proof}

Analogously to the observations made after Lemma \ref{lemma:Equivalentexact} above, item \ref{item:EquivalentUltra2} in Lemma \ref{lemma:EquivalentUltra} shows that, if $\lambda$ is the least ultraexacting cardinal and $\kappa$ is the least inaccessible cardinal above $\lambda$, then, in $V_\kappa$, the cardinal $\lambda$ is ultraexacting, there are no inaccessible cardinals greater than $\lambda$ and the class $C^{(3)}$ does not contain regular cardinals. In particular, there are no extendible cardinals in $V_\kappa$. 
%
%As we observed before in the case of $2$-exact embeddings (see the paragraph before \S \ref{subsec3.1}), if $V$ is a model of ZFC with a $2$-ultraexact embedding at $\lambda$, with $\lambda$ being the least such, and there is an inaccessible cardinal $\kappa$ above $\lambda$, then  $V_\kappa$ is a model of ZFC with a $2$-ultraexact embedding at $\lambda$ and with no regular cardinal in $C^{(3)}$, hence with no inaccessible cardinals. 
 %
 In the other direction, we can use the results of this section to show that the existence of an ultraexacting cardinal above an extendible cardinal turns out to have very strong consequences:

 \begin{corollary}
    If $\delta$ is an extendible cardinal below an  ultraexacting cardinal, then $\delta$ is a limit of ultraexacting cardinals and there is a proper class of cardinals $\lambda$  with the property that $V_{\lambda+1}^\#$ exists and is a strong Icarus set. 
 \end{corollary}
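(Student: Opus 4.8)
Write $\lambda^*$ for the given ultraexacting cardinal, so that $\delta<\lambda^*$. The plan is to treat the two assertions separately.

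\textit{That $\delta$ is a limit of ultraexacting cardinals} I would obtain by a reflection argument. Recall that every extendible cardinal belongs to $C^{(3)}$, so $V_\delta\prec_{\Sigma_3}V$, and recall from the remark following Lemma~\ref{lemma:EquivalentUltra} (the ultraexact counterpart of the $\Pi_2$-expressibility observed after Lemma~\ref{lemma:Equivalentexact}) that ``$\mu$ is ultraexacting'' is a $\Pi_2$ property of $\mu$, namely item~\ref{item:EquivalentUltra2} of that lemma with $x=\emptyset$. Then, for each $\alpha<\delta$, the assertion ``there is an ultraexacting cardinal above $\alpha$'' is $\Sigma_3$ in the parameter $\alpha$ and holds in $V$ as witnessed by $\lambda^*>\delta>\alpha$; by $\Sigma_3$-correctness it reflects to $V_\delta$, giving $\bar\lambda\in(\alpha,\delta)$ with $V_\delta\models$``$\bar\lambda$ is ultraexacting''. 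Since this is a $\Pi_2\subseteq\Sigma_3$ statement with a parameter in $V_\delta$, it is absolute back to $V$, so $\bar\lambda$ is genuinely ultraexacting; letting $\alpha$ range below $\delta$ finishes this part.

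\textit{For the proper class of $\lambda$ with $V_{\lambda+1}^\#$ a strong Icarus set} I would argue in three moves. First, since $\delta$ is extendible there is a proper class of measurable (hence inaccessible) cardinals---for any ordinal $\eta$, an extendibility embedding $\map{k}{V_\alpha}{V_\beta}$ with $\crit k=\delta$ and $k(\delta)>\alpha>\eta$ makes $k(\delta)$ measurable in $V_\beta$, hence in $V$---so $X^\#$ exists for every set $X$. Second, since $\lambda^*$ is ultraexacting there is, by the characterization of ultraexactness via $1$-ultraexact embeddings (the corollary following Lemma~\ref{lemma:EquivalentUltra}), a $1$-ultraexact embedding $\map{j}{X}{V_\zeta}$ at $\lambda^*$; applying Corollary~\ref{corollary:StrongIcarusUltraexactSharps} with $E=\emptyset$ and $F=V_{\lambda^*+1}^\#$---which is ordinal definable, hence lies in $\OD_{V_{\lambda^*}}\cap V_{\lambda^*+2}$, and whose sharp $(V_{\lambda^*+1},V_{\lambda^*+1}^\#)^\#$ exists by the first move---shows that $V_{\lambda^*+1}^\#$ is a strong Icarus set. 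Writing $\psi(\mu)$ for ``$V_{\mu+1}^\#$ exists and is a strong Icarus set'', we thus have $\psi(\lambda^*)$. Third, I would propagate $\psi$ upward using the extendibility of $\delta$: given any ordinal $\eta$, fix an inaccessible $\alpha>\max\{\eta,\lambda^*\}$ (such exist by the first move), note that $V_\alpha\models\psi(\lambda^*)$, take an elementary $\map{j}{V_\alpha}{V_\beta}$ with $\crit j=\delta$ and $j(\delta)>\alpha$, infer $V_\beta\models\psi(j(\lambda^*))$ by elementarity, transfer this back to $V$, and record that $j(\lambda^*)>j(\delta)>\alpha>\eta$. As $\eta$ is arbitrary, the class of $\lambda$ with $\psi(\lambda)$ is proper.

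The step I expect to be the main obstacle is the two transfers in the third move: verifying that ``$V_{\mu+1}^\#$ exists and is a strong Icarus set'' is absolute between $V$ and any rank-initial segment $V_\gamma$ which is a model of $\mathsf{ZFC}$ and contains $V_{\mu+\omega}$. Here I would invoke the standard absoluteness of sharps (the sharp is coded by a set of rank below $\mu+\omega$ whose correctness---remarkability and iterability, the latter being decided already by iterates of bounded length---is verified inside any transitive model of $\mathsf{ZFC}$ of sufficient height) together with the fact, already used in the proof of Corollary~\ref{corollary:StrongIcarusUltraexactSharps} and going back to the proof of {\cite[Lemma~5]{MR2914848}}, that the existence of the relevant elementary embedding of $L(V_{\mu+1},V_{\mu+1}^\#)$ into itself is first-order with parameter $V_{\mu+1}^\#$ and, by an argument of Laver and Woodin (see {\cite[Theorem~6.40]{MR3855762}}), is absolute to $L(V_{\mu+1},V_{\mu+1}^\#)$. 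Everything else---elementarity of the extendibility embeddings, the $\Pi_2/\Sigma_3$ complexity bookkeeping, and checking the hypotheses of Corollary~\ref{corollary:StrongIcarusUltraexactSharps}---is routine.
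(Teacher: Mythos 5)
Your first claim (that $\delta$ is a limit of ultraexacting cardinals) is handled exactly as in the paper: the authors also invoke $\delta\in C^{(3)}$ together with the $\Pi_2$-expressibility of ultraexactness from Lemma \ref{lemma:EquivalentUltra}\ref{item:EquivalentUltra2}, so that part is fine. Your application of Corollary \ref{corollary:StrongIcarusUltraexactSharps} with $E=\emptyset$ and $F=V_{\lambda^*+1}^\#$ is also correct, and the derivation of a proper class of measurables (hence sharps for all sets) from extendibility is routine.

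The gap is precisely where you suspect it: the two transfers of $\psi(\mu)=$ ``$V_{\mu+1}^\#$ is a strong Icarus set'' between $V$ and rank-initial segments $V_\alpha$, $V_\beta$. The statement asserts the existence of an elementary embedding of the \emph{proper class} $L(V_{\mu+1},F)$ into itself, whereas $V_\gamma$ can only assert the existence of an embedding of $L_\gamma(V_{\mu+1},F)$ into itself; a given global embedding $j$ need not fix $\gamma$, so it need not restrict to a self-embedding of $L_\gamma(V_{\mu+1},F)$, and conversely a self-embedding of $L_\beta(V_{\mu+1},F)$ does not obviously extend to one of the full model. The facts you cite do not bridge this: absoluteness of sharps handles only the ``$V_{\mu+1}^\#$ exists'' conjunct, and the Laver--Woodin result is absoluteness of the existence of such embeddings to the \emph{inner model} $L(V_{\mu+1},V_{\mu+1}^\sharp)$, which is a different phenomenon from absoluteness to rank-initial segments. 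What actually closes the gap is the reduction, via the proof of {\cite[Lemma 5]{MR2914848}}, of ``strong Icarus'' to the existence of a suitable ultrafilter $U$ on $V_{\mu+2}^{L(V_{\mu+1},F)}$ whose defining properties are verifiable locally (inside any $V_\kappa$ with $\kappa$ inaccessible above $\Theta^{L(V_{\mu+1},F)}$), together with a device for globalizing. This is exactly how the paper argues: it records, for unboundedly many $\lambda<\delta$, the \emph{local} configuration ``there is a measurable $\kappa>\lambda$ and a filter $U$ such that, in $V_\kappa$, $U$ induces a self-embedding of $L(V_{\lambda+1},V_{\lambda+1}^\#)$''; this localized statement is of low enough complexity to propagate upward through an extendibility embedding without any correctness assumption on the target $V_\beta$ (which is the other soft spot in your third move: a mere inaccessible, or the target of a plain extendibility embedding, is not $\Sigma_n$-correct for the $n$ at which $\psi$ sits); and it then converts each local configuration back into a genuine global strong-Icarus statement by iterating the normal measure on $\kappa$ through the ordinals. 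So your architecture is salvageable, but the transfer step needs to be replaced by the ultrafilter localization plus the measurable-cardinal/iterated-ultrapower globalization rather than the absoluteness appeal you sketch.
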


 \begin{proof}
  Since $\delta$ is an element of $C^{(3)}$, Lemma \ref{lemma:EquivalentUltra} directly implies that $\delta$ is a limit of ultraexacting cardinals. Using Corollary \ref{corollary:StrongIcarusUltraexactSharps}, the proof of {\cite[Lemma 5]{MR2914848}} and the fact that $X^\#$ exists for every set $X$, we know that for every $\gamma<\delta$, there exists a measurable cardinal $\gamma<\kappa<\delta$, a cardinal $\gamma<\lambda<\kappa$ and a filter $U$ on $V_{\lambda+2}^{L(V_{\lambda+1},V_{\lambda+1}^\#)}$ with the property that, in $V_\kappa$, the filter  $U$ induces an non-trivial embedding $\map{j}{L(V_{\lambda+1},V_{\lambda+1}^\#)}{L(V_{\lambda+1},V_{\lambda+1}^\#)}$ with critical point below $\lambda$. Using the extendibility of $\delta$, this implies that there is a proper class of cardinals $\lambda$ with the property that there exists a measurable cardinal $\kappa>\lambda$ and a filter $U$ on $V_{\lambda+2}^{L(V_{\lambda+1},V_{\lambda+1}^\#)}$ that induces a non-trivial embedding $\map{j}{L(V_{\lambda+1},V_{\lambda+1}^\#)}{L(V_{\lambda+1},V_{\lambda+1}^\#)}$  in $V_\kappa$ and, by forming $\On$-many iterated ultrapowers of $V$ using   normal ultrafilters on the given measurable cardinals. This implies the conclusion of the corollary. 
 \end{proof}

 %Now notice that by Lemma \ref{lemma:EquivalentUltra} (ii), the  statement that there is a $2$-ultraexact embedding $j:X\to V_\zeta$ at some cardinal $\lambda$ greater than $\eta$. % , with $\zeta \in C^{(2)}$, is $\Sigma_3$ in the parameter $\eta$, and so it also reflects to  $V_\delta$. 
%Therefore, having a 2-ultraexact embedding at a cardinal $\lambda$ with an extendible cardinal $\delta$ below $\lambda$ yields (via the proof of Theorem \ref{theorem:UltraexactSharpsIcarus}, and since an extendible cardinal implies the existence of sharps for all transitive sets)  a %model $V_\delta$ of ZFC with proper class-many 
%proper class of I0-embeddings. 

We close this section by showing that, in the presence of sharps, cardinals $\kappa$ that are  parametrically $n$-ultraexact cardinals for a sequence $\vec{\lambda}$ yield  I0-embeddings with $\vec{\lambda}$ as the critical sequence above $\kappa$.

\begin{theorem}
  Let $n>1$ be a natural number and let $\vec{\lambda}=\seq{\lambda_m}{m<\omega}$ be a strictly increasing sequence of cardinals with supremum $\lambda$. Assume $\kappa$ is parametrically $n$-ultraexact for $\vec{\lambda}$, and $E$ is an element of $V_{\lambda+1}$ that is definable by a $\Sigma_n$-formula with parameters in $V_\kappa\cup\{\lambda\}$. If $(V_{\lambda +1},E)^\#$ exists,   then there is a non-trivial elementary embedding $$\map{i}{L(V_{\lambda+1},E)}{L(V_{\lambda+1},E)}$$ with $i(E)=E$, $i(\crit{i})=\kappa$, $i(\kappa)=\lambda_0$ and $i(\lambda_m)=\lambda_{m+1}$ for all $m<\omega$.  
\end{theorem}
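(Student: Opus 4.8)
The plan is to deduce the statement from Theorem~\ref{theorem:UltraexactSharpsIcarus}. That theorem takes a $2$-ultraexact embedding $\map{j}{X}{V_\zeta}$ at $\lambda$ with $E\in V_{\lambda+2}\cap X$ and $j(E)=E$, and, assuming $(V_{\lambda+1},E)^\#$ exists, produces an elementary embedding $\map{i}{L(V_{\lambda+1},E)}{L(V_{\lambda+1},E)}$ with $i\restriction V_\lambda=j\restriction V_\lambda$ and $i(E)=E$. Since the ordinals $\crit j<\kappa<\lambda_0<\lambda_1<\cdots$ all lie in $V_\lambda$, the equality $i\restriction V_\lambda=j\restriction V_\lambda$ transfers $j(\crit j)=\kappa$, $j(\kappa)=\lambda_0$ and $j(\lambda_m)=\lambda_{m+1}$ directly to $i$, and non-triviality of $i$ follows from $j\restriction\lambda\neq\id_\lambda$; so $i$ has all the required properties. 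Thus everything reduces to producing a witness $\map{j}{X}{V_\zeta}$ to the parametric $n$-ultraexactness of $\kappa$ for $\vec\lambda$ that additionally satisfies $j(E)=E$ --- and, since $n>1$, any such witness is automatically a $2$-ultraexact embedding at $\lambda$ (its codomain lies in $C^{(n+1)}\subseteq C^{(3)}$ and its domain is an elementary submodel of a $V_\eta$ with $\eta\in C^{(n)}\subseteq C^{(2)}$).

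To produce such a $j$, I would apply the definition of parametric $n$-ultraexactness with $A=E$, obtaining an $n$-ultraexact embedding $\map{j}{X}{V_\zeta}$ at $\lambda$ with $E\in\ran j$, $j(\crit j)=\kappa$, $j(\kappa)=\lambda_0$ and $j(\lambda_m)=\lambda_{m+1}$; writing $\bar E=j^{-1}(E)\in X$, the task is to prove $\bar E=E$. This is the crux, and it is exactly here that the \emph{parametric} form of the hypothesis --- the clause $j(\crit j)=\kappa$ --- is indispensable: the defining parameter of $E$ lives in $V_\kappa$ and need not be fixed by $j$, but it can be ``pulled back'' below $\crit j$, where $j$ is the identity. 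In detail: fix a $\Sigma_n$-formula $\phi$ and a parameter $z\in V_\kappa$ such that $E$ is the unique $u$ with $\phi(u,z,\lambda)$. Since $\zeta\in C^{(n+1)}$, the structure $V_\zeta$ correctly identifies $E$ by $\phi$ from $z$ and $\lambda$, and as $z\in V_\kappa$ it follows that $V_\zeta$ satisfies ``there is $z'\in V_\kappa$ such that $E$ is the unique $u$ with $\phi(u,z',\lambda)$''. The parameters $E=j(\bar E)$, $\kappa=j(\crit j)$ and $\lambda=j(\lambda)$ all lie in $\ran j$, so pulling this statement back along $j$ shows that $X$ satisfies ``there is $\bar z\in V_{\crit j}$ such that $\bar E$ is the unique $u$ with $\phi(u,\bar z,\lambda)$''. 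Fix such a $\bar z$; then $j(\bar z)=\bar z$, since $\bar z\in V_{\crit j}$. On the one hand, $\phi(\bar E,\bar z,\lambda)$ holds in $X$, hence in $V_\eta$, hence in $V$ (as $\phi$ is $\Sigma_n$ and $\eta\in C^{(n)}$); on the other hand, applying $j$ to the statement that $\bar E$ is the unique $u$ with $\phi(u,\bar z,\lambda)$, and using $j(\bar E)=E$ and $j(\bar z)=\bar z$, shows that $E$ is the unique $u$ with $\phi(u,\bar z,\lambda)$ in $V_\zeta$, hence in $V$. Combining the two, $\bar E=E$, so $j(E)=E$ as desired.

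Having arranged $j(E)=E$, we have $E=\bar E\in X$ and $E\in V_{\lambda+1}\subseteq V_{\lambda+2}$, so $\map{j}{X}{V_\zeta}$ is a $2$-ultraexact embedding at $\lambda$ with $E\in V_{\lambda+2}\cap X$ and $j(E)=E$; Theorem~\ref{theorem:UltraexactSharpsIcarus} then yields the embedding $i$, and reading off its action on $\crit j,\kappa,\lambda_0,\lambda_1,\dots$ as in the first paragraph completes the proof. The only delicate point I anticipate is the ``pull-back'' step in the middle paragraph: one must check both that the defining formula for $E$ is absolute enough between $V$, $V_\eta$ and $V_\zeta$ (which is why the correctness levels $C^{(n)}$ and $C^{(n+1)}$ appear) and that the parameter of that formula can genuinely be reflected below $\crit j$ --- something a non-parametric ultraexact embedding, whose critical point is unconstrained, would not provide.
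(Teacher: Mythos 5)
Your proof is correct and follows essentially the same route as the paper: both arguments reduce the statement to Theorem \ref{theorem:UltraexactSharpsIcarus} by producing a witness to parametric $n$-ultraexactness that fixes $E$, with the clause $j(\crit{j})=\kappa$ used precisely to pull the defining parameter of $E$ below the critical point, and then read off the action of $i$ on $\crit{j},\kappa,\lambda_0,\lambda_1,\dots$ from $i\restriction V_\lambda=j\restriction V_\lambda$. The only cosmetic difference is that the paper applies the parametric hypothesis to the parameter $z\in V_\kappa$ itself (so $z\in V_{\crit{j}}$ is fixed and $j(E)=E$ then follows directly from the correctness of $X$ and $V_\zeta$), whereas you apply it to $A=E$ and recover $j^{-1}(E)=E$ by pulling the defining formula back along $j$; the underlying idea is identical.
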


\begin{proof}
    Pick an element $z$ of $V_\kappa$ such that $E$ is definable by a $\Sigma_n$-formula with parameters $\lambda$ and $z$. 
    As $\kappa$ is parametrically $n$-ultraexact for $\vec{\lambda}$, we have an $n$-ultraexact embedding $\map{j}{X}{V_\zeta}$ for $\lambda$, with $z\in\ran{j}$, $j(\crit{j})=\kappa$,  $j(\kappa)=\lambda_0$ and $j(\lambda_m)=\lambda_{m+1}$ for all $m<\omega$. Note that since $z\in V_{\crit{j}}$, we have $j(z)=z$. 
    Moreover, by the correctness of $X$ we also have  that $E\in X$ and $j(E)=E$. An application of Theorem  \ref{theorem:UltraexactSharpsIcarus} now yields an elementary embedding $\map{i}{L(V_{\lambda+1},E)}{L(V_{\lambda+1},E)}$ with $i(E)=E$ and $i\restriction V_\lambda ~ = ~ j\restriction V_\lambda$. Hence, $i(\crit{i})=\kappa$, $i(\kappa)=\lambda_0$ and $i(\lambda_m)=\lambda_{m+1}$ for all $m<\omega$.  
\end{proof}

%%%%%

\subsection{The consistency of ultraexact cardinals}
\label{section3.6}

In  this section, we establish the consistency of ultraexact cardinals from the consistency of I0-embeddings. This gives the 
remaining implication in our equiconsistency result, which in particular implies that the consistency strength of $m$-ultraexactness does not depend on $m$. The following argument should be compared with {\cite[Lemma 190]{WOEM1}}. 
Theorem \ref{theorem:UltraexactFromI0} below and other similar results should be regarded as a theorem schema indexed by $n > 0$.

\begin{theorem}\label{theorem:UltraexactFromI0}
  Assume that for some cardinal $\lambda$ and $E \in V_{\lambda+2}$, there is a non-trivial  elementary embedding 
\[\map{j}{L(V_{\lambda+1},E)}{L(V_{\lambda+1},E)}\] 
 with critical sequence $\seq{\lambda_m}{m<\omega}$ cofinal in $\lambda$ and $j(E)=E$. If $G$ is $\Add{\lambda^+}{1}$-generic over $V$, then $L(V_{\lambda+1},E,G)$ is a model of ZFC and,  in $L(V_{\lambda+1},E,G)$, for every natural number $n>0$ and every $A\in V_{\lambda+1}$, there is an $n$-ultraexact embedding $\map{i}{X}{V_\zeta}$ at $\lambda$ with $E\in X$, $i(E)=E$, $A\in\ran{i}$, $i(\crit{i})=\lambda_0$ and $i(\lambda_m)=\lambda_{m+1}$ for all $m<\omega$.   
\end{theorem}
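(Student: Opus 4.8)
The plan is to follow the template of the proof of Theorem~\ref{theorem:ExactFromI0}, replacing the small model $M$ built there by $N:=L(V_{\lambda+1},E,G)$ and replacing the rôle of the internal embedding $j\restriction M$ by the external embedding $\map{j}{L(V_{\lambda+1},E)}{L(V_{\lambda+1},E)}$ together with the weak homogeneity of the forcing $\mathbb{P}:=\Add{\lambda^+}{1}$. First I would record the preliminaries. Since $\lambda^+$ is regular, $\mathbb{P}$ is ${<}\lambda^+$-closed, so it adds no new subsets of $\lambda$; hence $V_{\lambda+1}$, $\lambda^+$ and $L(V_{\lambda+1},E)$ are the same in $V$ and in $V[G]$, and, representing $j$ by an ultrafilter on $V_{\lambda+2}^{L(V_{\lambda+1},E)}$ as in the proof of \cite[Lemma~5]{MR2914848}, the embedding $j$ (with $j(E)=E$ and critical sequence $\seq{\lambda_m}{m<\omega}$) persists to $V[G]$. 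Moreover $N=L(V_{\lambda+1},E)[G]$ and $G$ is $\mathbb{P}$-generic over $L(V_{\lambda+1},E)$. To see that $N\models\mathsf{ZFC}$: by ${<}\lambda^+$-closure and genericity every $a\in\mathcal{P}(\lambda)$ appears as a block $\Set{\xi<\lambda}{G(\lambda\cdot\alpha+\xi)=1}$ of $G$ for some $\alpha<\lambda^+$, so the assignment sending $\alpha$ to the corresponding block is a surjection from $\lambda^+$ onto $V_{\lambda+1}$ that is definable in $N$ from $G$ and $\lambda$; thus $V_{\lambda+1}$ is wellorderable in $N$, whence — since $N=L(V_{\lambda+1},E,G)$ and $V_{\lambda+1}\cup\{E,G\}$ is wellorderable in $N$ — $N$ satisfies $\mathsf{ZFC}$ (this is the analogue of the verification that $M\models\mathsf{ZFC}$ in the proof of Theorem~\ref{theorem:ExactFromI0}; compare \cite[Lemma~190]{WOEM1}).

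Now fix $n>0$ and $A\in V_{\lambda+1}$ and suppose, towards a contradiction, that in $N$ there is \emph{no} $n$-ultraexact embedding $\map{i}{X}{V_\zeta}$ at $\lambda$ with $E\in X$, $i(E)=E$, $A\in\ran{i}$, $i(\crit{i})=\lambda_0$ and $i(\lambda_m)=\lambda_{m+1}$ for all $m<\omega$; as in Theorem~\ref{theorem:ExactFromI0} we may assume $A\notin\lambda_0\cup\Set{\lambda_m}{m<\omega}\cup\{E\}$. This is a statement about $N=L(V_{\lambda+1},E)[G]$ all of whose parameters, namely $n,A,\lambda,E$ and $\seq{\lambda_m}{m<\omega}$, lie in $L(V_{\lambda+1},E)$; since $\mathbb{P}$ is weakly homogeneous and definable over $L(V_{\lambda+1},E)$ from $\lambda$, the statement is equivalent to the assertion, first-order over $L(V_{\lambda+1},E)$ in those parameters, that $\mathbb{1}_{\mathbb{P}}$ forces the corresponding statement about the canonical names of the parameters. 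Applying the elementarity of $j$ — using $j(\lambda)=\lambda$, $j(E)=E$ and $j(\seq{\lambda_m}{m<\omega})=\seq{\lambda_{m+1}}{m<\omega}$ — and translating back through the forcing, we obtain that in $N$ there is \emph{no} $n$-ultraexact embedding $\map{i}{X}{V_\zeta}$ at $\lambda$ with $E\in X$, $i(E)=E$, $j(A)\in\ran{i}$, $i(\crit{i})=\lambda_1$ and $i(\lambda_{m+1})=\lambda_{m+2}$ for all $m<\omega$. (This is the counterpart of the step in the proof of Theorem~\ref{theorem:ExactFromI0} where one pushes the assumed failure through $j\restriction M$.)

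It then remains to construct, inside $N$, an embedding witnessing the contrary, and here the extra work needed for ultraexactness appears. Fix an ordinal $\zeta\in(C^{(n+1)})^N$ with $\zeta>\lambda+2$ that is definable in $N$ from $\lambda$ (e.g. the least such). Inside $N$, let $X$ be the Skolem hull of $V_\lambda\cup\{\lambda,E,\seq{\lambda_m}{m<\omega},A,j\restriction V_\lambda\}$ in $V_\zeta^N$ (with respect to a fixed wellorder), and let $\map{i}{X}{V_\zeta^N}$ be the extension of the assignment sending each $a\in V_\lambda$ to $j(a)$, fixing $\lambda$ and $E$, sending $\seq{\lambda_m}{m<\omega}$ to $\seq{\lambda_{m+1}}{m<\omega}$, sending $A$ to $j(A)$, and sending $j\restriction V_\lambda$ to $(j\restriction V_\lambda)_+(j\restriction V_\lambda)=j(j\restriction V_\lambda)$, the last identity coming from \eqref{equation:j_+=j} of Lemma~\ref{lemma:j_+}. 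The key point is that this assignment agrees with $j$ on each of the listed generators, so the homogeneity-plus-elementarity argument of the previous paragraph, now applied to formulas of the shape ``$V_\zeta$ satisfies $\varphi(\cdot)$'' (whose evaluation involves only the parameter $\zeta$, which $j$ fixes), shows that the assignment is a partial elementary map of $V_\zeta^N$ into itself; hence $i$ is a well-defined elementary embedding, it is an element of $N$, and $X\prec V_\zeta^N$ has cardinality $\lambda$ with $V_\lambda\cup\{\lambda\}\subseteq X$. One now reads off that $i(\lambda)=\lambda$, that $i\restriction\lambda=j\restriction\lambda\ne\id_\lambda$ and $\crit{i}=\lambda_0$, whence $i(\crit{i})=\lambda_1$ and $i(\lambda_{m+1})=\lambda_{m+2}$, that $i(E)=E$ and $j(A)\in\ran{i}$, and — crucially — that $i\restriction V_\lambda$ equals $j\restriction V_\lambda$, which is one of the generators and hence an element of $X$; so $i$ is $n$-ultraexact in the sense of Definition~\ref{defultraexact}. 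This contradicts the conclusion of the previous paragraph. The argument is a schema in $n$, and, consistently with Lemma~\ref{lemma:EquivalentUltra}, the choice of $n$ plays no essential role.

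The step I expect to be the main obstacle is the bookkeeping in the middle: making precise, via weak homogeneity of $\Add{\lambda^+}{1}$, the translation between statements about $N=L(V_{\lambda+1},E)[G]$ and statements about $L(V_{\lambda+1},E)$, and then tracking exactly how $j$ moves each parameter — in particular identifying the image of $j\restriction V_\lambda$ with $(j\restriction V_\lambda)_+(j\restriction V_\lambda)$ and getting the index shift ($\lambda_0\rightsquigarrow\lambda_1$, and so on) right, so that the embedding produced at the end contradicts precisely the failure statement obtained earlier. By comparison, $N\models\mathsf{ZFC}$ is routine once one observes the block-coding of subsets of $\lambda$ by the generic, and the final construction, once partial elementarity is in hand, is just a Skolem-hull computation.
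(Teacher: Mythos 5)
Your proof is correct, and its engine is the same as the paper's: the weak homogeneity of $\Add{\lambda^+}{1}$ over $L(V_{\lambda+1},E)$ lets you convert statements about $L(V_{\lambda+1},E,G)$ with parameters in $L(V_{\lambda+1},E)$ into forcing statements decided by $\mathbbm{1}$, to which $j$ can then be applied; and the ultraexactness requirement is met by simply placing $j\restriction V_\lambda$ (an element of $V_{\lambda+2}$) into the domain. The packaging differs in two ways. First, you build the witnessing embedding directly as a Skolem-hull map determined by $j$ on the generators $V_\lambda\cup\{\lambda,E,\vec{\lambda},A,j\restriction V_\lambda\}$, verifying partial elementarity formula-by-formula via the homogeneity transfer; the paper instead proves a reusable Claim that \emph{every} $X\prec V_\zeta$ of cardinality $\lambda$ containing $V_\lambda\cup\{\lambda,E\}$ admits an elementary embedding into $V_{j(\zeta)}$ extending $j\restriction V_\lambda$, obtained by transferring the anticollapse of the transitive collapse of $X$ across $j$ and composing with $j\restriction M$. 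Second, you perform the index shift ($\lambda_0\rightsquigarrow\lambda_1$) at the outset by applying $j$ to the assumed failure for a fixed $A$, whereas the paper first proves the shifted statement for all $A$ (finding a putative bad $A$ inside the range $i[X]$ by correctness of $V_\zeta$) and only pulls the indices back via homogeneity at the very end. Your route is a bit more economical for this theorem since it handles one $A$ at a time; the paper's Claim has the advantage of applying to an arbitrary prescribed domain $X$, which is what its quantifier order requires. All the details you flag as delicate (that $j(\zeta)=\zeta$ for the least $\zeta\in(C^{(n+1)})^N$ above $\lambda$, that the image of $j\restriction V_\lambda$ under the assignment is $j(j\restriction V_\lambda)=(j\restriction V_\lambda)_+(j\restriction V_\lambda)$, and that the resulting $i$ lies in $N$ because it is definable there from $j\restriction V_\lambda$ and finitely many further parameters) do check out.
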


\begin{proof}
  First, note that $\Add{\lambda^+}{1}\subseteq L(V_{\lambda+1},E)$ and hence $G$ is also $\Add{\lambda^+}{1}$-generic over $L(V_{\lambda+1},E)$. Moreover, since the partial order $\Add{\lambda^+}{1}$ is ${<}\lambda^+$-closed in  $V$, it follows that $V$ and $V[G]$ contain the same $\lambda$-sequences of elements of $V$, and $L(V_{\lambda+1},E)$ and $L(V_{\lambda+1},E,G)$ contain the same $\lambda$-sequences of elements of $L(V_{\lambda+1},E)$.  By genericity, the filter  $G$ codes a wellordering of $P(\lambda)$ of order-type $\lambda^+$, and  it follows that $V_{\lambda+1}$ and $E\subseteq V_{\lambda+1}$ can be wellordered in $L(V_{\lambda+1},E,G)$, which thus satisfies the Axiom of Choice. In addition, these observations show that $H_{\lambda^+}^{L(V_{\lambda+1},E,G)}\subseteq L(V_{\lambda+1},E)$.

  \begin{claim*}
      In $L(V_{\lambda+1},E,G)$, if $\zeta>\lambda$ is a cardinal and $X$ is an elementary submodel of $V_\zeta$ of cardinality $\lambda$ with $V_\lambda\cup\{\lambda,E\}\subseteq X$, then there is an elementary embedding $\map{i}{X}{V_{j(\zeta)}}$ with $i\restriction V_\lambda=j\restriction V_\lambda$ and $i(E)=E$. 
  \end{claim*}

  \begin{proof}[Proof of the Claim]
    Let $\map{\pi}{X}{M}$ be the transitive collapse of $X$ and define $E_0=\pi(E)\in M$. 
    Then the inverse map $\map{\pi^{{-}1}}{M}{V_\zeta}$ is an elementary embedding with $\pi^{{-}1}\restriction V_\lambda=\id_{V_\lambda}$ and  
    $M\in H_{\lambda^+}^{L(V_{\lambda+1},E,G)}\subseteq L(V_{\lambda+1},E)$.  Moreover, the fact that  $\Add{\lambda^+}{1}$ is weakly homogeneous in $L(V_{\lambda+1},E)$  implies that 
    \begin{equation*}
       \begin{split}
           \mathbbm{1}_{\Add{\lambda^+}{1}}\Vdash ``&\textit{There is an elementary embedding $\map{k}{\check{M}}{V_{\check{\zeta}}}$} \\ 
           & ~ \textit{satisfying $k\restriction V_{\check{\lambda}}=\id_{V_{\check{\lambda}}}$ and $k(\check{E}_0)=\check{E}$ }"
       \end{split} 
    \end{equation*} 
    holds in $L(V_{\lambda+1},E)$. Since $j(\lambda)=\lambda$, $j(E)=E$ and $j(\Add{\lambda^+}{1})=\Add{\lambda^+}{1}$,  an application of $j$ ensures that,  in $L(V_{\lambda+1},E,G)$, there is an elementary embedding $\map{k}{j(M)}{V_{j(\zeta)}}$ with $k\restriction V_\lambda=\id_{V_\lambda}$ and $k(j(E_0))=E$. 
    But, this implies that the embedding $$\map{i ~ = ~ k\circ (j\upharpoonright M)\circ\pi}{X}{V_{j(\zeta)}^{L(V_{\lambda+1},E,G)}}$$ given by the  diagram below is an elementary embedding with $i\restriction V_\lambda=j\restriction V_\lambda$ and $i(E)=E$.  
    \begin{figure}[h]
    \begin{center}
    \begin{tikzcd} 
M \arrow[rr, "j\upharpoonright M"]                                        &  & j(M) \arrow[rr, "k"] &  & V_{j(\zeta)}^{L(V_{\lambda+1},E,G)} \\
                                                                          &  &                      &  &              \\
X \arrow[uu, "\pi"] \arrow[rrrruu, "i", dashed] \arrow[rr, "\mathrm{id}"] &  & V_\zeta^{L(V_{\lambda+1},E,G)}              &  & 
\end{tikzcd}      
    \end{center}
    \end{figure}         

    Finally, since the elementary embedding $\map{j\restriction M}{M}{j(M)}$ belongs to $L(V_{\lambda+1},E)$,  the map $i$ is an element of $L(V_{\lambda+1},E,G)$. This proves the claim.
  \end{proof}

  Now, fix $n > 0$ and assume, aiming for a contradiction, that there is $A\in V_{\lambda+1}$ with the property that, in $L(V_{\lambda+1},E,G)$, there is no  $n$-ultraexact embedding $\map{k}{X}{V_\zeta}$ at $\lambda$ with $E\in X$, $k(E)=E$, $A\in\ran{k}$, $k(\crit{k})=\lambda_1$ and $k(\lambda_{m+1})=\lambda_{m+2}$ for all $m<\omega$.  
  Fix a cardinal $\zeta>\lambda$ such that $j(\zeta)=\zeta$ and $V_\zeta^{L(V_{\lambda+1},E,G)}$ is sufficiently elementary in $L(V_{\lambda+1},E,G)$. 
  Now, work in $L(V_{\lambda+1},E,G)$ and pick an elementary submodel $X$ of $V_\zeta$ of cardinality $\lambda$ with $V_\lambda\cup\{E,j\restriction V_\lambda\}\subseteq X$. The above claim then yields an elementary embedding $\map{i}{X}{V_\zeta}$ with $i\restriction V_\lambda=j\restriction V_\lambda$ and $i(E)=E$. 
  Since $V_\zeta$ was chosen to be sufficiently elementary in $V$ and the set $i[X]$ is an elementary submodel of $V_\zeta$ that contains $E$ and the sequence $\seq{\lambda_{m+1}}{m<\omega}$, our assumption and the correctness properties of $V_\zeta$ yield $A\in i[X]$ with the property that there is no  $n$-ultraexact embedding $\map{k}{X}{V_\zeta}$ at $\lambda$ with $E\in X$, $k(E)=E$, $A\in\ran{k}$, $k(\crit{k})=\lambda_1$ and $k(\lambda_{m+1})=\lambda_{m+2}$ for all $m<\omega$.  
  However, the fact that $i\restriction V_\lambda=j\restriction V_\lambda\in X$ implies that $\map{i}{X}{V_\zeta}$ is an $n$-ultraexact embedding at $\lambda$ with  $A\in\ran{i}$, $i(\crit{i})=j(\crit{j})=j(\lambda_0)=\lambda_1$ and $i(\lambda_{m+1})=j(\lambda_{m+1})=\lambda_{m+2}$ for all $m<\omega$, a contradiction.   

  We have thus shown that, in $L(V_{\lambda+1},E,G)$, for every $A\in V_{\lambda+1}$,    there is an  $n$-ultraexact embedding $\map{i}{X}{V_\zeta}$ at $\lambda$ with $E\in X$, $i(E)=E$, $A\in\ran{i}$, $i(\crit{i})=\lambda_1$ and $i(\lambda_{m+1})=\lambda_{m+2}$ for all $m<\omega$.  Note that, in  $L(V_{\lambda+1},E,G)$, this statement can be expressed by a formula with parameters $E$ and $\seq{\lambda_{m+1}}{m<\omega}$. Since these parameters are contained in $L(V_{\lambda+1},E)$, the weak homogeneity of $\Add{\lambda^+}{1}$ in $L(V_{\lambda+1},E)$ implies that every condition in this partial order forces this statement to hold.   In this situation, the elementarity of $j$ ensures that every condition in $\Add{\lambda^+}{1}$ forces that for every $A\in V_{\lambda+1}$,    there is an  $n$-ultraexact embedding $\map{i}{X}{V_\zeta}$ at $\lambda$ with $E\in X$, $i(E)=E$, $A\in\ran{i}$, $i(\crit{i})=\lambda_0$ and $i(\lambda_m)=\lambda_{m+1}$ for all $m<\omega$. So, in particular, this statement holds in $L(V_{\lambda+1},E,G)$. 
\end{proof}

By applying the above theorem to the case $E=\emptyset$, we directly obtain the following corollary:

\begin{corollary}
 If $\map{j}{L(V_{\lambda+1})}{L(V_{\lambda+1})}$ is an I0-embedding with critical sequence $\seq{\lambda_m}{m<\omega}$ and $G$ is $\Add{\lambda^+}{1}$-generic over $V$, then, in $L(V_{\lambda+1},E,G)$, the cardinal $\lambda_0$ is parametrically $n$-ultraexact for $\seq{\lambda_{m+1}}{m<\omega}$, for all  $n>0$. \qed 
\end{corollary}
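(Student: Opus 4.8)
The plan is to read the corollary off Theorem \ref{theorem:UltraexactFromI0} by taking the predicate $E$ there to be $\emptyset$ (so that $L(V_{\lambda+1},E,G)$ becomes $L(V_{\lambda+1},G)$). The first step is to confirm the hypotheses of that theorem: an I0-embedding $\map{j}{L(V_{\lambda+1})}{L(V_{\lambda+1})}$ is precisely a non-trivial elementary embedding of the kind required with $E=\emptyset$, its critical sequence $\seq{\lambda_m}{m<\omega}$ is cofinal in $\lambda$ (a standard fact about I0 embeddings, via Kunen's theorem applied to $\map{j\restriction V_\lambda}{V_\lambda}{V_\lambda}$), and $j(\emptyset)=\emptyset$ trivially. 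Theorem \ref{theorem:UltraexactFromI0} then gives that $L(V_{\lambda+1},G)$ models $\mathsf{ZFC}$ and that, in it, for every $n>0$ and every $A\in V_{\lambda+1}$ there is an $n$-ultraexact embedding $\map{i}{X}{V_\zeta}$ at $\lambda$ with $A\in\ran{i}$, $i(\crit{i})=\lambda_0$ and $i(\lambda_m)=\lambda_{m+1}$ for all $m<\omega$; the clauses $\emptyset\in X$ and $i(\emptyset)=\emptyset$ are vacuous.

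The second step is to match this with Definition \ref{defultraexact}(2). Setting $\mu_m=\lambda_{m+1}$, the sequence $\vec{\mu}=\seq{\mu_m}{m<\omega}$ is strictly increasing with supremum $\lambda$ and satisfies $\lambda_0<\mu_0$, so that $\lambda_0$ is an admissible candidate for being parametrically $n$-ultraexact for $\vec{\mu}$. After substituting $\mu_m=\lambda_{m+1}$, the defining clauses for this notion — an $n$-ultraexact embedding $\map{i}{X}{V_\zeta}$ at $\lambda$ with $A\in\ran{i}$, $i(\lambda_0)=\mu_0$, $i(\mu_m)=\mu_{m+1}$ for all $m<\omega$, and $i(\crit{i})=\lambda_0$ — are exactly the conditions $i(\lambda_m)=\lambda_{m+1}$ (for all $m<\omega$) and $i(\crit{i})=\lambda_0$ supplied above. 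Hence $L(V_{\lambda+1},G)$ witnesses, for every $n>0$, that $\lambda_0$ is parametrically $n$-ultraexact for $\seq{\lambda_{m+1}}{m<\omega}$, which is the corollary.

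Since the statement is a direct specialization, I do not expect a genuine obstacle; the only point that deserves a moment's care is the index shift between the conclusion of Theorem \ref{theorem:UltraexactFromI0} (phrased with the critical sequence itself) and Definition \ref{defultraexact}(2) as applied to $\lambda_0$ (phrased with the shifted sequence $\seq{\lambda_{m+1}}{m<\omega}$), and, as noted, the two descriptions coincide once the indexing is aligned.
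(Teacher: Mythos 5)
Your proposal is correct and is exactly the paper's argument: the paper states that the corollary is obtained "by applying the above theorem to the case $E=\emptyset$," and your verification of the hypotheses and of the index shift against Definition \ref{defultraexact}(2) just fills in the routine details the paper leaves implicit.
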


While the combination of Theorems  \ref{theorem:UltraexactSharpsIcarus} and  \ref{theorem:UltraexactFromI0} does not directly give us an equiconsistency statement, we end this section by discussing how these results can be used to obtain results in this direction. 
 For this purpose, we start by inductively defining  for all natural numbers $n$ and cardinals $\lambda$, when a set $E$ is the \emph{$n$-th iterated sharp of $V_{\lambda+1}$}. First, a set $E$ is the $0$-th iterated sharp of $V_{\lambda+1}$ if and only if $E=\emptyset$. Next, a set $E$ is the $(n+1)$-th iterated sharp of $V_{\lambda+1}$ if there exists a set $D$ that is the $n$-th iterated sharp of $V_{\lambda+1}$ and satisfies $E=(V_{\lambda+1},D)^\#$. Notice that, if the $n$-th iterated sharp of $V_{\lambda+1}$ exists, then it is uniquely determined, it is contained in $\OD\cap V_{\lambda+2}$ and the $m$-th iterated sharp of $V_{\lambda+1}$ exists for all $m<n$. 
 Corollary  \ref{corollary:StrongIcarusUltraexactSharps} now shows that if the $(n+1)$-th iterated sharp of $V_{\lambda+1}$ exists for some ultraexacting cardinal $\lambda$, then the  $n$-th iterated sharp of $V_{\lambda+1}$ is a strong Icarus set. In the other direction, Theorem \ref{theorem:UltraexactFromI0} shows that if $\lambda$ is a cardinal with the property that  the $n$-th iterated sharp of $V_{\lambda+1}$ exists and is a strong Icarus set, then, in an inner model of a generic extension of $V$, the cardinal $\lambda$ is ultraexacting and the $n$-th iterated sharp of $V_{\lambda+1}$ exists. 
 These observations show that the following statements are equivalent: 
 \begin{enumerate}
     \item For every natural number $n$, the axioms of ZFC are consistent with the existence of a cardinal $\lambda$ with the property that the  $n$-th iterated sharp of $V_{\lambda+1}$ is a strong Icarus set. 

     \item For every natural number $n$, the axioms of ZFC  are consistent with the existence of an ultraexacting cardinal for which the $n$-th iterated sharp of $V_{\lambda+1}$ exists. 
 \end{enumerate}

\section{Square Root Reflection and Ultraexact Cardinals}\label{SectSSR}

 In this section, we show that the existence of ultraexact cardinals  corresponds exactly with the validity of very strong structural reflection principles. We start by reviewing the principles of \emph{Exact Structural Reflection $\ESR_\Ce(\vec{\lambda})$} introduced in \cite{BL} and their relationship to the existence of exact cardinals. 
 %Below, the \textit{rank} of a structure is defined as the cardinality of its universe.

 \begin{definition}[\cite{BL}]
 \label{defESR}
 Let $\calL$ be a first-order language containing unary predicate symbols $\vec{P}=\seq{\dot{P}_m}{m<\omega}$. 
 \begin{enumerate}
     \item Given a sequence $\vec{\lambda}=\seq{\lambda_m}{m<\omega}$ of cardinals with supremum $\lambda$, an $\calL$-structure $A$ has \emph{type $\vec{\lambda}$ (with respect to $\vec{P}$)} if the universe  of $A$ has rank $\lambda$ and $\rank{\dot{P}_m^{A}}=\lambda_m$ for all $m<\omega$. 
     
     \item Given a class $\Ce$ of $\calL$-structures and  a strictly increasing sequence $\vec{\lambda}=\seq{\lambda_m}{m<\omega}$ of cardinals, we let $\ESR_{\Ce}(\vec{\lambda})$ denote the statement that for every structure $B$ in $\Ce$ of type  $\seq{\lambda_{m+1}}{m<\omega}$, there exists an elementary embedding of a structure $A$ in $\Ce$ of type $\seq{\lambda_m}{m<\omega}$ into $B$. 
     
     \item Given a definability class $\Gamma$ and a class $P$, we let $\Gamma(P)$-$\ESR(\vec{\lambda})$ denote the statement that $\ESR_\Ce (\vec{\lambda})$ holds for every class $\Ce$ of structures of the same type that is $\Gamma$-definable with parameters in $P$.  
 \end{enumerate} 
\end{definition}

The following statements are a direct consequence of  {\cite[Corollary 9.10]{BL}} and its proof (just letting $\rho=\lambda$ in the proof of the implication (ii) $\Rightarrow$  (i) in \cite[Lemma 9.9]{BL}). In the original result, the cardinal $\lambda$ does not appear as a parameter in the definition of the classes of structures, but it is easily seen that the given proof also works if we allow this cardinal as a parameter.

\begin{theorem}[\cite{BL}]\label{theorem:ESRcorrespondence}
 Let $n>0$ be a natural number and let $\vec{\lambda}=\seq{\lambda_m}{m<\omega}$ be a strictly increasing sequence of cardinals with supremum $\lambda$. 
  \begin{enumerate}
   \item The cardinal $\lambda_0$ is  $n$-exact for $\seq{\lambda_{m+1}}{m<\omega}$ if and only if  $\Sigma_{n+1}(\{\lambda\})$-$\ESR(\vec{\lambda})$ holds. 
   
   \item If $\lambda_0$ is  parametrically $n$-exact for $\seq{\lambda_{i+1}}{i<\eta}$, then $\Sigma_{n+1}(V_{\lambda_0}\cup\{\lambda\})$-$\ESR(\vec{\lambda})$ holds.  
  \end{enumerate}
\end{theorem}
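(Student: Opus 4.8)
The plan is to prove both halves of (i) by passing back and forth between $n$-exact embeddings at $\lambda$ and elementary embeddings of structures of the prescribed $\vec\lambda$-types, and to obtain (ii) as a parametric refinement of the easier half. I would essentially recover the argument of \cite{BL}.

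First I would prove the implication from exactness to $\ESR$, uniformly for (i) and (ii). Let $\Ce$ be a class of structures in a countable language that is $\Sigma_{n+1}$-definable by a formula $\varphi(\,\cdot\,,\lambda,p)$ with $p\in V_{\lambda_0}$ (for (i) there is no parameter $p$). Suppose $B\in\Ce$ has type $\seq{\lambda_{m+1}}{m<\omega}$; after fixing a set-coding of structures we may take $B$, and the pair $\langle B,p\rangle$, to lie in $V_{\lambda+1}$, using that each $\lambda_m<\lambda$. Apply the hypothesis to the set $\langle B,p\rangle$ (to $B$ in case (i)): ordinary $n$-exactness of $\lambda_0$ for $\seq{\lambda_{m+1}}{m<\omega}$ in case (i), parametric $n$-exactness in case (ii). This yields an $n$-exact embedding $\map{j}{X}{V_\zeta}$ at $\lambda$ --- so $X\prec V_\eta$ for some $\eta\in C^{(n)}$, $\zeta\in C^{(n+1)}$, $j(\lambda)=\lambda$, $j\restriction\lambda\neq\id_\lambda$ --- with $B\in\ran{j}$ and $j(\lambda_m)=\lambda_{m+1}$ for all $m$, and, in case (ii), $p\in\ran{j}$ and $j(\crit{j})=\lambda_0$. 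Put $\bar B=j^{{-}1}(B)\in X$. In case (ii) the rank computation $\rank(p)<\lambda_0=j(\crit{j})$ together with $\rank(j^{{-}1}(p))=j^{{-}1}(\rank(p))$ and strict monotonicity of $j$ on the ordinals gives $j^{{-}1}(p)\in V_{\crit{j}}$, hence $j(p)=p$; this is the only use of the parametric hypothesis. Now ``$B\in\Ce$'' is the $\Sigma_{n+1}$-statement $\varphi(B,\lambda,p)$, true in $V$, hence true in $V_\zeta$ (as $\zeta\in C^{(n+1)}$ and the parameters lie in $V_\zeta$), hence true in $X$ by elementarity of $j$ (using $j(\bar B)=B$, $j(\lambda)=\lambda$, $j(p)=p$), hence true in $V$ again since $X\prec V_\eta$ and $\Sigma_{n+1}$-formulas are upward absolute from $V_\eta$ to $V$ when $\eta\in C^{(n)}$; so $\bar B\in\Ce$. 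Applying elementarity of $j$ to ``the universe of $B$ has rank $\lambda$'' and ``$\rank(\dot P^{B}_m)=\lambda_{m+1}$'', and using $j^{{-}1}(\lambda)=\lambda$ and $j^{{-}1}(\lambda_{m+1})=\lambda_m$, the structure $\bar B$ has type $\seq{\lambda_m}{m<\omega}$. Finally the universe of $\bar B$ has rank $\lambda$, so it is a subset of $V_\lambda\subseteq X$ and $j$ restricts to a map of it into the universe of $B$; a routine satisfaction-chase through $X$ and $V_\zeta$ shows $\map{j\restriction\bar B}{\bar B}{B}$ is elementary. Thus $\ESR_\Ce(\vec\lambda)$ holds, proving ``$\Rightarrow$'' of (i) and all of (ii).

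Next I would prove the converse of (i). Assume $\Sigma_{n+1}(\{\lambda\})$-$\ESR(\vec\lambda)$, fix $D\in V_{\lambda+1}$, and fix $\zeta\in C^{(n+1)}$ with $\zeta>\lambda$; I must produce an $n$-exact embedding at $\lambda$ with $D$ in its range and the right action on the $\lambda_m$. The idea is to code elementary submodels of correct rank-initial segments of $V$ as structures with universe $V_\lambda$. Let $\Ce$ consist (up to isomorphism) of all structures $A=\langle V_\lambda,\dot E^A,\dot\ell^A,\dot c^A,\dot d^A_0,\dot d^A_1,\dots,\dot P^A_0,\dot P^A_1,\dots\rangle$ such that: $\langle V_\lambda,\dot E^A\rangle$ is well-founded and extensional with collapsing isomorphism $h_A$ onto some $\langle Y_A,\in\rangle$, where $Y_A\supseteq V_\lambda\cup\{\lambda\}$ admits an elementary embedding into some $V_{\zeta'}$ fixing $V_\lambda\cup\{\lambda\}$ pointwise and with $\zeta'$ ``correct enough''; $h_A(\dot\ell^A)=\lambda$; $h_A(\dot c^A)\in Y_A$; and each $\dot d^A_m$ is a genuine ordinal with $\dot P^A_m=V_{\dot d^A_m}$, so that $\rank(\dot P^A_m)=\dot d^A_m$. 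This is first-order modulo the ``correct enough'' clause. Taking $Y_0\prec V_\zeta$ of size $\lambda$ with $V_\lambda\cup\{\lambda,D\}\subseteq Y_0$ and choosing the constants so that $h$ sends $\dot\ell$ to $\lambda$, $\dot c$ to $D$, and $\dot d_m$ to $\lambda_{m+1}$, one gets a master structure $B\in\Ce$ of type $\seq{\lambda_{m+1}}{m<\omega}$ (the witnessing embedding of $Y_0$ being the inclusion $Y_0\hookrightarrow V_\zeta$). Now $\ESR$ yields $A\in\Ce$ of type $\seq{\lambda_m}{m<\omega}$ --- forcing $\dot d^A_m=\lambda_m$ --- and an elementary $\map{e}{A}{B}$. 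Transporting $e$ along $h_A,h_B$ gives an elementary $\map{j_0}{Y_A}{Y_0}$, hence $\map{j_0}{Y_A}{V_\zeta}$; composing with the inverse of the embedding $\sigma\colon Y_A\to V_{\zeta'}$ supplied by $A\in\Ce$ (and identifying $X:=\sigma[Y_A]\prec V_{\zeta'}$) yields an $n$-exact embedding $\map{j=j_0\circ\sigma^{{-}1}}{X}{V_\zeta}$ at $\lambda$ with $V_\lambda\cup\{\lambda\}\subseteq X$: preservation of the constants under $e$ gives $j(\lambda)=\lambda$, $j(\lambda_m)=j_0(\dot d^A_m)=h_B(\dot d^B_m)=\lambda_{m+1}$, and $D=h_B(\dot c^B)=j_0(h_A(\dot c^A))=j(h_A(\dot c^A))\in\ran{j}$, while $\lambda_0<\lambda$ and $j(\lambda_0)=\lambda_1\neq\lambda_0$ give $j\restriction\lambda\neq\id_\lambda$. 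As $D$ was arbitrary, $\lambda_0$ is $n$-exact for $\seq{\lambda_{m+1}}{m<\omega}$.

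The main obstacle is the complexity bookkeeping in this converse direction. The clause ``$\dot P^A_m=V_{\dot d^A_m}$'' is included precisely to make the coding rank-faithful, so that the $\vec\lambda$-type of $A$ pins down the decoded ordinals $\dot d^A_m$; it is first-order and harmless. The real difficulty is the ``correct enough'' clause: the faithful phrasing, ``$Y_A$ embeds elementarily, fixing $V_\lambda\cup\{\lambda\}$, into $V_{\zeta'}$ for some $\zeta'\in C^{(n+1)}$'', prefixes an $\exists\zeta'$ to a $\Pi_{n+1}$ matrix and is therefore only $\Sigma_{n+2}$, whereas the hypothesis delivers only $\Sigma_{n+1}$-$\ESR$. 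Following \cite{BL}, I would replace membership of $\zeta'$ in $C^{(n+1)}$ by a first-order-expressible reflection property --- strong enough that the extracted $j$ still lands in a genuinely $C^{(n+1)}$-correct $V_\zeta$ and that $X$ is (isomorphic to) an elementary submodel of a $C^{(n)}$-correct $V_{\zeta'}$, yet weak enough to be satisfied by the master structure built from an honest $\zeta\in C^{(n+1)}$. Once that is set up, the remaining verifications --- the forward direction, the type bookkeeping, and the parametric strengthening in (ii) --- are routine.
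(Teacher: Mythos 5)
The paper does not reprove this statement -- it is quoted from \cite{BL} -- but your reconstruction follows exactly the strategy of that proof and of the paper's own analogous arguments for the square-root version (Lemma \ref{lemma:UltraexactEmbeddingsSQSR} and Lemma \ref{sqrtimpliesultraexact}): pull the target structure back through an exact embedding for the forward direction, and, for the converse, code elementary submodels of sufficiently correct rank-initial segments as structures with universe essentially $V_\lambda$ and apply $\ESR$ to that class. Your forward direction, including the observation that $p\in V_{\lambda_0}=V_{j(\crit{j})}$ forces $j(p)=p$ in the parametric case, is the intended argument. The one step you leave unexecuted -- the ``correct enough'' clause -- resolves more simply than your phrasing suggests: the class only has to certify that the domain of the extracted embedding is an elementary submodel of a $C^{(n)}$-correct $V_{\zeta'}$, since that is all Definition \ref{defexact} asks of the domain; as ``$\zeta'\in C^{(n)}$'' is a $\Pi_n$ condition, the clause ``there exists $\zeta'\in C^{(n)}$ such that $\ldots$'' keeps the class $\Sigma_{n+1}$-definable, and the $C^{(n+1)}$-correctness of the \emph{target} costs nothing because the target is the ambient $V_\zeta$ of the master structure, which you chose with $\zeta\in C^{(n+1)}$ yourself. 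This is precisely how the class $\calU_n$ is set up in \S\ref{SectSSR}, so no bespoke ``reflection property'' is needed. The only other blemish is bookkeeping: you define $Y_A$ as the transitive collapse of $\langle V_\lambda,\dot{E}^A\rangle$ but then take the decoded model of the master structure to be $Y_0\prec V_\zeta$ itself, which is not transitive; the paper's device of coding the submodel directly by a bijection $\tau$ with $\tau(x)=\langle 1,x\rangle$ for $x\in V_\lambda$ and $\tau(\lambda)=\langle 0,0\rangle$ sidesteps the collapse entirely and makes the type and constant chasing at the end routine.
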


 We now strengthen the above reflection principles by permuting the quantifiers in their statements to obtain principles of structural reflection that directly correspond to ultraexact cardinals. These stronger reflection principles are based on the concept of \emph{square roots} of I3-embeddings that play an important role in the study of rank-into-rank embeddings (see \cite[Section 2]{MR1489305}).

\begin{definition}
 Given a limit ordinal $\lambda$  and a function $\map{f}{V_\lambda}{V_\lambda}$, a \emph{square root of $f$} is a function $\map{r}{V_\lambda}{V_\lambda}$ with $r_+(r)=f$.\footnote{The function $\map{r_+}{V_{\lambda+1}}{V_{\lambda+1}}$ is defined in Definition \ref{DefinitionFPlus}.} 
\end{definition}
 
 %Below, we recall from \S \ref{SectUltraExactJonsson} that $r_+:V_{\lambda +1}\to V_{\lambda+1}$ is defined by $r_+(x)=\bigcup \{ r(x\cap V_\alpha):\alpha <\lambda\}$ and that a square root of $\map{f}{V_\lambda}{V_\lambda}$ is a function $\map{r}{V_\lambda}{V_\lambda}$ with $r_+(r)=f$. 

 \begin{definition}
 \label{DefSRESR}
   \begin{enumerate}
       \item Given  a first-order language $\calL$ containing unary predicate symbols $\seq{\dot{P}_m}{m<\omega}$, and given a class $\Ce$ of $\calL$-structures together with  a strictly increasing sequence $\vec{\lambda}=\seq{\lambda_m}{m<\omega}$ of cardinals with supremum $\lambda$, 
       we let $\sqrt{\ESR}_\Ce(\vec{\lambda})$ (the \emph{Square Root Exact Structural Reflection principle for $\vec{\lambda}$ and $\Ce$}) denote the statement that there is a function $\map{f}{V_\lambda}{V_\lambda}$ with the property that  for every structure $B$ in $\Ce$ of type  $\seq{\lambda_{m+1}}{m<\omega}$, there exists a structure $A$ in $\Ce$ of type $\seq{\lambda_m}{m<\omega}$ and a square root $r$ of $f$ such that the restriction of $r$ to the universe of $A$ is  an elementary embedding of $A$ into $B$. 

        \item Given a definability class $\Gamma$ and a class $P$, we let $\Gamma(P)$-$\sqrt{\ESR}(\vec{\lambda})$ denote the statement that $\sqrt{\ESR}_\Ce (\vec{\lambda})$ holds for every class $\Ce$ of structures of the same type that is $\Gamma$-definable with parameters in $P$.  
   \end{enumerate}  
 \end{definition}

Clearly, for every sequence $\vec{\lambda}$ and every class $\Ce$, the principle $\sqrt{\ESR}_\Ce (\vec{\lambda})$ implies the principle $\ESR_\Ce (\vec{\lambda})$. 
In the following, we shall prove the analogs of Theorem \ref{theorem:ESRcorrespondence} for the \emph{square root Structural Reflection principle $\sqrt{\ESR}$}.

\begin{lemma}\label{lemma:UltraexactEmbeddingsSQSR}
  Let $n>0$ be a natural number, let $\vec{\lambda}=\seq{\lambda_m}{m<\omega}$ be a strictly increasing sequence of cardinals with supremum $\lambda$, and let $\map{j}{X}{V_\zeta}$ be an $n$-ultraexact embedding at $\lambda$ with $\vec{\lambda}\in X$, $\lambda_0\in\ran{j}$ and $j(\lambda_m)=\lambda_{m+1}$ for all $m<\omega$. 
    If    $F=\Set{x\in X}{j(x)=x}$, then the principle $\Sigma_{n+1}(F)$-$\sqrt{\ESR}(\vec{\lambda})$ holds. 
\end{lemma}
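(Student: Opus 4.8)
The plan is to mimic the proof of the implication (ii) $\Rightarrow$ (i) in \cite[Lemma 9.9]{BL} (as quoted through Theorem \ref{theorem:ESRcorrespondence}), but carrying along the extra data provided by ultraexactness, namely that $j \restriction V_\lambda \in X$. The key new idea is that this restriction, or rather a suitable restriction of it, will be the witnessing function $f$ required by the definition of $\sqrt{\ESR}$. So first I would fix a $\Sigma_{n+1}$-formula $\varphi(v_0, v_1)$ and a parameter $p \in F$, let $\Ce = \Set{A}{\varphi(A, p)}$, and set $f = j \restriction V_\lambda$. Note $f \colon V_\lambda \to V_\lambda$ since $j(\lambda) = \lambda$, and by Lemma \ref{lemma:j_+} the extension $f_+ = (j \restriction V_\lambda)_+$ is an I1-embedding belonging to $X$ that agrees with $j$ on $V_{\lambda+1} \cap X$. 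The point is that, for a structure $B \in \Ce$ of type $\seq{\lambda_{m+1}}{m < \omega}$, elements witnessing the existence of the desired $A$ and the desired square root of $f$ will be found inside $X$.

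The core argument: given such a $B$ of type $\seq{\lambda_{m+1}}{m<\omega}$, I want to produce $A \in \Ce$ of type $\seq{\lambda_m}{m<\omega}$ and a square root $r$ of $f$ whose restriction to the universe of $A$ elementarily embeds $A$ into $B$. First I argue that it suffices to handle $B \in \ran{j}$: since $\vec{\lambda} \in X$ (hence $\seq{\lambda_{m+1}}{m<\omega} \in X$ and, using $\lambda_0 \in \ran{j}$ and $j(\lambda_m) = \lambda_{m+1}$, also $\seq{\lambda_m}{m<\omega} \in \ran{j}$), the correctness properties of $X$ and $V_\zeta$ (with $n+1$ large enough that $\Sigma_{n+1}$-truth about $\Ce$ and about the nonexistence of an embedding-from-$A$-of-the-right-type is absolute) let me reflect a counterexample $B$ into $\ran{j}$ if one exists. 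So write $B = j(\bar B)$ with $\bar B \in X$; then $\bar B$ has type $\seq{\lambda_m}{m<\omega}$ (shift the ranks down by the elementarity of $j$ and $j(\lambda_m) = \lambda_{m+1}$, $j(\lambda_0) = \lambda_1$), and $\bar B \in \Ce$ since $j(p) = p$ and $\varphi$ is $\Sigma_{n+1}$ hence upward absolute from $X$ to $V$. The universe of $\bar B$ has rank $\lambda$, so it is contained in $V_{\lambda+1}$, and I claim $r := f_+ \restriction V_\lambda$—more precisely the function on $V_\lambda$ obtained by restricting $f_+$—is a square root of $f$; indeed this is exactly the standard fact that $(f_+)$ restricted appropriately has $((f_+)\restriction V_\lambda)_+ = f_+ \supseteq f$, and one checks $r_+(r) = f$ directly from Definition \ref{DefinitionFPlus} and Lemma \ref{lemma:j_+}. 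Then $j$ induces an elementary embedding of $\bar B$ into $j(\bar B) = B$ (since $\bar B \in X$ and $j$ is elementary on $X$), and by \eqref{equation:j_+=j} this embedding agrees with $f_+$, hence with $r$, on the universe of $\bar B$, which is a subset of $V_{\lambda+1} \cap X$. Setting $A = \bar B$ completes the argument.

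The main obstacle I anticipate is pinning down exactly which function plays the role of the square root $r$ and verifying cleanly that $r_+(r) = f$ while simultaneously $r \restriction |A|$ is the $j$-induced embedding. The subtlety is that $f_+ \colon V_{\lambda+1} \to V_{\lambda+1}$ is not itself a map $V_\lambda \to V_\lambda$, so one cannot literally take $r = f_+$; instead one must use that $f_+ \restriction V_\lambda$ is again a function $V_\lambda \to V_\lambda$ (because $f_+$ maps elements of $V_\alpha$, for $\alpha < \lambda$, into $V_\beta$ for some $\beta < \lambda$ by I1-elementarity and $f_+(\lambda) = \lambda$), and that applying the $(\cdot)_+$ operation to it recovers $f_+$, hence extends $f$. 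This is routine once set up correctly—it is essentially the "square roots of I3-embeddings" bookkeeping from \cite[Section 2]{MR1489305}—but it is the one place where a careless identification would break the proof. A secondary technical point is bookkeeping the exact correctness level $n+1$ needed: the statement "$\Ce$ has no element of type $\seq{\lambda_m}{m<\omega}$ admitting an elementary embedding into $B$ that is a restriction of a square root of $f$" should be arranged to be reflected by the $C^{(n+1)}$-correctness of $V_\zeta$ and the elementarity of $X$ in $V_\zeta$, exactly as in the proof of Lemma \ref{lemma:Equivalentexact}, and I would handle it by the same minimal-counterexample device rather than trying to compute complexities by hand.
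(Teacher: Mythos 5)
Your overall strategy --- reflect a putative counterexample $B$ into $\ran{j}$, write $B=j(\bar B)$ with $\bar B\in X$ of type $\vec{\lambda}$, and use $j$ itself (restricted to the universe of $\bar B$) as the embedding --- is exactly the skeleton of the paper's proof. But your choice of the witnessing function $f$ is wrong, and the square-root verification collapses because of it. You set $f=j\restriction V_\lambda$ and propose $r=f_+\restriction V_\lambda$ as a square root of $f$. Since $f_+$ extends $f$ and $f$ already has domain $V_\lambda$, your $r$ is just $f$ itself, so the square-root condition you must check is $r_+(r)=f_+(f)=(j\restriction V_\lambda)_+(j\restriction V_\lambda)=f$. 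By \eqref{equation:j_+=j} (Lemma \ref{lemma:j_+}), $(j\restriction V_\lambda)_+(j\restriction V_\lambda)=j(j\restriction V_\lambda)$, which is \emph{not} equal to $j\restriction V_\lambda$: it has critical point $j(\crit{j})>\crit{j}$. You appear to have conflated ``$r_+$ extends $f$ as a function'' (true but irrelevant) with ``$r_+$ evaluated at the point $r$ equals $f$'' (the actual definition of square root, Definition \ref{DefSRESR}), and these are entirely different statements.

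The fix is to take $f=j(j\restriction V_\lambda)$ --- this is where ultraexactness enters essentially, since $j\restriction V_\lambda\in X$ is needed even to form $j(j\restriction V_\lambda)$ --- and then $r=j\restriction V_\lambda$ really is a square root of $f$, precisely by the computation above. With that correction the rest of your argument goes through: for $\bar B\in X\cap\Ce$ of type $\vec{\lambda}$ (obtained by reflecting the counterexample, using $\vec{\mu}\in X$ with $j(\vec{\mu})=\vec{\lambda}$ and the upward absoluteness of the $\Sigma_{n+1}$ definition of $\Ce$ from $X$ to $V_\zeta$), the restriction of $j\restriction V_\lambda$ to the universe of $\bar B$ is an elementary embedding of $\bar B$ into $j(\bar B)$, contradicting that $f=j(j\restriction V_\lambda)$ fails to witness $\sqrt{\ESR}_\Ce(\vec{\lambda})$. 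This is exactly the paper's argument.
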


\begin{proof}
 Assume, towards a contradiction, that there is a class $\Ce$ of structures of the same type such that $\sqrt{\ESR}_\Ce(\vec{\lambda})$ fails for $\Ce$ and $\Ce$ is definable by a $\Sigma_{n+1}$-formula $\varphi(v_0,v_1)$ and a parameter $z\in F$. 
 Since the map $\map{j(j\restriction V_\lambda)}{V_\lambda}{V_\lambda}$ does not witness that $\sqrt{\ESR}_\Ce(\vec{\lambda})$ holds, we can find  a structure $B\in\Ce$ of type $\seq{\lambda_{m+1}}{m<\omega}$ with the property that for every structure $A\in\Ce$ of type $\vec{\lambda}$, there is no square root $r$ of $j(j\restriction V_\lambda)$ such that the restriction of $r$ to the universe of $A$ is  an elementary embedding of $A$ into $B$.  
 Since $\zeta\in C^{(n+1)}$ by the definition of $n$-ultraexactness, and since $B\in  V_\zeta$, we have that, in $V_\zeta$,  $\varphi(B,z)$ holds and  for every structure $A$  of the given signature and  type $\vec{\lambda}$ such that $\varphi(A,z)$ holds, there is no square root $r$ of $j(j\restriction V_\lambda)$ such that the restriction of $r$ to the universe of $A$ is  an elementary embedding of $A$ into $B$. 
 Pick $\mu_0\in X$ with $j(\mu_0)=\lambda_0$ and set $\mu_{m+1}=\lambda_m$ for all $m<\omega$. Then $\vec{\mu}=\seq{\mu_m}{m<\omega}\in X$ and $j(\vec{\mu})=\vec{\lambda}$. 
 The elementarity of $j$ then implies that, in $X$, there is a structure $A$ of the given signature and type $\vec{\lambda}$  such that $\varphi(A,z)$ holds and  for every structure $A_0$  of the given signature and  type $\vec{\mu}$ such that $\varphi(A_0,z)$ holds, there is no square root $r$ of $j\restriction V_\lambda$ such that the restriction of $r$ to the universe of $A_0$ is  an elementary embedding of $A_0$ into $A$. 
Applying $j$, we see that, in $V_\zeta$, $A$ is a structure in $\Ce$ of type $\vec{\lambda}$, $j(A)$ is a structure in $\Ce$ of type $\seq{\lambda_{m+1}}{m<\omega}$ and there is no square root $r$ of $j(j\restriction V_\lambda)$ such that the restriction of $r$ to the universe of $A$ is  an elementary embedding of $A$ into $j(A)$. By the correctness of $V_\zeta$ this holds in $V$ as well.
However, this is a contradiction, as $j\restriction V_\lambda$ is a square root of $j(j\restriction V_\lambda)$  and the restriction of $j\restriction V_\lambda$  to the universe of $A$ is an elementary embedding of $A$ into $j(A)$.  
\end{proof}

\begin{corollary}
\label{coro4.5}
 Let $n>0$ be a natural number and let  $\vec{\lambda}=\seq{\lambda_m}{m<\omega}$ be a strictly increasing sequence of cardinals with supremum $\lambda$.
    \begin{enumerate}
        \item\label{item:SQSRfromultraexact1} If $\lambda_0$ is $n$-ultraexact for $\seq{\lambda_{m+1}}{m<\omega}$, then $\Sigma_{n+1}(\{\lambda\})$-$\sqrt{\ESR}(\vec{\lambda})$ holds.

        \item\label{item:SQSRfromultraexact2} If $\lambda_0$ is parametrically $n$-ultraexact for $\seq{\lambda_{m+1}}{m<\omega}$, then $\Sigma_{n+1}(V_{\lambda_0}\cup\{\lambda\})$-$\sqrt{\ESR}(\vec{\lambda})$ holds. 
    \end{enumerate}
\end{corollary}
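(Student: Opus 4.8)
The plan is to read both parts off Lemma \ref{lemma:UltraexactEmbeddingsSQSR}. Given a class $\Ce$ that is $\Sigma_{n+1}$-definable from the relevant parameters, I would exhibit a single $n$-ultraexact embedding $\map{j}{X}{V_\zeta}$ at $\lambda$ that both (a) satisfies the hypotheses of that lemma, i.e. $\vec{\lambda}\in X$, $\lambda_0\in\ran{j}$ and $j(\lambda_m)=\lambda_{m+1}$ for all $m<\omega$, and (b) fixes all the parameters used to define $\Ce$, so that those parameters lie in $F=\Set{x\in X}{j(x)=x}$. Then Lemma \ref{lemma:UltraexactEmbeddingsSQSR} gives $\Sigma_{n+1}(F)$-$\sqrt{\ESR}(\vec{\lambda})$, and since $\Ce$ is $\Sigma_{n+1}(F)$-definable this yields $\sqrt{\ESR}_\Ce(\vec{\lambda})$; as $\Ce$ is arbitrary among the admissible classes, the corresponding $\Gamma(P)$-$\sqrt{\ESR}(\vec{\lambda})$ follows.

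For \ref{item:SQSRfromultraexact1}, I would fix $\Ce$ definable by a $\Sigma_{n+1}$-formula with the single parameter $\lambda$ and apply the assumption that $\lambda_0$ is $n$-ultraexact for $\seq{\lambda_{m+1}}{m<\omega}$ with the witness parameter $A=\vec{\lambda}\in V_{\lambda+1}$, obtaining an $n$-ultraexact embedding $\map{j}{X}{V_\zeta}$ at $\lambda$ with $\vec{\lambda}\in\ran{j}$ and $j(\lambda_m)=\lambda_{m+1}$ for all $m<\omega$. Writing $\vec{\lambda}=j(\vec{\mu})$ with $\vec{\mu}\in X$ and using $j(m)=m$, the equations $j(\lambda_m)=\lambda_{m+1}$ force $\vec{\mu}(m+1)=\lambda_m$ for all $m$, so $\vec{\lambda}$ is definable from $\vec{\mu}$ and hence $\vec{\lambda}\in X$; moreover $j(\vec{\mu}(0))=\lambda_0$, so $\lambda_0\in\ran{j}$. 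Thus $j$ meets the hypotheses of Lemma \ref{lemma:UltraexactEmbeddingsSQSR}, and since $\lambda\in X$ and $j(\lambda)=\lambda$ hold by the very definition of an $n$-exact embedding at $\lambda$, we have $\lambda\in F$, which is all that is needed.

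For \ref{item:SQSRfromultraexact2}, I would first absorb the finitely many parameters from $V_{\lambda_0}$ into a single $p\in V_{\lambda_0}$, fix $\Ce$ defined from $\lambda$ and $p$, and apply the \emph{parametric} hypothesis with a parameter $A\in V_{\lambda+1}$ coding both $\vec{\lambda}$ and $p$ (a single such code fits into $V_{\lambda+1}$ since $\vec{\lambda}$ already has rank $\lambda$ and $\rank{p}<\lambda$). The resulting embedding $\map{j}{X}{V_\zeta}$ now additionally satisfies $j(\crit{j})=\lambda_0$; in particular $\lambda_0\in\ran{j}$, and $\vec{\lambda}\in X$ and $j(\lambda_m)=\lambda_{m+1}$ follow exactly as in the previous paragraph. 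Finally $j$ fixes $p$: decoding $p\in\ran{j}$ as $p=j(\bar p)$, elementarity gives $j(\rank{\bar p})=\rank{p}<\lambda_0=j(\crit{j})$, hence $\rank{\bar p}<\crit{j}$, so $\bar p\in V_{\crit{j}}$, $j(\bar p)=\bar p$, and $p=\bar p$. Since $p\in V_{\lambda_0}\subseteq V_\lambda\subseteq X$ this puts $p$ (and $\lambda$) into $F$, so $\Ce$ is $\Sigma_{n+1}(F)$-definable and Lemma \ref{lemma:UltraexactEmbeddingsSQSR} again concludes.

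I expect no genuine obstacle here: the mathematical content is entirely carried by Lemma \ref{lemma:UltraexactEmbeddingsSQSR}, and what remains is the bookkeeping that packages $\vec{\lambda}$ (and, in the second part, $p$) into one element of $V_{\lambda+1}$ and then pulls it back out of $\ran{j}$, together with the elementary rank computation that forces $j$ to fix a small parameter in the parametric case. The only conceptual point worth stressing is the asymmetry between the two parts — the bare ultraexactness hypothesis only controls $j$ on $V_\lambda\cup\{\lambda\}$ and on the prescribed critical sequence, whereas the parametric clause $j(\crit{j})=\lambda_0$ is precisely what lets one drag arbitrary parameters from $V_{\lambda_0}$ into the fixed-point set $F$.
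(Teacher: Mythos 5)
Your proof is correct and follows essentially the same route as the paper's: both parts are read off Lemma \ref{lemma:UltraexactEmbeddingsSQSR} by producing, for each admissible class, an $n$-ultraexact embedding whose fixed-point set contains the defining parameters. The details you supply (the shift argument giving $\vec{\lambda}\in X$ and $\lambda_0\in\ran{j}$, and the rank computation showing that the parametric clause forces $j$ to fix parameters from $V_{\lambda_0}$ once they are placed in $\ran{j}$) are exactly the ones the paper leaves implicit.
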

\begin{proof}
    \ref{item:SQSRfromultraexact1} If the cardinal $\lambda_0$ is $n$-ultraexact for $\seq{\lambda_{m+1}}{m<\omega}$, then there is an $n$-ultraexact embedding $\map{j}{X}{V_\zeta}$ at $\lambda$ with $\lambda_0,\vec{\lambda}\in\ran{j}$  and $j(\lambda_m)=\lambda_{m+1}$ for all $m<\omega$. Then $\vec{\lambda}\in X$ and  Lemma \ref{lemma:UltraexactEmbeddingsSQSR} shows that $\Sigma_{n+1}(\{\lambda\})$-$\sqrt{\ESR}(\vec{\lambda})$ holds. 
    
    \ref{item:SQSRfromultraexact2} Now, assume $\lambda_0$ is parametrically $n$-ultraexact for $\seq{\lambda_{m+1}}{m<\omega}$ and $z\in V_{\lambda_0}$. Then we find an $n$-ultraexact embedding $\map{j}{X}{V_\zeta}$ at $\lambda$ with $\vec{\lambda},z\in\ran{j}$,  $j(\crit{j})=\lambda_0$  and $j(\lambda_m)=\lambda_{m+1}$ for all $m<\omega$. Then $\vec{\lambda}\in X$, $j(z)=z$ and  Lemma \ref{lemma:UltraexactEmbeddingsSQSR} shows that $\Sigma_{n+1}(\{\lambda,z\})$-$\sqrt{\ESR}(\vec{\lambda})$ holds.  
\end{proof}

We shall prove next the converse to (i) of \ref{coro4.5}.
 In the following, we let $\calL$ denote the first-order language that extends the language of set theory by a constant symbol $\dot{c}$, a unary function symbol $\dot{f}$, 
 %a unary predicate symbol, 
  a binary predicate symbol $\dot{E}$, and a unary predicate symbol $\dot{P}_m$ for every $m<\omega$.  
 Given a natural number $n>0$,  we define $\calU_n$ to be the class of $\calL$-structures $A$ with the property that there exists a strictly increasing sequence $\vec{\lambda}=\seq{\lambda_m}{m<\omega}$ of cardinals with supremum $\lambda$ such that the following statements hold: 
 \begin{itemize}
     \item  The reduct of $A$ to the language of set theory is equal to $\langle V_\lambda,\in\rangle$.  

      \item There is $\lambda<\zeta\in C^{(n)}$, an elementary submodel $X$ of $V_\zeta$ with $V_\lambda\cup\{\lambda,\dot{f}^A\}\subseteq X$ and a bijection $\map{\tau}{X}{V_\lambda}$ with $\tau(\lambda)=\langle 0,0\rangle$, $\tau(x)=\langle 1,x\rangle$ for all $x\in V_\lambda$ and 
      \begin{equation}\label{equation:Ecodes}
          x\in y ~ \Longleftrightarrow ~ \tau(x) ~ \dot{E}^A ~ \tau(y)
      \end{equation} 
      for all $x,y\in X$.  
 \end{itemize}
 It is easy to check that the class $\calU_n$ is definable by a $\Sigma_{n+1}$-formula without parameters.

 \begin{lemma}
 \label{sqrtimpliesultraexact}
   If $\vec{\lambda}=\seq{\lambda_m}{m<\omega}$    is a strictly increasing sequence of cardinals  such that $\sqrt{\ESR}_{\calU_n}(\vec{\lambda})$ holds, then $\lambda_0$ is $n$-ultraexact for $\seq{\lambda_{m+1}}{m<\omega}$. 
 \end{lemma}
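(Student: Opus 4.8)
The plan is to reverse-engineer an $n$-ultraexact embedding at $\lambda$ from the square root reflection principle applied to the class $\calU_n$, mimicking the standard argument that recovers an embedding witnessing a large-cardinal property from the corresponding structural reflection principle. First I would fix the function $\map{f}{V_\lambda}{V_\lambda}$ witnessing $\sqrt{\ESR}_{\calU_n}(\vec\lambda)$. The goal is, given an arbitrary $A\in V_{\lambda+1}$ (to be captured in the range of the embedding) together with an arbitrary target elementarity level, to produce a structure $B\in\calU_n$ of type $\seq{\lambda_{m+1}}{m<\omega}$ encoding a sufficiently correct $V_\zeta$ and a suitable elementary submodel of it, apply the principle to get $A'\in\calU_n$ of type $\vec\lambda$ and a square root $r$ of $f$ whose restriction to the universe of $A'$ elementarily embeds $A'$ into $B$, and then decode this data into an $n$-ultraexact embedding $\map{j}{Y}{V_\zeta}$ at $\lambda$ with $A\in\ran j$, $j\restriction V_\lambda=r_+\restriction V_\lambda$ modulo the encoding, $j(\lambda_m)=\lambda_{m+1}$, and crucially $j\restriction V_\lambda\in Y$ because the encoding of $\calU_n$-structures was arranged to make the domain an elementary submodel $X$ whose bijection $\tau$ with $V_\lambda$ is named by $\dot f$, so $r$ (hence $r\restriction V_\lambda$, hence essentially $j\restriction V_\lambda$) is coded into the structure and therefore lands in the collapsed domain.

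The key steps, in order: (1) Given $A\in V_{\lambda+1}$ and $n$, pick $\lambda<\zeta\in C^{(n+1)}$ with $A\in V_\zeta$ and an elementary submodel $X_0\prec V_\zeta$ of size $\lambda$ with $V_\lambda\cup\{\lambda,A\}\subseteq X_0$; build the $\calL$-structure $B$ whose set-theoretic reduct is $\langle V_{\lambda'},\in\rangle$ for the appropriate $\lambda'$ with critical-type data $\seq{\lambda_{m+1}}{m<\omega}$, whose $\dot c^B$, $\dot P_m^B$ mark $\lambda_{m+1}$, whose $\dot E^B$ codes membership on $X_1$ (a suitable $\prec V_{\zeta'}$) via a bijection $\tau_1$ with $V_{\lambda'}$, and whose $\dot f^B=\tau_1$; so that $B\in\calU_n$ of type $\seq{\lambda_{m+1}}{m<\omega}$. (2) Apply $\sqrt{\ESR}_{\calU_n}(\vec\lambda)$ to get $A'\in\calU_n$ of type $\vec\lambda$ and a square root $r$ of $f$ with $r\restriction(\text{univ }A')$ an elementary $\calL$-embedding $A'\to B$. (3) Unpack: $A'$ being in $\calU_n$ of type $\vec\lambda$ gives an elementary submodel $X'\prec V_{\zeta'}$ with $V_\lambda\cup\{\lambda,\dot f^{A'}\}\subseteq X'$ and bijection $\tau'\colon X'\to V_\lambda$; the $\calL$-embedding induced by $r$ transports into an elementary embedding $\map{j}{X'}{X_1}\subseteq V_{\zeta'}$ via the $\dot E$-coding, with $j(\lambda)=\lambda$, $j$ sending the critical-type markers of $A'$ to those of $B$ so $j(\lambda_m)=\lambda_{m+1}$, and $A\in\ran j$ because $A$ was put into $X_1$ and hence coded in $B$. (4) Check $j\restriction V_\lambda\ne\id$: if $r\restriction V_\lambda$ were the identity then $f=r_+(r)$ would force an I1-type embedding of $V_{\lambda+1}$ contradicting Kunen, or more directly $j$ would be the identity on $V_\lambda$, making $j\restriction V_{\lambda+2}$ a nontrivial self-embedding — contradiction; one massages this as in Lemma~\ref{lemma:Equivalentexact}. (5) Check $j\restriction V_\lambda\in X'$: here $\dot f^{A'}$ names $\tau'$, and $r_+$ applied to the coded data recovers $r\restriction V_\lambda$ inside the structure; since $r=f$-square-root is fixed and $X'$ contains $\dot f^{A'}$ and is correct, $j\restriction V_\lambda$ (which is $r\restriction V_\lambda$ up to the coding bijections, all in $X'$) is an element of $X'$. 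Finally, apply Lemma~\ref{lemma:EquivalentUltra} (equivalence of the $n$ parameters) to upgrade to a genuine $n$-ultraexact embedding at $\lambda$ with the prescribed action on $\vec\lambda$.

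The main obstacle I expect is step (5): arranging the encoding of $\calU_n$ so that the square root $r$ of the fixed function $f$ is literally reconstructible inside the domain structure $A'$, thereby guaranteeing $j\restriction V_\lambda\in X'$. This is exactly the point where the ``square root'' formulation does its work — one needs $\dot f^{A'}$ (equivalently $\tau'$) and the map $r_+$ to conspire so that $r\restriction V_\lambda=r_+\restriction V_\lambda$ is coded by an element of $A'$, and checking that the $\Sigma_{n+1}$-definability of $\calU_n$ together with the correctness $\zeta'\in C^{(n)}$ (and $\zeta\in C^{(n+1)}$) is enough correctness to push all ``coded data belongs to $X'$'' conclusions through. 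A secondary but routine obstacle is bookkeeping the three bijections ($\tau$ for $A'$, $\tau_1$ for $B$, and $r$ relating their universes $V_\lambda$ and $V_{\lambda'}$) so that the induced map on the decoded elementary submodels is genuinely elementary and has the claimed critical sequence behavior; this is mechanical but needs care with ranks, since the ``type $\vec\lambda$'' condition pins down $\rank{\dot P_m^{A'}}=\lambda_m$ and one must confirm $j$ sends $\lambda_m\mapsto\lambda_{m+1}$ accordingly.
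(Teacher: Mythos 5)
Your overall architecture matches the paper's: fix the witnessing function $f$, package the target data (a set $D\in V_{\lambda+1}$, an elementary submodel $Y\prec V_\zeta$ with $\zeta\in C^{(n+1)}$, and a coding bijection $\pi$) into a structure $B\in\calU_n$ of type $\seq{\lambda_{m+1}}{m<\omega}$, reflect via $\sqrt{\ESR}_{\calU_n}(\vec{\lambda})$ to get $A\in\calU_n$ of type $\vec{\lambda}$ and a square root $r$ of $f$, and decode $j=\pi^{-1}\circ r\circ\tau$. You also correctly flag the crux, namely getting $j\restriction V_\lambda$ into the decoded domain. But your execution of that crux does not work. You set $\dot{f}^B=\tau_1$ and claim that ``$\dot{f}^{A'}$ names $\tau'$,'' so that $j\restriction V_\lambda$ lies in $X'$ ``up to the coding bijections, all in $X'$.'' This fails on two counts: the coding bijection $\tau'\colon X'\to V_\lambda$ is external to the structure and is \emph{not} an element of $X'$ (it has rank roughly $\eta$, far above $V_\zeta$'s members of $X'$), and $\dot{f}$ is a unary function symbol interpreted on the universe $V_\lambda$, so its interpretation cannot be a bijection with domain $X_1$. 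The mechanism that actually works is to set $\dot{f}^B=f$ itself. Then elementarity of $r$ gives $r(\dot{f}^{A}\restriction V_{\lambda_m})=\dot{f}^B\restriction V_{\lambda_{m+1}}=f\restriction V_{\lambda_{m+1}}$, while the square-root identity gives $r(r\restriction V_{\lambda_m})=r_+(r)\restriction V_{\lambda_{m+1}}=f\restriction V_{\lambda_{m+1}}$; since $j\restriction V_\lambda=r\restriction V_\lambda$ and $r$ is injective, this forces $j\restriction V_{\lambda_m}=\dot{f}^{A}\restriction V_{\lambda_m}$ for all $m$, hence $j\restriction V_\lambda=\dot{f}^{A}$, which belongs to $X'$ precisely because the definition of $\calU_n$ demands $\dot{f}^{A}\in X'$. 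With $\dot{f}^B=\tau_1$ this chain is severed and nothing ties $\dot{f}^{A}$ to $r$.

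A second problem is your closing appeal to Lemma \ref{lemma:EquivalentUltra} to ``upgrade \ldots with the prescribed action on $\vec{\lambda}$.'' That lemma cannot preserve a prescribed critical sequence: the paper remarks after Lemma \ref{lemma:Equivalentexact} that the minimal-counterexample argument breaks down once the sequence is a parameter, since the least counterexample need no longer be fixed by the embedding. Fortunately no upgrade is needed: choosing $\zeta\in C^{(n+1)}$ at the outset and using that the definition of $\calU_n$ already supplies $\eta\in C^{(n)}$ for the reflected structure makes the decoded $j$ an $n$-ultraexact embedding directly. Finally, the simplest reason that $j\restriction\lambda\neq\id_\lambda$ is that the type conditions force $j(\lambda_m)=r(\lambda_m)=\lambda_{m+1}$; the detour through Kunen's inconsistency is unnecessary (and as stated needs $V_{\lambda+2}$ in the domain, which is not available here).
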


 \begin{proof}
  Set $\lambda=\sup_{m<\omega}\lambda_m$ and let $\map{f}{V_\lambda}{V_\lambda}$ be a function witnessing that $\sqrt{\ESR}_{\calU_n}(\vec{\lambda})$  holds. 
  Fix $D\in V_{\lambda+1}$. Now, pick $\lambda<\zeta\in C^{(n+1)}$, an elementary submodel $Y$ of $V_\zeta$ of cardinality $\lambda$ with $V_\lambda\cup\{\lambda,D,f\}\subseteq Y$, and a bijection  $\map{\pi}{Y}{V_\lambda}$ with $\pi(\lambda)=\langle 0,0\rangle$,  $\pi(x)=\langle 1,x\rangle$ for all $x\in V_\lambda$. 
  Then there is an $\calL$-structure $B$ extending $\langle V_\lambda,\in\rangle$ with $\dot{c}^B=\pi(D)$, $\dot{f}^B=f$, $\dot{E}^B=\Set{\langle \pi(x),\pi(y)\rangle}{x,y\in Y, ~ x\in y}$ and $\dot{P}_m^B=\lambda_{m+1}$ for all $m<\omega$. 
  It follows that $B$ is an element of $\calU_n$ of type $\seq{\lambda_{m+1}}{m<\omega}$. Hence, there is a structure $A$ in $\calU_n$ of type $\seq{\lambda_m}{m<\omega}$ and a square root $\map{r}{V_\lambda}{V_\lambda}$ of $f$ that is an elementary embedding of $A$ into $B$. Observe that in particular $r$ maps $\dot c^A$ to $\dot c^B$. 
  
  Thus $r$ is an I3-embedding with $r(\lambda_m)=\lambda_{m+1}$ for all $m<\omega$. Also, we have that $r(\langle m,x\rangle)=\langle m,r(x)\rangle$ holds for all $x\in V_\lambda$ and $m<\omega$. 
  Pick a cardinal $\lambda<\eta\in C^{(n)}$,  let $X$ be an elementary submodel of $V_\eta$ of size $\lambda$, with $V_\lambda\cup\{\lambda,\dot{f}^A\}\subseteq X$,  and let $\map{\tau}{X}{V_\lambda}$ be a bijection such that $\tau(\lambda)=\langle 0,0\rangle$, $\tau(x)=\langle 1,x\rangle$ for all $x\in V_\lambda$, and \eqref{equation:Ecodes} holds for all $x,y\in X$. Now define $$\map{j ~ := ~ \pi^{{-}1}\circ r\circ \tau}{X}{V_\zeta}.$$
   \begin{figure}[h]
    \begin{center}
    \begin{tikzcd} 
V_\lambda \arrow[rr, "\pi^{-1}"]                                        &  & Y \arrow[rr, "{\rm id}"] &  & V_\zeta \\
                                                                          &  &                      &  &              \\
V_\lambda \arrow[uu, "r"]  \arrow[rr, leftarrow,  "\tau"] &  & X  \arrow[rruu, "j"]            &  & 
\end{tikzcd}      
    \end{center}
    \end{figure}

    \noindent
We claim that $j$ is an $n$-ultraexact embedding at $\lambda$. First note that, letting $x$ be such that $\tau (x)=\dot{c}^A$, we have that 
 $$j(x)=  \pi^{-1}(r(\dot{c}^A)))=\pi^{-1}(\dot{c}^B))=D$$
 and so $D\in\ran{j}$. Moreover, $$j(\lambda) ~ = ~ (\pi^{{-}1}\circ r\circ \tau)(\lambda) ~ = ~ (\pi^{{-}1}\circ r)(\langle 0,0\rangle) ~ = ~ \pi^{{-}1}(\langle 0,0\rangle) ~ = ~ \lambda$$ and 
  \begin{equation}\label{equation:j+rAgreeVlambda}
      j(x) ~ = ~ (\pi^{{-}1}\circ r\circ \tau)(x) ~ = ~ (\pi^{{-}1}\circ r)(\langle 1,x\rangle ) ~ = ~ \pi^{{-}1}(\langle 1,r(x)\rangle) ~ = ~ r(x)
  \end{equation} holds for all $x\in V_\lambda$. In particular, $j(\lambda_m)=\lambda_{m+1}$ for all $m<\omega$. Further, $j$ is an elementary embedding:  for every $a\in X$ and every formula $\varphi(x)$ in the language of set theory, 
  $$X\!\models \varphi (x) \quad \mbox{iff} \quad \langle V_\lambda , \dot{E}^A\rangle \!\models \varphi (\tau (a))\quad \mbox{iff} \quad \langle V_\lambda , \dot{E}^B\rangle \models \!\varphi (r(\tau(a)))\quad \mbox{iff}\quad Y\!\models \varphi (\pi^{-1}(r(\tau(a))))$$
  and the latter holds if and only if $V_\zeta \models \varphi(j(a))$.

  \begin{claim*}
      $j\restriction V_\lambda=\dot{f}^A$. 
  \end{claim*}

  \begin{proof}[Proof of the Claim]
    Assume, towards a contradiction, that the  claim fails and pick $m<\omega$ with   $j\restriction V_{\lambda_m}\neq\dot{f}^A\restriction V_{\lambda_m}$. Elementarity and \eqref{equation:j+rAgreeVlambda} then imply that $$r(r\restriction V_{\lambda_m}) ~ = ~ r(j\restriction V_{\lambda_m}) ~ \neq ~ r(\dot{f}^A\restriction V_{\lambda_m}) ~ = ~ \dot{f}^B\restriction V_{\lambda_{m+1}} ~ = ~ f\restriction V_{\lambda_{m+1}}.$$
    Since the fact that $r$ is a square root of $f$ implies that $$r(r\restriction V_{\lambda_m}) ~ = ~ r_+(r)\restriction V_{\lambda_{m+1}} ~ = ~ f\restriction V_{\lambda_{m+1}},$$ we derived a contradiction.  
  \end{proof}

   Hence, we have $j\restriction V_\lambda = \dot{f}^A\in X$, showing that $j$ is an $n$-ultraexact embedding at $\lambda$. Since $D$ was arbitrary and belongs to the range of $j$ and since $j(\lambda_m) = r(\lambda_m) = \lambda_{m+1}$ for all $m$, it follows that $\lambda_0$ is $n$-ultraexact for $\langle \lambda_{m+1} | m < \omega\rangle$, as desired.
 \end{proof}

\begin{corollary}
\label{coro4.6}
 Let $n>0$ be a natural number and let  $\vec{\lambda}=\seq{\lambda_m}{m<\omega}$ be a strictly increasing sequence of cardinals. The cardinal  $\lambda_0$ is $n$-ultraexact for $\seq{\lambda_{m+1}}{m<\omega}$ if and only if $\Sigma_{n+1}$-$\sqrt{\ESR}(\vec{\lambda})$ holds.
 \end{corollary}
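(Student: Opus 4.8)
The plan is to obtain the equivalence by combining the two results established immediately above, with only a small amount of routine parameter bookkeeping in between. Write $\lambda := \sup_{m<\omega}\lambda_m$.

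For the forward direction, suppose $\lambda_0$ is $n$-ultraexact for $\seq{\lambda_{m+1}}{m<\omega}$. I would invoke Corollary~\ref{coro4.5}\ref{item:SQSRfromultraexact1}, which yields that $\Sigma_{n+1}(\{\lambda\})$-$\sqrt{\ESR}(\vec{\lambda})$ holds. Since every $\Sigma_{n+1}$-definable parameter-free class of structures of the same type is \emph{a fortiori} $\Sigma_{n+1}$-definable allowing $\lambda$ as a parameter, the parameter-free principle $\Sigma_{n+1}$-$\sqrt{\ESR}(\vec{\lambda})$ follows as a special case. (Equivalently, one could re-run the argument of Lemma~\ref{lemma:UltraexactEmbeddingsSQSR} verbatim, restricting attention to parameter-free $\Sigma_{n+1}$-classes.)

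For the converse, suppose $\Sigma_{n+1}$-$\sqrt{\ESR}(\vec{\lambda})$ holds. The key observation is that the class $\calU_n$ introduced above is $\Sigma_{n+1}$-definable \emph{without parameters}, so the hypothesis applies to it and gives $\sqrt{\ESR}_{\calU_n}(\vec{\lambda})$. Feeding this into Lemma~\ref{sqrtimpliesultraexact} immediately yields that $\lambda_0$ is $n$-ultraexact for $\seq{\lambda_{m+1}}{m<\omega}$, completing the proof.

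There is no genuine obstacle left here: the substantive work — verifying that a square root $r$ of the witnessing function $f$, transported through the coding bijections $\tau$ and $\pi$, defines a bona fide $n$-ultraexact embedding $j$ at $\lambda$ with $j\restriction V_\lambda\in X$ and the prescribed action on $\vec{\lambda}$ — was already carried out in Lemma~\ref{sqrtimpliesultraexact}, and the analogue of Theorem~\ref{theorem:ESRcorrespondence}(i) for square-root reflection was handled in Corollary~\ref{coro4.5}. The only point requiring care is that the two sides are phrased with compatible parameter sets (parameter-free $\Sigma_{n+1}$ on the reflection side, matching the parameter-free definability of $\calU_n$), which is precisely why $\calU_n$ was arranged to be definable without parameters.
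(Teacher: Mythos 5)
Your proposal is correct and is exactly the argument the paper intends: the corollary is stated without proof precisely because it follows by combining Corollary~\ref{coro4.5}(i) (noting that parameter-free $\Sigma_{n+1}$-definability is a special case of $\Sigma_{n+1}(\{\lambda\})$-definability) with Lemma~\ref{sqrtimpliesultraexact} applied to the parameter-free $\Sigma_{n+1}$-definable class $\calU_n$. No gaps.
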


In view of the corollary above, we shall refer to square root Exact Structural Reflection also as  \emph{Ultraexact Structural Reflection}.

%%%%%%%%%%%%%%%%%%%%%%%%%%%%%%%%%
%%%%%%%%%%%%%%%%%%%%%%%%%%%%%%%%%

\section{Square Root Reflection and Choiceless Cardinals}\label{SectChoiceless}

As we already observed (Corollary \ref{corollary:noimplication} above), the existence of an ${\rm I}0$ embedding does not imply  the existence of a $2$-exact embedding, the reason being that the former is consistent with $V=\HOD$, while the latter is not. Hence an ${\rm I}0$ embedding does not imply Ultraexact Structural Reflection. In contrast, we will next show, in ZF, that Ultraexact Structural Reflection does follow from the existence of some large cardinals beyond the Axiom of Choice, such as Reinhardt, super-Reinhardt, or Berkeley cardinals.

\subsection{Ultraexact Structural Reflection and Reinhardt cardinals}

We will show that a Reinhardt cardinal $\lambda$ is parametrically $n$-ultraexact for every $n$, and moreover Ultraexact Structural Reflection for its critical sequence holds for all classes of structures that are definable with parameters in $V_\kappa$. Since Reinhardt and super-Reinhardt cardinals are given by proper class elementary embeddings, which need not be definable, we shall work in ${\rm NBG}-{\rm AC}$, von Neumann-Bernays-G\" odel class theory without the Axiom of Choice.

\begin{definition}[${\rm NBG}-{\rm AC}$]
A cardinal  is \emph{Reinhardt} if it is the critical point of a non-trivial elementary embedding $\map{j}{V}{V}$. 
 \end{definition}

 Note that the assertion that ``\emph{$j$ is elementary}" in the definition above can be formulated as ``\emph{$j$ is cofinal and $\Delta_0$-elementary}".

 \begin{theorem}[${\rm NBG}-{\rm AC}$] \label{theorem:ultraexactfromReinhardt}
If
$\map{j}{V}{V}$ is a non-trivial elementary embedding with critical sequence $\seq{\lambda_m}{m < \omega}$, then $\lambda_0$ is parametrically $n$-ultraexact for $\seq{\lambda_{m+1}}{m< \omega}$, for all $n>0$. 
\end{theorem}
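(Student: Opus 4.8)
The plan is to show that a Reinhardt embedding $\map{j}{V}{V}$ restricts, on suitable elementary submodels, to maps witnessing $n$-ultraexactness at $\lambda = \sup_m \lambda_m$. Given $n > 0$, a target structure $A \in V_{\lambda+1}$, and working towards showing $\lambda_0$ is parametrically $n$-ultraexact for $\seq{\lambda_{m+1}}{m<\omega}$, I would first fix some $\zeta \in C^{(n+1)}$ large enough that $A \in V_\zeta$ and with $j(\zeta) = \zeta$ (such $\zeta$ exist in abundance since $j$ is cofinal and $C^{(n+1)}$ is a proper class fixed pointwise on a club by $j$; one can take $\zeta$ a fixed point of $j$ above $\rank{A}$ in $C^{(n+1)}$). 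Then $\map{j\restriction V_\zeta}{V_\zeta}{V_\zeta}$ is elementary. The idea is to choose an elementary submodel $X \prec V_\zeta$ of size $\lambda$ that is closed enough to be mapped nicely by $j$: specifically, build $X$ so that $V_\lambda \cup \{\lambda, A, j\restriction V_\lambda\} \subseteq X$ and so that $j[X] \subseteq X$, or more precisely so that $j(X) = X$. The cleanest route is to take $X$ of the form $\bigcup_{m<\omega} X_m$ where $X_0 \prec V_\zeta$ has size $\lambda$ with $V_{\lambda_0} \cup \{A, \lambda, j\restriction V_\lambda, \ldots\} \subseteq X_0$ and $X_{m+1} = j[X_m]$ — but this requires care since $j$ is a class, not a set. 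Alternatively, and more robustly, one takes $X$ to be the closure under $j$ and $j^{-1}$-definable Skolem functions of $V_\lambda \cup \{\lambda, A\}$; since $V_\lambda$ is already $j$-``invariant'' as a set (though not pointwise), and $j(\lambda) = \lambda$, one arranges $j(X) = X$.

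Once $X \prec V_\zeta$ with $V_\lambda \cup \{\lambda\} \subseteq X$, $|X| = \lambda$, $A \in X$, $j\restriction V_\lambda \in X$, and $j(X) = X$ (hence $j(\zeta) = \zeta$ gives $j\restriction X : X \to X$ elementary), I would set $i = j\restriction X$. Then $i : X \to V_\zeta$ is elementary (its range is $X \subseteq V_\zeta$), $i(\lambda) = \lambda$, $i\restriction\lambda = j\restriction\lambda \neq \id_\lambda$ since $\lambda > \crit{j} = \lambda_0$, and $i\restriction V_\lambda = j\restriction V_\lambda \in X$, so $i$ is an $n$-ultraexact embedding at $\lambda$ (using that $\zeta \in C^{(n)} \cap C^{(n+1)}$ for the definitional requirements). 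The critical point of $i = j\restriction X$ is $\lambda_0$, and $i(\crit i) = j(\lambda_0) = \lambda_1$; to get the \emph{parametric} clause $i(\crit i) = \kappa$ with $\kappa = \lambda_0$, I need to instead reindex: one shows $\lambda_0$ is parametrically $n$-ultraexact for $\seq{\lambda_{m+1}}{m<\omega}$ by exhibiting embeddings $i$ with $\crit i$ below $\lambda_0$, $i(\crit i) = \lambda_0$, and $i(\lambda_{m+1}) = \lambda_{m+2}$ — wait, more precisely one applies the argument to the embedding $j$ but reads off the critical sequence $\seq{\lambda_m}{m<\omega}$ itself: $\crit{j} = \lambda_0$, $j(\lambda_m) = \lambda_{m+1}$, and since $j(\crit j) = j(\lambda_0) = \lambda_1 \neq \lambda_0$, the parametric condition as literally stated (with $\kappa = \lambda_0$) would need $j$ composed appropriately. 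The correct reading: apply the construction to get, for the sequence $\seq{\lambda_{m+1}}{m<\omega}$ with supremum $\lambda$, an embedding $i$ with $i(\crit i) = \lambda_0 = \kappa$ and $i(\lambda_{m+1}) = \lambda_{m+2}$; this is achieved by using $j$ restricted to an $X$ containing $\lambda_0$ in its range — take $\mu_0 = \crit j = \lambda_0$ itself, observing $j(\lambda_0) = \lambda_1$, so one actually needs a ``lower'' critical point, obtained by taking $X$ containing $\lambda_0$ and noting $j\restriction X$ has the same critical point $\lambda_0$ which maps to $\lambda_1$; to hit $\lambda_0$ as the \emph{image} of the critical point, I use elementarity of $j$ to pull back: $j(\map{j\restriction X}{X}{X})$, reflected, gives an embedding with critical point $< \lambda_0$ whose image is $\lambda_0$. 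Concretely, since ``$\lambda_0$ is $n$-ultraexact for $\seq{\lambda_{m+1}}{m<\omega}$ with a witnessing embedding having $\crit{} $ mapped to $\lambda_0$'' is a statement $\varphi(\lambda_0, \lambda_1, \seq{\lambda_{m+2}}{m<\omega})$ that holds by applying $j$ to the witness for $\crit j$, elementarity transfers it down.

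The main obstacle I anticipate is the bookkeeping around $j$ being a proper class: forming $X$ with $j(X) = X$ and ensuring $j\restriction V_\lambda \in X$ while $X$ has size $\lambda$ must be done in ${\rm NBG} - {\rm AC}$ without choosing well-orderings, so I would build $X$ as an increasing $\omega$-union $X = \bigcup_m X_m$ using definable Skolem functions for $V_\zeta$ (which exist since $\zeta$ can be taken in $C^{(n+1)}$, or one uses the canonical definable Skolem functions relative to a fixed definable well-order of $V_\zeta$ that need not be globally choice), starting from $X_0 \supseteq V_{\lambda_0} \cup \{\lambda, A, j\restriction V_\lambda\}$, Skolem-closing, applying $j$, Skolem-closing again, and iterating; the fixed point $X$ then satisfies $j[X] \subseteq X$ and $j(X) \supseteq j[X] \cup \ldots$, and a separate argument (using that $\zeta$ is a fixed point of $j$ and that $X$ is the unique such closure, hence definable from its generators which are themselves fixed or mapped into the closure) yields $j(X) = X$. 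The second delicate point is verifying the precise definitional requirements of Definition \ref{defultraexact}: the ambient cardinals $\eta \in C^{(n)}$ (for $X \prec V_\eta$) and $\zeta \in C^{(n+1)}$ — but here $X \prec V_\zeta$ with $\zeta \in C^{(n+1)} \subseteq C^{(n)}$ suffices, so the single choice of $\zeta \in C^{(n+1)}$ covers both roles. For the ``parameters in $V_\kappa$'' strengthening mentioned in the surrounding text, one additionally arranges $z \in X$ with $j(z) = z$ for any prescribed $z \in V_{\crit j}$, which is automatic since $j$ fixes $V_{\crit j}$ pointwise.
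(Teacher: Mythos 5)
Your high-level strategy --- use $j$ itself, restricted to a suitable domain, as the witnessing embedding, and recover the parametric clause by first establishing the property for $\lambda_1$ and the doubly shifted sequence and then pulling it back through the elementarity of $j$ --- is the same as the paper's, but the execution has two genuine gaps. First, the definition of (parametric) $n$-ultraexactness requires $A\in\ran{k}$ for the witnessing embedding $k$, not $A\in\dom{k}$. Arranging $A\in X$ and setting $i=j\restriction X$ only places $A$ in the domain; since $\crit{j}\notin\ran{j}$, the set $j[X]$ is a proper subset of $X$ even when $j(X)=X$, and there is no reason an arbitrary $A\in V_{\lambda+1}$ should lie in $j[X]$. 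This is exactly the point where the quantifier ``for every $A\in V_{\lambda+1}$'' has to be earned: the paper argues by contradiction, using that $j[V_\zeta]$ is an elementary submodel of the sufficiently correct $V_\zeta$ containing $\seq{\lambda_{m+1}}{m<\omega}$, so that if a counterexample $A$ existed at all, one could be found inside $j[V_\zeta]=\ran{j\restriction V_\zeta}$, where $j\restriction V_\zeta$ itself witnesses the property and yields a contradiction. Your proposal never produces an embedding whose \emph{range} contains a prescribed $A$, so the universally quantified statement for $\lambda_1$ is not established and the final elementarity pull-back has nothing to apply to.

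Second, the construction of a size-$\lambda$ elementary submodel $X\prec V_\zeta$ with $j(X)=X$ is both unnecessary and, in ${\rm NBG}-{\rm AC}$, genuinely problematic: the Skolem-hull iteration you describe needs definable Skolem functions, hence a well-ordering of $V_\zeta$, which need not exist without Choice, and the downward L\"owenheim--Skolem theorem is likewise unavailable (nor does the definition of an ultraexact embedding require $\betrag{X}=\lambda$ in the first place). The observation that unlocks the theorem is that in the choiceless setting the domain may be taken to be \emph{transitive}: the obstruction recorded after Definition \ref{defexact} (that $X$ cannot be transitive) rests on Kunen's inconsistency, which uses the Axiom of Choice. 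The paper simply fixes $\zeta\in C^{(n+2)}$ with $j(\zeta)=\zeta$ and takes $X=V_\zeta$, so that $\map{j\restriction V_\zeta}{V_\zeta}{V_\zeta}$ is already an $n$-ultraexact embedding at $\lambda$ with $j\restriction V_\lambda\in V_\zeta$, and all of the bookkeeping you flag as the ``main obstacle'' disappears.
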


\begin{proof}
 Pick $\zeta\in C^{(n+2)}$ greater than the supremum of the critical sequence, and such that $j(\zeta)=\zeta$ . Then, the map $\map{j\restriction V_\zeta}{V_\zeta}{V_\zeta}$ is an $n$-ultraexact embedding. 

Now, we can argue as in the last part of the proof of Theorem \ref{theorem:UltraexactFromI0}. Namely, 
since $j[V_\zeta]$ is an elementary submodel of $V_\zeta$ that contains the sequence $\seq{\lambda_{m+1}}{m<\omega}$ and $V_\zeta$ was chosen to be sufficiently elementary in $V$, the hypothesis that $\lambda_1$ is not parametrically $n$-ultraexact for $\seq{\lambda_{m+2}}{m<\omega}$ yields a set   $A \in j[V_\zeta]\cap V_{\lambda+1}$ with the property that there is no $n$-ultraexact embedding $\map{k}{X}{V_\zeta}$ at $\lambda$ with $A\in\ran{k}$, $k(\crit{k})=\lambda_1$  and $k(\lambda_{m+1})=\lambda_{m+2}$ for all $m<\omega$. 
But, the function $\map{j\restriction V_\zeta}{V_\zeta}{V_\zeta}$ has all of these properties, yielding a contradiction.  
We have thus shown that $\lambda_1$ is parametrically $n$-ultraexact for the sequence $\seq{\lambda_{m+2}}{m<\omega}$, for all $n>0$. Hence, the elementarity of $j$ implies that  $\lambda_0$ is parametrically $n$-ultraexact for the sequence $\seq{\lambda_{m+1}}{m<\omega}$, for all $n>0$.
\end{proof}

By checking that the proof of Lemma \ref{lemma:UltraexactEmbeddingsSQSR} does not make use of the Axiom of Choice, it can be seen that the following result is a corollary of the above theorem. But, we shall  give a simpler, direct proof. In the sequel, let  $\mathcal{L}$ be as in Definition \ref{defESR}.

\begin{corollary}[${\rm NBG}-{\rm AC}$] \label{theorem:Reinhardt}
 If $\map{j}{V}{V}$ is a non-trivial elementary embedding with critical sequence $\seq{\lambda_m}{m < \omega}$ and $\lambda=\sup_{m<\omega}\lambda_m$, then  $\sqrt{\ESR}_\Ce (\vec{\lambda})$ holds for all classes $\Ce$ of $\mathcal{L}$-structures that are definable with parameters in $V_{\lambda_0}\cup\{\lambda\}$. 
  \end{corollary}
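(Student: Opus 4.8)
The plan is to prove the statement first for the \emph{shifted} sequence $\seq{\lambda_{m+1}}{m<\omega}$ and then transfer it back to $\vec{\lambda}$ by applying $j$ once more. This detour is forced by the fact that $\lambda_0=\crit{j}\notin\ran{j}$, so that $\vec{\lambda}$ is not of the form $j(\vec{\mu})$; the shifted sequence, by contrast, equals $j(\vec{\lambda})$, and every parameter occurring in the shifted instance is either fixed by $j$ or lies in $\ran{j}$, which is precisely what the reflection argument below requires. First I would record the preliminaries: writing $\lambda=\sup_{m<\omega}\lambda_m$ and $j_0=j\restriction V_\lambda$, the map $\map{j_0}{V_\lambda}{V_\lambda}$ is a non-trivial elementary embedding with $j_0(\lambda_m)=\lambda_{m+1}$ (since $\crit{j}=\lambda_0$ and the critical sequence is cofinal in $\lambda$), and $(j_0)_+$ agrees with $j$ on $V_{\lambda+1}$; in particular $(j_0)_+(j_0)=j(j_0)$, so $j_0$ is a square root of the total function $f:=j(j_0)\colon V_\lambda\to V_\lambda$. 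Moreover any parameter in $V_{\lambda_0}\cup\{\lambda\}$ is fixed by $j$, so for a class $\Ce$ defined with such parameters we have $B\in\Ce\iff j(B)\in\Ce$, and for any structure $A$ whose universe is contained in $V_\lambda$ the restriction $\map{j_0\restriction\lvert A\rvert}{A}{j(A)}$ is an elementary embedding.

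The main step will be to show that $f$ witnesses $\sqrt{\ESR}_\Ce(\seq{\lambda_{m+1}}{m<\omega})$ whenever $\Ce$ is definable with parameters in $V_{\lambda_0}\cup\{\lambda\}$. Assuming otherwise, I would fix such a $\Ce$, say defined by a $\Sigma_n$-formula with parameter $z\in V_{\lambda_0}\cup\{\lambda\}$, and a structure $B\in\Ce$ of type $\seq{\lambda_{m+2}}{m<\omega}$ witnessing the failure; then pick $\zeta\in C^{(n+2)}$ with $\zeta>\lambda$ and $j(\zeta)=\zeta$ as in the proof of Theorem~\ref{theorem:ultraexactfromReinhardt}. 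Since the failure witnessed by $B$ is an assertion of complexity $\Pi_{n+1}$ with all parameters ($B$, $z$, $f$, $\seq{\lambda_{m+1}}{m<\omega}$, $\seq{\lambda_{m+2}}{m<\omega}$) inside $V_\zeta$, it holds in $V_\zeta$. The key observation is that $j$ sends the tuple $\bigl(z,\seq{\lambda_m}{m<\omega},j_0\bigr)$ to $\bigl(z,\seq{\lambda_{m+1}}{m<\omega},f\bigr)$, so applying the elementary embedding $\map{j\restriction V_\zeta}{V_\zeta}{V_\zeta}$ to the above statement produces, in $V_\zeta$, a structure $A'\in\Ce$ of type $\seq{\lambda_{m+1}}{m<\omega}$ such that no $A''\in\Ce$ of type $\seq{\lambda_m}{m<\omega}$ and no square root $r$ of $j_0$ satisfy that $\map{r\restriction\lvert A''\rvert}{A''}{A'}$ is elementary. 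Applying $\map{j\restriction V_\zeta}{V_\zeta}{V_\zeta}$ once more, this time to the statement about the specific element $A'$, and reflecting the resulting $\Pi_{n+1}$ assertion out of $V_\zeta$, I would conclude in $V$ that $j(A')\in\Ce$ has type $\seq{\lambda_{m+2}}{m<\omega}$ yet admits no $A''\in\Ce$ of type $\seq{\lambda_{m+1}}{m<\omega}$ with a square root $r$ of $f$ making $\map{r\restriction\lvert A''\rvert}{A''}{j(A')}$ elementary. This is contradictory, because $A'\in\Ce$ has type $\seq{\lambda_{m+1}}{m<\omega}$, $j_0$ is a square root of $f$, and $\map{j_0\restriction\lvert A'\rvert}{A'}{j(A')}$ is elementary, so the pair $(A',j_0)$ is exactly such a configuration.

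Finally, I would transfer back: for a fixed $\Ce$ defined by a formula $\varphi$ with parameter $z\in V_{\lambda_0}\cup\{\lambda\}$, the assertion ``$\sqrt{\ESR}_\Ce(\vec{\mu})$'' is first-order over $\langle V,\in\rangle$ with parameters $\vec{\mu}$ and $z$ (its function witness ranges over $V_{(\sup\vec{\mu})+1}$ and ``$B\in\Ce$'' abbreviates $\varphi(B,z)$), so since $j(z)=z$ and $j(\vec{\lambda})=\seq{\lambda_{m+1}}{m<\omega}$, elementarity of $\map{j}{V}{V}$ gives
\[
V\models\sqrt{\ESR}_\Ce(\vec{\lambda})\quad\Longleftrightarrow\quad V\models\sqrt{\ESR}_\Ce\bigl(\seq{\lambda_{m+1}}{m<\omega}\bigr),
\]
and the right-hand side holds by the main step. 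The hard part is really just the point flagged at the beginning, that $\crit{j}\notin\ran{j}$, which is why the argument must be run first for $j(\vec{\lambda})=\seq{\lambda_{m+1}}{m<\omega}$ rather than directly for $\vec{\lambda}$; the rest is bookkeeping of definability complexity and of which structures lie in the range of $j$. (Alternatively, the shifted instance can be extracted from Lemma~\ref{lemma:UltraexactEmbeddingsSQSR} applied to the $n$-ultraexact embedding $\map{j\restriction V_\zeta}{V_\zeta}{V_\zeta}$ and the sequence $\seq{\lambda_{m+1}}{m<\omega}$, whose hypothesis $\lambda_1\in\ran{j\restriction V_\zeta}$ holds since $\lambda_1=j(\lambda_0)$.)
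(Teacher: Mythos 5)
Your proof is correct, but it takes a noticeably more roundabout route than the paper's. The paper argues directly: given $B\in\Ce$ of type $\seq{\lambda_{m+1}}{m<\omega}$, set $f=j\restriction V_\lambda$, note that $f$ is a square root of $j(f)$ and that $\map{j\restriction B}{B}{j(B)}$ is elementary, so that the pair $(B,f)$ witnesses the $j$-image of the assertion ``there is $A\in\Ce$ of type $\vec{\lambda}$ and a square root $r$ of $f$ whose restriction to the universe of $A$ embeds $A$ elementarily into $B$''; one application of elementarity then yields the assertion itself, with $f=j\restriction V_\lambda$ serving as the global witness for $\sqrt{\ESR}_\Ce(\vec{\lambda})$. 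In particular, the obstacle you flag at the outset --- that $\vec{\lambda}\notin\ran{j}$ because $\lambda_0=\crit{j}$ --- is not actually an obstacle: one never needs a $j$-preimage of $\vec{\lambda}$, only its $j$-image. Your detour through the shifted sequence, with the contradiction argument reflected into a $j$-fixed $V_\zeta\in C^{(n+2)}$ and the final transfer $\sqrt{\ESR}_\Ce(\vec{\lambda})\Leftrightarrow\sqrt{\ESR}_\Ce(\seq{\lambda_{m+1}}{m<\omega})$, amounts to re-deriving Lemma \ref{lemma:UltraexactEmbeddingsSQSR} for the $n$-ultraexact embedding $\map{j\restriction V_\zeta}{V_\zeta}{V_\zeta}$ and then spending one extra use of elementarity --- which is exactly the alternative the paper acknowledges before giving its ``simpler, direct proof.'' What your route buys is uniformity with the ultraexactness machinery of \S\ref{SectSSR}; what the direct route buys is brevity and the explicit identification of $j\restriction V_\lambda$ as the witnessing square-root base. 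The individual steps you give all check out: the identity $(j\restriction V_\lambda)_+(j\restriction V_\lambda)=j(j\restriction V_\lambda)$, the existence of $j$-fixed $\zeta\in C^{(n+2)}$ above $\lambda$, the necessity of the second (push-forward) application of $j\restriction V_\zeta$ since $A'$ need not lie in $\ran{j}$, and the legitimacy of applying elementarity of the class embedding $j$ to the first-order assertion $\sqrt{\ESR}_\Ce(\vec{\mu})$ for each fixed defining formula.
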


\begin{proof}
  Fix a formula $\varphi(v_0,v_1,v_2)$ and  $z\in V_{\lambda_0}$ such that the class $\Ce=\Set{A}{\varphi(A,\lambda,z)}$ consists of $\calL$-structures. 
 Pick a structure $B$ in $\Ce$ of type $\seq{\lambda_{m+1}}{m<\omega}$. 
 Then the elementarity of $j$ implies that $\varphi(j(B),\lambda,z)$ holds. 
 Thus, we know that $j(B)$ is an  $\calL$-structure of type $\seq{\lambda_{m+2}}{m<\omega}$ and  the restriction map $\map{j\restriction B}{B}{j(B)}$ is an elementary embedding of  structures in $\Ce$. Let $f=\map{j\restriction V_\lambda}{V_\lambda}{V_\lambda}$, and note that $f$ is a square root of $j(f)$. 
  By  elementarity, we have that there is an $\calL$-structure $A$ of type $\vec{\lambda}$ with the property  that $\varphi(A,\lambda,z)$ holds and there exists an elementary embedding $\map{i}{A}{B}$ that is the restriction to $A$ of a function that is a square root of $f$. This shows that $\sqrt{\ESR}_\Ce (\vec{\lambda})$ holds.  
  \end{proof}

Under the stronger assumption of the existence of a super Reinhardt cardinal, we obtain a stronger result. Let us first recall the definition of super Reinhardt cardinal:

\begin{definition}[${\rm NBG}-{\rm AC}$]
A cardinal $\kappa$ is \emph{super Reinhardt} if for every ordinal $\lambda$ there exists  an elementary embedding $\map{j}{V}{V}$ with critical point $\kappa$ and   $j(\kappa)>\lambda$.
 \end{definition}

By a straightforward adaptation of the proof of Theorem \ref{theorem:Reinhardt}, we have the following:

\begin{proposition}[${\rm NBG}-{\rm AC}$]
  If $\kappa$ is a super Reinhardt cardinal, then  there exists a proper class of  sequences $\vec{\lambda}=\seq{\lambda_m}{m<\omega}$ of cardinals   for which  $\sqrt{\ESR}_\Ce(\vec{\lambda})$ holds for all classes $\Ce$ of $\mathcal{L}$-structures that are definable with parameters from $V_\kappa$. 
\end{proposition}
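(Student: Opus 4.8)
The plan is to combine the defining property of a super Reinhardt cardinal with Corollary~\ref{theorem:Reinhardt}. Fix a super Reinhardt cardinal $\kappa$ and let $\mu$ be an arbitrary ordinal. First I would use the super Reinhardtness of $\kappa$ to obtain a non-trivial elementary embedding $\map{j}{V}{V}$ with $\crit{j}=\kappa$ and $j(\kappa)>\mu$. Let $\seq{\lambda_m}{m<\omega}$ denote the critical sequence of $j$, so $\lambda_0=\kappa$ and $\lambda_{m+1}=j(\lambda_m)$, and set $\lambda=\sup_{m<\omega}\lambda_m$. Since $\lambda_1=j(\kappa)>\mu$, we get $\lambda>\mu$; in particular, as $\mu$ ranges over the ordinals the resulting suprema $\lambda$ are unbounded, so the sequences $\vec{\lambda}$ produced this way form a proper class.

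The second step is simply to observe that Corollary~\ref{theorem:Reinhardt} applies verbatim to this $j$: it is a non-trivial elementary embedding of $V$ into $V$ with critical sequence $\seq{\lambda_m}{m<\omega}$ and $\lambda=\sup_{m<\omega}\lambda_m$. Hence $\sqrt{\ESR}_\Ce(\vec{\lambda})$ holds for every class $\Ce$ of $\mathcal{L}$-structures that is definable with parameters in $V_{\lambda_0}\cup\{\lambda\}=V_\kappa\cup\{\lambda\}$, which is a fortiori more than the claimed conclusion (parameters from $V_\kappa$). The only facts about $j$ that enter the argument are its elementarity, that $j(z)=z$ for every $z\in V_\kappa$ (since $\kappa=\crit{j}$), and that $j(\lambda)=\lambda$ — the latter holding because $j[\lambda]\subseteq\lambda$ (each $\alpha<\lambda$ lies below some $\lambda_m$, so $j(\alpha)<\lambda_{m+1}<\lambda$) while the $\lambda_m$ are cofinal in $\lambda$ with $j(\lambda_m)=\lambda_{m+1}$. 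As in the proof of Corollary~\ref{theorem:Reinhardt}, the witnessing function for $\sqrt{\ESR}_\Ce(\vec{\lambda})$ is $f=j\restriction V_\lambda$, a square root of $j(f)=j\restriction V_{j(\lambda)}=j(j\restriction V_\lambda)$, and for a given $B\in\Ce$ of type $\seq{\lambda_{m+1}}{m<\omega}$ one uses elementarity of $j$ to reflect the statement ``there exist $A\in\Ce$ of type $\vec{\lambda}$ and a square root $r$ of $f$ with $r\restriction A$ an elementary embedding $A\to B$'', which is witnessed one step up by $j\restriction B$.

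I do not expect a genuine obstacle here; the content is entirely contained in Theorem~\ref{theorem:Reinhardt}/Corollary~\ref{theorem:Reinhardt}, and the adaptation is exactly the one already carried out when passing from a single Reinhardt embedding to the proper class of embeddings supplied by super Reinhardtness (cf.\ the relationship between Corollary~\ref{theorem:Reinhardt} and Theorem~\ref{theorem:ultraexactfromReinhardt}). The one mild point worth spelling out is that the whole argument lives in ${\rm NBG}-{\rm AC}$: the embeddings $j$ are class objects given by super Reinhardtness, the square-root witness $j\restriction V_\lambda$ is obtained without any choice, and Corollary~\ref{theorem:Reinhardt} itself was already proved in ${\rm NBG}-{\rm AC}$, so no further care is needed on that front.
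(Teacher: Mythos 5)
Your proposal is correct and is essentially the paper's intended argument: the paper gives no explicit proof, stating only that the result follows "by a straightforward adaptation of the proof" of the Reinhardt case, and your adaptation — using super Reinhardtness to produce, for each ordinal $\mu$, an embedding $\map{j}{V}{V}$ with $\crit{j}=\kappa$ and $j(\kappa)>\mu$, and then applying the earlier corollary to its critical sequence — is exactly that adaptation. The observation that the conclusion of the corollary (parameters in $V_\kappa\cup\{\lambda\}$) subsumes the stated conclusion (parameters in $V_\kappa$) is also the right reading.
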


We finish with an observation on structural reflection in the presence of Reinhardt cardinals, which should be compared with the characterization of proto-Berkeley cardinals given by Proposition \ref{charprotoBC} below. Let $\calL_{\in,\dot{P}}$ denote the first-order language that extends the language of set theory by a unary predicate symbol $\dot{P}$.

\begin{proposition}[${\rm NBG}-{\rm AC}$]
 If $\kappa$ is a Reinhardt cardinal, then for every  first-order language $\calL$ extending $\calL_{\in,\dot{P}}$, every class $\Ce$ of $\calL$-structures that is definable  with parameters in  $V_\kappa$, and every structure $B$ in $\Ce$ with $\rank{\dot{P}^B}=\kappa$, there exists a structure $A$ in $\Ce$ with $\rank{\dot{P}^A}<\kappa$ and an elementary embedding of $A$ into $B$.  
\end{proposition}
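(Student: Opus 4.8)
The plan is to run the standard one-step reflection argument: apply the Reinhardt embedding $j$ with $\crit{j}=\kappa$, observe that $B$ together with $j\restriction B$ witnesses a first-order statement about $j(\kappa)$ and $j(B)$, and then pull this statement back along $j$ to $\kappa$ and $B$. Concretely, I would first fix a non-trivial elementary embedding $\map{j}{V}{V}$ with $\crit{j}=\kappa$, recalling that, as noted after the definition of a Reinhardt cardinal, such a $j$ is (cofinal and $\Delta_0$-elementary, hence) fully elementary. I would also fix a first-order formula $\psi(v_0,v_1)$ and a parameter $\bar{p}\in V_\kappa$ with $\Ce=\Set{A}{\psi(A,\bar{p})}$; since $\crit{j}=\kappa$ we have $j(\bar{p})=\bar{p}$, so $j$ maps $\calL$-structures in $\Ce$ to $\calL$-structures in $\Ce$, and in particular $j(B)\in\Ce$.

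The two elementary facts I would record next are: (i) the restriction $\map{j\restriction B}{B}{j(B)}$ is an elementary embedding of $\calL$-structures, which follows by applying the elementarity of $j$ to each instance of the (first-order, set-parametrized) satisfaction assertion ``$B\models\theta(\vec{a})$''; and (ii) since $B$ is a set, $j\restriction B$ is a set, so ``there is an elementary embedding from $A$ into $C$'' is expressible by a first-order formula with set parameters $A$ and $C$. Combining these, I would consider the first-order formula (with the fixed parameter $\bar{p}$, which is unmoved by $j$)
\begin{equation*}
\varphi(v_0,v_1)\ :\equiv\ \exists A\,\exists e\,\bigl[\,\psi(A,\bar{p})\ \wedge\ \rank{\dot{P}^A}<v_0\ \wedge\ e\text{ is an elementary embedding of }A\text{ into }v_1\,\bigr].
\end{equation*}
Since $\rank{\dot{P}^B}=\kappa<j(\kappa)$ and $B\in\Ce$, the pair $A:=B$, $e:=j\restriction B$ witnesses that $\varphi(j(\kappa),j(B))$ holds in $V$.

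To finish, I would invoke the full elementarity of $j$ (together with $j(\bar{p})=\bar{p}$, so that the displayed formula is literally fixed by $j$ as a parametrized formula) to conclude that $\varphi(\kappa,B)$ holds in $V$. Unpacking $\varphi(\kappa,B)$ yields exactly a structure $A\in\Ce$ with $\rank{\dot{P}^A}<\kappa$ together with an elementary embedding of $A$ into $B$, which is the assertion to be proved.

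The part requiring care — rather than a genuine obstacle — is the bookkeeping forced by the ${\rm NBG}-{\rm AC}$ setting: $j$ is a proper class and $\Ce$ is a definable class, not a set, so one must check that the reflected statement $\varphi$ is genuinely first-order with \emph{set} parameters. This is precisely why it matters that $\Ce$ is definable with parameters from $V_\kappa$: the class $\Ce$ enters $\varphi$ only through its defining formula $\psi$, whose parameters lie below the critical point and are therefore fixed by $j$; likewise $B$, $j(B)$, and $j\restriction B$ are all sets. No appeal to the Axiom of Choice is made anywhere in the argument.
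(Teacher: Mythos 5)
Your proposal is correct and is essentially the same one-step reflection argument as in the paper: witness the reflected statement at $(j(\kappa),j(B))$ using $B$ itself together with $j\restriction B$, then pull back along $j$ using that the defining parameter of $\Ce$ lies in $V_\kappa$ and is therefore fixed. Your bookkeeping about full elementarity of $j$ (via Gaifman, from cofinality and $\Delta_0$-elementarity) and about the reflected formula having only set parameters is exactly the care the ${\rm NBG}-{\rm AC}$ setting requires, and if anything is stated more cleanly than in the paper's version.
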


\begin{proof}
 Let $\map{j}{V}{V}$ be elementary with critical point $\kappa$.  
 Fix a first-order language $\calL$ extending $\calL_{\in,\dot{P}}$, some $y\in V_\kappa$, a $\Sigma_n$-formula $\varphi(v_0,v_1)$ such that the class $\Ce=\Set{A}{\varphi(A,y)}$ consists of $\calL$-structures, and a structure $B\in\Ce$ with $\rank{\dot{P}^B}=\lambda$. 
  %
  %Let $\theta\in C^{(n)}$ be greater than $\kappa$, with $B\in V_\theta$, and such that $\theta$ is a fixed point of $j$, so that $j\restriction \theta :V_\theta \to V_\theta$ is an elementary embedding with critical point $\kappa$. 
  Then $\varphi(B,y)$ holds in $V_\theta$ and therefore the elementarity of $j$ ensures that $\varphi(j(B),y)$ holds in $V_\theta$. 
  %
  %Thus, we have that
  %
  Then $j(B)$ is an  $\calL$-structure with $\rank{\dot{P}^{j(B)}}=j(\kappa)>\kappa=\rank{\dot{P}^B}$,  and the map $\map{j\restriction B}{B}{j(B)}$ is an elementary embedding of $\calL$-structures.  %that is an element of $V_\theta$. 
  The elementarity of $j$ then yields  that %, in $V_\theta$, 
   there is an $\calL$-structure $A$ such that $\rank{\dot{P}^A}<\kappa$, $\varphi(A,y)$ holds and there exists an elementary embedding $\map{j}{A}{B}$. %Moreover, by the $\Sigma_n$-correctness of $V_\theta$, 
   It follows that the structure $A$ is an element of $\Ce$ with the desired properties.  
\end{proof}

\subsection{Ultraexact Structural Reflection and Berkeley cardinals}
Let us recall the definition of Berkeley cardinals:

\begin{definition}[\cite{MR4022642}]
 \begin{enumerate}
     \item An ordinal $\delta$ is a \emph{proto-Berkeley cardinal} if for all 
transitive sets $M$ with $\delta\in M$,  there exists a non-trivial elementary embedding $\map{j}{M}{M}$ with $\crit{j}<\delta$. 

  \item An ordinal $\delta$ is a \emph{Berkeley cardinal} if for all 
transitive sets $M$ with $\delta\in M$, for every $\eta <\delta$ there exists a non-trivial elementary embedding $\map{j}{M}{M}$ with $\eta<\crit{j}<\delta$.
 \end{enumerate}
\end{definition}

As shown in \cite{MR4022642}, under ZF, the least proto-Berkeley cardinal, if it exists, is a Berkeley cardinal. Arguing similarly as in the proof of Theorem \ref{theorem:ultraexactfromReinhardt}, we have the following:

\begin{theorem}[${\rm ZF}$]\label{theorem:ultraexactfromBC}
 Given a natural number $n>0$, if  $\delta$ is a Berkeley cardinal, then unboundedly many cardinals below $\delta$ are parametrically $n$-ultraexact cardinals for some sequence of cardinals below $\delta$. 
\end{theorem}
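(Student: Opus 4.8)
The plan is to mimic the structure of the proof of Theorem \ref{theorem:ultraexactfromReinhardt}, where a Reinhardt embedding $\map{j}{V}{V}$ yielded a parametrically $n$-ultraexact cardinal via its critical sequence, but now harvesting such embeddings locally from the Berkeley cardinal $\delta$ applied to large transitive sets. First I would fix an arbitrary ordinal $\eta<\delta$ and aim to produce a parametrically $n$-ultraexact cardinal strictly between $\eta$ and $\delta$; since $\eta$ is arbitrary, this gives unboundedly many below $\delta$. Pick a large $\Sigma_{n+2}$-correct ordinal $\theta$ (in the sense of $V_\theta\prec_{\Sigma_{n+2}}V$; in $\mathsf{ZF}$ one still has a proper class of such $\theta$ by the Lévy reflection scheme), let $M=V_\theta$, and apply the Berkeley property to $M$ and the threshold $\eta$ to obtain a non-trivial elementary embedding $\map{k}{M}{M}$ with $\eta<\crit{k}<\delta$. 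Let $\seq{\mu_m}{m<\omega}$ be the critical sequence of $k$ and $\mu=\sup_{m<\omega}\mu_m$; note $\mu\le\theta$ and, by the Kunen-type argument available to $V_\theta$ (which sees $k\restriction V_{\mu+2}$ if $\mu+2<\theta$), we must have $\mu<\theta$, so in fact $\seq{\mu_m}{m<\omega}$ is a genuine critical sequence cofinal in a limit cardinal $\mu$ well below $\theta$.

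Next I would run, inside $V_\theta$, the Reinhardt-style argument of Theorem \ref{theorem:ultraexactfromReinhardt}. Choose $\zeta\in C^{(n+2)}$ with $\mu<\zeta<\theta$ and $k(\zeta)=\zeta$ (obtainable since $k$ has arbitrarily large fixed points below $\theta$, being continuous at appropriate points and definable inside $M$); then $\map{k\restriction V_\zeta}{V_\zeta}{V_\zeta}$ is an $n$-ultraexact embedding at $\mu$ with $k\restriction V_\mu\in V_\zeta$. The key point is that the existence of such embeddings is enough to drive the parametric reflection: I would argue, exactly as in the last part of the proof of Theorem \ref{theorem:ultraexactfromReinhardt} (and of Theorem \ref{theorem:UltraexactFromI0}), that $\mu_1$ is parametrically $n$-ultraexact for $\seq{\mu_{m+2}}{m<\omega}$, using that $k[V_\zeta]$ is an elementary submodel of $V_\zeta$ containing $\seq{\mu_{m+1}}{m<\omega}$, so that any counterexample set $A\in V_{\mu+1}$ could be taken in $\ran{k\restriction V_\zeta}$, contradicting that $k\restriction V_\zeta$ itself witnesses the required $n$-ultraexact embedding with $k(\crit{k\restriction V_\zeta})=\mu_1$. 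Then elementarity of $k$ pushes this down: $\mu_0=\crit{k}$ is parametrically $n$-ultraexact for $\seq{\mu_{m+1}}{m<\omega}$. Since $\eta<\mu_0<\delta$, this produces the desired cardinal above $\eta$.

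One subtlety to handle carefully is the absence of choice: the proof of Theorem \ref{theorem:UltraexactFromI0}'s reflection step and of Lemma \ref{lemma:EquivalentUltra} should be inspected to confirm they go through in $\mathsf{ZF}$, since we never need to wellorder $V_\zeta$ — the embedding $k\restriction V_\zeta$ is handed to us directly by the Berkeley property, and the relevant $\Sigma_{n+1}$-correctness facts are schematic consequences of Lévy reflection, which holds in $\mathsf{ZF}$. A second point is that I must ensure $\crit{k}$ lands genuinely above the prescribed $\eta$ and that, as $\eta\to\delta$, these critical points are cofinal; this is immediate because for each $\eta<\delta$ the Berkeley property supplies $k$ with $\eta<\crit{k}<\delta$. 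The main obstacle, and the step I expect to require the most care, is verifying that all fixed-point and correctness choices (the $\zeta\in C^{(n+2)}$ with $k(\zeta)=\zeta$, and the claim $\mu<\theta$) can be made inside the \emph{set} $V_\theta$ rather than in $V$: this needs $V_\theta$ to be sufficiently elementary in $V$ and closed enough that it recognizes the relevant $\Sigma_{n+2}$ statements about its own rank-initial segments — precisely what choosing $\theta\in C^{(n+2)}$ (in the $\mathsf{ZF}$, schematic sense) buys us, but the bookkeeping between "correctness of $V_\theta$ in $V$" and "correctness of $V_\zeta$ in $V_\theta$" must be spelled out so that the $n$-ultraexactness witnessed inside $V_\theta$ is genuine $n$-ultraexactness in $V$.
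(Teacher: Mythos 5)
Your overall strategy --- apply the Berkeley property of $\delta$ to a sufficiently correct rank-initial segment, observe that the resulting embedding is itself an $n$-ultraexact embedding, and then run the reflection argument from Theorem \ref{theorem:ultraexactfromReinhardt} --- is exactly the paper's route. But there are two genuine problems in your execution, both traceable to the same omission: you never arrange that your embedding fixes $\delta$. The theorem asks for cardinals that are parametrically $n$-ultraexact \emph{for a sequence of cardinals below $\delta$}; the raw Berkeley property only controls $\crit{k}$, so $\mu_1=k(\mu_0)$ and the later terms of the critical sequence may jump past $\delta$, and nothing in your setup prevents the critical sequence from being cofinal in $\theta$, in which case $k$ is not an $n$-ultraexact embedding at any cardinal at all. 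The paper instead invokes the results of \cite{MR4022642} to obtain $\map{j}{V_\zeta}{V_\zeta}$ with $\alpha<\crit{j}<\delta$ \emph{and} $j(\delta)=\delta$, which immediately forces $\lambda_m<\delta$ for all $m$, hence $\sup_m\lambda_m\le\delta<\zeta$, so that $j$ really is an $n$-ultraexact embedding at that supremum and the witnessing sequence lies below $\delta$. (The paper also simply takes $\zeta\in C^{(n+2)}$ with $\zeta>\delta$ and applies the Berkeley property to $V_\zeta$ directly, avoiding your extra step of locating a $k$-fixed $\zeta\in C^{(n+2)}$ inside $V_\theta$.)

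Second, your claim that $\mu<\theta$ ``by the Kunen-type argument'' is not available: Kunen's inconsistency uses the Axiom of Choice, which is precisely what is suspended in this section (Berkeley cardinals refute AC), and even granting AC the argument would run the other way --- if $\mu+2<\theta$ then $k\restriction V_{\mu+2}$ would be a Kunen-violating embedding $V_{\mu+2}\to V_{\mu+2}$, so Kunen would force $\mu$ to be \emph{close to} $\theta$, not well below it. The correct way to bound the critical sequence is again via $k(\delta)=\delta$. With that fix in place, the remainder of your plan (the counterexample-in-the-range reflection argument, which is indeed choice-free) goes through exactly as in the paper.
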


\begin{proof} 
  Fix any $\alpha <\delta$ and pick $\delta<\zeta\in C^{(n+2)}$. Then the results of \cite{MR4022642} show that there is an  elementary embedding $\map{j}{V_\zeta}{V_\zeta}$ with $\alpha<\crit{j}<\delta$ and  $j(\delta)=\delta$. It follows that  $j$ is an $n$-ultraexact embedding. If $\seq{\lambda_m}{m<\omega}$ is its critical sequence, then $\lambda_m<\delta$ for all $m<\omega$ and    we can argue as in the proof of Theorem \ref{theorem:ultraexactfromReinhardt} to show that $\lambda_0$ is parametrically $n$-ultraexact for the sequence $\seq{\lambda_{m+1}}{m<\omega}$. 
\end{proof}

Let us show next that the least Berkeley cardinal is the limit of a sequence of cardinals witnessing full Ultraexact Structural Reflection.

\begin{theorem}[ZF]
 Given a natural number $n>0$, if $\delta$ is the least Berkeley cardinal, then for every set $z$, there exists a strictly increasing sequence $\vec{\lambda}=\seq{\lambda_m}{m<\omega}$ of cardinals with supremum  less than $\delta$ and the property that $\sqrt{\ESR}_\Ce(\vec{\lambda})$ holds for every class $\Ce$ of $\calL$-structures that is definable by a $\Sigma_n$-formula with parameter $z$.  
\end{theorem}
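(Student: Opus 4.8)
The plan is to reduce the statement, via Lemma~\ref{lemma:UltraexactEmbeddingsSQSR}, to the construction of a single $n$-ultraexact embedding at a cardinal below $\delta$ that additionally fixes the parameter $z$. Recall first that, as noted in the discussion preceding Theorem~\ref{theorem:Reinhardt}, the proof of Lemma~\ref{lemma:UltraexactEmbeddingsSQSR} makes no use of the Axiom of Choice, so it is available in $\mathsf{ZF}$; and recall that every class definable by a $\Sigma_n$-formula is also definable by a $\Sigma_{n+1}$-formula. Hence it suffices to show: for every set $z$ there are a limit cardinal $\lambda<\delta$, a strictly increasing sequence $\vec{\lambda}=\seq{\lambda_m}{m<\omega}$ of cardinals with supremum $\lambda$, and an $n$-ultraexact embedding $\map{j}{X}{V_\zeta}$ at $\lambda$ such that $z\in X$, $j(z)=z$, $\vec{\lambda}\in X$, $\lambda_0\in\ran{j}$ and $j(\lambda_m)=\lambda_{m+1}$ for all $m<\omega$. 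Indeed, setting $F=\Set{x\in X}{j(x)=x}$, Lemma~\ref{lemma:UltraexactEmbeddingsSQSR} then yields that $\Sigma_{n+1}(F)$-$\sqrt{\ESR}(\vec{\lambda})$ holds, and since $z\in F$ this gives in particular that $\sqrt{\ESR}_\Ce(\vec{\lambda})$ holds for every class $\Ce$ that is definable by a $\Sigma_n$-formula with parameter $z$.

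To build such an embedding, fix $z$ and choose an ordinal $\theta\in C^{(n+2)}$ with $z,\delta\in V_\theta$, large enough that $V_\theta$ computes cardinalities and the defining property ``$\delta$ is the least Berkeley cardinal'' correctly. The key point is to obtain, using the structural theory of the least Berkeley cardinal developed in \cite{MR4022642}, a non-trivial elementary embedding $\map{J}{V_\theta}{V_\theta}$ with $\crit{J}<\delta$, $J(\delta)=\delta$, $J(z)=z$, and whose critical sequence $\seq{\kappa_m}{m<\omega}$ is bounded below $\delta$. Since $\delta$ is least Berkeley it is definable without parameters, so $J(\delta)=\delta$ comes for free; the requirement $J(z)=z$ for an \emph{arbitrary} set $z$ (of possibly very large rank) is precisely where the minimality of $\delta$ must be used --- for $z\in V_\delta$ one could simply take $\crit{J}>\rank{z}$, but in general one must invoke the parameter-absorbing behaviour of the least Berkeley cardinal established in \cite{MR4022642}; and to keep the critical sequence bounded below $\delta$ one additionally arranges that $J$ fix an $\mathrm{I}3$-cardinal lying strictly between $\crit{J}$ and $\delta$, which is possible because $\delta$ is a limit of such cardinals by Theorem~\ref{theorem:ultraexactfromBC}. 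I expect this step --- securing $J(z)=z$ together with a critical sequence bounded below $\delta$ --- to be the main obstacle, and the place where one genuinely needs ``least Berkeley'' rather than merely ``Berkeley''.

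Granting such a $J$, set $\lambda:=\sup_{m<\omega}\kappa_m<\delta$; this is a limit cardinal, being a supremum of critical points of elementary embeddings, which are cardinals in $V_\theta$, hence in $V$ since $\theta\in C^{(1)}$. Put $\vec{\lambda}:=\seq{\kappa_{m+1}}{m<\omega}$, so that $\sup\vec{\lambda}=\lambda<\delta$, and take $X=V_\theta$, $\zeta=\theta$ and $j=J$. In this choiceless setting Kunen's inconsistency imposes no restriction on the domain being transitive, and $j$ qualifies as an $n$-ultraexact embedding at $\lambda$: one has $\zeta=\theta\in C^{(n+1)}$, $V_\lambda\cup\{\lambda\}\subseteq V_\theta$, $j(\lambda)=\lambda$, $j\restriction\lambda\neq\id_\lambda$ since $\crit{j}=\kappa_0<\lambda$, and $j\restriction V_\lambda\in V_{\lambda+\omega}\subseteq V_\theta=X$. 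Moreover $\vec{\lambda}\in V_\theta=X$, $\lambda_0=\kappa_1=j(\kappa_0)\in\ran{j}$, and $j(\lambda_m)=j(\kappa_{m+1})=\kappa_{m+2}=\lambda_{m+1}$ for all $m<\omega$. Finally $z\in V_\theta=X$ and $j(z)=z$, so $z\in F$, and the reduction carried out in the first paragraph completes the proof.
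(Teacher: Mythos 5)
Your overall architecture matches the paper's: obtain from the least Berkeley cardinal a single elementary embedding $\map{J}{V_\theta}{V_\theta}$ with $\crit{J}<\delta$, $J(\delta)=\delta$, $J(z)=z$ and critical sequence bounded below $\delta$, and then convert it into the desired instance of $\sqrt{\ESR}$. Where you diverge is in the second half: the paper re-runs the square-root argument directly (taking $f=J\restriction V_\lambda$ as the witness and pulling the embedding $J\restriction B\colon B\to J(B)$ back once by elementarity), whereas you observe that $J$ itself, with $X=V_\theta$ transitive, is already an $n$-ultraexact embedding in the choiceless setting and invoke Lemma \ref{lemma:UltraexactEmbeddingsSQSR}. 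This is a legitimate and clean alternative; the paper itself remarks (before Corollary \ref{theorem:Reinhardt}) that the proof of that lemma uses no choice and that the two routes are interchangeable. Your index bookkeeping ($\vec{\lambda}=\seq{\kappa_{m+1}}{m<\omega}$, $\lambda_0\in\ran{J}$, $z\in F$, $\Sigma_n\subseteq\Sigma_{n+1}$) is correct.

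The weak point is the first half, which you yourself flag as the main obstacle. The paper obtains the embedding $J$ with \emph{all four} properties simultaneously by citing {\cite[Lemma 2.1.19]{Cu17}}; it does not re-derive it. Your sketch of how to get the boundedness of the critical sequence --- ``arrange that $J$ fix an $\mathrm{I3}$-cardinal $\mu$ lying strictly between $\crit{J}$ and $\delta$'' --- does not explain how to secure $\crit{J}<\mu$ and $J(\mu)=\mu$ at the same time: adding $\mu$ to the parameter set gives $J(\mu)=\mu$ but loses control of the critical point from above (the stabilized minimal critical point for the enlarged parameter could exceed $\mu$), while Berkeleyness only pushes critical points \emph{up}, not down. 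Note also that $J(\delta)=\delta$ and $\crit{J}<\delta$ already give $\kappa_m<\delta$ for all $m$, so the issue is genuinely about the supremum, and it is not automatic. As written, this step is an appeal to the literature rather than a proof; that is acceptable (it is exactly what the paper does, just with the correct reference), but your proposed derivation of it should either be replaced by the citation or worked out properly.
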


\begin{proof}
  Using {\cite[Lemma 2.1.19]{Cu17}}, we can find a cardinal $\delta<\zeta\in C^{(n+2)}$ with $z\in V_\zeta$ and a non-trivial elementary embedding $\map{j}{V_\zeta}{V_\zeta}$ with critical sequence $\vec{\lambda}=\seq{\lambda_m}{m<\omega}$ such that $\crit{(j)}<\delta$, $j(\delta)=\delta$, $j(z)=z$ and $\lambda=\sup_{m<\omega}\lambda_m<\delta$. Set $\map{f:=j\restriction V_\lambda}{V_\lambda}{V_\lambda}$.

  Now, let  $\Ce$ be a class of $\calL$-structures that is definable by a $\Sigma_n$-formula with parameter $z$. Pick a $\Sigma_n$-formula $\varphi(v_0,v_1)$  with  $\Ce=\Set{A}{\varphi(A,z)}$.  Fix  a structure $B \in \Ce$ of type $\seq{\lambda_{m+1}}{m<\omega}$. Then $B$ is an element of $V\zeta$ and our setup ensures that $\varphi(j(B),z)$ holds in $V$. Hence, the  $\calL$-structure  $j(B)$ is an element of $\Ce$ of type $\seq{\lambda_{m+2}}{m<\omega}$ and  the restriction map $\map{j\restriction B}{B}{j(B)}$ is an elementary embedding of $\calL$-structures with $j\restriction B  \in V_\theta$. Moreover, notice that $j\restriction B$ is the restriction to $B$ of a function, namely $j\restriction V_\lambda$, that is a square root of $j(f)$. Since all of these statements are absolute between $V$ and $V_\zeta$, we can use the elementarity of $j$ and the correctness properties of $V_\zeta$ to find an  $\calL$-structure $A$ of type $\vec{\lambda}$ with the property  that $\varphi(A,z)$ holds and there exists an elementary embedding $\map{i}{A}{B}$ that is the restriction to $A$ of a function that is a square root of $f$. This allows us to conclude that $\sqrt{\ESR}_\Ce (\vec{\lambda})$ holds. 
  \end{proof}

%  As an immediate consequence of the proof of the theorem, we have the following:

%\begin{corollary}[ZF]
% Let $\delta$ be the least Berkeley cardinal. Given    $\eta <\delta$ and given any set $z$, there exists a strictly increasing sequence $\vec{\lambda}=\seq{\lambda_i}{i<\omega}$ of cardinals greater than $\eta$ and with supremum less than $\delta$ such that  $\sqrt{\ESR}_\Ce(\vec{\lambda})$ holds for every class $\Ce$ of $\calL$-structures  that is definable with parameter $z$. 
%\end{corollary}

Thus, in view of Theorem \ref{theorem:UltraexactFromI0}, and Corollary \ref{coro4.6}, the least Berkeley cardinal cannot be characterized in terms of Ultraexact Structural Reflection.
Nevertheless, we will  show next that proto-Berkeley cardinals, and therefore also the least Berkeley cardinal, can be characterized as a rather natural form of Structural Reflection.
%In the following, we let $\calL_{\in,\dot{P}}$ denote the first-order language that extends the language of set theory by a unary predicate symbol $\dot{P}$. 

\begin{proposition}[\rm ZF]
\label{charprotoBC}
 The following statements are equivalent for every limit cardinal $\delta$: 
 \begin{enumerate}
     \item $\delta$ is a proto-Berkeley cardinal. 
     
     \item For every natural number $n$ and every set $z$, there is a cardinal  $\lambda<\delta$ such that for every  first-order language $\calL$ extending $\calL_{\in,\dot{P}}$, every class $\Ce$ of $\calL$-structures that is definable  by a $\Sigma_n$-formula with parameters in  $V_\lambda\cup\{z\}$, and every structure $B$ in $\Ce$ with $\rank{\dot{P}^B}=\lambda$, there exists a structure $A$ in $\Ce$ with $\rank{\dot{P}^A}<\lambda$ and an elementary embedding of $A$ into $B$. 
     
     \item For every set $z$ and every class $\Ce$ of $\calL_{\in,\dot{P}}$-structures that is definable by a $\Sigma_0$-formula with parameter $z$, there exists an ordinal $\lambda<\delta$ such that for every structure $B$ in $\Ce$ with $\rank{\dot{P}^B}=\lambda$, there exists a structure $A$ in $\Ce$ with $\rank{\dot{P}^A}<\lambda$ and an elementary embedding of $A$ into $B$. 
 \end{enumerate}
\end{proposition}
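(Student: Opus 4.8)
The plan is to prove the cycle (i)$\Rightarrow$(ii)$\Rightarrow$(iii)$\Rightarrow$(i). The implication (ii)$\Rightarrow$(iii) is essentially a specialization: a $\Sigma_0$-formula with parameter $z$ is in particular a $\Sigma_1$-formula with parameters in $V_\lambda\cup\{z\}$, so the single cardinal $\lambda<\delta$ that (ii) attaches to $n=1$ and this $z$ already witnesses (iii) for every class $\Ce$ as there (and a cardinal is in particular an ordinal).

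For (iii)$\Rightarrow$(i), let $M$ be a transitive set with $\delta\in M$, and apply (iii) with $z=M$ to the class $\Ce$ of all $\calL_{\in,\dot{P}}$-structures of the form $\langle M,\in,\gamma\rangle$, where $\gamma$ ranges over the ordinals with $\gamma\subseteq M$ and $\dot{P}$ is interpreted by $\gamma$. One checks that $\Ce$ is $\Sigma_0$-definable from $M$ and that for every ordinal $\gamma<\delta$ the structure $\langle M,\in,\gamma\rangle$ lies in $\Ce$ with $\dot{P}$-interpretation $\gamma$ of rank $\gamma$. Thus the ordinal $\lambda<\delta$ furnished by (iii), applied to the structure $\langle M,\in,\lambda\rangle\in\Ce$, yields some $A=\langle M,\in,\gamma'\rangle\in\Ce$ with $\gamma'<\lambda$ together with an elementary embedding $\map{j}{\langle M,\in,\gamma'\rangle}{\langle M,\in,\lambda\rangle}$. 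Since in $A$ the ordinal $\gamma'$ is the unique $y$ with $\forall x\,(\dot{P}(x)\leftrightarrow x\in y)$, and likewise $\lambda$ in the target, elementarity gives $j(\gamma')=\lambda>\gamma'$, so $\map{j}{M}{M}$ is a non-trivial elementary embedding for $\in$ with $\crit{j}\leq\gamma'<\delta$; as $M$ was arbitrary, $\delta$ is proto-Berkeley.

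For (i)$\Rightarrow$(ii), fix $n$ and $z$. I would first record the routine fact that proto-Berkeleyness implies: for every transitive $N$ with $\delta\in N$ and every $a\in N$ there is a non-trivial elementary $\map{k}{\langle N,\in,a\rangle}{\langle N,\in,a\rangle}$ with $\crit{k}<\delta$ and $k(a)=a$ --- apply the definition to the transitive set $N\cup\{N,\{N\},\{N,a\},\langle N,a\rangle\}$, in which $N$ and $a$ are first-order definable, and restrict. Now choose $\theta\in C^{(n+2)}$ with $z,\delta\in V_\theta$ and apply this with $N=V_\theta$, $a=z$, obtaining a non-trivial elementary $\map{j}{V_\theta}{V_\theta}$ with $j(z)=z$ and $\lambda:=\crit{j}<\delta$ (a cardinal). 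To see $\lambda$ works, suppose not: there are $\calL\supseteq\calL_{\in,\dot{P}}$, a $\Sigma_n$-formula and parameters $p\in V_\lambda$ and $z$ defining a class $\Ce$ of $\calL$-structures, and $B\in\Ce$ with $\rank{\dot{P}^B}=\lambda$ into which no $A\in\Ce$ with $\rank{\dot{P}^A}<\lambda$ elementarily embeds. The existence of such a $B$ is a $\Sigma_{n+2}$ assertion about the parameters $z,p,\lambda\in V_\theta$, so by correctness of $V_\theta$ such a $B$ can be found inside $V_\theta$. Then $j\restriction B\in V_\theta$ and $\map{j\restriction B}{B}{j(B)}$ is an elementary embedding; since $j$ fixes $z$ and $p$ we get $j(B)\in\Ce$, and $\rank{\dot{P}^{j(B)}}=j(\lambda)>\lambda$. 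Hence, inside $V_\theta$, the structure $B$ witnesses that $j(B)$ has a $\Ce$-structure of strictly smaller $\dot P$-rank embedding into it; applying the elementarity of $j$ (all relevant parameters being in its range) pulls this statement back to $B$, and correctness of $V_\theta$ transports it to $V$, contradicting the choice of $B$.

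The main obstacle in (i)$\Rightarrow$(ii) is that the structures $B$ occurring in (ii) may be far larger than any fixed $V_\theta$ on which a proto-Berkeley self-embedding is available; this is circumvented by reflecting the \emph{negation} of the desired conclusion down into $V_\theta$, which is exactly what permits the reduction to $B\in V_\theta$. A secondary point, routine but requiring care, is checking that ``$f$ is an elementary embedding of $A$ into $B$'' and ``$\rank{\dot{P}^A}<\lambda$'' are absolute between $V_\theta$ and $V$, which is what pins down the bound $n+2$ on the required degree of correctness of $\theta$; a similarly routine check (via the usual argument that a critical point is a cardinal) is needed to confirm that $\lambda=\crit{j}$ is a cardinal, not merely an ordinal, as demanded in (ii).
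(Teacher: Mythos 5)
Your proof is correct, and the implications (ii)$\Rightarrow$(iii) and (iii)$\Rightarrow$(i) match the paper's argument almost verbatim (the paper also codes a transitive $M$ together with an ordinal predicate into a $\Sigma_0$-definable class and extracts the non-trivial embedding from $j(\gamma')>\gamma'$). Where you genuinely diverge is in (i)$\Rightarrow$(ii), specifically in how the unboundedness of the structures $B$ is handled. The paper invokes {\cite[Lemma 3.4]{MR4022642}} to get, for \emph{every} $\theta$, a minimal critical point $\lambda_\theta<\delta$ of an embedding $\map{j}{V_\theta}{V_\theta}$ fixing $z$, and then applies a proper-class pigeonhole to find one $\lambda$ with $\lambda=\lambda_\theta$ for a proper class of sufficiently correct $\theta$; given $B$, it then chooses $\theta$ \emph{after} $B$ so that $B\in V_\theta$. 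You instead fix a single $\theta\in C^{(n+2)}$ and one embedding, and pull a putative counterexample $B$ \emph{down} into $V_\theta$ by reflecting the $\Sigma_{n+1}$ (your $\Sigma_{n+2}$ bound is a safe overestimate) assertion that a counterexample exists. Both mechanisms work; yours avoids the pigeonhole and the external lemma at the cost of the reflection step, and is arguably more self-contained. Two small points of care, neither a gap: your ad hoc derivation of ``an embedding of $V_\theta$ fixing $z$'' via the transitive set $N\cup\{N,\{N\},\{N,a\},\langle N,a\rangle\}$ needs the observation that every element of $N$ is a member of another element of $N$ in order for $\langle N,a\rangle$ to be the \emph{unique} element not belonging to any element (true for $N=V_\theta$, which is all you use, but not for arbitrary transitive $N$ as your auxiliary claim is phrased); and the check that $\crit{j}$ is a cardinal in $\mathrm{ZF}$, which you correctly flag, goes through by applying $j$ to a putative surjection $\map{f}{\alpha}{\crit{j}}$ and noting $j(f)=f$.
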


\begin{proof}
 Assume that (i) holds and let us prove (ii). So fix a natural number $n$ and a set $z$. 
 By {\cite[Lemma 3.4]{MR4022642}},  for every ordinal $\theta>\delta$  with $z\in V_\theta$, there exists a minimal cardinal  $\lambda_\theta<\delta$ with the property that there exists a non-trivial elementary embedding $\map{j}{V_\theta}{V_\theta}$ with $\crit{j}=\lambda_\theta$  and $j(z)=z$.
It follows that there exists a cardinal $\lambda<\delta$ such that $\lambda=\lambda_\theta$ for a proper class of cardinals $\theta>\delta$ in $C^{(n+2)}$ with $z\in V_\theta$.
 
  Now, fix a first-order language $\calL$ extending $\calL_{\in,\dot{P}}$, let $y\in V_\lambda$, and let $\varphi(v_0,v_1,v_2)$ be a $\Sigma_n$-formula  such that the class $\Ce=\Set{A}{\varphi(A,y,z)}$ consists of $\calL$-structures. Let $B\in\Ce$ be such that  $\rank{\dot{P}^B}=\lambda$. 
  Then there exists a cardinal $\theta>\delta$ in $C^{(n)}$ with $B,z\in V_\theta$ and $\lambda=\lambda_\theta$. 
So there exists a non-trivial elementary embedding $\map{j}{V_\theta}{V_\theta}$ with $\crit{j}=\lambda$  and $j(z)=z$. 
  Moreover, we know that $\varphi(B,y,z)$ holds in $V_\theta$ and therefore the elementarity of $j$ ensures that $\varphi(j(B),y,z)$ holds in $V_\theta$ as well. 
  Thus, we have that $j(B)$ is an  $\calL$-structure with $\rank{\dot{P}^{j(B)}}=j(\lambda)>\lambda=\rank{\dot{P}^B}$ and the restriction map $\map{j\restriction B}{B}{j(B)}$ is an elementary embedding of $\calL$-structures that is an element of $V_\theta$. 
  The elementarity of $j$  then gives  that, in $V_\theta$, there is an $\calL$-structure $A$ with $\rank{\dot{P}^A}<\lambda$ and such   that $\varphi(A,y,z)$ holds, and there exists an elementary embedding $\map{j}{A}{B}$. But then the $\Sigma_n$-correctness of $V_\theta$ yields that $\varphi(A,y,z)$ holds in $V$, and so $A$ is an element of $\Ce$. 
  
  Now, assume that (iii) holds and let us prove (i). So, fix a transitive set $M$ with $\delta\in M$. Define $\Ce$ to be the class of all $\calL_{\in,\dot{P}}$-structures $A$ with universe $M$ and the property that $\dot{P}^A$ is an ordinal in $M$. Then $\Ce$ is definable by a $\Sigma_0$-formula with parameter $M$. 
  By our assumption, there exists an ordinal $\lambda<\delta$ with the property that for every  $B$ in $\Ce$ with $\rank{\dot{P}^B}=\lambda$, there exists a structure $A$ in $\Ce$ with $\rank{\dot{P}^A}<\lambda$ and an elementary embedding of $A$ into $B$. 
  
  Let $B$ denote the unique structure in $\Ce$ with $\dot{P}^B=\lambda$. %
  Then there exists a structure $A\in\Ce$, an ordinal $\eta<\lambda$,  and an elementary embedding $\map{j}{M}{M}$ such that $\dot{P}^A=\eta$ and the map $j$ is an elementary embedding of $A$ into $B$. 
  But then $\eta\in\dot{P}^B\setminus\dot{P}^A$ and therefore the elementarity of $j$ implies that $\eta<\lambda\leq j(\eta)\in M\cap\On$. 
  Thus, $j$ is a non-trivial elementary embedding with critical point less than $\delta$. 
\end{proof}

Since the least proto-Berkeley cardinal is a Berkeley cardinal, an immediate consequence of the proposition above is that the least Berkeley cardinal is characterized by being the least cardinal for which (ii) or (iii) above hold.

%%%%%%%%%%%%%%%%%%%%%%%%%%%%%%%%%
%%%%%%%%%%%%%%%%%%%%%%%%%%%%%%%%%

%\section{A failure of Woodin's HOD Conjecture}
\section{The consistency of an exacting cardinal above an extendible cardinal}
\label{section6}

%We define the class theory $\mathrm{ZF}_2$ as in {\cite[Section 1.1]{MR4022642}}. This is the setting in which we want to consider \emph{super Reinhardt cardinals}, {i.e.,} cardinals $\kappa$ with the property that for every ordinal $\alpha$, there exists a non-trivial elementary embedding $\map{j}{V}{V}$ with $\crit{j}=\kappa$ and $j(\kappa)>\alpha$. It is then easy to see that if $\kappa$ is a super Reinhardt cardinal, then $V_\kappa$ is an elementary submodel of $V$ (with respect to first-order formulas). Moreover, note that, if $\map{j}{V}{V}$ is a non-trivial elementary embedding with critical point $\kappa$ and $V_\kappa\prec V$, then we also have $V_{j^n(\kappa)}\prec V$ for all natural numbers $n$. Thus, if $\lambda$ is the supremum of the critical sequence of an elementary embedding $\map{j}{V}{V}$ with $V_{\crit{j}}\prec V$, then $V_\lambda\prec V$ also holds. 

%

Let us recall the notion of $C^{(n)}$-Reinhardt cardinal:

\begin{definition}[${\rm NBG}-{\rm AC}$]
 A cardinal $\kappa$ is \emph{$C^{(n)}$-Reinhardt} if there is a non-trivial elementary embedding $\map{j}{V}{V}$ with critical point $\kappa\in C^{(n)}$.   
\end{definition}

\begin{lemma}[${\rm NBG}-{\rm AC}$]
 Let $\map{j}{V}{V}$ be an elementary embedding witnessing  that a cardinal $\kappa$ is $C^{(n)}$-Reinhardt and let $\seq{\kappa_m}{m<\omega}$ be the critical sequence of $j$. Then, we have $\kappa_m \in C^{(n)}$ for all $m<\omega$ and thus also $\sup_{m<\omega}\kappa_m \in C^{(n)}$. 
\end{lemma}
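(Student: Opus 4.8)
The plan is a short induction on $m<\omega$ exploiting the elementarity of $j$ together with the uniform definability of the classes $C^{(n)}$. First recall that, for each fixed natural number $n\geq 1$, the class $C^{(n)}$ is definable in $V$ by a single formula $\psi_n(v)$ (in fact $\Pi_n$): using the partial satisfaction predicate for $\Sigma_n$-formulas — which exists as a definable class since $n$ is fixed — the assertion ``$\alpha\in C^{(n)}$'' is equivalent to ``for every $\Sigma_n$-formula $\varphi$ and every tuple $\vec a\in V_\alpha$, $V_\alpha\models\varphi(\vec a)$ if and only if $\varphi(\vec a)$''. I would also invoke the standard fact that $C^{(n)}$ is a closed unbounded class of ordinals; in particular $C^{(n)}$ is closed under suprema of its subsets.

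Next I would prove by induction on $m<\omega$ that $\kappa_m\in C^{(n)}$. The base case $\kappa_0=\kappa\in C^{(n)}$ holds by hypothesis, i.e.\ $V\models\psi_n(\kappa_0)$. For the inductive step, assume $\kappa_m\in C^{(n)}$, so $V\models\psi_n(\kappa_m)$. Since $\map{j}{V}{V}$ is elementary — recall that, per the convention fixed immediately after the definition of Reinhardt cardinal, ``$j$ is elementary'' already means the full elementarity schema (equivalently, $j$ is cofinal and $\Delta_0$-elementary) — applying $j$ gives $V\models\psi_n(j(\kappa_m))$, that is $V\models\psi_n(\kappa_{m+1})$. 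Hence $\kappa_{m+1}\in C^{(n)}$, and the induction closes. (Here $\kappa_0<\kappa_1<\kappa_2<\cdots$ is strictly increasing: $\kappa_0=\crit{j}<j(\kappa_0)=\kappa_1$, and applying $j$ repeatedly preserves the inequality.)

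Finally, set $\lambda=\sup_{m<\omega}\kappa_m$. Since $\lambda$ is a limit of elements of $C^{(n)}$ and $C^{(n)}$ is closed, we conclude $\lambda\in C^{(n)}$, as desired. There is essentially no obstacle here; the only points worth a line of care are that the argument uses $j$ as a \emph{fully} elementary class embedding (unproblematic, as the notion of a Reinhardt-type embedding already builds this in via cofinality and $\Delta_0$-elementarity) and that $C^{(n)}$ is definable by a single formula for the externally fixed $n$ (unproblematic for the same reason that there is a $\Sigma_n$-satisfaction class). If one prefers to avoid citing closedness of $C^{(n)}$ for the last step, one can instead verify $V_\lambda\prec_{\Sigma_n}V$ directly by the usual reflection argument, using that any $\Sigma_n$-assertion with parameters in $V_\lambda$ has all its parameters already in some $V_{\kappa_m}$ with $V_{\kappa_m}\prec_{\Sigma_n}V$.
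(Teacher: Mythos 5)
Your proof is correct and follows essentially the same route as the paper's: an induction on $m$ using the parameter-free definability of $C^{(n)}$ together with the elementarity of $j$ to push $\kappa_m\in C^{(n)}$ to $\kappa_{m+1}=j(\kappa_m)\in C^{(n)}$, and then closedness of $C^{(n)}$ for the supremum. The paper's proof is just a terser version of the same argument.
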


\begin{proof}
By a straightforward induction, the fact that the class $C^{(n)}$ is definable by a formula without parameters and the elementarity of $j$ imply that, if $\kappa_m$ is an element of $C^{(n)}$ for some $m<\omega$, then $\kappa_{m+1}=j(\kappa_m)$ is an element of $C^{(n)}$. 
% . Suppose we have shown that $\kappa_n \in C^{(n)}$ and, towards a contradiction, suppose that $\kappa_{n+1} \not \in C^{(n)}$. Thus, there is some parameter $x \in V_{\kappa_{n+1}}$ and some $\Sigma_n$ formula $\varphi$ such that $\varphi(x)$ holds but $V_{\kappa_{n+1}} \not\models\varphi(x)$. This is expressible using $\kappa_{n+1}$ as a parameter, so by taking the preimage by $j$ we find some parameter $x \in V_{\kappa_{n}}$ such that $\varphi(x)$ holds but $V_{\kappa_{n}} \not\models\varphi(x)$, which is a contradiction.
\end{proof}

Let us also recall the definitions of 
 %supercompact, extendible, and $n$-huge cardinals 
 standard large cardinal notions in the absence of the Axiom of Choice. First, following Woodin {\cite[Definition p. 323]{WOEM1}}, we fix the following form of inaccessibility (see \cite{MR2310342} for a discussion of inaccessibility in the absence of the Axiom of Choice):

 \begin{definition}[ZF]
    A cardinal $\kappa$ is \emph{strongly inaccessible} if for every ordinal $\alpha<\kappa$, there is no function $\map{f}{V_\alpha}{\kappa}$ whose  range is unbounded in $\kappa$.   
\end{definition}

A standard argument then shows that the strong inaccessibility of a cardinal $\kappa$ is preserved by forcing with partial order that are elements of $V_\kappa$.  
 
 Next, again following Woodin {\cite[Definition 220]{WOEM1}}, we define:

\begin{definition}[ZF]
    A cardinal $\kappa$ is \emph{supercompact} if for every ordinal $\alpha>\kappa$, there is an ordinal $\beta>\alpha$, a transitive set $N$ with ${}^{V_\alpha}N\subseteq N$ and an elementary embedding $\map{j}{V_\beta}{N}$ with $\crit{j}=\kappa$ and $j(\kappa)>\alpha$.  
\end{definition}

It is easy to see that, in  ${\rm ZFC}$, this definition is equivalent to the standard definition of supercompactness. Moreover, standard arguments show that ${\rm ZF}$   proves that supercompact cardinals are strongly inaccessible elements of $C^{(2)}$. 

Finally, again in ${\rm ZF}$, a cardinal $\kappa$ is \emph{extendible} if for all $\alpha>\kappa$ there is an elementary embedding $\map{j}{V_\alpha}{V_\beta}$ for some $\beta$, such that $\kappa =\crit{j}$ and $j(\kappa)>\alpha$. It is easily seen that ${\rm ZF}$ proves that extendible cardinals are supercompact. 
%
%, now in ${\rm NBG}-{\rm AC}$, a cardinal $\kappa$ is \emph{$n$-huge} if there a transitive class $M$ and an elementary embedding $\map{j}{V}{M}$ such that $\crit{j}=\kappa$ and $^{j^n(\kappa)}M\subseteq M$.  Thus, a cardinal is \emph{huge} if it is $1$-huge. (Under AC, $n$-huge cardinals have equivalent formulations in terms of the existence of ultrafilters \cite[Theorem 24.8]{Kan:THI}.) 
%
Moreover, standard arguments show that  ${\rm NBG}-{\rm AC}$ proves that  every cardinal $\lambda$ that is the supremum of the critical sequence of a non-trivial elementary embedding $\map{j}{V}{V}$ is a limit of cardinals that are extendible in $V_\lambda$. In particular, every element of $C^{(3)}$ with this property is a limit of extendible cardinals.  

\medskip

The goal of this section will be to prove the following result: 

\begin{theorem}[${\rm NBG}-{\rm AC}$]
\label{theorem:ConsUltraexactAboveSupercompact}
    If there  is a $C^{(3)}$-Reinhardt cardinal and a supercompact cardinal greater than the supremum of the critical sequence, then there exists a set-sized model of $\mathrm ZFC$ and the statement 
    \begin{center}
        ``There is an ultraexacting cardinal that is a limit of extendible cardinals". 
    \end{center} 
\end{theorem}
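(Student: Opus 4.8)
The plan is to combine the reflection machinery of \S\ref{SectExact} and \S\ref{Ultraexact} with a coding forcing, in the spirit of the proof of Theorem \ref{theorem:UltraexactFromI0}, and to use the supercompact cardinal to guarantee that extendibility descends to the model we build. Work in ${\rm NBG}-{\rm AC}$ and fix an elementary embedding $\map{j}{V}{V}$ witnessing that some cardinal $\kappa$ is $C^{(3)}$-Reinhardt, with critical sequence $\seq{\kappa_m}{m<\omega}$ and $\lambda=\sup_{m<\omega}\kappa_m$. By the lemma above, $\lambda\in C^{(3)}$; since $j(\lambda)=\lambda$ we have $j(V_{\lambda+1})=V_{\lambda+1}$, so $\map{j\restriction L(V_{\lambda+1})}{L(V_{\lambda+1})}{L(V_{\lambda+1})}$ is a non-trivial I0-embedding with critical sequence $\seq{\kappa_m}{m<\omega}$ cofinal in $\lambda$. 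By the fact recalled above, $\lambda$ is a limit of cardinals that are extendible in $V$; in particular there are cofinally many $\delta<\lambda$ that are extendible in $V$, hence also extendible in $V_\lambda$. Finally, let $\theta>\lambda$ be the given supercompact cardinal.

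Next I would force with $\mathbb{P}=\Add{\lambda^+}{1}$ over $V$ and pass to $V[G]$. Since $\mathbb{P}$ is ${<}\lambda^+$-closed, $V_{\lambda+1}^{V[G]}=V_{\lambda+1}$; since $\mathbb{P}\in V_\theta$, a standard small-forcing argument (valid over $\mathsf{ZF}$) shows $\theta$ remains supercompact in $V[G]$. Put $M:=L(V_{\lambda+1},G)$. Applying Theorem \ref{theorem:UltraexactFromI0} with $E=\emptyset$ to the I0-embedding $j\restriction L(V_{\lambda+1})$ — whose proof goes through over $\mathsf{ZF}$, using the well-ordering of $V_\lambda$ of order-type $\lambda$ derived from the critical sequence as in the proof of Theorem \ref{theorem:ExactFromI0} — the model $M$ satisfies $\mathsf{ZFC}$ and, in $M$, for every natural number $n>0$ the cardinal $\kappa$ is parametrically $n$-ultraexact for $\seq{\kappa_{m+1}}{m<\omega}$. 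By Lemma \ref{lemma:EquivalentUltra} and the corollaries following it, applied inside $M$, this produces a $1$-ultraexact embedding at $\lambda$, so $M\models$ ``$\lambda$ is an ultraexacting cardinal''. Note also that $M$ is transitive and contains $V_{\lambda+1}$, so $V_\lambda^M=V_\lambda$ and $M$ correctly computes the complete theory of the structure $\langle V_\lambda,\in\rangle$.

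It then remains to see that $M\models$ ``$\lambda$ is a limit of extendible cardinals''. For each of the cofinally many $\delta<\lambda$ above, $\delta$ is extendible in $V_\lambda$, and since $M$ computes $\langle V_\lambda,\in\rangle$ and its theory correctly, $M\models$ ``$\delta$ is extendible in $V_\lambda$''. Because ``$\delta$ is extendible'' is, uniformly in $\delta$, a $\Pi_3$-property, it suffices to show that $\lambda$ is $\Sigma_3$-correct in $M$, i.e. $V_\lambda^M\prec_{\Sigma_3}M$: granting this, each such $\delta$ is extendible in $M$, and hence $\lambda$ is a limit of extendible cardinals there. This is precisely the step where the supercompact $\theta>\lambda$ is needed: since $\lambda\in C^{(3)}$ holds in $V$, hence in $V[G]$, and $\theta$ is supercompact in $V[G]$, one wants to propagate $\Sigma_3$-correctness of $\lambda$ from $V[G]$ into the inner model $M=L(V_{\lambda+1},G)$, exploiting that $M$ is correct about $V_{\lambda+1}$ and $G$ and that all the relevant structure at $\lambda$ lies below $\theta$. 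I expect this to be the main obstacle, since $M$ is very far from computing $V_\alpha$ correctly for $\alpha>\lambda$, and one must argue carefully that the $\Sigma_3$-theory of $M$ with parameters in $V_\lambda$ nonetheless agrees with that of $V_\lambda$; an equivalent phrasing of the same difficulty is that one has to ensure the extendible cardinals below $\lambda$ are not destroyed by the coding forcing and survive into $M$.

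Finally, to obtain a genuine set-sized model: since $\theta$ is supercompact in $V[G]$ and $V_{\lambda+1},G\in V_\theta^{V[G]}$, the sharp $(V_{\lambda+1},G)^{\#}$ exists in $V[G]$, so there is an ordinal $\alpha$ with $L_\alpha(V_{\lambda+1},G)\prec L(V_{\lambda+1},G)=M$; then $L_\alpha(V_{\lambda+1},G)$ is a set-sized model of $\mathsf{ZFC}$ in which $\lambda$ is an ultraexacting cardinal that is a limit of extendible cardinals. Since ``there exists a set model of $T$'' is equivalent over a weak base theory to $\mathrm{Con}(T)$, which is arithmetic and hence absolute between $V$ and $V[G]$, such a set model already exists in $V$, as required.
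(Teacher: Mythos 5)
There is a genuine gap, and it is exactly at the step you flag as ``the main obstacle'': it is not merely difficult but impossible to recover extendible cardinals below $\lambda$ inside $M=L(V_{\lambda+1},G)$. Above $\Theta^{L(V_{\lambda+1},G)}$ this model is an $L$-like structure built over a set, and fine-structural arguments give $\square_\kappa$ in $M$ for all sufficiently large $\kappa$; by Solovay's theorem, this rules out strongly compact --- hence supercompact, hence extendible --- cardinals in $M$ altogether. The issue is not $\Sigma_3$-correctness of $\lambda$ in $M$ or the survival of the witnesses in $V_\lambda$: even granting that $M$ computes $\langle V_\lambda,\in\rangle$ and its theory perfectly, extendibility of $\delta<\lambda$ in $M$ requires embeddings $\map{i}{V_\alpha^M}{V_\beta^M}$ for arbitrarily large $\alpha$, and for $\alpha$ far above $\lambda$ the levels $V_\alpha^M$ carry none of the large-cardinal structure of $V_\alpha$. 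Your hope of ``propagating'' the supercompactness of $\theta$ into $M$ cannot work for the same reason: $M$ discards essentially all of $V$ above $V_{\lambda+1}$. The same objection applies to the set-sized model $L_\alpha(V_{\lambda+1},G)$ obtained at the end.

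This is precisely why the paper does not pass to an inner model of the form $L(V_{\lambda+1},G)$. Instead it uses Woodin's iteration $\QQQ\subseteq V_\delta$ (Theorem \ref{theorem:WoodinForceZFC}), where $\delta$ is the supercompact cardinal above $\lambda$: this forcing arranges that the rank-initial segment $V[G]_\delta$ of the full generic extension is itself a model of $\mathsf{ZFC}$, while retaining the entire structure of $V$ below $\delta$ so that the extendibility embeddings of the cardinals $\lambda_m<\lambda$ can be lifted (Clause (iii) of that theorem), and so that the homogeneity and factorization properties of $\QQQ$ at $\lambda$ yield a $1$-ultraexact embedding at $\lambda$ in $V[G]_\delta$ (Lemma \ref{lemma:WoddinsForcingUltraexact}). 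The role of the supercompact cardinal is thus not to certify correctness of an inner model but to make Woodin's forcing available and to serve as the height of the final set-sized model; your proposal assigns it a job it cannot do. The first half of your argument (extracting an ultraexacting cardinal from the Reinhardt embedding after a mild forcing) is sound in spirit, but the mechanism for producing extendible cardinals below it must go through a construction that preserves the rank hierarchy of $V$, not one that replaces it.
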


%A short argument shows that super Reinhardt cardinals are supercompact (see {\cite[Remark 221.(3)]{WOEM1}}). In combination with earlier remarks,  if $\kappa$ is a super Reinhardt cardinal and $\alpha>\kappa$, then there is a supercompact cardinal $\delta$ greater than  $\alpha$  and such that $V_\delta\prec V$. This fact will be used later, in the proof of Theorem \ref{theorem:ConsUltraexactAboveSupercompact}.

Recall that, given an infinite cardinal  $\lambda$, the \emph{Dependent Choice principle} $\lambda$-DC states that for every non-empty set $D$ and every binary relation $R$ with the property that for all $s\in{}^{{<}\lambda}D \setminus \{ \emptyset \}$, there exists $d\in D$ with $s \mathrel{R} d$,  there exists a function $\map{f}{\lambda}{D}$ with the property that ${(f\restriction\alpha)} \mathrel{R} f(\alpha)$ holds for all $\alpha<\lambda$.  It is easy to see that the Axiom of Choice is equivalent to the statement that $\lambda$-DC holds for every cardinal $\lambda$.
 In addition, given an infinite cardinal $\kappa$, we let ${<}\kappa$-DC denote the statement that $\lambda$-DC holds for every infinite cardinal $\lambda<\kappa$. Another easy argument, in ZF, then shows that if $\kappa$ is a singular cardinal with the property that ${<}\kappa$-DC holds, then $\kappa$-DC holds. 

%Also recall that an embedding $e:\mathbb{Q}\to \mathbb{P}$, where $\mathbb{P}$ and $\mathbb{Q}$ are partial orderings, is  a \emph{projection} if
%the preimage of any dense open subset of $\mathbb{P}$ is predense in $\mathbb{Q}$. 
%It can be easily verified that 
 %$e:\mathbb{Q}\to \mathbb{P}$ is a projection  if and only if, writing $\dot{G}$ for the standard name for the $\mathbb{Q}$-generic filter over $V$,  $\mathbb{Q}$ forces that  the set $\{ p\in \mathbb{P}: \exists q\in \dot{G}\; (e(q)\leq p)\}$ is a $\mathbb{P}$-generic filter over $V$.

 In order to prove Theorem \ref{theorem:ConsUltraexactAboveSupercompact}, we will use the following version of a theorem of Woodin  (see {\cite[Theorem 226]{WOEM1}}):

\begin{theorem}[Woodin, ZF]\label{theorem:WoodinForceZFC}
  If $\delta$ is  a supercompact cardinal, then there is a partial order $\QQQ\subseteq V_\delta$ such that the following hold: 
  \begin{enumerate}
      \item $\QQQ$ is  homogeneous. 

      \item $\QQQ$ is $\Sigma_3$-definable, without parameters, over $V_\delta$. 

      \item\label{item:Woodin3} If $G$ is $\QQQ$-generic over $V$, then $V[G]_\delta$ is a model of ZFC, and every supercompact (respectively, extendible) cardinal  smaller than $\delta$ in $V$ is supercompact (respectively, extendible) in $V[G]_\delta$. 

      \item\label{item:Woodin4} If $\lambda<\delta$ is a cardinal in $C^{(3)}$ and $\map{j}{V_{\lambda+1}}{V_{\lambda+1}}$ is a non-trivial elementary embedding such that $\lambda$ is the supremum of the critical sequence of $j$, then there is a complete suborder $\PPP$ of $\QQQ$ with  $\PPP\subseteq V_{\lambda+1}$ and a $\PPP$-name $\dot{\RRR}$ for a partial order such that the following statements hold: 
      \begin{enumerate}
          %\item $\PPP$ is homogeneous. 

          \item\label{item:Woodin4-2} $\PPP$ is homogeneous and $\Sigma_3$-definable, without parameters, over $V_{\lambda+1}$. 

          \item\label{item:Woodin4-3} There is a dense embedding of $\QQQ$ into $\PPP*\dot{\RRR}$ that maps every condition $p$ in $\PPP$ to $(p,\mathbbm{1}_{\dot{\RRR}})$.  %$\QQQ \cong \PPP*\dot{\RRR}$. 

          \item\label{item:Woodin4-4} $\mathbbm{1}_{\PPP}\Vdash ``\textit{$\dot{\RRR}$ is  homogeneous and ${<}\check{\lambda}^+$-closed}"$

          %\item\label{item:Woodin4-5} There is a dense embedding of $\QQQ$ into  $\PPP*\dot{\RRR}$ that induces $e$. 

          \item\label{item:Woodin4-6} $\mathbbm{1}_{\PPP}\Vdash \text{$\check{\lambda}$-DC}$

          \item\label{item:Woodin4-7} There is a condition $p$ in $\PPP$ with the property that whenever $G_0$ is $\PPP$-generic over $V$ with $p\in G_0$, then $j[G_0]\subseteq G_0$ holds. 
      \end{enumerate}
  \end{enumerate}
\end{theorem}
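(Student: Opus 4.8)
The plan is to prove this by adapting Woodin's construction in {\cite[Theorem 226]{WOEM1}}. I would realize $\QQQ$ as a homogeneous, $\Sigma_3$-definable \emph{generic-wellordering} forcing: a suitably supported product, over the ordinals $\alpha<\delta$, of forcings $\QQQ_\alpha$ each of which generically adjoins a wellordering of $V_\alpha$ (in the choiceless ground model $V_\alpha$ need not be wellorderable, so the $\QQQ_\alpha$ must be engineered, as in Woodin's proof, so as to be highly closed and weakly homogeneous simultaneously). The support is taken to be bounded below every strongly inaccessible cardinal $\le\delta$, so that $\QQQ$ is ``small'' below each inaccessible and has a highly closed tail above it. Weak homogeneity of each factor — obtained from the action of the definable rank-preserving symmetries on the coding of wellorderings — passes to the supported product, giving (i); and since the assignment $\alpha\mapsto\QQQ_\alpha$ is uniformly definable and the support condition, built from the ($\Pi_1$) inaccessibility predicate, is $\Pi_2$ over $V_\delta$, membership in $\QQQ$ and its order are $\Sigma_3$ without parameters, giving (ii). Here I would use that the supercompact $\delta$ lies in $C^{(2)}$ and is strongly inaccessible, so $V_\delta$ computes these definitions correctly.

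For (iii), the generic $G$ yields a wellordering of each $V_\alpha^{V[G]}$ for $\alpha<\delta$, and hence a wellordering of $V[G]_\delta$ stratified by rank; thus $V[G]_\delta\models\mathsf{AC}$, and as set forcing preserves the $\mathsf{ZF}$-axioms and $\delta$ remains strongly inaccessible, $V[G]_\delta\models\mathsf{ZFC}$. To preserve a supercompact (resp. extendible) $\kappa<\delta$, I would factor $\QQQ\cong\QQQ{\restriction}\kappa\times\QQQ_{\geq\kappa}$ with $\QQQ{\restriction}\kappa\in V_\kappa$ and the tail $\QQQ_{\geq\kappa}$ sufficiently closed, and then run the standard Silver/Laver master-condition lifting: the small bottom part is absorbed by the target of the relevant embedding and the closed tail introduces no new small subsets, so the witnessing embeddings lift to $V[G]_\delta$. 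The Easton-style support is exactly what guarantees the tail closure needed at every large cardinal below $\delta$.

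For (iv), fix $\lambda<\delta$ in $C^{(3)}$ and $\map{j}{V_{\lambda+1}}{V_{\lambda+1}}$ with critical sequence $\seq{\lambda_m}{m<\omega}$ cofinal in $\lambda$; in particular $\mathrm{cf}(\lambda)=\omega$, so $\lambda$ carries no support restriction. I would set $\PPP=\QQQ{\restriction}\lambda=\QQQ\cap V_{\lambda+1}$ and let $\dot\RRR$ name the tail $\QQQ_{\geq\lambda}$. Since $\lambda\in C^{(3)}$, the $\Sigma_3$-definition of $\QQQ$ reflects to $V_{\lambda+1}$, so $\PPP$ is homogeneous and $\Sigma_3$-definable over $V_{\lambda+1}$ without parameters, giving (a); the product factorization gives the dense embedding of (b); and the engineered closure of the tail gives that $\dot\RRR$ is homogeneous and ${<}\lambda^+$-closed, giving (c). For (d) I would argue that, once $\PPP$ has wellordered $V_\lambda$, the ground-model dependent choice available below $\lambda$ upgrades to $\lambda$-DC in the $\PPP$-extension. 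The crux is (e): because $\PPP$ is definable over $V_{\lambda+1}$ and $j$ is elementary, $j(\PPP)=\PPP$ and $j{\restriction}\PPP$ is a self-embedding of $\PPP$; I would build a master condition $p$ by recursion along the critical sequence, choosing $p$ below $\lambda_0=\crit{j}$ freely and setting the block of $p$ on $[\lambda_{m+1},\lambda_{m+2})$ to be the $j$-image of its block on $[\lambda_m,\lambda_{m+1})$, the union being a legitimate condition precisely because $\mathrm{cf}(\lambda)=\omega$. This $p$ satisfies $p\leq j(p)$, and a density argument below $p$ — forcing the generic wellordering of $V_\lambda$ to be preserved by $j{\restriction}V_\lambda$ — then yields $j[G_0]\subseteq G_0$ for every generic $G_0\ni p$.

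The step I expect to be the main obstacle is the $\mathsf{ZF}$ engineering that makes all of (a)--(e) hold at once: designing the factors $\QQQ_\alpha$ so that they generically wellorder the non-wellorderable sets $V_\alpha$ while remaining weakly homogeneous, definable, and closed enough that the tail $\dot\RRR$ is genuinely ${<}\lambda^+$-closed; and then, on the bottom part $\PPP$, simultaneously securing $\lambda$-DC in (d) and a condition $p$ below which the generic is truly $j$-invariant in (e) — that is, showing the $j$-compatible wellorderings of $V_\lambda$ remain dense under the recursion, not merely that $j(p)\in G_0$. Balancing these competing demands without the Axiom of Choice is precisely the delicate point of Woodin's theorem, and I would follow his verification closely there.
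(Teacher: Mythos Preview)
Your proposal takes a genuinely different architecture from the paper: you build $\QQQ$ as an Easton-supported \emph{product} of wellordering forcings, whereas Woodin's construction (and the paper's sketch) uses a two-step \emph{iteration} $\seq{\QQQ_\beta}{\beta<\delta}$ along an adaptively defined sequence $\seq{\kappa_\beta}{\beta<\delta}$ of regular cardinals, with $\QQQ_{\beta+1}=\QQQ_\beta*\dot{\QQQ}^{\kappa_{\beta+1}}_{\kappa_\beta}$ where $\QQQ^\eta_\gamma$ is a collapse-type forcing of small partial functions. The crucial feature of the paper's iteration is that the cardinals $\kappa_\beta$ are chosen so that $\QQQ_\beta$ forces ${<}\kappa_\beta$-DC; this is how Dependent Choice, and ultimately the Axiom of Choice in $V[G]_\delta$, is manufactured stage by stage from a choiceless ground model. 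Your product approach loses this mechanism entirely.

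This leads to a genuine gap in your treatment of (d). You write that ``once $\PPP$ has wellordered $V_\lambda$, the ground-model dependent choice available below $\lambda$ upgrades to $\lambda$-DC,'' but wellordering $V_\lambda$ does not yield $\lambda$-DC: the principle $\lambda$-DC concerns $\lambda$-sequences through arbitrary sets and relations, not just those in $V_\lambda$. In Woodin's argument this is handled by the iterative design, where at each step one forces a surjection from a small ordinal onto a rank-initial segment of the extension, and then verifies (nontrivially, as in \cite[Theorem~225]{WOEM1}) that the next stage of DC holds. A second, related concern is that in ZF an infinite product need not be ${<}\lambda^+$-closed even when each factor is, since finding a lower bound for a descending sequence requires choosing coordinates; the iteration avoids this by working over extensions where progressively more DC is available. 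Your concluding paragraph correctly flags the ZF engineering as the main obstacle, but the architecture you have chosen makes that obstacle insurmountable rather than merely delicate: you should switch to an iteration and track DC explicitly, as the paper does.
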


\begin{proof}[Sketch of the proof]
For each pair $(\gamma,\eta)$ of regular infinite cardinals such that $\gamma <\eta$, we let  $\QQQ^\eta_\gamma$ denote the partial ordering of all partial functions $$\pmap{p}{\gamma \times \eta}{V_\eta}{part}$$ with domain of cardinality less than $\gamma$ and the property that $p(\alpha ,\beta)\in V_{1+\beta}$ holds for all $(\alpha ,\beta)$ in the domain of $p$. The ordering is by extension, {i.e.,} we have $p\leq_{\QQQ^\eta_\gamma} q$ if and only if $q\subseteq p$. 
 The following statements are easily checked:
  \begin{enumerate}
   \item $\QQQ^\eta_\gamma$ is ${<}\gamma$-closed.  
   
   \item For all $0<\beta <\eta$, forcing with $\QQQ^\eta_\gamma$ adds a function from $\gamma$ onto $V_{\beta}$. 
   
   \item $\QQQ^\eta_\gamma$ is definable in $V_\eta$ by a    $\Sigma_1$-formula  with  parameter $\gamma$. 
  \end{enumerate}

  The partial order $\QQQ$ is the direct limit of an iteration $\seq{\QQQ_\beta}{\beta <\delta}$, defined relative to a sequence $\seq{\kappa_\beta}{\beta <\delta}$ of regular cardinals that is cofinal in $\delta$, where $\QQQ_0=\emptyset$ and $\kappa_0$ is the least regular cardinal $\mu$ such that ${<}\mu$-DC holds and $\mu$-DC fails;   and for every  $\beta <\delta$, we have $\QQQ_{\beta +1}=\QQQ_\beta \ast \dot{\QQQ}^{\kappa_{\beta +1}}_{\kappa_\beta}$, where $\kappa_{\beta +1}$ is the least inaccessible cardinal $\nu$ greater than $\kappa_\beta$ such that if $\QQQ_\beta$ forces ${<}\kappa_\beta$-DC to hold, then $\QQQ_{\beta +1}$ forces ${<}\nu$-DC to hold (see \cite[Theorem 225]{WOEM1} for details). If $\beta$ is a limit ordinal and $\gamma=\sup_{\alpha <\beta}\kappa_\alpha$ is strongly inaccessible, then $\kappa_\beta =\gamma$ and $\QQQ_\beta$ is the direct limit of the iteration up to $\beta$. Otherwise, the inverse limit is taken, and we set $\kappa_\beta =\gamma^+$. The iteration is well-defined, assuming that $\kappa_0$ exists and is less than $\delta$. It is then easily seen that $\QQQ$ is  homogeneous, definable in $V_\delta$ by a $\Sigma_3$-formula  without parameters,  it forces ${<}\delta$-DC to hold, and preserves the strong inaccessibility of $\delta$. Hence, if $G$ is $\QQQ$-generic over $V$, then $V[G]_\delta$ is a model of $ZFC$.
 
 Now suppose $\delta_0 <\delta$ is a supercompact cardinal, and let us show it is supercompact in $V[G]_\delta$.  Woodin  (see {\cite[Lemma 222]{WOEM1}}) showed that there exist $\bar{\delta}_0 < \bar{\gamma} < \delta_0 <\gamma <\delta$, with $\kappa_\gamma =\gamma$, and $\bar{\QQQ}$, with an elementary embedding 
$$\map{i}{V_{\bar{\gamma}+\omega}}{V_{\gamma +\omega}}$$ 
having critical point $\bar{\delta}_0$ and such that $i(\bar{\delta}_0)=\delta_0$ and $i(\bar{\QQQ})=\QQQ_\gamma$. As $\bar{\QQQ}\restriction \bar{\delta}_0 =\QQQ_{\bar{\delta}_0}$, the map $i$ lifts to an elementary embedding 
$$\map{i}{V_{\bar{\gamma}+\omega}[G\restriction \bar{\delta}_0]}{V_{\gamma+\omega}[G\restriction \delta_0]}.$$ 
Since $\kappa_\gamma =\gamma$, and $V_{\bar{\gamma}}$ is sufficiently correct in $V$, it follows that $\bar{\QQQ}=\QQQ_{\bar{\gamma}}$. Moreover, we have $\QQQ_\gamma =\QQQ_{\delta_0}\ast \dot{\RRR}$, where $\dot{\RRR}$ is forced by $\QQQ_{\delta_0}$ to be homogeneous and ${<}\delta_0$-closed in $V_{\gamma +\omega}[\dot{G}\restriction \delta_0]$. Let $\RRR=\dot{\RRR}[G\restriction \delta_0]$, and let $\bar{\RRR}\in V_{\bar{\gamma}+\omega}[G\restriction \bar{\delta}_0]$ be such that $i(\bar{\RRR})=\RRR$. So, we have $V[G\restriction \bar{\gamma}]=V[G\restriction \bar{\delta}_0][g]$, where $g\subseteq \bar{\RRR}$ is $V[G\restriction \bar{\delta}_0]$-generic. As $\RRR$ is ${<\delta_0}$-closed in $V[G\restriction \delta_0]$, there is a condition $p\in \RRR$ below $i(r)$, for all $r\in g$. By homogeneity, we may assume that $p\in G$. Therefore, the map $i$ lifts to an elementary embedding 
$$\map{i}{V_{\bar{\gamma}+\omega}[G\restriction \bar{\gamma}]}{V_{\gamma +\omega}[G\restriction \gamma]}.$$ 
Since $\kappa_{\bar{\gamma}}=\bar{\gamma}$ and $\kappa_\gamma =\gamma$, we have that $V_{\bar{\gamma}}[G\restriction \bar{\gamma}]=V[G]_{\bar{\gamma}}$ and $V_\gamma[G\restriction \gamma]=V[G]_\gamma$. Thus, in $V[G]$, for each $\gamma$ such that $\delta_0 <\gamma <\delta$, there exist $\bar{\delta}_0, \bar{\gamma} <\delta_0$  and an elementary embedding 
$$\map{i}{V[G]_{\bar{\gamma}}}{V[G]_\gamma}$$
with critical point $\bar{\delta}_0$ such that $i(\bar{\delta}_0)=\delta_0$. This shows that $\delta_0$ is supercompact in $V[G]_\delta$. 

The proof that every extendible cardinal smaller than $\delta$ in $V$ is extendible in $V[G]_\delta$ is similar, and easier. This proves (iii). 

To prove (iv), assume $\lambda<\delta$ is a cardinal in $C^{(3)}$ and $\map{j}{V_{\lambda+1}}{V_{\lambda+1}}$ is a non-trivial elementary embedding with the property that $\lambda$ is the supremum of the critical sequence of $j$. Since $\lambda \in C^{(3)}$ and the supercompactness of $\delta$ implies that $\delta \in C^{(2)}$, we have that $V_\lambda \prec_{\Sigma_3} V_\delta$, and this implies that $\seq{\QQQ_\beta }{\beta <\lambda}\subseteq V_\lambda$. Moreover,  since $\lambda$ has countable cofinality, it follows that $\QQQ_\lambda \subseteq V_{\lambda +1}$ is the inverse limit of $\seq{\QQQ_\beta}{\beta <\lambda}$. Moreover,  the remaining part of the iteration is  homogeneous and $\lambda$-closed in $V^{\QQQ_\lambda}$. So, set $\PPP = \QQQ_\lambda$ and let $\dot{\RRR}$ be a $\PPP$-name for the tail of the iteration. Then clauses (a)-(c) clearly hold.

To prove clause (d), as before one can show that $\PPP$ forces, over $V$, that ${<}\lambda$-DC holds and this implies that $\lambda$-DC holds. Now, note that,  since $\PPP$ forces that $\dot{\RRR}$ is ${<}\lambda^+$-closed, we know that
$${}^\lambda V[G\restriction \lambda] \subseteq V[G\restriction \lambda]$$ holds in $V[G]$, and so it follows that $\lambda$-DC also holds in $V[G\restriction\lambda]$.

Finally, to prove clause (e), let $\seq{\lambda_m}{m<\omega}$ be the critical sequence of $j$. For every $m<\omega$, the forcing  $\PPP_{\lambda_{m+1}}=\QQQ_{\lambda_{m+1}}$ may be seen as a two-step iteration $\QQQ_{\lambda_m}\ast \dot{\QQQ}_{\lambda_m,\lambda_{m+1}}$ with $\QQQ_{\lambda_m}\subseteq V_{\lambda_m}$. Then, as $\dot{\QQQ}_{\lambda_m,\lambda_{m+1}}$ is forced by $\QQQ_{\lambda_m}$ to be ${<}\lambda_m$-closed, if $g_m$ is $\QQQ_{\lambda_m}$-generic over $V$, then in $V[g_m]$ there is a condition $p_{m+1}$ in $\dot{\QQQ}_{\lambda_m,\lambda_{m+1}}^{g_n}$ that is below $(j(r)\restriction [\lambda_m,\lambda_{m+1}))^{g_m}$, for all $r\in g_m$. Thus, starting with any condition $p_0\in \PPP_{\lambda_0}$, we can successively find a $\PPP_{\lambda_m}$-name $\dot{p}_m$ for a condition in $\dot{\QQQ}_{\lambda_m,\lambda_{m+1}}$, so that the condition $p\in \PPP$ given by the sequence $\langle p_0\rangle^\frown\seq{\dot{p}_{m+1}}{m < \omega}$ has the property that for every $G_0$ that is $\PPP$-generic over $V$ with $p\in G_0$,  $j[G_0]\subseteq G_0$ holds. For suppose $q\in G_0$. Pick $r\leq_\PPP p,q$ with  $r\in G_0$. Then $$p \leq_\PPP j(r)= \langle j(r\restriction[0,\lambda_0))\rangle^\frown \seq{j(r\restriction [\lambda_m,\lambda_{m+1}))}{n<\omega}$$ and therefore $j(r)\in G_0$, hence $j(q)\in G_0$.  
\end{proof}

 We will next show that forcing with Woodin's partial order $\QQQ$ over a model with a  $C^{(3)}$-Reinhardt cardinal and a supercompact cardinal above the supremum $\lambda$ of its critical sequence yields a $1$-ultraexact embedding at  $\lambda$ in a rank-initial segment of the generic extension.

\begin{lemma}[${\rm NBG}-{\rm AC}$]\label{lemma:WoddinsForcingUltraexact}
  Suppose $\map{j}{V}{V}$ is a non-trivial  elementary embedding,   $\lambda \in C^{(3)}$ is the supremum of its critical sequence, and  $\delta>\lambda$ is  a supercompact cardinal with $j(\delta)=\delta$. If $\QQQ$ is the partial order  given by Theorem \ref{theorem:WoodinForceZFC} and $G$ is $\QQQ$-generic over $V$, then there is a $1$-ultraexact embedding at  $\lambda$ in $V[G]_\delta$ that extends $j\restriction V_\lambda$.  
\end{lemma}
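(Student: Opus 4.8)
\textbf{Proof strategy for Lemma \ref{lemma:WoddinsForcingUltraexact}.}

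The plan is to lift the given proper-class embedding $j$ through the forcing $\QQQ$ in the style of the last part of the proof of Theorem \ref{theorem:UltraexactFromI0}, but working inside the rank-initial segment $V[G]_\delta$ instead of an inner model of a generic extension. First I would decompose $\QQQ$ according to the critical sequence $\seq{\lambda_m}{m<\omega}$ of $j$: by Theorem \ref{theorem:WoodinForceZFC}(\ref{item:Woodin4}), since $\lambda\in C^{(3)}$ there is a complete suborder $\PPP=\QQQ_\lambda\subseteq V_{\lambda+1}$, homogeneous and $\Sigma_3$-definable over $V_{\lambda+1}$, together with a $\PPP$-name $\dot{\RRR}$ for a homogeneous, ${<}\lambda^+$-closed tail forcing, with $\QQQ$ densely embedding into $\PPP*\dot\RRR$. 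Write $G = G_0 * G_1$ accordingly. Using clause (\ref{item:Woodin4-7}), we may choose $G_0$ so that $j[G_0]\subseteq G_0$; since $j(\lambda)=\lambda$, $j(\delta)=\delta$, and $\PPP$ is definable over $V_{\lambda+1}$ from $\lambda$, we have $j(\PPP)=\PPP$ and hence $j$ lifts to an elementary embedding $\map{j}{V[G_0]}{V[G_0]}$ with $j\restriction V_\lambda$ unchanged and $\lambda$ still the supremum of its critical sequence. By clauses (\ref{item:Woodin4-4}) and (\ref{item:Woodin4-6}), ${}^\lambda V[G_0]\subseteq V[G_0]$ and $\lambda$-DC holds in $V[G_0]$, and since $\dot\RRR$ is ${<}\lambda^+$-closed, $V[G]$ and $V[G_0]$ have the same subsets of $V_\lambda$; in particular $j\restriction V_\lambda$ is the same object in $V[G]$ as in $V[G_0]$, and $(j\restriction V_\lambda)_+$ is an I1-embedding of $V_{\lambda+1}$ belonging to $V[G_0]\subseteq V[G]$.

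Next I would produce the ultraexact embedding by an argument parallel to the Claim in the proof of Theorem \ref{theorem:UltraexactFromI0}. Fix in $V[G]_\delta$ a cardinal $\zeta$ with $\lambda<\zeta<\delta$, $j(\zeta)=\zeta$, and $V_\zeta^{V[G]}$ sufficiently elementary in $V[G]_\delta$; and let $X\prec V_\zeta^{V[G]}$ be of cardinality $\lambda$ with $V_\lambda\cup\{\lambda, j\restriction V_\lambda\}\subseteq X$. Taking the transitive collapse $\map{\pi}{X}{M}$, the model $M$ has size $\lambda$ and, because $V[G_0]$ is closed under $\lambda$-sequences and $\dot\RRR$ adds none, $M$ and $\pi^{-1}$ live in $V[G_0]$ — more precisely in the intermediate model where the relevant fragment of $G_0$ has been added. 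Now I would run the homogeneity-plus-$j$ argument: the statement ``there is an elementary $\map{k}{\check M}{V_{\check\zeta}}$ with $k\restriction V_{\check\lambda}=\id$ and $k\restriction V_\lambda$'' (suitably massaged so that it is forced by $\mathbbm 1$, using homogeneity of the relevant forcing over $V$) transfers under $j$ to give, in $V[G_0]$, an elementary embedding $\map{k}{j(M)}{V_{j(\zeta)}}$ with $k\restriction V_\lambda=\id_{V_\lambda}$. Composing, $i := k\circ(j\restriction M)\circ\pi$ is an elementary embedding $\map{i}{X}{V_\zeta^{V[G]}}$ with $i\restriction V_\lambda = j\restriction V_\lambda$; this is an element of $V[G_0]$, hence of $V[G]_\delta$. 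Since $j\restriction V_\lambda\neq\id$ and $\lambda$ is the supremum of the critical sequence of $j$, we get $i(\lambda)=\lambda$ and $i\restriction\lambda\neq\id_\lambda$. Crucially, $i\restriction V_\lambda = j\restriction V_\lambda\in X$ by construction, so $i$ is a $1$-ultraexact embedding at $\lambda$ in $V[G]_\delta$ extending $j\restriction V_\lambda$. (One should also check that $V[G]_\delta$ is a model of $\mathsf{ZFC}$, which is Theorem \ref{theorem:WoodinForceZFC}(\ref{item:Woodin3}), and that $\zeta$ of the required kind exists there, which follows from $\delta$ being inaccessible in $V[G]_\delta$ and $j(\delta)=\delta$.)

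The main obstacle I anticipate is the bookkeeping around \emph{which} model the collapse $M$ and the lifted embedding $k$ actually belong to, and making the homogeneity argument run cleanly in the choiceless ground model. Unlike in Theorem \ref{theorem:UltraexactFromI0}, here we are not inside a fixed inner model $L(V_{\lambda+1},E)$ but inside a set-forcing extension of a model of $\mathsf{NBG}-\mathsf{AC}$; so I must be careful that the forcing relations and the weak homogeneity of $\PPP$ (and of the further tail up to $\zeta$) are available, that $\lambda$-DC in $V[G_0]$ genuinely suffices to carry out the transitive-collapse and elementary-submodel constructions with domain of size $\lambda$, and that $j$ can be applied to the relevant forcing statement despite $j$ being a proper-class object in the $\mathsf{NBG}$ sense. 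I would handle this by keeping everything below $\zeta<\delta$, so that all the objects in play are sets moved by the (now lifted) class embedding $j$ on $V[G_0]$, and by invoking $j(\zeta)=\zeta$ together with the $\Sigma_3$-definability of $\PPP$ to guarantee $j(\PPP)=\PPP$ and $j(\dot\RRR)=\dot\RRR$. The rest is a routine adaptation of the lifting argument already carried out in the proof of Theorem \ref{theorem:UltraexactFromI0}.
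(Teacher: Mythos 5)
Your proposal is correct and follows essentially the same route as the paper's proof: decompose $\QQQ$ as $\PPP*\dot\RRR$ via Theorem \ref{theorem:WoodinForceZFC}\ref{item:Woodin4}, arrange $j[G_0]\subseteq G_0$ by clause \ref{item:Woodin4-7} and homogeneity, collapse an elementary submodel $X\ni j_*\restriction V[G]_\lambda$ to an $M\in V[G_0]$ using the ${<}\lambda^+$-closure of the tail, transfer the existence of the un-collapsing map through $j$ by homogeneity of the tail forcing (the paper additionally uses $j(\QQQ)=\QQQ$ and the dense embedding of $\QQQ$ into $\PPP*j(\dot\RRR)$ to see that $V[G]$ is a $j(\dot\RRR)^{G_0}$-generic extension of $V[G_0]$), and compose. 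The only slip is the claim that $k$ and $i$ lie in $V[G_0]$ — they cannot, since their ranges meet $V[G]\setminus V[G_0]$ — but all that is needed, and all that the composition actually yields, is membership in $V[G]_\delta$.
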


\begin{proof}
 Since the partial order $\QQQ$ is  homogeneous, it will be  sufficient to show that some condition in $\QQQ$ forces the existence, in $V[G]_\delta$, of a $1$-ultraexact embedding at $\lambda$ that extends the map $j\restriction V_\lambda$.  

 Let $p$ be the condition in $\PPP$ given by Clause \ref{item:Woodin4-7} in the theorem above. 
 Suppose $G$ is a $\QQQ$-generic filter over $V$ with $p\in G$ and  
 %We will show that there is a $2$-ultraexact embedding at $\lambda$ in $V[G]_\delta$ by appealing to the characterization given by Lemma \ref{lemma:EquivalentUltra}.
 let $G_0*G_1$ denote the filter on $\PPP*\dot{\RRR}$ induced by the dense embedding  given by Clause \ref{item:Woodin4-3} in Theorem \ref{theorem:WoodinForceZFC}. Thus, we have $V[G_0 \ast G_1]=V[G]$. Since $p \in G$,  we know that $p \in G_0$, and so, by Clause \ref{item:Woodin4-7},  the map $j\restriction V_{\lambda+1}$ lifts to an elementary embedding $$\map{j_*}{V[G_0]_{\lambda+1}}{V[G_0]_{\lambda+1}}$$ in $V[G_0]$. In particular, by Clause \ref{item:Woodin4-6} in Theorem \ref{theorem:WoodinForceZFC}, this implies  that $\lambda$ is a limit of strongly inaccessible cardinals in $V[G_0]$, and moreover, by Clause \ref{item:Woodin4-4}, it follows that $V[G]_\lambda$ has cardinality $\lambda$ in $V[G]_\delta$. 
 Thus, in $V[G]_\delta$, we can find $\lambda<\eta\in C^{(2)}$ and an elementary submodel $X$ of $V[G]_\eta$ of cardinality $\lambda$ with $V[G]_\lambda\cup\{j_*\restriction V[G]_\lambda\}\subseteq X$. Let $\map{\pi}{X}{M}$ denote the corresponding transitive collapse. 
 
 Clearly, we have $M\in H(\lambda^+)^{V[G]_\delta}$, so we can again use  Clauses \ref{item:Woodin4-4} and \ref{item:Woodin4-6} in Theorem \ref{theorem:WoodinForceZFC} to conclude that $M$ is an element of $V[G_0]$.  The homogeneity of $\dot{\RRR}^{G_0}$ in $V[G_0]$ then implies that whenever $F$ is $\dot{\RRR}^{G_0}$-generic over $V[G_0]$, then, in $V[G_0,F]_\delta$, we can find a cardinal $\lambda<\zeta\in C^{(2)}$ and an elementary submodel $Y$ of $V[G_0,F]_\zeta$ such that $V[G_0,F]_\lambda\cup\{\lambda\}\subseteq Y$ and the transitive collapse of $Y$ is equal to $M$. 
  Pick a $\PPP$-name $\dot{M}$ in $V$ with $\dot{M}^{G_0}=M$. Then, there is a condition $p_0$ in $G_0$ with the property that whenever $H_0*H_1$ is $(\PPP*\dot{\RRR})$-generic over $V$ with $p_0\in H_0$, then, in $V[H_0,H_1]_\delta$, we can find $\lambda<\zeta\in C^{(2)}$ and an elementary submodel $Y$ of $V[H_0,H_1]_\zeta$ such that $V[H_0,H_1]_\lambda\cup\{\lambda\}\subseteq Y$ and the transitive collapse of $Y$ is equal to $\dot{M}^{H_0}$.

  Note that by Clause \ref{item:Woodin4-2} in Theorem \ref{theorem:WoodinForceZFC}, we have $j(\PPP)=\PPP$. Thus, by the elementarity of $j$, we then have that whenever $H_0*H_1$ is $(\PPP*j(\dot{\RRR}))$-generic over $V$ with $j(p_0)\in H_0$, then, in $V[H_0,H_1]_\delta$, we can find $\lambda<\zeta\in C^{(2)}$ and an elementary submodel $Y$ of $V[H_0,H_1]_\zeta$ such that $V[H_0,H_1]_\lambda\cup\{\lambda\}\subseteq Y$ and the transitive collapse of $Y$ is equal to $j(\dot{M})^{H_0}$. Note also that since $\QQQ$ is definable in $V_\delta$ by a formula without parameters, and since $j(\delta)=\delta$, we know that $j(\QQQ)=\QQQ$. By elementarity, and Clause \ref{item:Woodin4-3}, this implies that there is a dense embedding of $\QQQ$ into $\PPP*j(\dot{\RRR})$ in $V$ that sends every condition $q$ in $\PPP$ to $(q,\mathbbm{1}_{j(\dot{\RRR})})$. Hence, there is $F\in V[G]$ that is $j(\dot{\RRR})^{G_0}$-generic over $V[G_0]$ with $V[G]=V[G_0,F]$. 

  Since $p_0\in G_0$ and $j[G_0]\subseteq G_0$, we may now conclude that, in $V[G]_\delta$, there exist a cardinal $\lambda<\zeta\in C^{(2)}$ and an elementary submodel $Y$ of $V[G]_\zeta$ such that $V[G]_\lambda\cup\{\lambda\}\subseteq Y$ and the transitive collapse $\tau$ of $Y$ is an isomorphism onto $j(\dot{M})^{G_0}$. 
  The elementary embedding $j_\ast$, being a lifting of $j\restriction V_{\lambda +1}$ to $V[G_0]_{\lambda+1}$,  now yields that $j_*(M)=j(\dot{M})^{G_0}$, and hence  $$\map{j_*\restriction M}{M}{j(\dot{M})^{G_0}}$$ is an elementary embedding in $V[G]_\delta$. This shows that the composition $$\map{\tau^{{-}1}\circ(j_*\restriction M)\circ\pi}{X}{Y}$$ given by
  \[
\begin{tikzcd}
{V[G]_\eta} & X \arrow[l, "\succ", phantom] \arrow[d, "\pi"] & Y \arrow[r, "\prec", phantom]       & {V[G]_\zeta} \\
            & M \arrow[r, "j_*\upharpoonright M"]                & {j(\dot M)^{G_0}} \arrow[u, "\tau^{-1}"] &             
\end{tikzcd}
\]
is an elementary elementary embedding from $X$ to $V[G]_\zeta$ in $V[G]_\delta$ and, since we know that $\pi\restriction V[G]_\lambda=\id_{V_\lambda}$ and $\tau^{{-}1}\restriction V[G]_\lambda=\id_{V[G]_\lambda}$, we can conclude  that $$(\tau^{{-}1}\circ(j_*\restriction M)\circ\pi)\restriction V[G]_\lambda ~ = ~ j_*\restriction V[G]_\lambda\in X.$$ Since $j_*\restriction V[G]_\lambda$ extends $j\restriction V_\lambda$, this equality  also shows that the constructed elementary embedding extends $j\restriction V_\lambda$.  
%Appealing to Lemma \ref{lemma:EquivalentUltra}, this shows that there is a $2$-ultraexact embedding at $\lambda$ in $V[{G}]_\delta$. 
\end{proof}

 Woodin's theorem and the lemma above  will now yield a proof of the main result of this section:

\begin{proof}[Proof of Theorem \ref{theorem:ConsUltraexactAboveSupercompact}]
 Work in $\mathrm{NBG}-\mathrm{AC}$ and assume that there is a $C^{(3)}$-Reinhardt cardinal $\kappa$, witnessed by $\map{j}{V}{V}$, and there is a supercompact cardinal greater than the supremum, $\lambda$, of the critical sequence, $\seq{\lambda_m} {m<\omega}$, of $j$. Let $\delta$ denote the least supercompact above $\lambda$. Since $\delta$ is definable using the parameter $\lambda$, it follows that $j(\delta)=\delta$.

 Let $\QQQ$ be the partial order given by Theorem \ref{theorem:WoodinForceZFC} and suppose $G$ is $\QQQ$-generic over $V$. Then Clause \ref{item:Woodin3} of Theorem \ref{theorem:WoodinForceZFC} yields that $V[G]_\delta$ is a model of ZFC. Moreover, since $V_\lambda\prec_{\Sigma_3} V$,  the cardinals $\lambda_m$ of the critical sequence of $j$ are extendible cardinals in $V$. Hence, since  Clause \ref{item:Woodin3} of Theorem \ref{theorem:WoodinForceZFC} ensures that extendible cardinals below $\delta$ in $V$ are extendible in $V[G]_\delta$, we have that,  in $V[G]_\delta$, the $\lambda_m$ are extendible cardinals, for all $m<\omega$.  
 %
 % Moreover,  arguing as in the proof of Clause \ref{item:Woodin3} of Theorem \ref{theorem:WoodinForceZFC}, every  restriction embedding $\map{j\restriction V_{\lambda_{m+1}}}{V_{\lambda_{m+1}}}{V_{\lambda_{m+2}}}$ lifts to an elementary embedding $$\map{j_m}{V[G]_{\lambda_{m+1}}}{V[G]_{\lambda_{m+2}}}.$$ Now, in $V[G]_\delta$, for every $m<\omega$, we can define $$\mathcal{U}_m ~ = ~ \Set{ X\subseteq \lambda_m}{j_m[\lambda_m]\in j_m(X)}$$ and   easily verify that $\mathcal{U}_m$ is an ultrafilter in $V[G]_\delta$. So, since $V[G]_\delta$ is a model of ZFC,  we can apply the ultrafilter characterization of $m$-hugeness, as in {\cite[Theorem 24.8]{Kan:THI}}, to conclude that $\lambda_0$ is $m$-huge for all $0<m<\omega$. Similarly, one can show that $\lambda_k$ is $m$-huge, for all $0<k,m <\omega$. 
 In addition, Lemma \ref{lemma:WoddinsForcingUltraexact} shows that, in $V[G]_\delta$, there is a $1$-ultraexact embedding at $\lambda$ and hence Lemma \ref{lemma:EquivalentUltra} shows that $\lambda$ is ultraexacting in $V[G]_\delta$. Therefore, the set $V[G]_\delta$ is a model of the theory in the statement of Theorem \ref{theorem:ConsUltraexactAboveSupercompact}. By absoluteness, this theory is also consistent in the ground model $V$, yielding the statement of the theorem.  
\end{proof}

%%%%%%

\subsection{A failure of the  HOD Conjecture}\label{sect6}

We can draw the following conclusions concerning the interaction between exacting cardinals and extendible cardinals. We refer the reader to  \cite[Definition 7.13]{MR4022642} for more on the Ultimate-L Conjecture.

\begin{conclusion}\label{coroWHODConj}
  Let $T$ denote the theory consisting  of the axioms of ZFC together with the statement that there is an exacting cardinal above an extendible cardinal. 
  %$\mathrm{ZFC}$ + there is  $2$-ultraexact embedding at some $\lambda$ and an extendible cardinal $\delta<\lambda$. Then,
 \begin{enumerate}
  \item\label{coroWHODConj1} The theory $T$ proves that there is a huge cardinal above an extendible cardinal and  all sufficiently large regular cardinals are $\omega$-strongly measurable in $\HOD$. 
  %then the interval $(\delta,\lambda)$ contains a huge cardinal and every regular cardinal greater than or equal to $\delta$ is $\omega$-strongly measurable in $\HOD$.%, {i.e.,} the $\HOD$ Hypothesis fails. 

  \item\label{coroWHODConj2}  $\mathrm{PA} + \mathrm{con}(T)$ disproves the Weak $\HOD$ Conjecture and the Ultimate-$L$ Conjecture.

 \item\label{coroWHODConj3} The theory $\mathrm{NBG}-\mathrm{AC}$ + ``there is a $C^{(3)}$-Reinhardt cardinal and a supercompact cardinal greater than the supremum of the critical sequence'' proves $\mathrm{con}(T)$. 
 %\jpa{I wonder if this is redundant since the way it is written now it is just repeating Theorem \ref{theorem:ConsUltraexactAboveSupercompact}.}
\end{enumerate}
\end{conclusion}

\begin{proof}
    By Theorem \ref{VnotHOD}, the theory $T$ proves that some singular cardinal that is greater than an extendible cardinal  is regular in $\HOD$.  
    According to Woodin's $\HOD$-dichotomy theorem (see {\cite[Theorems 3.34 \& 3.39]{MR3632568}}), this implies that every regular cardinal greater or equal to the given extendible cardinal is $\omega$-strongly measurable in $\HOD$\footnote{Note that, by a result of Goldberg {\cite[Theorem 2.10]{MR4693981}}, this conclusion only requires a strongly compact cardinal}. Since we also know that there is an exacting cardinal $\lambda$ above the given extendible cardinal, it follows that there exists an I3-embedding $\map{j}{V_\lambda}{V_\lambda}$ and therefore $\lambda$ is a limit of huge cardinals. This proves \ref{coroWHODConj1} and \ref{coroWHODConj2} (recall that the Weak Ultimate-L Conjecture implies the Weak HOD Conjecture by \cite{MR4022642}). Finally,  \ref{coroWHODConj3} follows directly from Theorem \ref{theorem:ConsUltraexactAboveSupercompact}.
    %
    %Now, $V_\lambda$ satisfies ZFC together with ``there exists an extendible cardinal below a huge cardinal.'' 
    %The extendibility of $\delta$ then ensures that there is an inaccessible cardinal $\kappa>\lambda$ and a supercompact cardinal $\gamma<\delta$. Then $\gamma$ is supercompact in $V_\kappa$. 
\end{proof}

\subsection{Concluding remarks}
We would like to end the current discussion with some remarks.
\subsubsection{}\label{remark1}
In \cite{MR4022642}, it is shown that the Weak $\HOD$ Conjecture fails under the assumption of the consistency of $\mathrm{NBG}-\mathrm{AC}$ with the existence of a Reinhardt cardinal  and an $\mathrm{I3}$-embedding above the supremum, $\lambda$, of the critical sequence of $j$, {i.e.,}  a non-trivial elementary embedding $\map{i}{V_\mu}{V_\mu}$, for some limit $\mu$ (see \cite[Theorem  8.4]{MR4022642}) having critical point greater than $\lambda$.  
While the theory employed in Theorem \ref{theorem:ConsUltraexactAboveSupercompact} is stronger than this, 
the novelty of Conclusion \ref{coroWHODConj} is that the Weak HOD Conjecture can be refuted from a combination of natural large cardinal principles compatible with the Axiom of Choice, moreover the individual consistency strength of each of these principles is weaker than the existence of an I0-embedding (by Theorem \ref{theorem:ExactFromI0}). It seems unlikely that the hypothesis of Theorem \ref{theorem:ConsUltraexactAboveSupercompact} can be weakened substantially, thus we conjecture:

\begin{conjecture}
Suppose that $\mathsf{ZFC}$ holds, $\lambda$ is ultraexacting and $\kappa<\lambda$ is extendible. Then, there is a set model of $\mathsf{ZF}$ with a rank-Berkeley cardinal.
\end{conjecture}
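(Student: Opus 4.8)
The plan is to produce, inside $V$, a \emph{set-sized} model $N\models\mathsf{ZF}$ together with an ordinal $\mu$ that is rank-Berkeley in $N$ in the sense of \cite{GoSch24} — the natural candidate for $N$ being a suitable set-sized rank-initial segment of $L(V_{\mu+1})$ (or of $L(V_{\mu+1},E)$ for an appropriate $E$) with $\mu<\lambda$. The two resources to exploit are the \emph{amplification phenomenon} of \S\ref{SubsectIcarus} — that an ultraexacting cardinal together with enough sharps generates a proper class of $\mathrm I0$ embeddings (Theorem \ref{corollary:LeastI0notUltraexacting}), with critical points that can be prescribed above any $\alpha<\lambda$ via Lemma \ref{lemma:EquivalentUltra} — and the strong reflection supplied by the extendible cardinal $\kappa<\lambda$, which (as in the remarks after Lemma \ref{lemma:Equivalentexact} and in the proof of Theorem \ref{theorem:ConsUltraexactAboveSupercompact}) is exactly the device that lets one extend embeddings of $V_{\lambda+1}$-level structures upward through the $V$-hierarchy.

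Concretely, I would proceed as follows. \emph{Step 1 (sharps).} First secure that $(V_{\lambda+1},E)^\#$ exists for the relevant $E\in\OD_{V_\lambda}\cap V_{\lambda+2}$; absent an extendible cardinal \emph{above} $\lambda$ this is not immediate, so the bootstrap is to use that the extendible $\kappa<\lambda$ gives $X^\#$ for all $X\in V_\kappa$, to reflect this closure upward along a $1$-ultraexact embedding at $\lambda$ whose critical point exceeds the rank of $E$ (so that $E$ and its putative sharp are fixed, cf.\ Corollary \ref{corollary:StrongIcarusUltraexactSharps}), and thereby to transfer the existence of sharps to $V_{\lambda+1}$-level sets — or, failing a $\mathsf{ZFC}$ argument, to pass first to a mild forcing or inner-model extension in which these sharps are available. \emph{Step 2 (many $\mathrm I0$ embeddings with controlled critical points).} For each $\alpha<\lambda$, apply clause (iii) of Lemma \ref{lemma:EquivalentUltra} to get a $2$-ultraexact embedding $\map{j}{X}{V_\zeta}$ at $\lambda$ with $\crit{j}>\alpha$ and $j(E)=E$, and then Theorem \ref{theorem:UltraexactSharpsIcarus} to obtain $\map{i_\alpha}{L(V_{\lambda+1},E)}{L(V_{\lambda+1},E)}$ with $i_\alpha(E)=E$ and $\crit{i_\alpha}\in(\alpha,\lambda)$; since the critical sequence of each $i_\alpha$ is cofinal in $\lambda$, one obtains self-embeddings of $L(V_{\lambda+1},E)$ fixing $E$ with critical points cofinal below $\lambda$. \emph{Step 3 (reflection to a set model, using extendibility).} Feed this configuration into a strengthened form of Cramer's inverse-limit reflection \cite{Cramer:Inverse} — the tool already used in Theorem \ref{corollary:LeastI0notUltraexacting} — now with the extendible cardinal $\kappa$ supplying enough reflection to propagate each $V_\lambda$-level embedding all the way up the $V$-hierarchy of the reflected model: the aim is to produce $\mu<\lambda$ (well below $\kappa$) and a set model $N\models\mathsf{ZF}$ with $V_{\mu+1}^N=V_{\mu+1}$ such that, in $N$, for every $\zeta>\mu$ and every $\beta<\mu$ there is a non-trivial elementary embedding $\map{k}{V_\zeta^{N}}{V_\zeta^{N}}$ with $\beta<\crit{k}<\mu$ and $k(\mu)=\mu$, i.e.\ $\mu$ is rank-Berkeley in $N$. \emph{Step 4 (conclusion).} Since $N$ is a set and satisfies $\mathsf{ZF}$, this yields a set model of $\mathsf{ZF}$ with a rank-Berkeley cardinal, and by absoluteness the consistency of $\mathsf{ZF}$ with a rank-Berkeley cardinal follows.

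The hard part is Step 3, and it has two faces. First, genuine rank-Berkeleyness demands that the embeddings restrict to self-maps of $V_\zeta^{N}$ for \emph{every} $\zeta$, not merely for $\zeta$ that happen to be fixed points of the ambient $\mathrm I0$ embeddings; upgrading ``$\mathrm I0$-like at cofinally many critical points'' to this uniform statement is precisely where the reflection provided by the extendible cardinal must be made to work, and it mirrors the delicate extension arguments of \cite{GoSch24}. Second, it is not clear the stated hypothesis is strong enough: rank-Berkeleyness may be strictly stronger than what ``ultraexacting above extendible'' delivers, in which case the argument would require a measurable (or further extendibles) above $\lambda$, or a genuinely new idea for squeezing a $\mathsf{ZF}$ model of rank-Berkeley out of the given $\mathsf{ZFC}$ data — this is why the statement is offered only as a conjecture, with the expectation (supported by the apparent tightness of Theorem \ref{theorem:ConsUltraexactAboveSupercompact}) that the hypothesis is close to optimal while the matching lower bound remains open. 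Finally, Step 1 is a genuine gap if it cannot be carried out in $\mathsf{ZFC}$ alone; resolving whether ultraexacting plus an extendible below it already implies the consistency of $V_{\lambda+1}^\#$ would be a natural first milestone toward the conjecture.
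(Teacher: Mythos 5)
The statement you were asked to prove is not a theorem of the paper but an \emph{open conjecture}: the paper offers no proof, only the heuristic that the hypothesis of Theorem \ref{theorem:ConsUltraexactAboveSupercompact} (deriving the consistency of ``ultraexacting above extendible'' from a $C^{(3)}$-Reinhardt cardinal with a supercompact above its critical sequence) is unlikely to admit substantial weakening, so that a converse of roughly this strength ought to hold. Your proposal is accordingly a research plan rather than a proof, and to your credit you identify its gaps yourself. Two remarks on calibration. Your Step 1 is not actually a gap: an extendible cardinal $\kappa$ yields a proper class of measurable cardinals (apply an extendibility embedding $\map{j}{V_\alpha}{V_\beta}$ with $j(\kappa)>\alpha$ and note that $j(\kappa)$ carries a genuine measure), hence $X^\#$ exists for every set $X$, including $V_{\lambda+1}$ and the relevant $E\in V_{\lambda+2}$. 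Combined with Theorem \ref{theorem:UltraexactSharpsIcarus}, Corollary \ref{corollary:StrongIcarusUltraexactSharps} and Lemma \ref{lemma:EquivalentUltra}, your Step 2 then goes through and is essentially the content of the paper's corollary on extendible cardinals below ultraexacting cardinals: one obtains self-embeddings of $L(V_{\lambda+1},E)$ fixing $E$ with critical points cofinal in $\lambda$, and indeed a proper class of strong Icarus sets.

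The genuine and, at present, unbridgeable gap is your Step 3. Everything produced in Steps 1--2 lives at the I0/Icarus level, which is compatible with $\mathsf{ZFC}$; a rank-Berkeley cardinal $\mu$ requires, in a $\mathsf{ZF}$ model $N$, non-trivial elementary embeddings $\map{k}{V^N_\zeta}{V^N_\zeta}$ with $\crit{k}<\mu$ for \emph{every} $\zeta>\mu$ simultaneously, in particular for $\zeta\geq\mu+2$, which is outright inconsistent with choice by Kunen's theorem. No known reflection or inverse-limit technique (Cramer's included) converts a family of embeddings of $L(V_{\lambda+1},E)$-type structures into a $\mathsf{ZF}$ model in which Kunen's inconsistency fails in this uniform way, and the extendible cardinal $\kappa<\lambda$ supplies correctness of $V_\kappa$ in $V$, not a mechanism for propagating embeddings through all ranks of a choiceless inner model. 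Your own caveat --- that rank-Berkeleyness ``may be strictly stronger than what the hypothesis delivers'' --- is precisely the open problem; settling it in either direction is the content of the conjecture, not a step within a proof of it.
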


\subsubsection{}
After an earlier draft of this article had been circulated and following discussions with Woodin, the authors have learned that the hypothesis of \cite[Theorem 200]{WOEM1} possesses several of the key features of ultraexacting cardinals: its consistency with ZFC follows from the consistency of ZFC with I0, it implies that $V$ is not equal to HOD, and the existence of such a cardinal above an extendible cardinal implies a failure of the HOD Hypothesis. This hypothesis is a variation of \textit{Laver's axiom} (see \cite[Definition 1]{MR2914848}) asserting the existence of an elementary embedding 
\[\map{j}{(V_{\lambda+1},T)}{ (V_{\lambda+1},T)},\]
where $T$ is the $\Sigma_2$-theory of $V$ with parameters in $V_{\lambda+1}$.
A follow-up article joint with Goldberg \cite{ABGL25}  contains  an argument showing that such an embedding exists if and only if $\lambda$ is ultraexacting, yielding yet another characterization of this notion. 

In contrast, the notion of an exacting cardinal appears not to be equivalent to any principle studied previously, despite its natural definition, and it also suffices to establish Conclusion \ref{coroWHODConj}. It is also strictly weaker in terms of consistency strength. Indeed, by Theorem \ref{TheoremIntroConUE}, they are \textit{strictly} weaker than I0. Refinements of the arguments in \S\ref{SectConsistencyExact} can be used to weaken this hypothesis further, below I2 and within the iterability hierarchy of \cite{AnDi19} (see \cite{ABGL25}).

\subsubsection{} The authors do not believe that the work presented here bears any negative prognosis whatsoever on the Inner Model Program or even on the Ultimate-L \textit{Program}, despite the connection between exacting and ultraexacting cardinals and the Ultimate-L \textit{Conjecture}. Woodin has suggested a \textit{Revised HOD Conjecture}: Suppose that there is an extendible cardinal. Then either the HOD Hypothesis holds or there is no supercompact cardinal in HOD.

\subsubsection{} We observe that consistent large cardinals compatible with the Axiom of Choice can imply that $V$ is not equal to HOD. Moreover, one of two things must happen: if exacting cardinals are consistent together with extendible cardinals below them, then the HOD Conjecture must fail, and thus the set-theoretic universe V is dramatically different from the sub-universe HOD of definable sets; if not, one must accept the existence of consistent large cardinal notions which are mutually incompatible, and thus the divergence of the large-cardinal hierarchy. In either case, the authors believe that some of the commonly held beliefs concerning strong axioms of infinity have to be revised.

%%%%%%%%%%%%%%%%%%%
%%%%%%%%%%%%%%%%%%%

\bibliographystyle{amsplain} 
\bibliography{masterbiblio}

\end{document}